\documentclass[11pt,reqno]{amsart}
\usepackage{amsmath,amsthm,amsfonts,amssymb,mathrsfs,bm,graphicx,stmaryrd}
\usepackage{mathtools}
\usepackage[usenames,dvipsnames]{color}
\usepackage{dsfont}
\usepackage{multicol}
\usepackage[colorlinks=true,linkcolor=blue]{hyperref}
\setcounter{tocdepth}{1}
\usepackage{bbm}
\usepackage[normalem]{ulem}

\usepackage[letterpaper,hmargin=1.0in,vmargin=1.0in]{geometry}
\parindent	1pc
\parskip 	\smallskipamount

\newtheorem{theorem}{Theorem}[section]
\newtheorem{lemma}[theorem]{Lemma}
\newtheorem{corollary}[theorem]{Corollary}
\newtheorem{conjecture}[theorem]{Conjecture}
\newtheorem{proposition}[theorem]{Proposition}

\newtheorem{maintheorem}{Theorem}
\newtheorem{maincoro}[maintheorem]{Corollary}

\def\L{\mathbb{L}}
\def\P{\mathbb{P}}
\def\Z{\mathbb{Z}}
\def\R{\mathbb{R}}
\def\E{\mathbb{E}}

\newcommand{\cE}{\mathcal{E}}

\newcommand{\red}{\textcolor{red}}

\begin{document}
\title[Small Ball Probabilities for LPP]{Small deviation estimates and small ball probabilities for geodesics in last passage percolation}
\author{Riddhipratim Basu}
\address{Riddhipratim Basu, International Centre for Theoretical Sciences, Tata Institute of Fundamental Research, Bangalore, India}
\email{rbasu@icts.res.in}
\author{Manan Bhatia}
\address{Manan Bhatia, International Centre for Theoretical Sciences, Tata Institute of Fundamental Research, Bangalore, India}
\email{mananbhatia1701@gmail.com}

\date{}
\maketitle
\begin{abstract} 
For the exactly solvable model of exponential last passage percolation on
$\Z^2$, consider the geodesic $\Gamma_n$ joining $(0,0)$ and $(n,n)$ for large
$n$. It is well known that the transversal fluctuation of $\Gamma_n$ around
the line $x=y$ is $n^{2/3+o(1)}$ with high probability. We obtain the exponent
governing the decay of the small ball probability for $\Gamma_{n}$ and
establish that for small $\delta$, the probability that $\Gamma_{n}$ is
contained in a strip of width $\delta n^{2/3}$ around the diagonal is $\exp
(-\Theta(\delta^{-3/2}))$ uniformly in high $n$. We also obtain optimal small
deviation estimates for the one point distribution of the geodesic showing
that for $\frac{t}{2n}$ bounded away from $0$ and $1$, we have
$\P(|x(t)-y(t)|\leq \delta n^{2/3})=\Theta(\delta)$ uniformly in high $n$,
where $(x(t),y(t))$ is the unique point where $\Gamma_{n}$ intersects the line
$x+y=t$. Our methods are expected to go through for other exactly solvable
models of planar last passage percolation and, upon taking the $n\to \infty$
limit, provide analogous estimates for geodesics in the directed landscape. 
\end{abstract}
\tableofcontents

\section{Introduction and statement of main results}
Small ball probabilities are fundamental objects associated to stochastic
processes where one asks the following question: what is the probability that
a stochastic process remains within a ball of small radius (in an appropriate
norm) of a fixed function? One extensively studied case in the literature is that of the small
ball probabilities of $C[0,1]$ valued Gaussian processes in the sup norm, of
which Brownian motion and Brownian bridge are paradigm examples. If
$\{B_{t}\}_{t\in [0,1]}$ denotes a standard Brownian motion or the standard
Brownian Bridge, it is well known \cite{Chu48} that 
$\log \P(\sup_{t\in [0,1]}|B(t)|\leq \delta) \sim -\frac{\pi^2}{8}\delta^{-2}$ as $\delta\to
0$. Our objective in this paper is to investigate
the corresponding question for the geodesics in planar last passage percolation models in the Kardar-Parisi-Zhang (KPZ) universality class. 

We primarily work with the exactly solvable model of exponential last passage percolation on $\Z^2$. Let $\{\omega_{v}\}_{v\in \Z^2}$
be a configuration of independent and identically distributed rate one
exponentials associated with the vertices of $\Z^2$. For any $u,v\in \Z^2$ such
that $u$ is co-ordinate wise smaller than $v$, and an up-right path $\gamma$
from $u$ to $v$, we define the passage time of the path $\gamma$, denoted
$\ell(\gamma)$, by 
$$\ell(\gamma):=\sum_{w\in \gamma\setminus \{u,v\}} \omega_{w};$$
i.e., the passage time of a path is the sum of the weights on it
excluding the weight of the initial and final vertices.\footnote{Note that we are
excluding both the initial and final vertices in the computation of
$\ell(\gamma)$ contrary to the more standard definition that includes both the endpoints. This is done for certain technical reasons explained later and our main results remain valid under the standard definition. Indeed, one
can note that all paths $\gamma\colon u\rightarrow v$ share the vertices
$u$ and $v$. Hence, the geodesic is not dependent on whether we include the
weights at $u$ and $v$ in the definition of $\ell(\gamma)$. In fact, for our
purposes, we shall also briefly need to consider a variant of the definition
that excludes only the weight of the last vertex.} The last passage time
between $u$ and $v$, denoted $T_{u,v}$, is the maximum of $\ell(\gamma)$ where
the maximum is taken over all up-right paths from $u$ to $v$. The last passage
time from $\mathbf{0}$ to $\mathbf{n}$ (we shall denote the vertex $(r,r)$ by
$\mathbf{r}$ for $r\in \Z$) will be denoted by $T_{n}$. Note that by the
continuity of the exponential distribution, almost surely, between any two
(ordered) vertices $u$ and $v$,  there exists a unique path attaining the last passage time $T_{u,v}$; this path, denoted $\Gamma_{u,v}$, will be called the geodesic between $u$ and $v$. The geodesic between $\mathbf{0}$ and $\mathbf{n}$ will be denoted by $\Gamma_n$.

Observe that $\Gamma_{n}$ defines a stochastic process on $\llbracket 0,2n
\rrbracket$ ($\llbracket a,b \rrbracket$ will denote the discrete interval
$[a,b]\cap \Z$) in the following way: For $t\in \llbracket 0,2n \rrbracket$,
define $\Gamma_n(t):=x(t)-y(t)$, where $(x(t),y(t))$ is the unique point where
$\Gamma_{n}$ intersects the line $\L_{t}:=\{x+y=t\}$; note that the same
definition also allows us to define
$\Gamma_{u,v}(t)$ for any
$u\in \mathbb{L}_0$ and $v\in \mathbb{L}_n$. Clearly,
$\Gamma_{n}(\cdot)$ is an integer valued stochastic process on $\llbracket
0,2n \rrbracket$ pinned to $0$ at either end and having $\pm 1$ increments,
i.e., taking the same realizations as a  simple random walk bridge on the same
interval. The distribution of $\Gamma_n(\cdot)$ however, is very different.
Unlike the $n^{1/2}$ fluctuations in the case of the SRW bridge, $\Gamma_n(\cdot)$ has
fluctuations of the order $n^{2/3}$ \cite{J00,BSS14}, $2/3$ being the
characteristic KPZ scaling exponent for correlation length. Hence it is natural to consider the small ball probability that $\sup |\Gamma_n(\cdot)|$, usually referred to as the \emph{transversal fluctuation} of the geodesic, is upper bounded by $\delta n^{2/3}$ for some small positive $\delta$. Our first main theorem identifies the exponent governing the decay of this probability. 

\begin{maintheorem}
\label{t:sb}
There exists $\delta_0>0$ and positive constants $C_1,C_2,c_1,c_2$ such that
for all $0<\delta<\delta_0$ and for all $n\geq n_0(\delta)$, we have 
\begin{displaymath}
	C_2e^{-c_2\delta^{-3/2}}\leq \P\left (\sup_{t\in \llbracket 0,2n \rrbracket} |\Gamma_n(t)|\leq \delta n^{2/3}\right) \leq C_1e^{-c_1\delta^{-3/2}}.
\end{displaymath}
\end{maintheorem}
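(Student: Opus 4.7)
Set $K := \lceil \delta^{-3/2} \rceil$ and $L := 2n/K \asymp \delta^{3/2} n$, chosen so that $L^{2/3} \asymp \delta n^{2/3}$ is the natural transversal fluctuation scale of an LPP geodesic of longitudinal extent $L$. For $i = 0, 1, \ldots, K$ let $t_i := iL$ (rounded to an integer), let $p_i \in \L_{t_i}$ be the intersection of $\Gamma_n$ with $\L_{t_i}$, and put $D_i := \Gamma_n(t_i) = x(t_i) - y(t_i)$. The choice of $K$ is driven by the following heuristic: on each of the $K$ mesoscopic blocks of diagonal length $L$, confinement of $\Gamma_n$ to a strip of width $\delta n^{2/3}$ is a \emph{constant-probability} event, so compounding over $K \asymp \delta^{-3/2}$ (nearly) independent blocks yields the claimed $\exp(-\Theta(\delta^{-3/2}))$ bound.

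\textbf{Upper bound.} Using the inclusion $\{\sup_t |\Gamma_n(t)| \le \delta n^{2/3}\} \subseteq \bigcap_{i=1}^{K-1} A_i$ with $A_i := \{|D_i| \le \delta n^{2/3}\}$, I would telescope
\[
\P\!\left(\bigcap_{i=1}^{K-1} A_i\right) = \prod_{i=1}^{K-1} \P\!\left(A_i \,\big|\, A_1, \ldots, A_{i-1}\right),
\]
and show that each conditional factor is at most some $p < 1$ uniform in $\delta, n, i$. Given the past up through $\L_{t_{i-1}}$, and in particular given $p_{i-1}$ at transverse distance $\le \delta n^{2/3}$ from the diagonal, the point $p_i$ coincides with the intersection of the geodesic $\Gamma_{p_{i-1},\mathbf{n}}$ with $\L_{t_i}$. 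A KPZ scaling computation shows that for every $i \in \{1,\dots,K-1\}$ this intersection has transverse fluctuation of order $\bigl(L (2n-t_i)/(2n-t_{i-1})\bigr)^{2/3} \asymp L^{2/3} \asymp \delta n^{2/3}$ around a deterministic interpolation. A one-point anti-concentration estimate at this scale (a constant-probability variant of the $\Theta(\delta)$ one-point result advertised in the abstract, but with the left endpoint $p_{i-1}$ possibly off-diagonal) then bounds the conditional probability of $A_i$ by some $p < 1$. Compounding over $K-1$ blocks gives $\P(\text{small ball}) \le p^{K-1} \le C_1 e^{-c_1 \delta^{-3/2}}$.

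\textbf{Lower bound.} I would produce an explicit favorable event of the desired probability built from disjoint mesoscopic blocks. Place anchor points $q_i := \lfloor iL/2 \rfloor \cdot (1,1)$ on the diagonal, and for each block $i = 1, \ldots, K$ consider
\[
F_i := \Bigl\{ \sup_{t \in \llbracket t_{i-1}, t_i \rrbracket} \bigl|\Gamma_{q_{i-1}, q_i}(t)\bigr| \le \tfrac{1}{2}\delta n^{2/3} \Bigr\},
\]
together with a barrier event $G_i$ forcing every up-right path from $q_{i-1}$ to $q_i$ that exits the strip of width $\delta n^{2/3}$ to have passage time strictly less than $T_{q_{i-1}, q_i}$ by a uniform gap. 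Each pair $(F_i, G_i)$ depends only on weights in a mild enlargement of the parallelogram $[q_{i-1}, q_i]$; choosing these enlargements pairwise disjoint (which costs only a constant factor in each local probability) makes the $K$ pairs jointly independent, and by KPZ scaling at the mesoscopic scale $L$ each has probability bounded below by some $p' > 0$ uniform in $\delta$ and $n$. On $\bigcap_{i=1}^{K} (F_i \cap G_i)$, a straightforward passage-time comparison argument shows that $\Gamma_n$ coincides with the concatenation $\Gamma_{q_0, q_1} \cup \cdots \cup \Gamma_{q_{K-1}, q_K}$ and hence lies in the strip of width $\delta n^{2/3}$. This yields $\P(\text{small ball}) \ge (p')^K \ge C_2 e^{-c_2 \delta^{-3/2}}$.

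\textbf{Main obstacle.} I expect the principal technical difficulty to be the conditional anti-concentration step in the upper bound. The events $A_i$ are strongly positively correlated, so the naive multiplicativity from the one-point $\Theta(\delta)$ estimate is off by an exponential factor, and the LPP geodesic lacks a literal Markov property: conditioning on $\Gamma_n$ passing through $p_{i-1}$ couples weights behind and ahead of $\L_{t_{i-1}}$ in a non-trivial way. Disentangling this will require careful geodesic monotonicity/resampling arguments, together with a robust one-point anti-concentration statement at scale $L^{2/3}$ that is uniform over left endpoints lying within $\delta n^{2/3}$ of the diagonal. The lower bound, by contrast, should reduce to an organizational exercise assembling standard planar KPZ inputs (transversal fluctuation upper tails and passage-time barrier lemmas) on $K$ disjoint blocks, with independence inherited from the disjointness of the underlying weight sets.
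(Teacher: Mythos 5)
Your overall scaling heuristic (decompose into $\Theta(\delta^{-3/2})$ blocks of longitudinal length $\asymp \delta^{3/2}n$, each a constant-probability event) is the right one and is the same that drives the paper's proof. But both of your constructions have gaps, and the paper's proof sidesteps them rather differently.

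\textbf{Upper bound.} As you yourself flag, the conditional-probability telescoping $\prod \P(A_i\mid A_1,\dots,A_{i-1})$ is the sticking point: conditioning on where $\Gamma_n$ crosses $\L_{t_{i-1}}$ does not leave the weight field ahead of $\L_{t_{i-1}}$ in its unconditional law (the whole geodesic path depends on all the weights), so the ``KPZ scaling computation'' you invoke for the conditional transverse fluctuation is simply not available. The paper avoids this entirely. Its main upper-bound argument compares $T_n^\delta$ (the best inside-strip path weight) with the unconstrained $T_n$: Proposition~\ref{mod5} shows $T_n^\delta \le 4n - \Theta(\delta^{-1})n^{1/3}$ except on probability $e^{-c\delta^{-3/2}}$, by writing $T_n^\delta$ as a sum of $\Theta(\delta^{-3/2})$ genuinely independent block passage times each with strictly negative mean at scale $\delta^{1/2}n^{1/3}$, and applying a Bernstein inequality; the small-ball event then forces $T_n$ to be atypically small, which has probability $e^{-c\delta^{-3}}$. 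No conditioning on the geodesic occurs. The paper also sketches an alternative upper bound that realizes your block idea correctly: instead of conditioning on where $\Gamma_n$ hits $\L_{t_i}$, it fixes deterministic anchors $a_i = i\delta^{3/2}\mathbf{n} - \delta n^{2/3}(1,-1)$ strictly to the left of the strip and uses planarity (geodesic ordering) to deduce that $\Gamma_n \subseteq U_\delta$ implies each independent $\Gamma_{a_i,a_{i+1}}$ has confined transversal fluctuation. That gives a true product of independent $O(1)$-probability events. The deterministic off-strip anchors and the monotonicity trick are exactly what your sketch is missing.

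\textbf{Lower bound.} Your favorable event $\bigcap_i (F_i \cap G_i)$ does not in fact force $\Gamma_n$ to be the concatenation $\Gamma_{q_0,q_1}\cup\cdots\cup\Gamma_{q_{K-1},q_K}$. The events $F_i$, $G_i$ only control paths \emph{between consecutive anchors} $q_{i-1}, q_i$; nothing constrains $\Gamma_n$ to pass through any $q_i$ at all, and an excursion of $\Gamma_n$ out of the strip can straddle several block boundaries without ever visiting an anchor, so your per-block barrier events say nothing about it. To force the geodesic through prescribed points one needs coalescence events of the kind the paper constructs later (in the proof of Theorem~\ref{t:onepoint}), which cost more than a constant per block. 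The paper's actual lower bound is structured differently: it does not pin the geodesic to anchors. It builds an increasing event $\mathbf{Inside} = \mathbf{Far}\cap\mathbf{Close}$ controlling constrained passage times between \emph{arbitrary} pairs $u,v$ on the long sides of $U_\delta$ (Far for pairs well separated in time, Close for nearby pairs), plus barrier events $\mathbf{Bar}^{\mathtt{L}}, \mathbf{Bar}^{\mathtt{R}}$ on the two sides penalizing both long and short excursions. For any putative excursion of $\Gamma_n$, whatever its endpoints $t_1 < t_2$, one of these event pairs produces an inside-strip alternative with strictly higher weight, contradicting optimality. Crucially the events handle excursion endpoints at arbitrary locations, not just anchor-aligned ones. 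FKG gives the product lower bound across the $\Theta(\delta^{-3/2})$ sub-events. This flexibility in the excursion endpoints is the genuinely new ingredient your block-aligned scheme lacks.
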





Our second main result concerns small deviations of the one point distribution
$\Gamma_n(\cdot)$. As mentioned before, it is known that if $\frac{t}{2n}$ is bounded away from $0$ and $1$, then $\Gamma_n(t)$ has fluctuations of the order $n^{2/3}$ (if $t$ is close to $0$ or $2n$, the fluctuation is of the order $t^{2/3}$ or $(2n-t)^{2/3}$ respectively, see \cite[Theorem 3]{BSS17B}) and the following theorem addresses the small deviation question for such values of $t$. 

\begin{maintheorem}
\label{t:onepoint}
There exists $\delta_0>0$ such that for all $\epsilon\in (0,1)$ and
$0<\delta<\delta_0$, there exist positive constants $C_3,c_3$ depending
on $\epsilon$ such that for
all $n\geq n_0(\delta,\epsilon)$ and $t\in \llbracket \epsilon n,
(2-\epsilon)n \rrbracket$, we have
\begin{displaymath}
	c_3\delta \leq \P\left (|\Gamma_n(t)|\leq \delta n^{2/3}\right) \leq C_3\delta.
\end{displaymath}
\end{maintheorem}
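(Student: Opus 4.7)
The natural framework is to study the passage-time profile along $\L_t$. For each integer $k$ of the appropriate parity with $|k|\leq t\wedge(2n-t)$, let $p_k$ denote the vertex on $\L_t$ with $x$-coordinate minus $y$-coordinate equal to $k$, and set
\begin{displaymath}
\Psi(k):=T_{\mathbf{0},p_k}+T_{p_k,\mathbf{n}}+\omega_{p_k}.
\end{displaymath}
Then $T_n=\max_k\Psi(k)$ and $\{\Gamma_n(t)=k\}=\{\argmax_j \Psi(j)=k\}$. Under KPZ scaling, $k\mapsto n^{-1/3}(\Psi(\lfloor \alpha n^{2/3}\rfloor)-\E\Psi(0))$ at scale $\alpha=O(1)$ is expected to behave like a parabolic Airy-type random field whose argmax has a continuous density bounded above and below on compact subsets of $\mathbb{R}$; the theorem is the uniform non-asymptotic statement of this density bound at scale $\delta n^{2/3}$.

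\textbf{Reduction to equidistribution.} I would reduce both directions of the theorem to an \emph{equidistribution lemma}: there exist constants $M>0$ and $0<c_{\star}\leq C_{\star}<\infty$ (depending on $\epsilon$ but not on $n$) such that for all admissible $k,k'$ with $|k|,|k'|\leq Mn^{2/3}$,
\begin{displaymath}
c_{\star}\,\P(\Gamma_n(t)=k')\leq \P(\Gamma_n(t)=k)\leq C_{\star}\,\P(\Gamma_n(t)=k').
\end{displaymath}
The total-mass bound $\P(|\Gamma_n(t)|\leq Mn^{2/3})\geq 3/4$ (a routine consequence of the transversal fluctuation estimates in \cite{BSS14,BSS17B} for suitably large $M$), together with summation over the $\Theta(\delta n^{2/3})$ admissible values of $k$ in $[-\delta n^{2/3},\delta n^{2/3}]$, then yields both directions of the theorem simultaneously.

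\textbf{Proof of the equidistribution.} I would prove the equidistribution via an endpoint-shift comparison. For $a\in\llbracket -2Mn^{2/3},2Mn^{2/3}\rrbracket$, let $\Gamma_n^a$ denote the geodesic from $\mathbf{0}$ to $(n+a,n-a)$; the distribution of $\Gamma_n^a(t)$ is centered near $at/n$ with KPZ fluctuations of order $n^{2/3}$. The comparison I seek is
\begin{displaymath}
\P(\Gamma_n^a(t)=k)\asymp \P(\Gamma_n(t)=k-\lfloor at/n\rfloor),\qquad |a|,|k|\leq 2Mn^{2/3},
\end{displaymath}
uniformly in $n$. To establish it, I would combine sharp one-point moderate-deviation estimates for passage times $T_{\mathbf{0},(n+a,n-a)}$, available for exponential LPP through \cite{J00} and subsequent integrable refinements, with a restricted-path argument localizing the geodesic to a strip of width $O(n^{2/3})$, so that a coupling of the two environments on this strip transports the desired probabilities up to a $\Theta(1)$ factor. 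Varying $a$ then transfers probability mass between admissible values of $\Gamma_n(t)$ separated by $O(n^{2/3})$, giving the equidistribution.

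\textbf{Main obstacle.} The hardest step is the quantitative endpoint-shift comparison with constants uniform in $n$ when $|a|\sim n^{2/3}$. Controlling the joint distribution of passage times to endpoints separated by $\Theta(n^{2/3})$ is precisely where the integrable structure of exponential LPP (exit-point distributions for stationary LPP, Fredholm determinant asymptotics, or couplings to TASEP) is most naturally invoked, and producing a $\Theta(1)$ comparison rather than one that degrades with $n$ is the delicate point. Once the equidistribution lemma is established, the reduction to the theorem is essentially summation.
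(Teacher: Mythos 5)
Your reduction of the theorem to an \emph{equidistribution lemma} (the density of $\Gamma_n(t)$ is bounded above and below by constants times $n^{-2/3}$ on $[-Mn^{2/3},Mn^{2/3}]$) is logically sound: the total-mass argument does indeed yield both directions once that lemma is in hand. The problem is that the lemma is never proved; it is precisely the hard content, and the endpoint-shift comparison you sketch does not establish it. The proposed comparison $\P(\Gamma_n^a(t)=k)\asymp\P(\Gamma_n(t)=k-\lfloor at/n\rfloor)$ relates two geodesics in the \emph{same} environment with different endpoints, and there is no coupling or integrable identity that transports one probability to the other up to a uniform $\Theta(1)$ factor; the phrase ``a coupling of the two environments on this strip transports the desired probabilities'' is not an argument but a restatement of the difficulty. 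As you yourself note, this is ``the delicate point,'' but the proposal leaves it entirely open, and nothing in the cited references (exit points, Fredholm determinants, TASEP couplings) delivers a pointwise density comparison of this kind off the shelf.

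The paper avoids the equidistribution lemma altogether by a more economical use of symmetry. It considers shifts $u_i=(i\delta n^{2/3},-i\delta n^{2/3})$, $v_i=\mathbf{n}+u_i$, so that \emph{both} endpoints are translated and the law of $\Gamma_{u_i,v_i}(t)-\psi(u_i)$ is \emph{exactly} that of $\Gamma_n(t)$ by translation invariance --- no comparison of unequal probabilities is needed. For the upper bound, the sum over $i$ of the probability that $\Gamma_{u_i,v_i}$ hits a $\delta n^{2/3}$-window at $\L_t$ is $\delta^{-1}p$, and this is controlled by the expected number of \emph{distinct} intersection points of these (planar-ordered) geodesics with $\L_t$, which is bounded via the disjoint-geodesics estimate of \cite{BHS18} (Proposition~\ref{one-2}, Lemma~\ref{one-2.1}). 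For the lower bound, the paper constructs a positive-probability coalescence event (Proposition~\ref{one0}) forcing $\Gamma_{a_1,b_1}$ and $\Gamma_{a_2,b_2}$ to agree at $\L_t$ within a bounded window, which again by planar ordering forces at least one of the translated geodesics to land in its target window. If you want to salvage your route, you would need to actually prove the pointwise density bounds --- which in the literature has only been achieved by precisely this kind of geodesic-geometry argument, not by endpoint perturbation --- so in effect you would end up rebuilding the paper's proof.
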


Notice that it is not necessary to take $\delta<\delta_0$ in the statements of
Theorems \ref{t:sb} and \ref{t:onepoint}; indeed, one can simply adjust the
constants so ensure that the estimates hold for all $\delta$. Further,
although we have stated the theorems for a fixed $\delta$ and $n\to \infty$,
one can also get similar results if $\delta\to 0$ sufficiently slowly with
$n$. For Theorem \ref{t:sb}, it suffices to assume $\delta n^{2/3}\to
\infty$;
see Section \ref{s:disf} for a more detailed discussion on this point. It will
be clear from our proofs that Theorem \ref{t:onepoint} holds for any $\delta
\geq 2n^{-2/3}$ for all $n$ sufficiently large (depending on $\epsilon$). The
factor $2$ is needed to handle the case of odd $t$; for even $t$, one gets the same statement for $\delta \geq n^{-2/3}$ (indeed, for $t$ odd, $\P\left (|\Gamma_n(t)|\leq 1\right)=0$).  Furthermore,
for each $L>0$ and any interval $I \subseteq [-Ln^{2/3},Ln^{2/3}]$ with
$|I| \geq 2$ (the lower bound of $2$ is imposed to make sure that the corresponding probability is not $0$ as before), we have that for some positive constants $c_3,C_3$ depending on
$\epsilon$ and $L$, $\P(|\Gamma_n(t)|\in I)\in n^{-2/3}|I|[c_3, C_3]$ for $n$ sufficiently large. See
Corollary \ref{c:lbgen} for a precise statement.



The $n^{2/3}$ fluctuation suggests the following scaling for $\Gamma_n$ akin to the scaling taking simple random walk to Brownian motion: we define a $C[0,1]$ valued stochastic process $\pi_{n}$ by setting
$$\pi_{n}(s):= n^{-2/3}\Gamma_{n}(2ns)$$
 for $s\in [0,1]$ if $2ns\in \Z$ and extending $\pi_{n}$ by linear interpolation to all of
$[0,1]$. One can show using the estimates in \cite{BSS14,BGZ19,BSS17B} that $\pi_n$ is tight in the topology of uniform convergence (see e.g. \cite[Theorem 1.1 (a)]{HS18} for the corresponding result in Poissonian LPP) and it is expected that there exists a $C[0,1]$ valued stochastic process
$\pi$ such that $\pi_{n}\Rightarrow \pi$ weakly in $C[0,1]$, where the limit $\pi$ corresponds to a
geodesic in the universal space-time scaling limit of the last passage percolation landscape. Such a
result has recently been established for the related model of Brownian last
passage percolation \cite{DOV18}, where the limiting object is called `the
directed landscape' and certain geometric properties of the geodesics therein have been established \cite{DOV18,DSV20}. Even without establishing the existence of weak limit, one can read off certain geometric properties of any possible weak limits of $\pi_n$ from uniform pre-limiting estimates on $\Gamma_n$. In particular, we have the following corollary of Theorems \ref{t:sb} and \ref{t:onepoint}.




\begin{maincoro}
\label{c:as}
Let $\pi$ denote any subsequential weak limit $\pi$ of $\pi_n$ in $C[0,1]$
equipped with the topology of uniform convergence. Then we have:
\begin{enumerate}
\item[(i)] There exists $\delta_0>0$ and positive constants $C_1,C_2,c_1,c_2$
	such that for all $0<\delta<\delta_0$, we have 
$$C_2e^{-c_2\delta^{-3/2}}\leq \P\left (\sup_{s\in [0,1]} |\pi(s)|\leq \delta\right) \leq C_1e^{-c_1\delta^{-3/2}}.$$
\item[(ii)] There exists $\delta_1>0$ such that for all $s\in (0,1)$ and $0<\delta <\delta_1$ there exists $C_3(s),c_3(s)$ (bounded away from $0$ and $\infty$ as long as $s$ is bounded away from $0$ and $1$) with
$$c_3\delta \leq \P\left (|\pi(s)|\leq \delta\right) \leq C_3\delta.$$
\end{enumerate}
\end{maincoro}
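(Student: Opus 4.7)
The plan is to transfer the pre-limit estimates of Theorems~\ref{t:sb} and~\ref{t:onepoint} to any subsequential weak limit $\pi$ via the Portmanteau theorem, after a brief bookkeeping step relating the discrete process $\Gamma_n$ to its piecewise-linear rescaling $\pi_n$. Fix once and for all a subsequence along which $\pi_{n_k}\Rightarrow \pi$ in $C[0,1]$ equipped with the sup-norm topology.

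For part~(i), I would first observe that since $\Gamma_n$ has $\pm 1$ increments, the piecewise-linear interpolation $\pi_n$ attains its sup at a grid point $s$ with $2ns\in \Z$, so
\begin{displaymath}
\sup_{s\in [0,1]} |\pi_n(s)| \;=\; n^{-2/3} \max_{t\in \llbracket 0,2n\rrbracket} |\Gamma_n(t)|.
\end{displaymath}
Setting $A_\delta := \{f\in C[0,1]\colon \sup_s |f(s)| \leq \delta\}$, which is closed in sup norm, Portmanteau applied to $A_\delta$ together with the lower bound of Theorem~\ref{t:sb} (valid for all large $n_k$) gives
\begin{displaymath}
\P(\pi\in A_\delta) \;\geq\; \limsup_k \P(\pi_{n_k}\in A_\delta) \;\geq\; C_2 e^{-c_2 \delta^{-3/2}}.
\end{displaymath}
For the matching upper bound I would apply Portmanteau to the open set $A_{\delta+\eta}^\circ := \{f : \sup_s |f(s)| < \delta+\eta\}$ for $\eta>0$ with $\delta+\eta<\delta_0$, obtaining $\P(\pi\in A_\delta)\leq \P(\pi\in A_{\delta+\eta}^\circ)\leq \liminf_k \P(\pi_{n_k}\in A_{\delta+\eta}^\circ)\leq C_1 e^{-c_1(\delta+\eta)^{-3/2}}$, and then let $\eta\downarrow 0$.

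For part~(ii), fix $s\in (0,1)$ and pick $\epsilon\in (0,1)$ with $s\in [\epsilon/2, 1-\epsilon/2]$ so that Theorem~\ref{t:onepoint} applies. Since the evaluation map $f\mapsto f(s)$ is continuous on $C[0,1]$, one has $\pi_{n_k}(s)\Rightarrow \pi(s)$ as real random variables. Let $t_n$ denote the integer in $\llbracket 0,2n\rrbracket$ nearest to $2ns$; for $n$ large, $t_n\in \llbracket \epsilon n, (2-\epsilon)n\rrbracket$, and the piecewise-linear construction combined with the $\pm 1$ increment property yields $|\pi_n(s) - n^{-2/3}\Gamma_n(t_n)|\leq n^{-2/3}$. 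Applying Portmanteau on $\R$ to the closed interval $[-\delta,\delta]$ for the lower bound and to the open interval $(-\delta-\eta,\delta+\eta)$ for the upper bound, invoking Theorem~\ref{t:onepoint} at radius $(\delta\pm\eta)n_k^{2/3}\pm 1$, and finally sending $\eta\downarrow 0$ and absorbing the $O(n_k^{-2/3})$ discretization error, produces $c_3(s)\delta \leq \P(|\pi(s)|\leq \delta)\leq C_3(s)\delta$, with the dependence on $s$ coming only through $\epsilon$.

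There is no real obstacle here: this is a routine transfer argument, and all the substance lives in the pre-limit theorems. The only points requiring care are (a) identifying the continuous sup of $\pi_n$ with the discrete max of $\Gamma_n$ over integer points, which is immediate from the $\pm 1$ increments, and (b) absorbing the $O(n^{-2/3})$ mismatch between $\pi_n(s)$ and $n^{-2/3}\Gamma_n(t_n)$ via the standard $\eta$-slack in the small-ball radius that is eventually sent to zero.
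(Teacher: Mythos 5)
Your proposal is correct and follows essentially the same route as the paper: transfer Theorems~\ref{t:sb} and~\ref{t:onepoint} to the limit via the observation that the piecewise-linear interpolant attains its sup at grid points, continuity of the relevant functional, and the Portmanteau theorem. The only cosmetic difference is that you spell out the $\eta$-slack argument (equivalently, a small adjustment of constants) for the one-sided Portmanteau inequalities on closed sets, which the paper leaves implicit in part~(i) and handles by absorbing factors of~$2$ in part~(ii).
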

One expects that Corollary \ref{c:as} also holds when $\pi$ above is replaced by the geodesic in the directed landscape as constructed in \cite{DOV18} since the proofs of Theorem \ref{t:sb} and Theorem \ref{t:onepoint} are expected to go through for Brownian LPP. To maintain the clarity of exposition, we have not attempted to work out the details in this paper; a detailed discussion of how our methods can be adapted to other integrable models of planar last passage percolation together with a discussion on the directed landscape is provided in Section \ref{s:dis}.

\subsection{Background and related results}
\label{s:bg}
Planar last passage percolation models are believed to belong to the KPZ
universality class for a very general class of passage time distributions and
are predicted to exhibit the universal scaling exponents $1/3$ and $2/3$ for
the passage time and the transversal fluctuation of the geodesic
respectively.  Starting with the breakthrough work of Baik-Deift-Johansson
\cite{BDJ99} which established the exponent $1/3$ (and the Tracy-Widom limit)
for Poissonian LPP, this area has seen a great flurry of activity which has
led to a similar analysis of a number of other exactly solvable models of
planar last passage percolation including exponential and geometric LPP
\cite{Jo00}, and Brownian LPP \cite{Bar01}. Using the connection of
exponential LPP with the Totally Asymmetric Simple Exclusion process, the first
order behaviour of $T_{m,n}$ was already identified by Rost in \cite{Ro81} who
established that for $\gamma$ bounded away from $0$ and $\infty$, almost
surely,
$$n^{-1}T_{\mathbf{0}, (n,\gamma n)}\to (1+\sqrt{\gamma})^{2}.$$
 In \cite{Jo00}, it was shown that $T_{m,n}$ has the same law as the largest
eigenvalue of a certain random matrix ensemble (Laguerre Unitary Ensemble
(LUE)) and that for $\gamma$ bounded away from $0$ and $\infty$, one has that
\begin{equation}
	\label{eqn:-4}
	\gamma^{1/6}(1+\sqrt{\gamma})^{-4/3}n^{-1/3}(T_{\mathbf{0}, (n,\gamma n)}-(1+\sqrt{\gamma})^{2}n)
\end{equation}
\noindent
converges weakly to the GUE Tracy-Widom distribution as $n\to \infty$. For our purposes, we
shall need finite $n$ quantitative results, namely, uniform moderate deviation
estimates for $n^{-1/3}(T_{\mathbf{0}, (n,\gamma n)}-(1+\sqrt{\gamma})^{2}n)$. These are provided in \cite{LR10} using the tridiagonal form of LUE (a non-optimal estimate is also available in \cite{BFP12}).

Using the understanding of the fluctuation of the passage times, one can study
the transversal fluctuations of the geodesic. Under some unproven
assumptions, an upper bound on the transversal fluctuation exponent was proved
for first passage percolation by Newman and co-authors (see e.g.\
\cite{New95}) and a rigorous lower bound was proved in \cite{W98,W98+} for the related model of Brownian
motion in a Poissonian potential; a general argument proving both conditional
upper and lower bounds for FPP later appeared in \cite{Cha11}. Using similar
arguments together with the moderate deviation estimates from \cite{BDJ99},
Johansson \cite{J00} first proved the $2/3$ exponent for geodesics in
Poissonian LPP. In particular, he proved that for every $\epsilon>0$, the
probability that either $\sup_{t} |\Gamma_n(t)| \geq n^{2/3+\epsilon}$ or
$\sup_{t} |\Gamma_n(t)| \leq n^{2/3-\epsilon}$ goes to $0$ as $n\to \infty$
(here $|\Gamma_n(\cdot)|$ is defined similarly as before, but for Poissonian
LPP). The same result was proved for Geometric LPP \cite{Jo03} and the same argument would also provide the same result for exponential LPP using the moderate deviation estimates from \cite{LR10}. The transversal fluctuation exponent of $2/3$ for exponential LPP was also established in \cite{BCS06} using a very different approach involving stationary LPP. 

One point of note regarding \cite{J00} (and other similar results e.g.\ \cite{Cha11}) is that the argument for the upper bound for the transversal fluctuation can be made quantitative, while the lower bound cannot. That is, one can tighten the argument in \cite{J00} and write down an upper bound for $\P(|\Gamma_n(t)|>xn^{2/3})$ for some large but fixed $x$ (indeed, one would even get the optimal result if one uses the optimal moderate deviation estimates from \cite{J00}, see \cite{BGZ19}). However, the lower bound typically involves a union bound over a discretization that is polynomially large in $n$, and hence the same argument cannot be used to get a bound for $\P(\sup_{t}|\Gamma_n(t)|<\delta n^{2/3})$ for a small but fixed $\delta$ (a quantitative but non optimal upper bound for $\P(\sup_{t} |\Gamma_n(t)| \leq n^{2/3-\epsilon})$ appeared in \cite{BGH18}). Theorem \ref{t:sb} therefore requires a somewhat different approach.

This paper falls within the general program of understanding the geodesic
geometry in integrable models of last passage percolation using one point
moderate deviation estimates together with percolation techniques. This
program was initiated in \cite{BSS14} and has been followed up in
\cite{BSS17B,
BHS18, HS18,Zha19,BGHH20}. Consequences of understanding the geodesic geometry
have been further explored in \cite{BG18,BGZ19}. In \cite{BSS14}, among other
results, quantitative upper tail bounds for the transversal fluctuation of
geodesics in Poissonian LPP were proved using moderate deviation estimates and a chaining argument. Although \cite{BSS14} used sub-optimal moderate deviation estimates from \cite{BFP12}, the same argument together with the optimal moderate deviation estimates lead to the optimal upper tail bound for  large transversal fluctuations: 
\begin{equation}
\label{e:tfub}
\P(\sup_{t} |\Gamma_n(t)|>xn^{2/3})\leq e^{-cx^{3}}.
\end{equation}
See \cite[Proposition C.9]{BGZ19} for the corresponding result (obtained by the
same argument) written out in the exponential LPP setting. In the set-up of
Poissonian LPP, \cite{HS18} proved a matching lower bound $\P(\sup_{t}
|\Gamma_n(t)|>xn^{2/3})\geq e^{-c'x^{3}}$ establishing that the exponent is
indeed optimal. As far as we are aware, this result does not explicitly appear
in the literature for exponential LPP, but the arguments are robust and are
expected to go through. 

There is a separate line of works relevant for the current paper involving the
related semi-discrete model of Brownian last passage percolation. Using the one point moderate deviation estimates and a special resampling property (Brownian Gibbs property) exhibited by a
line ensemble associated with Brownian LPP, Hammond
\cite{Ham16,Ham17a,Ham17b,Ham17c} developed a deep understanding of geodesic geometry in Brownian LPP. Using similar techniques, \cite{DOV18}
constructed the scaling limit-- the directed landscape, as mentioned before. A more detailed discussion about the results on the geodesic geometry in the directed landscape requires some definitions and is postponed to Section \ref{s:dis}.

\subsection{Outline of the proofs and new contributions in this paper}
\label{s:outline}
As mentioned before, this paper continues the general program of understanding the geodesic geometry in integrable planar last passage percolation models using the one point moderate deviation estimates. As such, while requiring several new ideas and technical ingredients, we borrow ideas and techniques from the existing
literature \cite{BSS14, BSS17B, BHS18, BG18, BGZ19} and also use certain results closely aligned to a few that have already appeared before. We provide a sketch of our
arguments proving Theorems \ref{t:sb} and \ref{t:onepoint} in this subsection
and point out the connections as well as the new contributions of our work. We discuss the upper and lower bounds in each case separately below. In both the cases, the upper bound turns out to be significantly easier than the lower bound. 

%
%
\subsubsection{Theorem \ref{t:sb}, upper bound}
\label{s:sbub}
The basic idea for the upper bound in Theorem \ref{t:sb} is rather simple: we
first obtain upper tail estimates for the weight of best path from
$\mathbf{0}$ to $\mathbf{n}$ constrained to be in the $\delta n^{2/3}$ strip
and show that for small $\delta$, the probability that it is competitive with
$T_{n}$ has the desired upper bound. Namely, if we let $T_{n}^{\delta}$ denote
the weight of the best path from $\mathbf{0}$ to $\mathbf{n}$ that does not
exit the strip $\left\{|x-y|\leq \delta n^{2/3}\right\}$, then we have the following
proposition: 
\begin{proposition}
\label{mod5}
There exists $C,c>0$ such that for all $\delta$ sufficiently small and all $n$
sufficiently large (depending on $\delta$), we have 
$$\P\left(T_{n}^{\delta}\geq 4n-\frac{C}{\delta}n^{1/3}\right)\leq e^{-c\delta^{-3/2}}.$$
\end{proposition}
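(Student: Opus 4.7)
The plan is to decompose the diagonal strip into pieces at the natural KPZ scale, dominate $T_n^\delta$ by a sum of independent segment passage times, and then apply a Chernoff bound. Fix a small constant $c_*>0$ to be tuned, set $K := \lfloor c_* \delta^{-3/2}\rfloor$ and $m := n/K$; these choices satisfy $m \asymp \delta^{3/2} n$, and make the strip half-width $\delta n^{2/3}/2$ comparable to $c_*^{2/3} m^{2/3}$, which is strictly smaller (by the factor $c_*^{2/3}$) than the natural transversal fluctuation scale $m^{2/3}$ of a segment of diagonal length $m$. Introducing the anti-diagonal lines $\L_{2jm}$ for $j = 0,1,\ldots,K$, any up-right path from $\mathbf{0}$ to $\mathbf{n}$ contained in $S := \{|x-y| \leq \delta n^{2/3}\}$ crosses each $\L_{2jm}$ inside $S$. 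Decomposing at these crossings and bounding each segment by the unrestricted passage time between its endpoints yields
\[
T_n^\delta \leq \sum_{j=0}^{K-1} W_j, \qquad W_j := \max_{u\in \L_{2jm}\cap S,\; v\in \L_{2(j+1)m}\cap S} T_{u,v},
\]
and the $W_j$'s are mutually independent because they use disjoint regions of weights.

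The core technical work is to establish two moderate-deviation properties of each $W_j$. First, a stretched exponential upper tail
\[
\P\bigl(W_j \geq 4m + y m^{1/3}\bigr) \leq C_1 e^{-c_1 y^{3/2}}, \qquad y \geq 1,
\]
which I would derive from the one-point tail bound of \cite{LR10} via a union bound over the $O(m^{4/3})$ endpoint pairs (with a sharper line-to-line moderate deviation argument, in the spirit of \cite{BSS14, BGZ19}, needed for $y$ of order one to avoid a polynomial-in-$m$ factor). Second, and crucially, a mean estimate
\[
\E W_j \leq 4m - \mu\, m^{1/3}
\]
for some constant $\mu > 0$ provided $c_*$ is chosen small enough. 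The intuition is that when $c_*$ is small, the strip is much narrower than the segment's natural transversal scale, so each individual $T_{u,v}$ has mean reduced both by the parabolic depression $(\sqrt{m+r}+\sqrt{m-r})^2 - 4m \asymp -r^2/m$ and by the strictly negative Tracy--Widom mean correction, while the gain from maximizing over a sub-natural endpoint window is small and can be controlled; as $c_* \to 0$ one expects $\mu$ to approach the magnitude of the Tracy--Widom GUE mean. A quantitative version of this bound can be obtained by integrating the upper tail above together with the complementary lower tail $\P(T_{u,v}\leq 4m-y m^{1/3})\leq e^{-c y^3}$ from \cite{LR10}.

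Given these inputs, the proposition follows from a Chernoff bound. Writing $X_j := (W_j - 4m)/m^{1/3}$ and using $K m^{1/3} \asymp c_*^{2/3}\, \delta^{-1} n^{1/3}$, the event $\{T_n^\delta \geq 4n - (C/\delta) n^{1/3}\}$ is contained in $\{\sum_j X_j \geq -C' K\}$ for some $C'$ proportional to $C/c_*^{2/3}$. Since the upper tail makes $\E e^{\lambda X_j}$ finite in a neighborhood of $\lambda = 0$ and $\E X_j \leq -\mu$, for any $C' < \mu$ one can pick $\lambda > 0$ small with $\lambda C' + \log \E e^{\lambda X_j} \leq -c$, and Markov together with independence then give
\[
\P\Bigl(\sum_{j=0}^{K-1} X_j \geq -C' K\Bigr) \leq e^{-cK} = e^{-c\delta^{-3/2}}.
\]
The main obstacle is the quantitative mean bound $\E W_j \leq 4m - \mu m^{1/3}$: controlling the expected value of the maximum of the passage time over endpoint pairs in the strip, with explicit favorable constants, is subtler than the pointwise moderate deviation inputs, and it is here that one has to exploit the precise matching of the strip width and the KPZ scale on each segment. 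Once this is in hand, the diagonal decomposition, independence across segments, and Chernoff bound are essentially standard.
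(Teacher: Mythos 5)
Your block decomposition, negative-mean-per-block, Bernstein/Chernoff strategy is exactly the paper's argument (the paper parametrizes by taking the block length $A\delta^{3/2}n$ with $A$ large rather than the equivalent $c_*$ small), so the overall route is correct. Two technical points are not quite right as written, and both are things the paper explicitly handles. First, with $T_{u,v}$ in the paper's convention (both endpoints excluded), the decomposition $T_n^{\delta}\le\sum_j W_j$ fails by the weight of the interior crossing vertices, while with the standard convention (both endpoints included) adjacent $W_j$'s share the boundary line $\L_{2(j+1)m}$ and are therefore not independent; the paper resolves this by defining $Y_i$ using $\underline{T}$, which includes only the initial endpoint, giving the inequality and independence simultaneously. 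Second, your claimed derivation of $\E W_j\le 4m-\mu m^{1/3}$ by ``integrating the upper tail together with the complementary lower tail'' cannot work: the one-point upper tail only bounds $\E(W_j-4m)^{+}$ from above, and the one-point lower tail for $T_{u,v}$ bounds $\P(W_j\le 4m-ym^{1/3})$ from \emph{above}, which upper bounds $\E(W_j-4m)^{-}$ and is therefore in the wrong direction for producing a strictly negative mean. What is actually needed is the two-step comparison: $\E T_{\mathbf{0},\mathbf{m}}\le 4m-c\,m^{1/3}$ from the Tracy--Widom mean (\cite[Lemma A.4]{BGHH20}), plus a bound $\E(W_j-T_{\mathbf{0},\mathbf{m}})\le\epsilon(c_*)\,m^{1/3}$ with $\epsilon(c_*)\to 0$, and this second estimate is a genuine modulus-of-continuity/local-comparison statement for passage times as a function of the endpoint (the paper cites \cite[Lemma 4.1]{BGHH20} and \cite[Lemma 2.4]{BHS18}), not a consequence of one-point tails alone. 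You do correctly flag this as the main obstacle, but the suggested route to it does not close.
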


The idea behind Proposition \ref{mod5} is the following: one can approximate
$T_{n}^{\delta}$ by a sum of $\Theta(\delta^{-3/2})$ many i.i.d.\ variables,
each of
which roughly corresponds to the passage time across the two smaller sides of a
$\delta^{3/2}n\times \delta n^{2/3}$ rectangle. Owing to the negativity of
the mean of the GUE Tracy-Widom distribution, each of these variables have
mean $4\delta^{3/2}n-c'\delta^{1/2}n^{1/3}$ and have sub-exponential tails at
the scale $\delta^{1/2}n^{1/3}$. Once this is established, the proof of
Proposition \ref{mod5} is a simple application of a Bernstein type inequality
for i.i.d.\ sub-exponential variables. With Proposition \ref{mod5} at our disposal, completing the proof of the
 upper bound in Theorem \ref{t:sb} is easy by using the lower tail estimate
 for $T_n$. The argument proving Proposition \ref{mod5} has already been used
 in the literature several times, sometimes with sub-optimal tails (see e.g.\
 \cite{BG18,BGZ19,BHS18}, and also \cite[Proposition 4.2]{BGHH20}, where the
 optimal exponent was obtained in a more general setting), but its consequence
 for the upper bound of small ball probability had not been noted before as far as we are aware.
In the setting of Poissonian LPP, another relevant work is
\cite{DJP18}, where the mean, fluctuation and
 central limit behaviour is studied for an off-scale analogue of
 $T_n^\delta$, where the strip $\left\{|x-y|\leq \delta n^{2/3}\right\}$ in the
 definition of $T_n^\delta$ is replaced by
 the off-scale strip $\left\{|x-y|\leq n^{2/3-\epsilon}\right\}$. 

The proofs of Proposition \ref{mod5} and the upper bound in Theorem \ref{t:sb}
are provided in Section \ref{s:ub}. 
 
\subsubsection{Theorem \ref{t:sb}, lower bound}
This is the most technical part of our arguments and also the heart of new
technical achievements of this paper. Recall that we are trying to show that
on an event of probability at least $e^{-c\delta^{-3/2}}$, the geodesic
$\Gamma_{n}$ does not exit the strip $\left\{|x-y|\leq \delta
n^{2/3}\right\}$. As we only
require to prove this for sufficiently small $\delta$, we shall instead
consider the following reparametrization for notational convenience. We shall
show that there exists an absolute constant $M$ such that with probability at
least $e^{-c\delta^{-3/2}}$, the geodesic does not exit the the strip
$\left\{|x-y|\leq M \delta n^{2/3}\right\}$. 

The main idea is to construct two favourable events. The first one, called
$\mathbf{Inside}$, shall depend on the inside of the strip $\left\{|x-y|\leq
\delta n^{2/3}\right\}$ and shall ensure that:
\begin{enumerate}
\item[$\bullet$] $T_{n}^{\delta}\geq 4n+\frac{C}{\delta}n^{1/3}$.
\item[$\bullet$] The passage time for any points (not necessarily
	well-separated) inside the
	strip $\left\{|x-y|\leq \delta n^{2/3}\right\}$ is not too much smaller compared to its expectation.
\end{enumerate}
Both these conditions can be shown to hold with probability at least $e^{-c\delta^{-3/2}}$, and as they are both increasing events, the FKG inequality ensures that $\P(\mathbf{Inside})$ satisfies a desired probability lower bound.

The second event is a barrier event, called $\mathbf{Bar}$, which ensures that
there is a barrier straddling both of the longer
sides of the rectangle $\{0\leq x+y \leq 2n\} \cap \{|x-y|\leq \delta
n^{2/3}\}$ such that any path that spends a lot of time inside this barrier
region incurs a penalty. Though the event $\mathbf{Bar}$, as defined, will
depend on the entire complement of the above-mentioned rectangle, it
essentially puts constraints only in a region of width $O(\delta n^{2/3})$
around the rectangle. One can show
that the barrier event $\mathbf{Bar}$ holds with probability at least
$e^{-c\delta^{-3/2}}$, and since, by definition, this is independent of
$\mathbf{Inside}$, the intersection of the two favourable events have the desired probability lower bound of $e^{-c\delta^{-3/2}}$. 

The rest of the argument is to show that on these favourable events, one
indeed has that $\Gamma_{n}$ does not exit the strip $\left\{|x-y|\leq M
\delta n^{2/3}\right\}$, which is achieved by ruling out both short and long
excursions outside this strip. Indeed, if the geodesic has a short excursion
(i.e., the starting and ending point of the excursion is separated by
$O(\delta^{3/2} n)$ in the time direction) outside the strip $\left\{|x-y|\leq
\delta n^{2/3}\right\}$ during which it also exits the wider strip
$\left\{|x-y|\leq M \delta n^{2/3}\right\}$ this segment will then have a very
high transversal fluctuation which would make it uncompetitive with the best
path between the excursion endpoints restricted to be within the strip
$\left\{|x-y|\leq \delta n^{2/3}\right\}$. Long excursions are ruled out using the definition of the barrier event and the fact that the best path inside the strip is ensured to be longer than typical. 

A superficially similar scheme was adapted in \cite{BG18, BGZ19} to lower bound correlations between last passage times; however here we are faced with significant new technical challenges. Among other issues, the barrier event has to be suitably defined so that its probability can be appropriately lower bounded as a function of $\delta$, which requires the introduction of a number of new geometric ingredients. This is one of the primary new contributions in this work. 


\subsubsection{Theorem \ref{t:onepoint}, upper bound}
\label{s:onepointub}
This follows quite easily from existing results in the literature, with the
idea going back to \cite{BHS18}. In fact, the special case of $t=n$ and
$\delta n^{2/3}=1$ of Theorem \ref{t:onepoint} was alluded to in \cite[Remark
2.11]{BHS18} in connection with the so-called midpoint problem where it was
remarked that the probability that $\Gamma_{n}$ passes through
$\frac{\mathbf{n}}{2}$ is $O(n^{-2/3})$. An essentially complete sketch for
the upper bound was provided there, and we adapt same argument for our purposes.

Let us fix $t\in\llbracket \epsilon n, (2-\epsilon)n \rrbracket$ and without loss of
generality let us assume $t$ is even. We consider the points $u_{i}=(i\delta
n^{2/3},-i\delta n^{2/3})$ and $v_i=\mathbf{n}+u_{i}$ for $i\in \llbracket
-\frac{\delta^{-1}}{2}, \frac{\delta^{-1}}{2} \rrbracket$. Let $I_{0}$ denote
the line segment on $\L_{t}$ between the points
$(\frac{t}{2}+\frac{\delta}{2} n^{2/3},\frac{t}{2}-\frac{\delta}{2} n^{2/3})$
and $(\frac{t}{2}-\frac{\delta}{2} n^{2/3},\frac{t}{2}+\frac{\delta}{2} n^{2/3})$ and let $I_{i}=I_{0}+u_{i}$. Clearly by translation invariance, for each $i$,
$\P(|\Gamma_n(t)|\leq \delta n^{2/3})=p$ is equal to the probability $p_{i}$
that $\Gamma_{u_i,v_i}$ intersects $I_{i}$. Now clearly, $\sum_{i}
p_{i}=\delta^{-1}p$ is upper bounded by the expected number of distinct points at
which the geodesics $\Gamma_{u_i,v_i}$ can intersect the line $\L_{t}$. We shall show in
Lemma \ref{one-2.1}, following an argument in \cite{BHS18} that the
latter number is upper bounded by a constant independent of $\delta$, and
this would provide the required upper bound for $p$. 
This proof is completed in Section \ref{s:oneub}. 

\subsubsection{Theorem \ref{t:onepoint}, lower bound}
The idea here is similar to the upper bound, but requires several different ingredients. Using the same notations as
above, we need to show that $\sum p_{i}$ is bounded below away from $0$
independently of $\delta$. It suffices to show that with probability bounded
away from $0$ independently of $\delta$, there exists an $i$ such that the
geodesic $\Gamma_{u_i,v_{i}}$ intersects $I_{i}$. By planarity, and the
ordering of geodesics, it is enough to show the following:

\begin{proposition}
	\label{one0}
Let $a_1,a_2$ denote the
points $(-Mn^{2/3},Mn^{2/3})$ and $(Mn^{2/3},-Mn^{2/3})$
respectively and let $b_1=a_1+\mathbf{n}$ and $b_2=a_2+\mathbf{n}$. Given
$\epsilon\in(0,1)$ and $t\in \llbracket\epsilon n,(2-\epsilon)n\rrbracket$, there exists
	a constant $c>0$ and a large positive constant $M$ depending on $\epsilon$ such that for all
	$n>n_0(\epsilon)$, we have
	\begin{displaymath}
		\mathbb{P}\left( \left\{\Gamma_{a_1,b_1}(t)=
		\Gamma_{a_2,b_2}(t)\right\} \cap\left\{ |\Gamma_{a_1,b_1}(t)|\leq
		2Mn^{2/3}\right\} \right)\geq c.
	\end{displaymath}
\end{proposition}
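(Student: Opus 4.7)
The approach has two stages: first a planarity reduction converts the coincidence event into the existence of a common vertex of the two geodesics on $\L_t$; second, an explicit bottleneck event of positive probability forces both geodesics through a designated vertex near the diagonal.

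For the planarity reduction, I would invoke the standard non-crossing property of LPP geodesics. Since the endpoint pairs $(a_1,b_1)$ and $(a_2,b_2)$ are ordered in the anti-diagonal direction---$a_1$ has $x-y=-2Mn^{2/3}$, smaller than $x-y=2Mn^{2/3}$ at $a_2$, and $b_1$ sits similarly above $b_2$---one obtains $\Gamma_{a_1,b_1}(s)\leq \Gamma_{a_2,b_2}(s)$ for every $s\in\llbracket 0,2n\rrbracket$. Consequently, the event $\{\Gamma_{a_1,b_1}(t)=\Gamma_{a_2,b_2}(t)\}$ is equivalent to the two geodesics sharing a common lattice vertex on $\L_t$.

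For the bottleneck stage, I fix a target vertex $v_*\in\L_t$ near $(\lfloor t/2\rfloor,\lceil t/2\rceil)$ and construct a favorable event $\mathcal{E}$ as the intersection of a \emph{boost} event and a \emph{barrier} event. The boost requires each of the four passage times $T_{a_i,v_*}$ and $T_{v_*,b_i}$ ($i\in\{1,2\}$) to exceed its mean by at least $+Cn^{1/3}$, for a large constant $C=C(M,\epsilon)$; each sub-event is increasing in the weights of one of the four quadrants attached to $v_*$, so by FKG the intersection has probability bounded below by a positive constant depending on $C$, using quantitative upper-tail GUE Tracy--Widom estimates as in \cite{LR10}. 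The barrier, modeled directly on the $\mathbf{Bar}$ event of the lower bound of Theorem \ref{t:sb}, is supported on weights outside these quadrants and forbids any up-right path from $a_i$ to $b_j$ ($i,j\in\{1,2\}$) that avoids $v_*$ from attaining weight exceeding $\mu_{a_i,b_j}+C'n^{1/3}$, with $C'<2C-4M^2$; its probability is bounded below by an argument parallel to the one used for $\mathbf{Bar}$. Since boost and barrier depend on disjoint weights, $\P(\mathcal{E})\geq c(M,\epsilon)>0$ uniformly in $n$. On $\mathcal{E}$, the concatenated path $a_i\to v_*\to b_j$ has weight at least $\mu_{a_i,b_j}+(2C-4M^2)n^{1/3}+\omega_{v_*}$, using the correction $\mu_{a_i,v_*}+\mu_{v_*,b_j}=\mu_{a_i,b_j}-\Theta(M^2 n^{1/3})$ coming from the $(\sqrt{n_1}+\sqrt{n_2})^2$ formula for the mean of exponential LPP, and this beats every avoiding path whose weight is $\leq \mu_{a_i,b_j}+C'n^{1/3}$. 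Thus every $\Gamma_{a_i,b_j}$ passes through $v_*$, in particular $\Gamma_{a_1,b_1}$ and $\Gamma_{a_2,b_2}$, and since $|(v_*)_x-(v_*)_y|\leq 1\leq 2Mn^{2/3}$ the required coincidence is established.

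\textbf{Main obstacle.} The principal technical difficulty is engineering the barrier so that it controls all avoiding channels on both anti-diagonal sides of $v_*$ while keeping its weight support disjoint from the boost and maintaining probability bounded below uniformly in $n$. A related subtlety is that the boost at $v_*$ raises passage times through nearby vertices by shared-weight correlations, so a small additional argument (e.g.\ conditioning separately on the $O(1)$-neighborhood weights of $v_*$ to break ties) is needed to pin both geodesics to $v_*$ itself rather than a neighboring vertex. Both points mirror and extend the interplay between the $\mathbf{Inside}$ and $\mathbf{Bar}$ events constructed for Theorem \ref{t:sb}, and the constants $C,C'$ will have to be calibrated so that $2C-4M^2>C'$ is consistent with both sub-events holding with constant probability---this careful balancing is the heart of the quantitative argument.
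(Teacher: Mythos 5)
Your planarity reduction is correct: the ordering of $(a_1,b_1)$ and $(a_2,b_2)$ in the $\psi$-coordinate gives $\Gamma_{a_1,b_1}(s)\leq\Gamma_{a_2,b_2}(s)$ for all $s$, so the coincidence event on $\L_t$ is equivalent to the two geodesics sharing a vertex there.

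The bottleneck construction, however, has a fatal flaw that goes beyond the ``technical difficulty'' you acknowledge. Your event forces both $\Gamma_{a_1,b_1}$ and $\Gamma_{a_2,b_2}$ through a \emph{single fixed} vertex $v_*$ on $\L_t$. By planar ordering, $\Gamma_n$ is sandwiched between these two geodesics at every time $s$, so on your event one would also have $\Gamma_n(t)=\psi(v_*)$. If this happened with probability bounded below by a constant $c>0$ uniformly in $n$, it would directly contradict the upper bound of Theorem \ref{t:onepoint}, which gives $\P(\Gamma_n(t)=\psi(v_*))\leq\P(|\Gamma_n(t)-\psi(v_*)|< 2)=O(n^{-2/3})$ (after centering, by translation invariance). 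So no choice of boost and barrier can give what you want; the obstruction is not engineering but impossibility. Concretely, the breakdown is in your requirement that the barrier's weight-support be disjoint from the boost: the boost lives on the union of the four rectangles meeting at $v_*$, which nearly fills the relevant neighbourhood of $\L_t$, and the barrier must block the $\Theta(n^{2/3})$-many competing channels adjacent to $v_*$ on $\L_t$ --- each such channel is roughly as favourable as $v_*$ itself, so suppressing all of them has probability going to zero.

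The paper sidesteps this by not pinning down the coalescence to a predetermined point. It fixes two macroscopic rectangles $R_1$ (below $\L_t$) and $R_2$ (above $\L_t$), conditions on the restrictions $\gamma_1=\left.\Gamma_n\right|_{R_1}$ and $\gamma_2=\left.\Gamma_n\right|_{R_2}$ being ``typical'' (the set $\mathscr{I}$), and then constructs a \emph{conditional} barrier event $\mathcal{P}^{\gamma_1,\gamma_2}$, supported on $R_i\setminus\gamma_i$, which makes every path crossing $R_i$ disjoint from $\gamma_i$ lose $M^4 n^{1/3}$ in weight. An FKG argument under conditioning on the $\sigma$-algebra of weights outside these regions shows $\mathcal{P}^{\gamma_1,\gamma_2}$ retains probability $\geq\beta$ given the conditioning. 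On the resulting good event, both $\Gamma_{a_1,b_1}$ and $\Gamma_{a_2,b_2}$ are forced to intersect both $\gamma_1$ and $\gamma_2$, hence (by uniqueness) to coincide with $\Gamma_n$ on the entire interval between $\overline{R}_1$ and $\underline{R}_2$, which contains $\L_t$. The coalescence location is thus dictated by $\Gamma_n$ itself rather than chosen in advance, which is precisely what avoids the contradiction with the one-point upper bound.
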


The proof of Proposition \ref{one0} hinges on constructing favourable geometric
events which force the geodesics to coalesce. While the general scheme adapted to establish this is broadly similar to the one employed in \cite[Proposition 3.1]{BSS17B}, since we require a common point of the geodesics to be located in a restricted region in both space and
time (on the line segment $\L_{t}\cap \{|x-y|\leq 2M n^{2/3}\}$), stronger control on the geometry of the geodesics is required and there are new challenges that we need to overcome. We use Proposition \ref{one0} to complete the proof of the lower bound of Theorem \ref{t:onepoint} in Section \ref{s:onelb} and the proof of Proposition
\ref{one0} is provided in Section \ref{s:one0}.

\subsection*{Organization of the paper}
The rest of this paper is organised as follows. In Section \ref{s:prelim}, we
collect the basic inputs we use in this work including the one point moderate
deviation estimates and their consequences that have appeared in the literature.
In Sections \ref{s:ub} and \ref{s:lb}, we complete the proofs of the upper and lower bounds of Theorem \ref{t:sb} respectively. Section \ref{s:onept} contains the proof of Theorem \ref{t:onepoint}. We finish with a discussion of potential extensions in Section \ref{s:dis}. 

\subsection*{Acknowledgements}
RB is partially supported by a Ramanujan Fellowship (SB/S2/RJN-097/2017) from
the Science and Engineering Research Board, an ICTS-Simons Junior Faculty
Fellowship, DAE project no. 12-R\&D-TFR-5.10-1100 via ICTS, and the Infosys Foundation via the Infosys-Chandrasekharan Virtual Centre for Random Geometry of TIFR. MB acknowledges the support from the Long Term Visiting Students Program (LTVSP) at ICTS.

\section{Moderate deviation estimates and consequences}
\label{s:prelim}



In this section, we recall some of the fundamental estimates about exponential LPP and their consequences that have appeared in the literature. These include moderate deviation estimates for the passage times, and estimates on controlling passage times across parallelograms and transversal fluctuations of geodesics. We shall heavily rely on these estimates throughout this paper.

Before starting, we introduce some notation. For a path
$\gamma\colon u\rightarrow v$, where $u\leq v$ (i.e., $u$ is
coordinate-wise smaller than $v$), we use the notation
\begin{gather}
	\label{eqn:-3}
	\ell(\gamma)=\sum_{w\in \gamma \setminus \{u,v\}} \omega_w, \\
	\label{eqn:-2}
	\underline{\ell}(\gamma)=\sum_{w\in \gamma\setminus \{v\}} \omega_w.
\end{gather}
For a point $(x_1,y_1)\in
\mathbb{R}^2$, we will often use the change of co-ordinates
\begin{gather}
	\label{eqn:-1}
	\phi( (x_1,y_1))= x_1+y_1,\\
	\psi( (x_1,y_1)) = x_1-y_1.
\end{gather}
Keeping in line with the literature, $\phi(\cdot)$ and $\psi(\cdot)$ will be called the time coordinate and the space coordinate of a point respectively. For points $u,v\in \mathbb{Z}^2$ with
$u\leq v$, we use $T_{u,v}$ to denote the last passage time from $u$ to $v$,
calculated by using weights given by $\ell$, i.e.,
$$T_{u,v}=\max_{\gamma:u\to v} \ell(\gamma).$$
We shall also have brief occasions to use a variant of the above definition of
last passage time defined by replacing $\ell$ by $\underline{\ell}$ in the
above display: this will be denoted by $\underline{T}_{u,v}$. Clearly,
$\underline{T}_{u,v} \geq T_{u,v}$; in fact, we have that
$\underline{T}_{u,v}=T_{u,v}+\omega_u$, and this implies that
$\mathbb{E}\underline{T}_{u,v}=\mathbb{E}T_{u,v}+1$.
%
We shall use centered passage times; in general, we use a $\widetilde{\cdot}$ symbol over a variable to denote a centered (by its
mean) variable-- e.g.\
\begin{equation}
	\label{eqn:0.1}
	\widetilde{T}_{u,v}=T_{u,v}-\mathbb{E}T_{u,v}.
\end{equation}
We will use $\mathbb{L}_t$ to denote
the line $\{x+y=t\}\subseteq \mathbb{Z}^2$.
%
%


\subsection{One point moderate deviation estimates and passage times across
parallelograms}
As already mentioned, the correspondence between point to point passage times and the largest eigenvalue of LUE was obtained in \cite{Jo00}. The following sharp moderate deviation estimate for the latter has been obtained in \cite{LR10}.\footnote{The correspondence to LUE holds when the last passage time includes the weights of the endpoints. However, for $m,n$ large the contribution of  the endpoints is negligible and Proposition \ref{mod1} holds for our definition of $T_{\mathbf{0},(m,n)}$ (and also $\underline{T}_{\mathbf{0},(m,n)}$).}
%
%

\begin{proposition}[{\cite[Theorem 2]{LR10}}]
\label{mod1}
	For each $\eta>1$, there exist $C,c>0$ depending on $\eta$ such that
	for all $m,n$ sufficiently large with $\eta^{-1}<\frac{m}{n}<\eta$ and
	all $y>0$, we have the following:
	\begin{enumerate}
		\item[(i)] $\mathbb{P}(T_{\mathbf{0},(m,n)}-(\sqrt{m}+\sqrt{n})^2\geq yn^{1/3})\leq
			Ce^{-c\min\{y^{3/2},yn^{1/3}\}}$.
		\item[(ii)] $\mathbb{P}(T_{\mathbf{0},(m,n)}-(\sqrt{m}+\sqrt{n})^2\leq -yn^{1/3})\leq
			Ce^{-cy^3}$.
\end{enumerate}
\end{proposition}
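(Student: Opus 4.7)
My starting point would be Johansson's identity \cite{Jo00} realizing $T_{\mathbf{0},(m,n)}$ (with endpoint weights included) as the largest eigenvalue $\lambda_{\max}$ of a Laguerre Unitary Ensemble of rank $\min(m,n)+1$ with shape parameter $|m-n|$. I would then pass to the Dumitriu--Edelman tridiagonal model, in which $\lambda_{\max}$ is the top eigenvalue of an explicit symmetric tridiagonal matrix $M$ whose diagonal and sub-diagonal entries are independent and expressible through chi random variables (at $\beta=2$). Both tails would then be extracted from the variational identity $\lambda_{\max}(M)=\sup_{\|v\|=1}\langle v,Mv\rangle$, exploiting the sharp Gaussian and sub-exponential concentration properties of chi variables, together with the fact that the deterministic profile of the entries has a quadratic minimum of the deficit to the edge $(\sqrt m+\sqrt n)^2$ in a window of size $\Theta(n^{2/3})$ near the soft edge.

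\textbf{Upper tail.} For part $(i)$, I would use the Gershgorin-type bound $\lambda_{\max}(M)\leq \max_i(M_{ii}+M_{i-1,i}+M_{i,i+1})$, combined with the expansion of the expected row sums near the soft edge. Requiring a positive shift of $yn^{1/3}$ would force some chi variable in an edge window of width $\Theta(y^{1/2}n^{2/3})$ to fluctuate by $\Theta(y^{1/2})$; Gaussian concentration of chis together with a union bound on this window yields the factor $e^{-cy^{3/2}}$. When $y\asymp n^{2/3}$ the required fluctuation becomes comparable to the mean of a single chi, and its genuine sub-exponential tail takes over to produce the $e^{-cyn^{1/3}}$ regime; the minimum in the exponent merges the two.

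\textbf{Lower tail.} For part $(ii)$, I would use the reverse bound $\lambda_{\max}(M)\geq \langle v,Mv\rangle$ with a deterministic unit vector $v$ supported on an edge window of width $w=\Theta(y^{1/2}n^{2/3})$, chosen to approximate the Plancherel--Rotach soft-edge eigenfunction. The event $\{\lambda_{\max}\leq (\sqrt m+\sqrt n)^2-yn^{1/3}\}$ would then force a weighted sum of $\Theta(w)$ independent chi-type variables to be biased downward by a collective amount of order $yn^{1/3}$. A Bernstein--Laplace transform bound for sums of chis, after optimizing $w$ in $y$, produces the exponent $y^3=y^{3/2}\cdot y^{3/2}$, reflecting that one pays $y^{3/2}$ from the collective shift and $y^{3/2}$ from the window width.

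\textbf{Main obstacle.} The crux is to obtain the sharp exponents $3/2$ and $3$, matching the Tracy--Widom tail asymptotics, uniformly in $m,n$ under $\eta^{-1}<m/n<\eta$. This forces the edge window to scale with $y$ in precisely the right way, and in the lower tail requires choosing $v$ accurately enough that $\langle v,Mv\rangle$ captures the correct leading order and that the chi Laplace tilt is fully exploited, so that one does not degrade to a Gaussian $e^{-cy^2}$ bound. Handling the crossover at $y\asymp n^{2/3}$ in the upper tail, where the relevant chi tail transitions from Gaussian to sub-exponential, also requires a unified treatment of both regimes with constants tracked independently of $m,n$.
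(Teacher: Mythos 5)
The paper does not prove this proposition; it is quoted verbatim from Ledoux--Rider \cite{LR10}, and the surrounding text merely records that the bounds there are obtained via the tridiagonal form of LUE. Your outline correctly names the ingredients (Johansson's LUE correspondence, the Dumitriu--Edelman tridiagonal model, variational control of $\lambda_{\max}$), but both of the concrete mechanisms you propose would fail to yield the stated exponents. For the upper tail, the Gershgorin bound $\lambda_{\max}(M)\leq\max_i(M_{ii}+M_{i-1,i}+M_{i,i+1})$ is far too loose here. In the (properly $1/\beta$-normalized) Laguerre tridiagonal matrix the row sums near the soft edge have mean $\approx(\sqrt m+\sqrt n)^2-ck$ with slope $c=\Theta(1)$ in the index $k$, and standard deviation $\Theta(\sqrt n)$; hence $\max_k\text{RowSum}_k\approx(\sqrt m+\sqrt n)^2+\Theta(\sqrt{n\log n})$, which overshoots the edge by a factor $\gg n^{1/3}$. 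The resulting bound is vacuous for $y\lesssim n^{1/6}$, and for larger $y$ the Gaussian tail together with the union bound over $k$ produces only $\exp(-cy^2n^{-1/3})$, which is strictly weaker than $\exp(-cy^{3/2})$ throughout the moderate-deviation range $y\ll n^{2/3}$. Extracting the sharp $y^{3/2}$ requires exploiting precisely the coupling across entries that the max-row-sum bound throws away (e.g.\ moment/trace estimates on $\operatorname{Tr}M^p$, or a recursive representation of $\lambda_{\max}$ as used in \cite{LR10}).

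For the lower tail the test-vector idea is the right one, but your window scaling $w=\Theta(y^{1/2}n^{2/3})$ and the claimed ``quadratic minimum'' of the deterministic profile are both incorrect. The profile deficit is \emph{linear} in the tridiagonal index near the edge (it is the discretized Airy potential $x$, not a parabola), so with your choice of $w$ the deterministic penalty $\Theta(w)=\Theta(y^{1/2}n^{2/3})$ already exceeds the target shift $yn^{1/3}$ whenever $y\ll n^{2/3}$, making $\mathbb E\langle v,Mv\rangle$ fall below $(\sqrt m+\sqrt n)^2-yn^{1/3}$ and the probability bound trivial. The correct choice is $w\asymp yn^{1/3}$: then the deterministic deficit is $\Theta(yn^{1/3})$, $\operatorname{Var}\langle v,Mv\rangle=\Theta(n/w)=\Theta(n^{2/3}/y)$, and the required downward fluctuation is $\Theta(y^{3/2})$ standard deviations, giving the exponent $y^3$ after a Bernstein/Chernoff bound. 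So the plan matches the broad strategy in \cite{LR10}, but as written the upper-tail mechanism is wrong in kind and the lower-tail mechanism is wrong in its crucial quantitative choice.
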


Observe that for $m,n$ as above, Proposition \ref{mod1} implies that
\begin{equation}
\label{e:mean}
|\E T_{\mathbf{0},(m,n)}-(\sqrt{m}+\sqrt{n})^2|\leq Cn^{1/3}
\end{equation}
for some positive constant $C$ (depending only on $\eta$). A similar
statement holds for $\E \underline{T}_{\mathbf{0},(m,n)}$ simply because $\E
\underline{T}_{\mathbf{0},(m,n)}=\E T_{\mathbf{0},(m,n)}+1$.

%
%
Proposition \ref{mod1} can be used to control passage times across an on-scale
parallelogram (i.e., a parallelogram whose dimensions in the time and space directions are $n$ and $n^{2/3}$ respectively). Such estimates were first obtained in \cite{BSS14} in the context of Poissonian LPP, and the details for the exponential LPP was worked out in \cite{BGZ19}; we shall quote the latter source. We need to set up some further notation before stating the results. 
	
%
	We use $U_\delta ^n$ for the
rectangle which is defined by 
\begin{equation}
	\label{eqn:1.0.1}
	U_\delta^n=\left\{ -\delta n^{2/3}\leq \psi(u)\leq \delta n^{2/3} \right\}\cap
\left\{ 0\leq \phi(u)\leq 2n \right\}.
\end{equation}
In general, we suppress the dependence on $n$ and simply write
$U_\delta$ for $U^n_\delta$. We will still use the latter notation in case we
need to use the notation with some parameter other than $n$.  
We shall use notations
$U_{\delta}^{n,\mathtt{L}}$ and
$U_{\delta}^{n,\mathtt{R}}$ to denote the left and right line segments of $
U_{\delta}^n$ respectively.\footnote{The standard convention of rotating the picture by 45 degrees counter-clockwise so that time direction moves vertically upwards will often guide our choice of defining ``left" and ``right".} That is, we define
\begin{gather}
	U_{\delta}^{n,\mathtt{L}}=U_{\delta}^{n}\cap \left\{
	\psi(u)=-\delta n^{2/3},
	\right\}\nonumber\\
	\label{eqn:1.10.1}
	U_{\delta}^{n,\mathtt{R}}=U_{\delta}^{n}\cap \left\{
	\psi(u)=\delta n^{2/3}\right\}.
\end{gather}
To reduce clutter, we usually
abbreviate these to just
$U_{\delta}^{\mathtt{L}}$
and $U_{\delta}^{\mathtt{R}}$. Similarly, 
the two short sides of the
parallelogram $U_\delta^n$ are denoted by $\underline{U}_\delta^n$ and
$\overline{U}_\delta^n$ respectively. That is, we define
\begin{gather}
	\underline{U}_\delta^n= U_\delta^n \cap \mathbb{L}_0,\nonumber\\
	\label{eqn:1.10.2}
	\overline{U}_\delta^n= U_\delta^n \cap \mathbb{L}_{2n}. \nonumber
\end{gather}
These are similarly abbreviated to $\underline{U}_\delta$ and
$\overline{U}_\delta$ respectively. 

We will in general be quoting results
from \cite{BGZ19} for passage times across the parallelograms $U_{\Delta}$
for any fixed $\Delta>0$. These results are originally written for $\Delta=1$ but all the
proofs straightforwardly generalize for any $\Delta$ (see \cite[Lemma
C.3, Lemma C.15]{BGZ19}). Thus, we will
directly quote such results for general $\Delta$ and not comment further. The following result controlling the tails of the maximum and minimum passage
time from $\underline{U}_\Delta$ to $\overline{U}_\Delta$ for any fixed
$\Delta>0$ will be crucial for us. 

	\begin{proposition}[{\cite[Theorem 4.2]{BGZ19}}]
		\label{mod2}
		For any $\Delta>0$, there exist constants $C_1,C_2,c_1,c_2$
		depending on $\Delta$ such that for all $r,n$ large enough,
		we have
		\begin{enumerate}
			\item $\mathbb{P}\left( \sup_{u\in \underline{U}_\Delta,v\in
				\overline{U}_\Delta} \widetilde{T}_{u,v} \geq r
				n^{1/3}\right)\leq C_1e^{-c_1 \min\{r^{3/2},rn^{1/3}\}}$.
			\item $\mathbb{P}\left( \inf_{u\in \underline{U}_\Delta,v\in
				\overline{U}_\Delta} \widetilde{T}_{u,v} \leq -r
				n^{1/3}\right)\leq C_2e^{-c_2 r^{3}}$.
		\end{enumerate}
	\end{proposition}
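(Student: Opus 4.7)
The plan is to upgrade the one-point moderate deviation estimate of Proposition \ref{mod1} to uniform control over all endpoints on the short sides of $U_\Delta$, via a multi-scale (dyadic) chaining argument over pairs $(u,v) \in \underline{U}_\Delta \times \overline{U}_\Delta$, following the scheme developed for Poissonian LPP in \cite{BSS14} and implemented in the exponential setting in \cite{BGZ19}. A direct union bound is much too wasteful since the two short sides together contain $\Theta(\Delta^2 n^{4/3})$ pairs, so the main point is to exploit the strong correlation between passage times at nearby endpoints by bounding their differences in terms of auxiliary passage times across thin parallelograms that themselves fall under Proposition \ref{mod1}.

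The first step is to reduce to controlling a single ``base'' passage time plus an ``excess fluctuation''. Fix a reference pair $(u_\ast, v_\ast)$, say the midpoints of $\underline{U}_\Delta$ and $\overline{U}_\Delta$. By Proposition \ref{mod1} together with the mean estimate \eqref{e:mean}, $\widetilde{T}_{u_\ast, v_\ast}$ already satisfies the target upper tail $\exp(-c\min\{r^{3/2}, r n^{1/3}\})$ and lower tail $\exp(-cr^3)$. Consequently, after a union bound, it suffices to establish the same tails for the oscillation
\[
\mathcal{D} := \sup_{u \in \underline{U}_\Delta,\, v \in \overline{U}_\Delta} \bigl| \widetilde{T}_{u,v} - \widetilde{T}_{u_\ast, v_\ast} \bigr|.
\]

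For the chaining, for each $k \geq 0$ I would partition $\underline{U}_\Delta$ and $\overline{U}_\Delta$ into $2^k$ equal arcs and let $\mathcal{G}_k$ be the resulting set of $2^k$ grid points on each side, chosen so that $\mathcal{G}_k \subseteq \mathcal{G}_{k+1}$. For a pair $(u,v)$, telescope
\[
\widetilde{T}_{u,v} - \widetilde{T}_{u_\ast, v_\ast} = \sum_{k \geq 0} \bigl( \widetilde{T}_{u_{k+1}, v_{k+1}} - \widetilde{T}_{u_k, v_k} \bigr),
\]
where $(u_k, v_k) \in \mathcal{G}_k \times \mathcal{G}_k$ is the best dyadic approximant at scale $k$. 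Planar monotonicity of LPP bounds each individual increment by the fluctuation of a passage time across a thin parallelogram of transversal width $\sim 2^{-k}\Delta n^{2/3}$; Proposition \ref{mod1} then controls this increment at the appropriate KPZ scale after accounting for the mean via \eqref{e:mean}. Allocating thresholds $r_k$ that decay geometrically in $k$ but sum to at most $r$, and union-bounding over the $O(2^{2k})$ pairs at scale $k$, one arrives at the claimed tails for $\mathcal{D}$: $\exp(-c\min\{r^{3/2}, r n^{1/3}\})$ for part (i), and $\exp(-cr^3)$ for part (ii), where the stronger $r^3$ bound makes the summation in (ii) entirely painless.

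The main obstacle will be the increment step: relating $|T_{u_{k+1}, v_{k+1}} - T_{u_k, v_k}|$ to a centered passage time across an auxiliary thin parallelogram, with a mean correction at each scale that is controlled uniformly via \eqref{e:mean} and whose telescoping sum over $k$ remains of order $n^{1/3}$. This requires careful geometric bookkeeping---in particular, choosing the auxiliary parallelograms to be on-scale with transversal width matching the dyadic scale, so that Proposition \ref{mod1} yields the correct fluctuation exponent and the geometric sum of tail thresholds closes. Once this per-scale increment estimate is in place, both parts of the proposition follow from the dyadic summation described above.
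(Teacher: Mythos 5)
Note first that the paper does not actually prove this proposition --- it cites \cite[Theorem 4.2]{BGZ19}, and the only new material here is the paragraph immediately after the statement, which translates that reference from the convention $\underline{T}$ (include the starting weight) to the convention $T$ (exclude both endpoints), using $\underline{T}_{u,v}=T_{u,v}+\omega_u$ and \eqref{e:mean}. Your proposal, by contrast, attempts a self-contained chaining proof, which is indeed the strategy that underlies the cited result (and its Poissonian ancestor in \cite{BSS14}), so you are pointing at the right machinery even though the paper itself only quotes it.

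The substantive issue is that your sketch has a genuine gap at exactly the step you flag as the ``main obstacle,'' and the hint you offer --- ``planar monotonicity of LPP'' --- is not the right tool. The maps $u\mapsto T_{u,v}$ and $v\mapsto T_{u,v}$ are not monotone as $u,v$ range over the anti-diagonal segments $\underline{U}_\Delta$ and $\overline{U}_\Delta$, so there is no direct monotonicity comparison for the dyadic increments $T_{u_{k+1},v_{k+1}}-T_{u_k,v_k}$. The actual increment control in \cite{BSS14,BGZ19} is built on a different mechanism: one places auxiliary anchor points a short distance below $\mathbb{L}_0$ and above $\mathbb{L}_{2n}$, at the on-scale time separation $\sim(2^{-k}\Delta)^{3/2}n$ matching the spatial scale $2^{-k}\Delta n^{2/3}$, uses superadditivity of $\ell$ under concatenation to sandwich the varying passage times between quantities involving these anchors, and applies Proposition \ref{mod1} to the small LPPs near the endpoints; the mean corrections from \eqref{e:mean} must then be shown to telescope to $O(n^{1/3})$ across all scales, and the threshold allocation $r_k$ must beat the union bound over $O(2^{2k})$ pairs against the merely stretched-exponential upper tail. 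Your outline names this goal but does not carry it out, so as written it is a plan rather than a proof; the paper sidesteps all of it by citing the result and only doing the endpoint-convention bookkeeping.
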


{We would like to point out that there is another slight
discrepancy between Proposition \ref{mod2} stated as above and the
corresponding statement in \cite{BGZ19}, and the same is true for the other
results below quoted from the same source. Indeed, \cite{BGZ19} proves
Proposition \ref{mod2} with $T$ above replaced by $\underline{T}$. As we have pointed out above, the exclusion of one of the endpoints does not change the estimates. For the sake of completeness, we shall explain, just this once, how to get Proposition \ref{mod2} from the corresponding result in \cite{BGZ19}. We shall ignore this issue for the subsequent results quoted in this section with the understanding that similar minor adaptations can be made to work in each of the cases.} 

{As mentioned earlier, observe first that for any $u,v\in \Z^2$,
$\underline{T}_{u,v}=T_{u,v}+\omega_u$ and hence
$\mathbb{E}\underline{T}_{u,v}=\mathbb{E}T_{u,v}+1$. It therefore follows that $\sup_{u,v}
\widetilde{T}_{u,v} \leq \sup_{u,v} \widetilde{\underline{T}}_{u,v}+1$ and
item (1) of Proposition \ref{mod2} is immediate from the corresponding result
for $\underline{T}$. For item (2), let $U_*$ denote the line segment
$\{u:\phi(u)=1, |\psi(u)|\leq \Delta n^{2/3}+1\}$. By using \eqref{e:mean}, one has
that $\mathbb{E}\underline{T}_{u,v}, \mathbb{E}T_{u,v}\in
(4n-Cn^{1/3},4n+Cn^{1/3})$ for large enough $n$ and all $u\in
U_*\cup \underline{U}_\Delta,v\in \overline{U}_\Delta$. Clearly, this implies the
crude bound
\begin{displaymath}
	\inf_{u\in \underline{U}_\Delta,v\in \overline{U}_\Delta} \widetilde{T}_{u,v} \geq
	\inf_{u\in U_*,v\in \overline{U}_\Delta} \widetilde{\underline{T}}_{u,v}-2Cn^{1/3}
\end{displaymath} and applying \cite[Theorem 4.2]{BGZ19} to the RHS above immediately gives item (2).}

%
	
	We will also require a version of Proposition \ref{mod2} for passage times of highest weight paths restricted to be in some parallelogram. For any $u\leq v$ and a region $G\subseteq \mathbb{Z}^2$ satisfying $u,v\in G\cup \partial G$, we define the constrained passage time
	\begin{equation}
		\label{eqn:1}
		T_{u,v}^{G}=\sup_{\gamma:u\rightarrow v, \gamma \setminus \left\{
		u,v \right\} \subseteq  G}\ell(\gamma).
	\end{equation}
	
	For constrained last passage times, we define the centered version
	$\widetilde{T}_{u,v}^G=T_{u,v}^G-\mathbb{E}T_{u,v}$. Note that this
	notation is a slight deviation from \eqref{eqn:0.1}, the centering here is done with the mean of the unrestricted passage
	time $T_{u,v}$ instead of $T_{u,v}^G$. We similarly define
	$\underline{T}_{u,v}^G$ and
	$\widetilde{\underline{T}}_{u,v}^G=\underline{T}_{u,v}^G-\mathbb{E}\underline{T}_{u,v}$. Since we will be using the terms
	$T_{u,v}^{U_\delta^n}$ and $\widetilde{T}_{u,v}^{U_\delta^n}$ very often,
	to reduce notational clutter, we
	introduce the shorthand notations
	\begin{gather*}
		T_{u,v}^{\delta}= T_{u,v}^{U^n_\delta},\nonumber\\ 
		\widetilde{T}_{u,v}^{\delta}=\widetilde{T}_{u,v}^{U^n_\delta}.
	\end{gather*}
We need the following tail estimates for constrained passage times between well separated points in an $n\times n^{2/3}$ rectangle.

	\begin{proposition}[{\cite[Theorem 4.2]{BGZ19}}]
		\label{mod3}
		For any $\Delta>0$, there exist constants $C_1,C_2,c_1,c_2$
		depending on $\Delta$ such that for any $L>0$ and
		all $r,n$ large enough depending on $L$,
		we have
		\begin{enumerate}
			\item $\mathbb{P}\left( \sup_{u,v\in
				U_\Delta,\phi(v)-\phi(u)\geq \frac{n}{L}}
				\widetilde{T}_{u,v}^{\Delta} \geq r
				n^{1/3}\right)\leq C_1e^{-c_1 \min\{r^{3/2},rn^{1/3}\}}$.
			\item $\mathbb{P}\left( \inf_{u,v\in
				U_\Delta,\phi(v)-\phi(u)\geq \frac{n}{L}}
				\widetilde{T}_{u,v}^{\Delta} \leq -r
				n^{1/3}\right)\leq C_2e^{-c_2 r}$.
		\end{enumerate}
	\end{proposition}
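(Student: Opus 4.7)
The plan is to establish both items of Proposition \ref{mod3} by reducing them to Proposition \ref{mod2} via discretization and concatenation, with item (1) following essentially from pathwise monotonicity and item (2) requiring a more delicate concatenation across stacked sub-parallelograms.

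\textbf{Item (1) (upper tail).} I would exploit the obvious pathwise domination $T^{\Delta}_{u,v} \leq T_{u,v}$, which yields $\widetilde{T}^{\Delta}_{u,v} \leq \widetilde{T}_{u,v}$, so the supremum on the constrained side is bounded by the corresponding unconstrained supremum. Partition the time interval $[0,2n]$ into $O(L)$ disjoint sub-intervals of length $n/(3L)$. Any admissible pair $(u,v)$ with $\phi(v) - \phi(u) \geq n/L$ has its time-coordinates in two sub-intervals separated by at least one further sub-interval. For each of the $O(L^2)$ such pairs of sub-intervals, the corresponding endpoints lie on the short ends of a sub-parallelogram of $U_\Delta^n$ of dimensions $\sim n/L \times \Delta n^{2/3}$, and Proposition \ref{mod2}(1) applied to this enclosing sub-parallelogram gives a tail of the form $C e^{-c \min\{r^{3/2}, rn^{1/3}\}}$ on the associated supremum. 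A union bound over the $O_L(1)$ choices then yields item (1) after adjusting constants.

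\textbf{Item (2) (lower tail).} Here the constraint is active, so we cannot simply reduce to $T_{u,v}$. My strategy is concatenation along a fixed stack of sub-parallelograms. Fix a large integer $K = K(L, \Delta)$ and define $U^{(k)} := U_\Delta^n \cap \{2(k-1)n/K \leq \phi \leq 2kn/K\}$ for $k = 1, \ldots, K$. For any $(u,v)$ with $\phi(v) - \phi(u) \geq n/L$, choose indices $i^* \leq j^*$ with $\phi(u) \leq 2i^*n/K$ and $2j^*n/K \leq \phi(v)$; for $K \gg L$ we have $j^* - i^* \geq K/(2L)$. Any up-right path from $u$ to $v$ within $U_\Delta$ crosses each line $\overline{U}^{(k)} = \underline{U}^{(k+1)}$ in a unique point $\pi_k$, and by additivity of path weights together with the lower bound from a specific choice of such a path I obtain
\begin{equation*}
T^{\Delta}_{u,v} \;\geq\; T^{\Delta}_{u, \pi_{i^*}} \;+\; \sum_{k = i^*}^{j^* - 1} T^{U^{(k+1)}}_{\pi_k, \pi_{k+1}} \;+\; T^{\Delta}_{\pi_{j^*}, v}
\end{equation*}
for an appropriate choice of $\pi_k$'s. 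Each middle term is at least $\inf_{u' \in \underline{U}^{(k+1)}, v' \in \overline{U}^{(k+1)}} T^{U^{(k+1)}}_{u', v'}$, whose lower tail at scale $(n/K)^{1/3}$ is dominated by $Ce^{-cy^3}$: one drops the internal constraint by paying a transversal-fluctuation cost of $e^{-c\Delta^3 K^2}$ (which is absorbed by taking $K$ large in terms of $\Delta$) and then applies Proposition \ref{mod2}(2) to the sub-parallelogram. Comparing $\sum_k \mathbb{E}T^{U^{(k+1)}}_{\pi_k, \pi_{k+1}}$ with $\mathbb{E}T_{u,v}$ via \eqref{e:mean} (both $\approx 4n$ up to $O_L(n^{1/3})$ corrections) gives the lower tail on $T^{\Delta}_{u,v} - \mathbb{E}T_{u,v}$, and a union bound over the $O_L(1)$ sub-parallelograms followed by a discretization of $(u,v)$ based on coordinate-monotonicity extends the estimate uniformly over all admissible pairs.

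\textbf{Main obstacle.} The principal difficulty is handling the two edge terms $T^{\Delta}_{u, \pi_{i^*}}$ and $T^{\Delta}_{\pi_{j^*}, v}$, and more importantly, controlling the infimum over the continuous family of admissible $(u,v)$ pairs without incurring a polynomial-in-$n$ union-bound factor that would overwhelm the $e^{-cr}$ tail (much weaker than the pointwise $e^{-cr^3}$ one gets for a single pair). I would exploit monotonicity of passage times in the endpoints --- specifically, that $T^{\Delta}$ is decreasing in $u$ and increasing in $v$ with respect to the coordinate order --- to reduce the infimum over each small endpoint window to the infimum at a few extremal corner points, giving $O_L(1)$ applications of Proposition \ref{mod2}(2) for the edge terms. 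A secondary subtlety is that the centering in $\widetilde{T}^{\Delta}_{u,v}$ is done against $\mathbb{E}T_{u,v}$ rather than $\mathbb{E}T^{\Delta}_{u,v}$; by \eqref{e:mean} the difference is $O_L(n^{1/3})$ and is absorbed into the threshold $rn^{1/3}$ provided $r \geq r_0(L, \Delta)$, consistent with the ``$r$ large enough'' hypothesis of the proposition.
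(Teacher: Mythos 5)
The paper does not prove Proposition \ref{mod3}; it is quoted verbatim from \cite[Theorem 4.2]{BGZ19}, and the only discussion surrounding it concerns the cosmetic replacement of $\underline{T}$ by $T$. So there is no ``paper's proof'' to compare against, and the question is whether your reconstruction from Proposition \ref{mod2} is sound. It captures the correct coarse flavor (discretization into sub-parallelograms, concatenation, monotonicity), but there is a genuine gap in how interior endpoints are handled, and this is precisely the part that \cite{BGZ19} (following \cite{BSS14}) treats by a multi-scale chaining argument rather than a single partition.

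Concretely, in item (1), after the reduction $\widetilde{T}^\Delta_{u,v}\le\widetilde{T}_{u,v}$ you partition $[0,2n]$ into $O(L)$ time-slabs and assert that for $(u,v)$ with $\phi(u)\in I_i$, $\phi(v)\in I_j$ ``the corresponding endpoints lie on the short ends of a sub-parallelogram.'' They do not: $u$ and $v$ are arbitrary points of the slabs $U_\Delta\cap\{\phi\in I_i\}$ and $U_\Delta\cap\{\phi\in I_j\}$, not points of $\mathbb{L}_{t_i}$ or $\mathbb{L}_{t_{j+1}}$, so Proposition \ref{mod2}(1) does not apply as stated. The monotonicity repair you invoke in your ``main obstacle'' paragraph — project $u$ backward to the slab boundary to raise the passage time — does give $T_{u,v}\le T_{u',v'}$, but it simultaneously shifts $\E T_{u,v}$ by an amount of order $n/L$, which for fixed $L$ dwarfs the $rn^{1/3}$ scale of interest; the \emph{centered} quantity $\widetilde{T}_{u,v}$ is therefore not monotone in the endpoints. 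Moreover, the minimal elements of a time-slab with respect to the coordinate partial order form the entire line segment $\mathbb{L}_{t_i}\cap U_\Delta$, which has $\Theta(n^{2/3})$ lattice points, not ``a few extremal corner points.'' The same two objections apply to the edge terms $T^\Delta_{u,\pi_{i^*}}$ and $T^\Delta_{\pi_{j^*},v}$ in your item (2): the infimum over a continuum of endpoints against an $e^{-cr}$ tail is precisely the crux, and a single-scale discretization at resolution $n/L$ cannot close it, because either the centering drifts by $\Theta(n/L)$ or the union bound incurs a polynomial-in-$n$ factor. What is missing is the multi-scale chaining (controlling increments $|T_{u,v}-T_{u',v'}|$ and the centering drift simultaneously across geometrically shrinking scales), and this chaining is the technical heart of \cite[Theorem 4.2]{BGZ19} that your sketch cannot substitute for.
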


	In Proposition \ref{mod2}, we allowed the two points $u,v$ to vary on the
	shorter sides of the parallelogram $U_\Delta$. We now state an
	analogous result from \cite{BGZ19} where the points $u,v$ vary on the long sides of
	$U_\Delta$ (i.e., $U_\Delta^\mathtt{L}$ or $U_\Delta^\mathtt{R}$), and are thus allowed to be arbitrarily close to each other in
	the time direction.
	\begin{proposition}[{\cite[Lemma C.16]{BGZ19}}]
		\label{mod3.1}
		For any $\Delta>0$, there exist constants $C,c$ depending on
		$\Delta$ such
		that for all $r,n$ large enough, we have
		\begin{displaymath}
			\mathbb{P}\left( \inf_{u,v\in U_\Delta^\mathtt{L},\phi(u)\leq
			\phi(v)} \widetilde{T}_{u,v}^{\Delta} \leq -r
				n^{1/3}\right)\leq Ce^{-c r}.
		\end{displaymath}
		The same holds if $U_\Delta^\mathtt{L}$ is replaced by
		$U_\Delta^\mathtt{R}$.
	\end{proposition}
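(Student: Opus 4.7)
Plan: The strategy is to decompose the infimum over $(u,v) \in U_\Delta^{\mathtt{L}}$ based on the time separation $k := \phi(v) - \phi(u)$, treating three regimes separately. When $k < rn^{1/3}/4$, I note that $T_{u,v}^{\Delta} \geq 0$ while \eqref{e:mean} gives $\mathbb{E} T_{u,v} \leq 2k + O(k^{1/3})$, so $\widetilde{T}_{u,v}^{\Delta} \geq -\mathbb{E} T_{u,v} > -rn^{1/3}$ for $n$ sufficiently large and the event in question is vacuous. When $k \geq n/L$ for a fixed constant $L$ (to be chosen), Proposition \ref{mod3}(2) applies directly and yields the bound $Ce^{-cr}$ on the infimum over such pairs.

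The bulk of the work concerns the intermediate regime $rn^{1/3}/4 \leq k < n/L$, where Proposition \ref{mod3} does not apply. The plan is a dyadic decomposition: at each dyadic scale $k_0 = 2^j$ in this range, cover the portion of $U_\Delta^{\mathtt{L}}$ relevant for pairs $(u,v)$ with separation in $[k_0, 2k_0]$ by $O(n/k_0)$ overlapping sub-parallelograms, each of time-extent of order $k_0$ and space-extent of order $\Delta k_0^{2/3}$ (which fits inside $U_\Delta$ because $k_0 \ll n$). Every such pair lies in at least one sub-parallelogram, and a scaled version of Proposition \ref{mod3} applied at scale $k_0$ within each sub-parallelogram controls the infimum of $\widetilde{T}^{\Delta}_{u,v}$ over pairs inside; since a deviation of $rn^{1/3}$ corresponds to $r(n/k_0)^{1/3}$ on the natural scale $k_0^{1/3}$, each sub-parallelogram contributes a bound $Ce^{-cr(n/k_0)^{1/3}}$. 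A union bound over the $O(n/k_0)$ sub-parallelograms yields a per-scale bound of $C(n/k_0) e^{-cr(n/k_0)^{1/3}}$.

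The main obstacle is ensuring the sum over dyadic scales remains bounded by $Ce^{-cr}$ uniformly in $n$. Writing $s = (n/k_0)^{1/3}$, the per-scale bound takes the form $Cs^3 e^{-crs}$; the ratio of successive dyadic terms is $2\exp\!\bigl(-crL^{1/3}\cdot 2^{j/3}(2^{1/3}-1)\bigr)$, which is uniformly less than $1/2$ across all $j$ once $r$ exceeds a threshold depending on $L$. The geometric sum is then dominated by the $j = 0$ term corresponding to $k_0 = n/L$, contributing $O\!\bigl(L e^{-crL^{1/3}}\bigr)$; choosing $L$ sufficiently large depending on $\Delta$, and correspondingly $r$ sufficiently large depending on $L$ and $\Delta$, ensures that this is bounded by $Ce^{-cr}$ independently of $n$. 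Combining the three regimes completes the proof.
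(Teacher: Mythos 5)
The paper itself gives no proof of Proposition \ref{mod3.1}; it is quoted directly from \cite[Lemma C.16]{BGZ19}. So there is no in-paper argument to compare against, only the external reference. Your three-regime decomposition by time separation $k = \phi(v) - \phi(u)$ is a sound and natural route to establish the result from Proposition \ref{mod3} alone, and the mechanics check out: the regime $k < rn^{1/3}/4$ is indeed vacuous because $T_{u,v}^{\Delta}$ is a sum of nonnegative weights, while $\mathbb{E} T_{u,v}$ is comfortably below $rn^{1/3}$; the regime $k \geq n/L$ is exactly the hypothesis of Proposition \ref{mod3}(2); and the dyadic covering of the intermediate range by on-scale sub-parallelograms, each contained in $U_\Delta^n$ so that $T_{u,v}^{\Delta}$ dominates the constrained passage time within the sub-parallelogram, yields per-scale bounds $(n/k_0)\,e^{-cr(n/k_0)^{1/3}}$ whose dyadic sum is geometric (ratio $< 1/2$ once $r$ exceeds a threshold depending on $L$ and $\Delta$) and is dominated by the coarsest scale $k_0 \asymp n/L$, giving a total of order $L e^{-crL^{1/3}} \leq Ce^{-cr}$. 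Two small precision points worth fixing in a full write-up: for $k$ below the scale where \eqref{e:mean} applies (i.e., the aspect-ratio hypothesis of Proposition \ref{mod1} fails), you should instead invoke the trivial bound $\mathbb{E}T_{u,v} = O(k+1)$, which gives the same conclusion; and the claim that $L$ must be taken ``sufficiently large depending on $\Delta$'' is stronger than needed --- any fixed $L > 1$ works, since $L e^{-crL^{1/3}} \le L e^{-cr}$ and the constant $L$ can simply be absorbed into $C$. Neither affects correctness.
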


\subsection{Transversal fluctuation estimates}
For both the small ball and the one point estimates, we will require strong estimates on the upper tail of the transversal
fluctuation of the point-to-point geodesic. The following result from \cite{BGZ19} states that
that paths from $\mathbf{0}$ to $\mathbf{n}$ with a transversal fluctuation larger than
$Mn^{2/3}$ incur a loss of order $M^2 n^{1/3}$ in weight with large probability.

\begin{proposition}[{\cite[Proposition 4.7]{BGZ19}}]
	\label{mod4}
	There exist constants $\xi,c_1>0$ such that for all
	$M$ sufficiently large, and all $n$ sufficiently large, the event
	(denoted by $\mathcal{G}$) that there exists a path $\gamma$ from $\mathbf{0}$ to
	$\mathbf{n}$ satisfying $\gamma\nsubseteq U_M$ and $\ell(\gamma)\geq 4n -
	\xi M^2 n^{1/3}$ satisfies 
	\begin{displaymath}
		\mathbb{P}\left( \mathcal{G} \right)\leq e^{-c_1 M^3}.
	\end{displaymath}
\end{proposition}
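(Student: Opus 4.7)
The approach is path-splitting followed by a dyadic union bound. Since both endpoints lie in $U_M$, any path $\gamma \nsubseteq U_M$ from $\mathbf{0}$ to $\mathbf{n}$ must cross one of the long sides $U_M^\mathtt{L}$ or $U_M^\mathtt{R}$; by the reflection symmetry $(x,y) \mapsto (y,x)$ I focus on the case where $\gamma$ intersects $U_M^\mathtt{R}$ at some vertex $w$ and multiply the final bound by $2$. For such $w$, decomposing $\gamma$ at $w$ gives $\ell(\gamma) \leq T_{\mathbf{0},w} + T_{w,\mathbf{n}} + \omega_w$, so on $\mathcal{G}$ there exists $w \in U_M^\mathtt{R}$ with $T_{\mathbf{0},w} + T_{w,\mathbf{n}} + \omega_w \geq 4n - \xi M^2 n^{1/3}$.

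The key geometric input is a parabolic mean deficit. For $w \in U_M^\mathtt{R}$ with $\phi(w) = s \in [C_0 M n^{2/3}, 2n - C_0 M n^{2/3}]$, a Taylor expansion of $(\sqrt{x}+\sqrt{y})^2$ combined with \eqref{e:mean} yields
\begin{displaymath}
\E T_{\mathbf{0},w} + \E T_{w,\mathbf{n}} \leq 4n - c_0 M^2 n^{4/3} \cdot \frac{n}{s(2n-s)} + O(n^{1/3}),
\end{displaymath}
a deficit of at least $c_0 M^2 n^{1/3}$ in the bulk $s \sim n$ and blowing up as $s$ approaches $0$ or $2n$. Choosing $\xi < c_0/4$ and $M$ large enough to absorb the $O(n^{1/3})$ correction (and the contribution of $\omega_w$, which is negligible), the event forces $\widetilde{T}_{\mathbf{0},w} + \widetilde{T}_{w,\mathbf{n}} \geq \tfrac{c_0}{2} M^2 n^{4/3} \cdot n/(s(2n-s))$ for some $w \in U_M^\mathtt{R}$.

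I then control this supremum by a dyadic decomposition of $U_M^\mathtt{R}$ into shells $P_k := \{w: 2^k \leq \min(\phi(w), 2n-\phi(w)) < 2^{k+1}\}$ ranging over $k_0 \leq k \leq \lceil \log_2 n \rceil$ with $2^{k_0} \asymp M n^{2/3}$. In $P_k$ the natural fluctuation scale of $T_{\mathbf{0},w}$ (and of $T_{w,\mathbf{n}}$ near the nearer endpoint) is $(2^k)^{1/3}$, and the required deficit expressed in these units is $r \asymp M^2 (n/2^k)^{4/3}$. Partitioning $P_k$ into $O((2^k)^{1/3})$ on-scale sub-segments of $\phi$-length $(2^k)^{2/3}$ and applying a supremum variant of Proposition \ref{mod2} on each (with one endpoint fixed at $\mathbf{0}$ or $\mathbf{n}$ and the other varying in the sub-segment, absorbing the off-scale factor $\Delta = M(n/2^k)^{2/3}$ into the $\Delta$-dependent constants, which a careful inspection of the proof of Proposition \ref{mod2} shows grow only polynomially in $\Delta$) gives a per-sub-segment bound of $\exp(-cM^3 (n/2^k)^2)$. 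Writing $K = n/2^k$, union bounding over sub-segments within each $P_k$ and summing over dyadic scales produces
\begin{displaymath}
\sum_{K \geq 1} \mathrm{poly}(MK) \, \exp(-cM^3 K^2),
\end{displaymath}
which is super-exponentially dominated by its $K = 1$ term and hence bounded by $e^{-c_1 M^3}$ for $M$ large. The near-endpoint regime $s < C_0 M n^{2/3}$ is handled separately: here $\E T_{\mathbf{0},w} = O(Mn^{2/3})$ gives a mean deficit of order $Mn^{2/3} \gg \xi M^2 n^{1/3}$, and a direct union bound using the one-point upper tail of Proposition \ref{mod1} over $O(Mn^{2/3})$ lattice points produces a bound much smaller than $e^{-cM^3}$.

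The principal technical obstacle is verifying that the polynomial-in-$K$ overcounting from the dyadic union bound is dominated by the exponent. This works precisely because the required deficit scales as $K^{4/3}$ in on-scale units while the number of on-scale sub-segments at scale $K$ grows only polynomially in $K$, so the Gaussian-type exponent $M^3 K^2$ wins decisively; the remaining work is bookkeeping of constants in the off-scale supremum variant of Proposition \ref{mod2}.
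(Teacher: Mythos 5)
The paper does not prove Proposition \ref{mod4}; it is quoted verbatim from \cite[Proposition 4.7]{BGZ19}, so there is no in-paper proof to compare against. Evaluating your proposal on its own merits: the overall architecture (decompose at a crossing point $w$, invoke the parabolic mean deficit, organize a union bound over dyadic scales in $\phi(w)$) is the right shape and is the same shape as the chaining argument that underlies \cite{BSS14} and \cite{BGZ19}. The parabolic-deficit computation, the identification of the bulk scale $K\asymp 1$ as the dominant contribution, and the observation that the near-endpoint regime needs separate treatment are all correct.

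However, there is a concrete gap in the union-bound accounting. At scale $2^k$ you partition $P_k$ into $O((2^k)^{1/3})$ sub-segments. Writing $K=n/2^k$, this is $O((n/K)^{1/3})$ sub-segments, which is polynomial in $n$, not in $MK$. Summing the per-sub-segment bound $\exp(-cM^3K^2)$ over sub-segments and then over dyadic $K$ gives
\begin{displaymath}
\sum_{K\ \text{dyadic}} (n/K)^{1/3}\exp(-cM^3K^2)\asymp n^{1/3}e^{-cM^3},
\end{displaymath}
which carries a spurious $n^{1/3}$ and therefore does \emph{not} yield the desired $n$-uniform bound $e^{-c_1M^3}$. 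The line ``$\sum_{K\geq1}\mathrm{poly}(MK)\exp(-cM^3K^2)$'' in your write-up silently replaces $(n/K)^{1/3}$ by a quantity independent of $n$. The fix is not to subdivide $P_k$ into $(2^k)^{1/3}$ pieces: rather, treat the whole shell $P_k$ (of $\phi$-extent $\asymp 2^k$) as a single on-scale unit at time scale $2^k$ with effective aspect ratio $\Delta_k\asymp MK^{2/3}$, so that there are only $O(\log n)$ shells to union-bound over, with per-shell cost $\mathrm{poly}(\Delta_k)\exp(-cM^3K^2)=\mathrm{poly}(MK)\exp(-cM^3K^2)$, which is then summable to $e^{-c_1M^3}$. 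This, however, makes your parenthetical remark that the $\Delta$-dependent constants in Proposition \ref{mod2} are only polynomially bad in $\Delta$ load-bearing rather than cosmetic: as stated, that proposition only asserts unquantified $\Delta$-dependence, so you would need to actually establish the polynomial dependence (plausible, but not a footnote). There is also a small secondary gap at scales $s\leq C_0Mn^{2/3}$, where the aspect ratio $m_w/n_w$ of the rectangle from $\mathbf{0}$ to $w$ is unbounded and Proposition \ref{mod1} (which requires $\eta^{-1}<m/n<\eta$) does not apply directly; your proposed crude one-point union bound there is in the right spirit, but needs a replacement tail estimate valid for highly skewed rectangles rather than an appeal to Proposition \ref{mod1}.
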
 

\subsection{Lower Bounds}
To obtain the lower bounds in Theorems \ref{t:sb} and Theorem
\ref{t:onepoint}, one needs to show that the probability of certain unlikely
events are nonetheless uniformly bounded away from $0$. We need two results:
one for the upper tail and one for the lower tail. The first result, which is
a strengthening of \cite[Lemma 4.9]{BGZ19}, shows that with probability bounded
away from $0$, last passage times (and constrained last passage times) can be
arbitrarily larger than typical at the fluctuation scale. Although it is a rather straightforward consequence of \cite[Lemma 4.9]{BGZ19}, we believe that it can be potentially useful in other settings and hence state the following lemma separately. 
%
%
	
%
%

\begin{lemma}
	\label{4.12+}
	For any $\Delta>0$, there exist constants $C,c > 0$ depending on
	$\Delta$ such that for
	every $x > 0$, we have
for all sufficiently large $n$ (depending on $x$)
\begin{displaymath}
	\mathbb{P}\left( \inf_{u\in\underline{U}_\Delta,v\in
			\overline{U}_\Delta}T_{u,v}^{\Delta}\geq 4n+xn^{1/3}
	\right)\geq Ce^{-cx^{3/2}}.
\end{displaymath}
\end{lemma}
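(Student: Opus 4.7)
My plan is to upgrade \cite[Lemma 4.9]{BGZ19} from a fixed pair of endpoints to the infimum over the families $\underline{U}_\Delta$ and $\overline{U}_\Delta$. Recall that \cite[Lemma 4.9]{BGZ19} provides, for each fixed $\Delta > 0$, an estimate of the form $\mathbb{P}(T_{\mathbf{0}, \mathbf{n}}^\Delta \geq 4n + x n^{1/3}) \geq C e^{-c x^{3/2}}$. By the translation invariance of the model and the mean control \eqref{e:mean}, the same bound (with constants depending only on $\Delta$) holds with $(\mathbf{0}, \mathbf{n})$ replaced by any fixed pair $(u,v) \in \underline{U}_\Delta \times \overline{U}_\Delta$, since across this range the mean $\mathbb{E} T_{u,v}$ only varies by $O_\Delta(n^{1/3})$; any such shift can be absorbed by enlarging the exponent argument from $x$ to $x + O_\Delta(1)$, which affects $C,c$ only by constants depending on $\Delta$.

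To pass from a single pair to the infimum, I will reduce the matter to the simultaneous occurrence of two \emph{extreme crossing} single-pair events. Concretely, letting $u_L, u_R$ denote the two endpoints of $\underline{U}_\Delta$ and $v_L, v_R$ those of $\overline{U}_\Delta$, consider the events $E_1 = \{T_{u_L, v_R}^\Delta \geq 4n + (x+K)n^{1/3}\}$ and $E_2 = \{T_{u_R, v_L}^\Delta \geq 4n + (x+K)n^{1/3}\}$ for some constant $K = K(\Delta)$ to be chosen. Both events are increasing in the weights, and by the preceding paragraph each has probability at least $C e^{-c x^{3/2}}$. FKG then gives $\mathbb{P}(E_1 \cap E_2) \geq C^2 e^{-2c x^{3/2}}$, which is of the desired form after adjusting constants.

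It remains to establish the domination claim: on $E_1 \cap E_2$, every constrained passage time $T_{u,v}^\Delta$ with $u \in \underline{U}_\Delta$ and $v \in \overline{U}_\Delta$ is at least $4n + x n^{1/3}$. I anticipate proving this via planar path-patching: since the constrained geodesics $\Gamma_{u_L, v_R}^\Delta$ and $\Gamma_{u_R, v_L}^\Delta$ must cross at some point $z \in U_\Delta$, for any intermediate $(u,v)$ one can build an up-right path from $u$ to $v$ inside $U_\Delta$ by (i) a short vertical connector from $u$ up to a lattice point $z_1 \geq u$ lying on $\Gamma_{u_L, v_R}^\Delta$, (ii) the portion of $\Gamma_{u_L, v_R}^\Delta$ from $z_1$ to $z$, (iii) the portion of $\Gamma_{u_R, v_L}^\Delta$ from $z$ to an appropriate point $z_2 \leq v$, and (iv) a short connector from $z_2$ up to $v$. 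The weights lost through the connectors can be controlled by applying Proposition \ref{mod2} to two on-scale sub-parallelograms at the top and bottom of $U_\Delta$ chosen so that the connectors live inside them. The main obstacle I foresee is the rigorous execution of this patching, and in particular verifying that the connector weights are at least their means minus $O_\Delta(n^{1/3})$ on a sufficiently high-probability increasing event that can be combined with $E_1 \cap E_2$ through FKG. If this turns out to be delicate, I will instead use the geodesic from $\mathbf{0}$ to $\mathbf{n}$ in the enlarged region $U_{2\Delta}$ as a single backbone (after verifying by Proposition \ref{mod4} that with good probability it lies inside $U_\Delta$), absorbing the additional constants into $C$ and $c$.
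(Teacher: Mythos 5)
Your diagnosis that the patching step is the bottleneck is correct, and unfortunately as written it contains a genuine gap that an FKG argument cannot close. The event $E_1 \cap E_2$ only controls the \emph{total} weights $\ell(\gamma_1)$ and $\ell(\gamma_2)$ of the two extreme constrained geodesics. The patched path from $u$ to $v$ uses only the middle portions $\gamma_1[z_1,z]$ and $\gamma_2[z,z_2]$, and to lower bound these you must subtract off the weights of the discarded end-segments $\gamma_1[u_L,z_1]$, $\gamma_1[z,v_R]$, $\gamma_2[u_R,z]$, $\gamma_2[z_2,v_L]$. Upper bounds on these discarded weights are \emph{decreasing} events, so they cannot be intersected with the increasing events $E_1\cap E_2$ through FKG; one would instead have to argue via a union bound $\P(E_1\cap E_2) - \P(\text{bad})$, which then forces the thresholds in the upper bound events (and hence the constant $K$) to grow with $x$, and this bookkeeping is compounded by the fact that the crossing point $z$ is \emph{random}, so you cannot apply Proposition \ref{mod2} to a fixed sub-parallelogram containing $\gamma_1[z,v_R]$. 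The fallback with a single backbone geodesic has the same problem, plus the separate issue that an up-right connector from an arbitrary $v$ to the geodesic near time $2n$ need not exist when $\psi(v)$ is smaller than the geodesic's $\psi$-coordinate there.

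The paper avoids all of this by abandoning the idea of reusing a random geodesic as a backbone and instead fixing two \emph{deterministic} waypoints $\mathbf{n}/4$ and $3\mathbf{n}/4$. Since the endpoint weights are excluded in the definition of $\ell$, one has the clean superadditivity
\[
T_{u,v}^{\Delta}\;\geq\; T_{u,\mathbf{n}/4}^{\Delta}+T_{\mathbf{n}/4,\,3\mathbf{n}/4}^{\Delta}+T_{3\mathbf{n}/4,\,v}^{\Delta}
\qquad\text{for all }u\in\underline{U}_\Delta,\; v\in\overline{U}_\Delta,
\]
so one simply takes $A_2=\{T_{\mathbf{n}/4,3\mathbf{n}/4}^\Delta \geq 2n+2xn^{1/3}\}$ (which carries the entire $e^{-cx^{3/2}}$ cost, by \cite[Lemma 4.9]{BGZ19}) together with the high-probability events $A_1,A_3$ that the two end-segments are not much below their means \emph{uniformly} over $u\in\underline{U}_\Delta$, $v\in\overline{U}_\Delta$ (via Proposition \ref{mod3}). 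Because the three events depend on weights in disjoint time-slabs, they are independent and even FKG is unnecessary. The lesson is that by injecting the gain into a fixed middle interval, you never need to dissect a random geodesic, so all the events you need are increasing (indeed, independent), and the domination is a one-line concatenation.
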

\begin{proof}
	We will define three independent events $A_1,A_2,A_3$ such that the
	event in question is a sub-event of $A_1 \cap A_2 \cap A_3$ and
	$\mathbb{P}(A_1\cap A_2\cap A_3)\geq Ce^{-cx^{3/2}}$. Define
	\begin{gather*}
		A_1=\left\{  \inf_{u\in\underline{U}_\Delta}T^{\Delta}_{u,\mathbf{n}/4} \geq 
	n -\frac{x}{2} n^{1/3}\right\}, \\ 
	A_2 = \left\{  T_{\mathbf{n}/4,3\mathbf{n}/4}^{\Delta}\geq 2n+2xn^{1/3}
	\right\}, \\ 
	A_3=\left\{  \inf_{v\in\overline{U}_\Delta}T^{\Delta}_{3\mathbf{n}/4,v} \geq 
	n -\frac{x}{2} n^{1/3}\right\}.
	\end{gather*}
	The independence of $A_1,A_2,A_3$ is clear by definition. Also, by
	\cite[Lemma 4.9]{BGZ19}
	we have $\mathbb{P}(A_2)\geq C_1e^{-c_1x^{3/2}}$ for
	$n$ large enough depending on $x$. Observe now that by \eqref{e:mean}, we
	have that there exists a large enough constant $C_2$ such that
				\begin{displaymath}
					A_1\supseteq \left\{
					\inf_{u\in\underline{U}_\Delta}\widetilde{T}^{\Delta}_{u,\mathbf{n}/4} \geq 
	-\left(\frac{x}{2}-C_2\right) n^{1/3} \right\}
				\end{displaymath} and analogously 
				\begin{displaymath}
					A_3\supseteq \left\{
	\inf_{v\in\overline{U}_\Delta}\widetilde{T}^{\Delta}_{3\mathbf{n}/4,v} \geq 
	-\left(\frac{x}{2}-C_2\right) n^{1/3} \right\}.
				\end{displaymath}
		 Now on using Proposition
	\ref{mod3}, we get high probability lower bounds for $\mathbb{P}(A_1)$ and
	$\mathbb{P}(A_3)$. Combining this with the independence of $A_1,A_2,A_3$
	along with $\mathbb{P}(A_2)\geq Ce^{-cx^{3/2}}$, we get the needed lower
	bound for the probability of $\mathbb{P}(A_1\cap A_2\cap A_3)$. The fact
	that $A_1\cap A_2\cap A_3$ is a sub-event of the event is question is
	straightforward. Indeed, we have that $T_{u,v}^{\Delta}\geq
	T_{u,\mathbf{n}/4}^{\Delta}+ T_{\mathbf{n}/4,3\mathbf{n}/4}^{\Delta}+
	T_{3\mathbf{n}/4,v}^{\Delta}$ deterministically for any $u,v$ in the respective line
	segments due to the exclusion of the endpoints in the definition of
	$\ell$.
\end{proof}

The next result, quoted from \cite{BGZ19}, provides a lower bound for unlikely events in the lower tail. 
\begin{lemma}[{\cite[Lemma 4.10]{BGZ19}}]
	\label{mod3.2}
	For any $\Delta,M>0$, we have that there exists a constant $c>0$
	(depending on $\Delta,M$) such
	that for all $n$ sufficiently large depending on $\Delta, M$, we have
	\begin{displaymath}
		\mathbb{P}\left( \sup_{u\in \underline{U}_{\Delta},v\in
			\overline{{U}_{\Delta}}} T_{u,v} \leq 4n-M
				n^{1/3}\right)\geq c.
	\end{displaymath}
\end{lemma}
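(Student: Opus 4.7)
Plan:

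The lemma asserts a uniform-in-$n$ positive lower bound for the probability that the set-to-set passage time across the parallelogram $U_\Delta$ undershoots its typical value $4n$ by $Mn^{1/3}$. Writing $T^{\max}_\Delta := \sup_{u \in \underline{U}_\Delta,\, v \in \overline{U}_\Delta} T_{u,v}$, my plan is to establish an inductive halving decomposition that reduces the problem to a base case at a small deviation threshold, which is then handled directly by a single-point argument combined with Propositions \ref{mod1} and \ref{mod2}.

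First, since any up-right path from $\underline{U}_\Delta$ to $\overline{U}_\Delta$ must cross the time-slice $\mathbb{L}_n$ at exactly one vertex $w$, and since with the convention $\ell$ the weight at $w$ is excluded from both sub-passage-times, I would decompose at the mid-slice to obtain
\[
T^{\max}_\Delta \;\leq\; A + B + \sup_{w \in \mathbb{L}_n} \omega_w,
\]
where $A := \sup_{u \in \underline{U}_\Delta,\, w \in \mathbb{L}_n} T_{u, w}$ and $B := \sup_{w \in \mathbb{L}_n,\, v \in \overline{U}_\Delta} T_{w, v}$ depend on disjoint subsets of the weights and are therefore independent. After restricting to paths contained in an ambient parallelogram $U_{M'}$ at the cost of at most $e^{-c(M')^3}$ by Proposition \ref{mod4}, the range of $w$ becomes a segment of $O(n^{2/3})$ vertices and the residual $\sup \omega_w$ is $O(\log n)$, negligible at scale $n^{1/3}$.

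Second, each of $A$ and $B$ is itself a set-to-set passage time on a halved sub-parallelogram of the same geometric shape, whose natural fluctuation scale is $(n/2)^{1/3}$. Rescaling, the effective parameters become $\Delta_1 = 2^{2/3}\Delta$ and $M_1 = 2^{-2/3} M$. Hence if $c(\Delta, M)$ denotes the lower bound in question, independence of $A$ and $B$ and self-similarity of the setup yield the recursion
\[
c(\Delta, M) \;\geq\; c\bigl(2^{2/3}\Delta,\; 2^{-2/3} M\bigr)^{2} \,-\, e^{-c(M')^3}.
\]
Iterating this $k = O(\log M)$ times reduces the threshold to $M_k \leq M_0$ for some fixed absolute $M_0$, at the cost of increasing the width to $\Delta_k = O(\Delta \cdot M)$. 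For the base case, I would use a direct single-point argument: by the weak convergence $(T_{\mathbf{0},\mathbf{n}} - 4n)/(c_0 n^{1/3}) \Rightarrow \mathrm{TW}_{\mathrm{GUE}}$ implicit in Proposition \ref{mod1}, together with the fact that GUE Tracy--Widom has a strictly positive density on all of $\mathbb{R}$, one has $\mathbb{P}(T_{\mathbf{0},\mathbf{n}} \leq 4n - (M_0 + K) n^{1/3}) \geq c_1 > 0$ for $n$ large and any finite $K$. Combining this with the bound $\mathbb{P}(T^{\max}_{\Delta_k} - T_{\mathbf{0},\mathbf{n}} > K n^{1/3}) \leq Ce^{-cK^{3/2}}$ obtained from Propositions \ref{mod1}(ii) and \ref{mod2}(1) via the inclusion $\{T^{\max}_{\Delta_k} - T_{\mathbf{0},\mathbf{n}} > K n^{1/3}\} \subseteq \{T^{\max}_{\Delta_k} > 4n + \tfrac{K}{2} n^{1/3}\} \cup \{T_{\mathbf{0},\mathbf{n}} < 4n - \tfrac{K}{2} n^{1/3}\}$, and choosing $K = K(\Delta_k, M_0)$ large enough, closes the base case.

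The hard part will be tracking constants through the recursion: the $e^{-cK^{3/2}}$ rate from Proposition \ref{mod2}(1) is slower than the cubic rate $e^{-c(M+K)^3}$ of the single-point Tracy--Widom lower tail, so the tails do not balance in a single step and a direct single-point reduction works only for $M$ below an absolute threshold. The inductive halving circumvents this by keeping the effective $M_k$ bounded, but makes the effective width $\Delta_k$ grow geometrically; one must verify that the base-case constant $c_1 = c_1(\Delta_k, M_0)$ does not degenerate. Since the number of iterations is only $O(\log M)$, the final bound $c(\Delta, M) \gtrsim c_1^{2^{k}}$ remains strictly positive (though typically exponentially small in $M^{3/2}$), which is precisely what the lemma asks for.
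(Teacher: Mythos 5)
The paper does not actually prove this lemma; it imports it verbatim from \cite[Lemma 4.10]{BGZ19}, so there is no in-paper proof to compare against. I will therefore evaluate your argument on its own.

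Your decomposition at the mid-slice and the self-similar rescaling bookkeeping (taking $(\Delta, M)$ to $(2^{2/3}\Delta, 2^{-2/3}M)$) are both correct, and you are also right that a one-shot reduction to the single-point Tracy--Widom lower tail cannot work for large $M$, since the available upper-tail rate for the supremum is only $e^{-c r^{3/2}}$ while the single-point lower tail decays like $e^{-cr^{3}}$. The problem is that your halving strategy does not escape this obstruction: it merely relocates it. After $k = O(\log M)$ halvings the threshold $M_k$ is small, but the effective width is $\Delta_k \asymp M\Delta$, and the prefactor $C_1 = C_1(\Delta_k)$ in Proposition \ref{mod2}(1) grows with $\Delta_k$ (this constant comes from a union/chaining bound over roughly $\Delta_k^2$ cells; the exponential rate $c_1(\Delta_k)$ stays bounded below, but $C_1$ does not stay bounded above). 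Your base-case inequality requires some $K>0$ with
\begin{displaymath}
\P\bigl(T_{\mathbf{0},\mathbf{n}} \leq 4n - (M_0+K)n^{1/3}\bigr) \;>\; C_1(\Delta_k)\, e^{-c_1(\Delta_k)(K/2)^{3/2}} + Ce^{-c(K/2)^{3}},
\end{displaymath}
and the left side is at best of order $e^{-a(M_0+K)^3}$ for an absolute $a>0$ coming from the TW left tail. Optimizing $K$, the right side dips below $e^{-aK^3}$ only if $\log C_1(\Delta_k) \lesssim c_1^2/a$; once $\Delta_k$ is large this fails for every $K\geq 0$ (at small $K$ the right side exceeds $1$, at large $K$ the left side decays strictly faster). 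So the base case you propose does not establish $c_1(\Delta_k, M_0)>0$ in the regime the recursion actually lands in, and the claimed final bound $c(\Delta,M) \gtrsim c_1^{2^k}$ never gets off the ground. You flag ``one must verify that the base-case constant does not degenerate,'' but this is not a bookkeeping step to be deferred — it is exactly where the argument breaks, and the halving cannot fix it because every halving that shrinks $M_k$ grows $\Delta_k$ by the same factor $2^{2/3}$. A proof of this lemma has to control the whole set-to-set supremum at once (e.g.\ via an FKG argument together with a careful chaining-with-restriction scheme, as in \cite{BGZ19}), rather than reducing to a single point-to-point comparison. There is also a secondary, repairable issue: to make $A$, $B$, and $\sup_{w}\omega_w$ genuinely independent after truncating $w$ to $\L_n\cap U_{M'}$, one should pass to \emph{constrained} passage times $T^{U_{M'}}$ on each half (and pay the restriction cost once, outside the independence argument); the quantities you wrote are not independent of the restriction event.
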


\section{Upper bound for the small ball probability}
\label{s:ub}
%
In this section, we provide the proof of Proposition \ref{mod5} and use the same to establish the upper bound in Theorem \ref{t:sb}.
The first step of the proof of Proposition \ref{mod5} is to divide the
rectangle $U_{\delta}$ into sub-rectangles of size $O(\delta^{3/2}n)\times
\delta n^{2/3}$.\footnote{Throughout the paper, we shall assume without loss
of generality that $\delta n^{2/3}$ and $\delta^{3/2} n$ are {even} integers and ignore rounding issues arising from this, and also several other divisibility issues. This is done merely to reduce notational overhead and the reader can verify that the same arguments go through in the general case, with appropriate additions of floor and ceiling signs.} 
%
Define for any $A>0$,
\begin{equation}
	\label{eqn:0.2}
	U_{\delta,A,i}^n= \left\{ -\delta n^{2/3}\leq
\psi(u)\leq \delta n^{2/3} \right\}\cap \left\{ 2iA\delta^{3/2} n\leq \phi(u)\leq
2(i+1)A\delta^{3/2} n \right\}. 
\end{equation}
The abbreviations
$U_{\delta,A,i},\underline{U}_{\delta,A,i},\overline{U}_{\delta,A,i}$
are defined analogously to the abbreviations for the corresponding
quantities of the parallelogram $U_\delta^n$.

The basic idea for the proof of Proposition \ref{mod5} is that for small
$\delta$, $T_n^{\delta}$ can be approximated by sums of passage times across
$U_{\delta,A,i}$. Indeed, for $A$ sufficiently large, to be chosen
appropriately later, we define 
\begin{equation}
		\label{eqn:1.2}
		Y_i=\sup_{u\in \underline{U}_{\delta,A,i}
		,
		v\in \overline{U}_{\delta,A,i}}\underline{T}_{u,v}^{U_{\delta,A,i}}.
	\end{equation}
Clearly, the $Y_i$ are i.i.d.\ across $i$. Let us define i.i.d.\ variables 
$Z_i=\dfrac{Y_{i}-4A\delta^{3/2}n}{A^{1/3}\delta^{1/2}n^{1/3}}$. The next
result gives information about the mean and upper tail of $Z_{i}$. 

\begin{lemma}
\label{l:zi}
For $A$ sufficiently large, there exist positive constants $c_2,c_3,C_3$
(independent of $\delta$) such that for all $n$ sufficiently large
depending on $\delta$, we have 
\begin{itemize}
\item[(i)] $\E Z_{i}\leq -c_2.$
\item[(ii)] $\P(Z_{i}\geq r)\leq C_3e^{-c_3r}$ for each $r>0$. 
\end{itemize}
\end{lemma}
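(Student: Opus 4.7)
The plan is to handle the two parts separately using the identification of $U_{\delta,A,i}$ with a translate of $U^{m}_{\Delta}$ where $m = A\delta^{3/2}n$ (so the denominator $A^{1/3}\delta^{1/2}n^{1/3}$ in $Z_i$ equals $m^{1/3}$) and $\Delta = A^{-2/3}$. Since $A$ is taken large, $\Delta$ is small, which is precisely the regime in which the constraint to the strip is binding and responsible for a negative mean shift.

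For part (ii), I would invoke Proposition \ref{mod3}(1) on the scale $m$, together with the standard $T \leftrightarrow \underline{T}$ adaptation described after Proposition \ref{mod2}. This yields $\mathbb{P}\bigl(\sup_{u,v} \widetilde{\underline{T}}^{U_{\delta,A,i}}_{u,v} \ge rm^{1/3}\bigr) \le C_1 e^{-c_1 \min(r^{3/2}, rm^{1/3})}$. Combining with the uniform bound $|\mathbb{E}\underline{T}_{u,v} - 4m| \le C_0 m^{1/3}$ from \eqref{e:mean}, valid uniformly over $u,v$ on the short sides since $\phi(v) - \phi(u) = 2m$ and $|\psi(v) - \psi(u)| = O(m^{2/3})$, this upgrades to an analogous bound on $\mathbb{P}(Y_i \ge 4m + rm^{1/3})$ after a constant shift in $r$. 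Since $\min(r^{3/2}, rm^{1/3}) \ge r$ for $r \ge 1$ and $m$ large, one reads off $\mathbb{P}(Z_i \ge r) \le C_3 e^{-c_3 r}$, with the range of bounded $r$ handled trivially by enlarging $C_3$.

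For part (i), the plan is to combine (ii) with Lemma \ref{mod3.2}. The latter provides, for any $M > 0$, a constant $c_M > 0$ with $\mathbb{P}\bigl(\sup_{u,v} T_{u,v} \le 4m - Mm^{1/3}\bigr) \ge c_M$. Since $Y_i \le \sup_{u,v}\underline{T}_{u,v} \le \sup_{u,v} T_{u,v} + \max_{u \in \underline{U}_{\delta,A,i}} \omega_u$, and the maximum of $O(\delta n^{2/3})$ i.i.d.\ rate-$1$ exponentials is $O(\log n) = o(m^{1/3})$ with probability $1-o(1)$, we deduce $\mathbb{P}(Z_i \le -(M-1)) \ge c_M/2$ for $n$ large. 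Writing $\mathbb{E}Z_i = \int_0^\infty \mathbb{P}(Z_i > r)\,dr - \int_0^\infty \mathbb{P}(Z_i < -r)\,dr$, the first integral is bounded by a constant $K := C_3/c_3$ using (ii), while monotonicity of the lower tail bounds the second by $(M-1)c_M/2$. Thus $\mathbb{E}Z_i \le K - (M-1)c_M/2$, which is $\le -c_2 < 0$ provided $(M-1)c_M > 2(K+c_2)$.

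The main obstacle is realising this last calibration. The underlying heuristic, mentioned in Section \ref{s:sbub}, is that as $\Delta = A^{-2/3} \to 0$ the supremum $\sup_{u,v}T_{u,v}$ collapses onto the single passage time $T_{\mathbf{0}, \mathbf{m}}$, whose Tracy--Widom limit has strictly negative mean; the cubic left tail of $F_2$ then ensures $c_M$ stays bounded below by a nontrivial positive quantity for $M$ moderate, so that by choosing $A$ (and hence $M$) sufficiently large the inequality $(M-1)c_M > 2(K+c_2)$ can be met. Once part (i) and part (ii) are in hand, they feed directly into a Bernstein-type inequality for the sum $\sum_i Z_i$ of $k = 1/(A\delta^{3/2})$ i.i.d.\ variables to yield the desired $e^{-c\delta^{-3/2}}$ bound in Proposition \ref{mod5}.
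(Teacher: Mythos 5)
Your treatment of part (ii) is correct and mirrors the paper: invoke Proposition \ref{mod3}(1) at scale $m = A\delta^{3/2}n$ with $\Delta = A^{-2/3}$, handle the $T \leftrightarrow \underline{T}$ discrepancy, and re-center via \eqref{e:mean}.

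Your route to part (i), however, has a genuine gap at exactly the step you flag as ``the main obstacle.'' The paper shows that $(A\delta^{3/2}n)^{-1/3}(\mathbb{E}Y_i - \mathbb{E}T_{A\delta^{3/2}n})$ vanishes as $A \to \infty$ (a nontrivial estimate cited from \cite[Lemma 4.1]{BGHH20} and \cite[Lemma 2.4]{BHS18}) and then invokes $\mathbb{E}T_m \le 4m - cm^{1/3}$, which encodes the strictly negative Tracy--Widom mean; the negativity is thus transferred directly to $\mathbb{E}Y_i$. You instead try to read off $\mathbb{E}Z_i < 0$ purely from one-sided tail information, bounding the positive-tail integral by $K = C_3/c_3$ and the negative-tail integral from below by a single term $(M-1)c_M/2$, and asserting the calibration $(M-1)c_M > 2K$. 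This is neither established nor forced by what you have in hand: $K$ is a very lossy envelope (you integrate a sub-exponential bound over all of $(0,\infty)$, whereas the genuine upper tail decays like $e^{-\Theta(r^{3/2})}$); your lower bound captures only one level of the lower tail; and the constant $c_M$ from Lemma \ref{mod3.2} is unquantified and decays cubically in $M$, so $\sup_M (M-1)c_M$ is just some fixed number with no reason to dominate $2K$. Indeed a random variable can satisfy the upper-tail bound of (ii) and have a fixed positive lower-tail probability at a single level while still having strictly positive mean, so these ingredients alone cannot determine the sign of $\mathbb{E}Z_i$. The $\Delta\to 0$ Tracy--Widom heuristic does not repair this --- even for the limiting law the inequality $(M-1)c_M > 2K$ need not hold with the constants actually in play. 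What the paper's route supplies and yours omits is a direct two-sided comparison of $\mathbb{E}Y_i$ with $\mathbb{E}T_m$, which is the only way the known sign of the Tracy--Widom mean enters the estimate.
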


\begin{proof}
Observe first that (ii) is an immediate consequence of Proposition \ref{mod2},
(i) and \eqref{e:mean}. Note that Proposition \ref{mod2}, as stated, is for the variables
$T$ and not the variables $\underline{T}$. However, as remarked in a
footnote earlier, the effect of the endpoints is negligible and Proposition
\ref{mod2} (and also \eqref{e:mean}) also hold for the variables $\underline{T}$.

To prove (i), we show that $(A\delta^{3/2}n)^{-1/3}(\E Y_{i}-\E
T_{A\delta^{3/2}n})$ can be made arbitrarily small by taking $A$ sufficiently
large. Since $\E T_{A\delta^{3/2}n}\leq
4A\delta^{3/2}n-cA^{1/3}\delta^{1/2}n^{1/3}$ (this is a
consequence of the distributional convergence as in \eqref{eqn:-4}  and the fact that GUE Tracy-Widom
distribution has negative mean, see \cite[Lemma A.4]{BGHH20}) for some $c>0$
and $n$ sufficiently large, (i) follows by choosing $A$ appropriately large.
See \cite[Lemma 4.1]{BGHH20} or \cite[Lemma 2.4]{BHS18} for a complete argument.
\end{proof}

We can now give the proof of Proposition \ref{mod5}.

\begin{proof}[Proof of Proposition \ref{mod5}]
Notice first that
	\begin{equation}
		\label{eqn:1.3}
		T_{n}^{\delta}\leq
		\sum_{i=0}^{\delta^{-3/2}/A-1} Y_i.
	\end{equation}
	Indeed, this is the reason we used $\underline{T}$ instead of $T$ in the definition of $Y_{i}$. 
	Thus, it suffices to show that for some constant $c_1$, we have
	\begin{equation}
		\label{eqn:1.4}
		\mathbb{P}\left( \sum_{i=0}^{\delta^{-3/2}/A-1} Y_i \geq 4n-
		\frac{c_1}{\delta}n^{1/3} \right) \leq Ce^{-c\delta^{-3/2}}.
	\end{equation}
	Recalling the definition of $Z_{i}$, \eqref{eqn:1.4} is equivalent to showing 
	\begin{equation}
	\label{e:zitail}
	\P\left (\frac{1}{\delta^{-3/2}/A} \sum Z_{i} \geq -c_1'\right)\leq Ce^{-c\delta^{-3/2}}.
	\end{equation}
	for some $c_1'>0$. Using Lemma \ref{l:zi}, and choosing $c_1'=c_2/2$ where $c_2$ is as in Lemma \ref{l:zi}, \eqref{e:zitail} is an easy consequence of a Bernstein type concentration inequality for sums of i.i.d.\ variables with sub-exponential tails; see e.g.\ \cite[Corollary 2.8.3]{vershynin}. This completes the proof of the proposition. 
\end{proof}

In view of Proposition \ref{mod5}, the proof of the upper bound in Theorem
\ref{t:sb} is almost immediate. 
\begin{proof}[Proof of Theorem \ref{t:sb}, upper bound]
	With $c_1$ as in the statement of Proposition \ref{mod5}, we know that for
	some constants $C,c,C',c'$ (independent of $\delta$), we have
	\begin{align}
		\label{eqn:3.9}
		\mathbb{P}\left( \Gamma_n \subseteq U_\delta \right)&\leq
		\mathbb{P}\left( T_{n}^{\delta} \geq 4n-
		\frac{c_1}{\delta} n^{1/3} \right)+ \mathbb{P}\left(
		T_{n}\leq 4n - \frac{c_1}{\delta} n^{1/3}
		\right) \nonumber\\ 
		&\leq Ce^{-c\delta ^{-3/2}}+C'e^{-c'\delta ^{-3}}\leq
		2Ce^{-c\delta^{-3/2}}
	\end{align}
	where in the second inequality above, we have used Proposition \ref{mod5} along with the second part of Proposition
	\ref{mod1}. This completes the proof of the upper bound in Theorem \ref{t:sb}.
\end{proof}

Before completing this section, let us give a sketch of an alternative proof of the upper bound in Theorem \ref{t:sb}. This argument hinges on having a lower bound of the probability that geodesics have large transversal fluctuation (at the scale $n^{2/3}$). Such a result is known in Poissonian LPP; see \cite[Proposition 1.4]{HS18}. Even though the same argument should work for exponential LPP with minor modifications, we did not find the result in the literature for exponential LPP and hence will not attempt to write down a complete proof. 

For $i\in\left\{
0,1,\cdots,\delta^{-3/2} \right\}$, define the points
\begin{displaymath}
	a_i=
	i\delta^{3/2} \mathbf{n} -\delta n^{2/3} (1,-1).
\end{displaymath} 
Let
$\Gamma_{a_i,a_{i+1}}$ denote the geodesic joining $a_i$ and $a_{i+1}$. By the planar ordering of the geodesics, we know that each $\Gamma_{a_i,a_{i+1}}$ lies to the left of the geodesic $\Gamma_n$, and hence we have  
\begin{equation}
	\label{eqn:3.21}
	\left\{ \Gamma_n\subseteq U_\delta \right\} \subseteq
	\bigcap_{i=0}^{\delta^{-3/2}-1} \left\{
	\sup_{u\in \Gamma_{a_i,a_{i+1}}}\psi(u) \leq 
	\delta n^{2/3}  \right\}= \bigcap_{i=0}^{\delta^{-3/2}-1} \left\{
	\sup_{u\in\Gamma_{a_i,a_{i+1}}}\psi(u)-(-2\delta n^{2/3}) 
	\leq 3\delta n^{2/3} \right\}.
\end{equation}
Note that the events on the right hand side are independent across $i$ and each event has probability bounded away from $1$ by \cite[Proposition 1.4]{HS18} adapted to the exponential case. The upper bound in Theorem \ref{t:sb} follows.

\section{Lower bound for the small ball probability}
\label{s:lb}


In this section, we will obtain the lower bound in Theorem \ref{t:sb}. As
discussed in the introduction, the strategy is to construct a favourable event
with the requisite lower bound on its probability, on which the small ball event holds. We first define the favourable events, and state the probability lower bounds for them. Then we complete the proof of the lower bound in Theorem \ref{t:sb} assuming these. The proofs of the probability bounds are provided at the end of the section.

\subsection{Construction of  favourable events}
Before coming to the construction of the needed events, we introduce  the
following notation:
\begin{gather}
	\mathtt{Left}^n_\delta=\left\{ \psi(u)<-\delta n^{2/3}
\right\},\nonumber\\
\label{eqn:3.24}
\mathtt{Right}^n_\delta= \left\{
\psi(u)>\delta n^{2/3} \right\}. 
\end{gather}
We shall use $\mathtt{Left}_\delta$ and $\mathtt{Right}_\delta$ as
shorthands for the above, but in case we need to make use of the notations for
something other than $n$, we shall use the more general notation.
 	
We
	define three independent events $\mathbf{Bar}^\mathtt{L}$,
	$\mathbf{Inside}$ and $\mathbf{Bar}^\mathtt{R}$ (we
	use $\mathbf{Bar}$ to denote the event
	$\mathbf{Bar}^\mathtt{R}\cap \mathbf{Bar}^\mathtt{L}$) measurable with respect to the
vertex weights in the regions $\mathtt{Left}_\delta$, $U_\delta$ and
$\mathtt{Right}_\delta$
respectively. As already alluded to in the introduction, for some fixed large constant $M$, we shall show that on the event
$\mathcal{E}=\mathbf{Bar}^\mathtt{L}\cap
\mathbf{Inside} \cap \mathbf{Bar}^\mathtt{R}$, we have that $\Gamma_n\subseteq
U_{M\delta}$, and obtain $\mathbb{P}(\mathcal{E})\geq
Ce^{-c\delta^{-3/2}}$ by lower bounding the probabilities of each constituent event separately; see Figure \ref{fig:lb-basic} for an illustration.

\begin{figure}[htbp!]
	\begin{center}
		\includegraphics[width=0.5\linewidth]{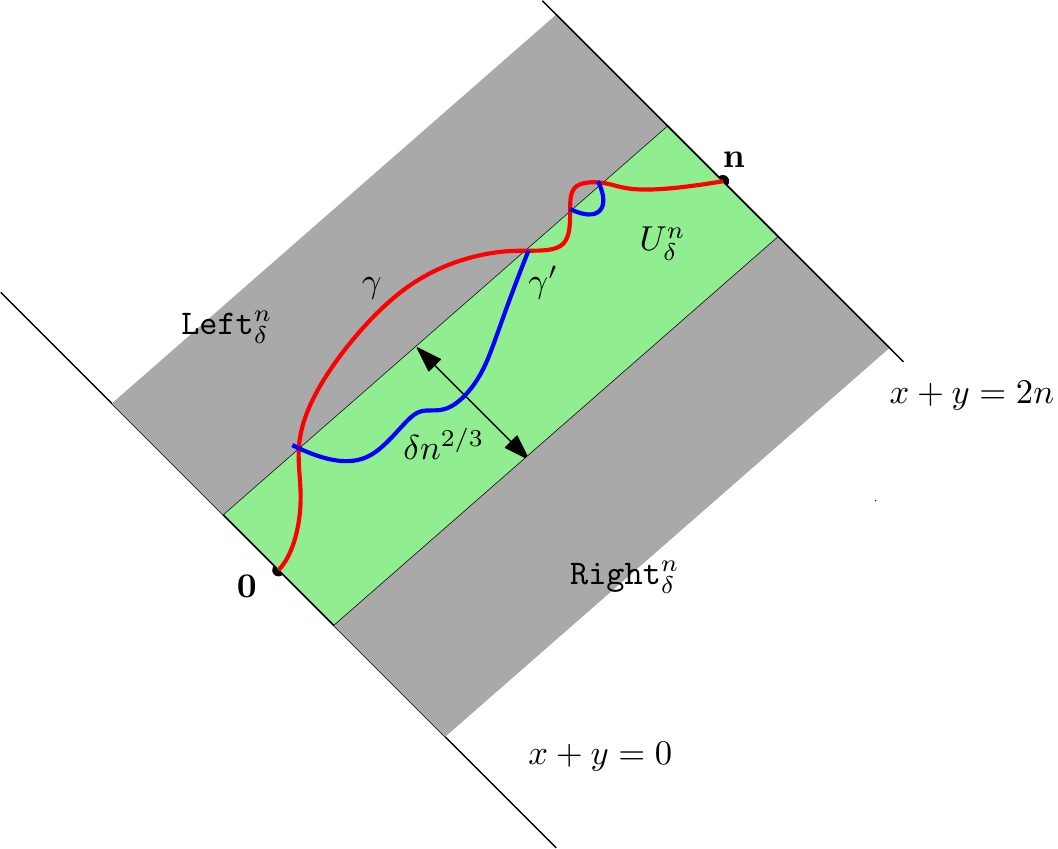}
	\end{center}
	\caption{The basic set-up for the proof of the lower bound in Theorem
	\ref{t:sb}: The region $U_{\delta}$ is shaded in green. The event
	$\mathbf{Inside}$ is a function of the vertex weights in $U_{\delta}$ and
	ensures that any two points $u,v$ in $U^{\delta}$ that differ in the time
	co-ordinate by a large constant times $\delta^{3/2}n$ have large
	$\widetilde{T}^{\delta}_{u,v}$, and even for points $u,v$ that are not
	very well separated in the time coordinate, $\widetilde{T}^{\delta}_{u,v}$
	is not too small. The events $\mathbf{Bar}^\mathtt{L}$ and
	$\mathbf{Bar}^\mathtt{R}$ are functions of the vertex weights on the
	strips to the left and right of $U_{\delta}$ (denoted
	$\mathtt{Left}_{\delta}$ and $\mathtt{Right}_{\delta}$) respectively and
	ensure that for any path $\gamma$ (marked in red) that has either a short
	or a long excursion outside $U_{\delta}$ that exits $U_{M\delta}$ also,
	the excursion can be replaced by a path in the interior of $U_{\delta}$
	(marked in blue) that has higher weight. Thus we ensure that on $\mathbf{Bar}^\mathtt{L}\cap \mathbf{Inside}\cap \mathbf{Bar}^\mathtt{R}$, one has $\Gamma_{n}\subseteq U_{M\delta}$.}
	\label{fig:lb-basic}
\end{figure}

\subsubsection{Choice of parameters}
\label{ss:constants} We shall fix $\delta$ to be sufficiently small
throughout this section. Note that $K,k_0,k_1,\beta,M>0$ will be constants which will appear in the
definitions and will be fixed later in this section; all of them will be independent of $\delta$. 

\noindent We now explicitly point out how the constants are fixed to prevent confusion
later. 
\begin{itemize}
	\item $K$, a large positive constant is obtained by invoking Lemma
		\ref{far1}.
	\item $k_0$, a large positive constant is fixed by invoking Lemma
		\ref{far2}.
	\item $k_1$ is now defined to be $\max\{K,k_0\}+2$.
	\item $\beta$ is an absolute constant not depending on any of the other
		parameters and is obtained
		from the statement of Lemma \ref{short1}. In fact, we have
		$\beta=2^{-2/3}\xi$, where $\xi$ appears in the statement of
		Proposition \ref{mod4}.
	\item The constant $M$ is
		fixed to be large enough compared to all the other parameters fixed so far so that the conclusion of Lemma \ref{short1}
		holds and \eqref{eqn:14.3} in the proof of Lemma \ref{l:close} holds.
\end{itemize}
Note that all the above constants will be
independent of $\delta$, and $n$ will be taken to be sufficiently large
depending on $\delta$ ({and all the other parameters}). Actually, it will be clear from the
proofs that it suffices to take $n\gg \delta^{-3/2}$; we will
comment more about this in Section \ref{s:dis}. For the rest of this section, we shall work with a fixed choice of parameters as described above, $\delta$ sufficiently small and $n$ sufficiently large. 

\subsubsection{The event $\mathbf{Inside}$}
The event $\mathbf{Inside}$ is composed of two parts: (i) $\mathbf{Far}$, which asks that
for any two points $u,v\in U_{\delta}^\mathtt{L}$ (resp.\  $U_{\delta}^{\mathtt{R}}$) which are well separated in the time
direction, $T^{\delta}_{u,v}$ is sufficiently larger than typical; and (ii)
$\mathbf{Close}$, which asks that for any two points $u,v$ in $U_{\delta}^{\mathtt{L}}$ (resp.\  $U_{\delta}^{\mathtt{R}}$) (not necessarily well separated) the constrained centered passage time $\widetilde{T}_{u,v}^{\delta}$ is not much smaller than typical. Let us now move towards defining the events formally. 

\medskip
\noindent
\textbf{Definition of $\mathbf{Far}$:} 
For any positive integer $K$, let $\mathbf{Far}$ denote the event that for any two points $u,v\in
	U_{\delta}^\mathtt{L}$ (or both in $U_{\delta}^\mathtt{R}$) satisfying
	$\phi(v)-\phi(u)\geq K\delta^{3/2}n$, we have
	\begin{equation}
	\label{e:fardef}
		T_{u,v}^{\delta}\geq 2(\phi(v)-\phi(u))+\frac{\phi(v)-\phi(u)}{\delta^{3/2}
	n}(\sqrt{\delta} n^{1/3}).
	\end{equation}
The following lemma provides a lower bound for $\P(\mathbf{Far})$. 		
\begin{lemma}
	\label{far1}
For all $\delta$ sufficiently small, there exists a positive integer $K$
(independent of $\delta$) and positive constants
	$C,c$ (independent of $\delta$) such that
	\begin{displaymath}
		\mathbb{P}(\mathbf{Far})\geq Ce^{-c\delta^{-3/2}}	
	\end{displaymath}
	for all $n$ large enough depending on $\delta$.
\end{lemma}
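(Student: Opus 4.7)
The plan is to partition the time-direction of $U_\delta$ into $N=\delta^{-3/2}/A$ consecutive blocks $U_{\delta,A,i}$ of time-width $2A\delta^{3/2}n$, where $A$ is a large absolute constant to be fixed, and to construct a favourable event as the intersection of independent per-block events. For each block I would ask for (i) an atypically large constrained passage time from its bottom side to its top side, and (ii) a uniform lower bound on constrained passage times between points on its left (resp.\ right) longer side. Concretely, for constants $C_1, C_3$ to be chosen, I would set
\begin{align*}
E_i^{(1)} &:= \Bigl\{ \inf_{u'\in\underline{U}_{\delta,A,i},\,v'\in\overline{U}_{\delta,A,i}} T^{U_{\delta,A,i}}_{u',v'} \geq 4A\delta^{3/2}n + C_1\sqrt{\delta}\, n^{1/3} \Bigr\}, \\
E_i^{(2,\mathtt{L})} &:= \Bigl\{ \inf_{u',v'\in U^{\mathtt{L}}_{\delta,A,i},\,\phi(u')\leq\phi(v')} \widetilde{T}^{U_{\delta,A,i}}_{u',v'} \geq -C_3 A^{1/3}\sqrt{\delta}\, n^{1/3} \Bigr\},
\end{align*}
define $E_i^{(2,\mathtt{R})}$ analogously, and put $E_i := E_i^{(1)} \cap E_i^{(2,\mathtt{L})} \cap E_i^{(2,\mathtt{R})}$. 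Lemma \ref{4.12+}, applied at scale $A\delta^{3/2}n$ with $\Delta = A^{-2/3}$, gives $\mathbb{P}(E_i^{(1)}) \geq p_1$ for some absolute constant $p_1>0$, while Proposition \ref{mod3.1} at the same scale yields $\mathbb{P}(E_i^{(2,\cdot)}) \geq 1-Ce^{-cC_3}$, which exceeds $1 - p_1/4$ upon taking $C_3$ large. Hence $\mathbb{P}(E_i) \geq p_1/2$, and since the events for different blocks are functions of disjoint vertex weights, independence yields $\mathbb{P}\bigl(\bigcap_{i=0}^{N-1} E_i\bigr) \geq (p_1/2)^N = e^{-c\delta^{-3/2}}$.

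The next step is to verify that $\bigcap_i E_i \subseteq \mathbf{Far}$. Given $u,v\in U^{\mathtt{L}}_\delta$ with $L:=\phi(v)-\phi(u) \geq K\delta^{3/2}n$, I would pick $w_0,w_m \in U^{\mathtt{L}}_\delta$ lying at block boundaries and sandwiching $[\phi(u),\phi(v)]$ from inside, so that $\phi(w_0)-\phi(u), \phi(v)-\phi(w_m) \leq 2A\delta^{3/2}n$ and $[\phi(w_0),\phi(w_m)]$ is the union of $m \geq L/(2A\delta^{3/2}n) - 2$ complete blocks. Concatenation at $w_0$ and $w_m$ (valid thanks to the endpoint exclusion in $\ell$), together with a further block-by-block decomposition of the middle segment, then yields
\[
T^\delta_{u,v} \geq T^\delta_{u,w_0} + T^\delta_{w_0, w_m} + T^\delta_{w_m,v} \geq 2L + m C_1 \sqrt{\delta}\, n^{1/3} - 2 C_3' A^{1/3} \sqrt{\delta}\, n^{1/3},
\]
where the two end pieces are lower-bounded using $E^{(2,\mathtt{L})}_{i}$ for the block containing them, and $C_3'$ absorbs $C_3$ together with the $O(A^{1/3}\sqrt{\delta}\,n^{1/3})$ mean correction from \eqref{e:mean}. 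The inequality $T^\delta_{u,v} \geq 2L + L\sqrt{\delta}\, n^{1/3}/(\delta^{3/2}n)$ defining $\mathbf{Far}$ then follows by choosing $C_1 \geq 4A$ so that the coefficient of $L/(\delta^{3/2}n)$ in the above bound is at least $1$, and finally choosing $K$ large compared to $C_1, C_3', A$ to absorb the additive error coming from the $O(1)$ loss in $m$ and from the two end pieces. The same argument handles pairs on $U^{\mathtt{R}}_\delta$.

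I expect the main obstacle to be the delicate ordering of the constants: one first fixes $A$ (large enough for the subsequent estimates to apply cleanly), then $C_3$ (large enough that $E_i^{(2,\cdot)}$ has probability close to $1$), then $C_1 \geq 4A$ (forcing the per-block excess to dominate the growth requirement of $\mathbf{Far}$), and finally $K$ large compared to $C_1, C_3', A$. Crucially, all four constants are $\delta$-independent, so that the per-block probability $p_1/2$ does not degrade with $\delta$ and the product across $N = \delta^{-3/2}/A$ blocks gives precisely the required $e^{-c\delta^{-3/2}}$ lower bound. The invoked estimates additionally require $n$ to be large depending on these constants (and hence on $\delta$), which is consistent with the hypothesis $n \geq n_0(\delta)$ in the statement.
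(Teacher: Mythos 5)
Your proposal is correct, and it takes the same high-level route as the paper (partition the time direction into $\Theta(\delta^{-3/2})$ blocks; per block demand an atypically large across-block constrained passage time together with a uniform lower bound on passage times near the boundary; multiply per-block probabilities to get the $e^{-c\delta^{-3/2}}$ bound; then deduce $\mathbf{Far}$ by concatenation at block boundaries), but the implementation differs in a few worthwhile ways. The paper works with blocks $U_{\delta,i}$ of time-width exactly $2\delta^{3/2}n$, uses the diagonal points $w_i = i\delta^{3/2}\mathbf{n}$ as concatenation anchors, and therefore needs an event ($\mathcal{B}_i$ in the paper, analogous to your $E_i^{(2,\cdot)}$) defined over a \emph{pair} of consecutive blocks $U_{\delta,i}\cup U_{\delta,i+1}$ to control the ``stub'' pieces $T^\delta_{u,w_{a+1}}$ from a boundary point $u$ to a diagonal anchor. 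Since these paired events overlap across $i$, the paper bounds their intersection via FKG. Your scheme instead anchors the concatenation at boundary points $w_0, w_m$ of $U^{\mathtt L}_\delta$ lying on the block interfaces, so your stub pieces stay inside a single block; as a result your per-block events use genuinely disjoint vertex weights and you get the product bound by independence alone, which is cleaner. A second difference: because your stub endpoints can be arbitrarily close in the time direction, you appeal to Proposition~\ref{mod3.1} (close-points version) rather than Proposition~\ref{mod3} (well-separated points in a two-block window) as the paper does; your choice slightly enlarges the constant but is otherwise harmless. Third, you rescale the block to width $2A\delta^{3/2}n$ and take $A$ large, whereas the paper keeps the block at width $2\delta^{3/2}n$ and compensates by introducing the auxiliary parameter $\alpha$ and proving a strengthened event $\mathbf{Far}_\alpha$; these are equivalent degrees of freedom. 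Your ordering of constants ($A$, then $C_3$, then $C_1\geq 4A$, then $K$) and the identification of $\Delta=A^{-2/3}$ when rescaling Lemma~\ref{4.12+} and Proposition~\ref{mod3.1} to scale $A\delta^{3/2}n$ are both correct, as is the observation that concatenation is licensed by the exclusion of endpoints in $\ell$. I do not see a gap.
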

The proof of Lemma \ref{far1} has been postponed to
Section \ref{ss:far}.



\medskip
\noindent
\textbf{Definition of $\mathbf{Close}$:} 
%
The event $\mathbf{Close}$, as described above will control the $T^{\delta}_{u,v}$ where both $u$ and $v$ are close by points either on $U_{\delta}^{\mathtt{L}}$ or on $U_{\delta}^{\mathtt{L}}$. This will be defined as the intersection of several events indexed by $\mathtt{L}$ or $\mathtt{R}$ depending on which of the boundaries are being considered and also by $i$, which controls the location of the points in the time direction. For $k_1$ sufficiently large (compared to $K$ obtained from Lemma \ref{far1}), a positive absolute constant $\beta$ and 
$i\in\{0,1,\cdots,\frac{1}{k_1 \delta^{3/2}}-2\}$, we define the events $\mathbf{Close}_i^\mathtt{L}$ and $\mathbf{Close}_i^\mathtt{R}$ by setting
\begin{equation}
	\label{eqn:15}
	(\mathbf{Close}_i^\mathtt{L})^c=\left\{	\inf_{u,v\in
	U^{\mathtt{L}}_\delta,2ik_1\delta^{3/2}n\leq \phi(u)\leq \phi(v) \leq
	2(i+2)k_1\delta^{3/2} n }\left\{ T_{u,v}^{1}
	-2(\phi(v)-\phi(u))
		\right\}\leq -\beta M^2 (2k_1 \delta^{3/2} n )^{1/3} \right\},	
\end{equation}

\begin{equation}
	\label{eqn:15.1}
			(\mathbf{Close}_i^\mathtt{R})^c=\left\{	\inf_{u,v\in
			U^\mathtt{R}_\delta,2ik_1\delta^{3/2}n\leq \phi(u)\leq \phi(v) \leq
	2(i+2)k_1\delta^{3/2} n}\left\{ T_{u,v}^{1} -2(\phi(v)-\phi(u))
		\right\}\leq -\beta M^2 (2k_1 \delta^{3/2} n )^{1/3} \right\}.
\end{equation}

We set $\mathbf{Close}_i= \mathbf{Close}_i^\mathtt{L} \cap
\mathbf{Close}_i^\mathtt{R}$ and define 
\begin{equation}
	\label{eqn:15.2}
	\mathbf{Close}=\bigcap_{i=0}^{\frac{1}{k_1 \delta^{3/2}}-2}
	\mathbf{Close}_i.
\end{equation}

We have the following lower bound on $\P(\mathbf{Close})$.

\begin{lemma}
\label{l:close}
There exist positive constants $C,c$ (independent of $\delta$) such that for all $\delta$
sufficiently small, and for the parameters being chosen as in
Section \ref{ss:constants}, we have
$$\P(\mathbf{Close})\geq Ce^{-c\delta^{-3/2}}.$$
\end{lemma}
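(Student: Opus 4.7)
The plan is to observe that each constituent event $\mathbf{Close}_i^{\mathtt{L}}$ (and similarly $\mathbf{Close}_i^{\mathtt{R}}$) is an increasing event in the vertex weights $\{\omega_v\}_{v\in\mathbb{Z}^2}$, and then to combine the $\Theta(\delta^{-3/2})$ many such events via the FKG inequality. Increasingness is immediate: raising any weight only raises the constrained passage time $T_{u,v}^{1}$, and $\mathbf{Close}_i^{\mathtt{L}}$ asks precisely that a family of such passage times be bounded below. Hence FKG yields
\begin{equation*}
\mathbb{P}(\mathbf{Close}) \;\geq\; \prod_{i=0}^{\frac{1}{k_1\delta^{3/2}}-2}\mathbb{P}(\mathbf{Close}_i^{\mathtt{L}})\cdot\mathbb{P}(\mathbf{Close}_i^{\mathtt{R}}),
\end{equation*}
so it will suffice to establish $\mathbb{P}(\mathbf{Close}_i^{\mathtt{L}}),\mathbb{P}(\mathbf{Close}_i^{\mathtt{R}})\geq 1/2$ uniformly in $i$ and $\delta$; the product then gives $\mathbb{P}(\mathbf{Close})\geq (1/2)^{O(\delta^{-3/2})}\geq Ce^{-c\delta^{-3/2}}$.

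To bound a single factor, I would shrink the constraint region from $U_1^n$ to an on-scale sub-parallelogram. Set
\begin{equation*}
P_i := \bigl\{|\psi(u)|\leq \delta n^{2/3}\bigr\}\cap \bigl\{2ik_1\delta^{3/2}n\leq \phi(u)\leq 2(i+2)k_1\delta^{3/2}n\bigr\}.
\end{equation*}
Since $P_i\subseteq U_1^n$, we have $T_{u,v}^{P_i}\leq T_{u,v}^{1}$ for all $u,v\in P_i$, so the event obtained from $\mathbf{Close}_i^{\mathtt{L}}$ by replacing $T^{1}$ with $T^{P_i}$ is a sub-event, and it suffices to lower bound its probability. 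Define $N := 2k_1\delta^{3/2}n$ and $\Delta_\star := (2k_1)^{-2/3}$. A direct calculation shows $\Delta_\star N^{2/3} = \delta n^{2/3}$, so after translating time by $-2ik_1\delta^{3/2}n$ the parallelogram $P_i$ becomes exactly $U_{\Delta_\star}^{N}$. By translation invariance of the i.i.d.\ weights, the family $\{T_{u,v}^{P_i}\}$ has the same joint law as $\{T_{u,v}^{\Delta_\star}\}$ on $U_{\Delta_\star}^{N}$, with the admissible range of $u,v$ mapping onto $(U_{\Delta_\star}^{N})^{\mathtt{L}}$ with $\phi(u)\leq \phi(v)$. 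Crucially, $\Delta_\star$ is an absolute constant depending only on the already-fixed parameter $k_1$, not on $\delta$.

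Now I would invoke Proposition \ref{mod3.1} at scale $N$ with $\Delta = \Delta_\star$, combined with the mean estimate \eqref{e:mean} which gives $|\mathbb{E}T_{u,v}-2(\phi(v)-\phi(u))|\leq C_0 N^{1/3}$ for an absolute constant $C_0$ (since $u,v$ share the same $\psi$-coordinate, so $v-u$ is a diagonal vector $(s,s)$ with $s\leq N$, placing the endpoints safely in the $\eta$-bounded regime of Proposition \ref{mod1}). The outcome is
\begin{equation*}
\mathbb{P}\bigl((\mathbf{Close}_i^{\mathtt{L}})^c\bigr) \;\leq\; C_1\exp\bigl(-c_1(\beta M^2 - C_0)\bigr),
\end{equation*}
where $C_1,c_1$ depend only on $\Delta_\star$ (hence only on $k_1$), and are thus absolute. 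Choosing $M$ large enough that the right-hand side is at most $1/2$ (this is the condition on $M$ referenced in Section \ref{ss:constants}) yields $\mathbb{P}(\mathbf{Close}_i^{\mathtt{L}})\geq 1/2$ uniformly in $i$ and $\delta$; the analogous bound for $\mathbf{Close}_i^{\mathtt{R}}$ follows by the reflection $\psi\mapsto -\psi$, and substituting into the FKG product closes the argument. The main delicate point is the scaling book-keeping: one must select the sub-parallelogram $P_i$ so that rescaling to its own time length produces a parallelogram $U_{\Delta_\star}^{N}$ whose aspect ratio $\Delta_\star$ is an absolute constant, so that the constants emerging from Proposition \ref{mod3.1} do not degrade as $\delta\to 0$ and the uniform per-factor lower bound actually holds.
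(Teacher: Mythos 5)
Your proposal is correct and follows essentially the same route as the paper: show $\P(\mathbf{Close}_i^{\mathtt{L}}),\P(\mathbf{Close}_i^{\mathtt{R}})\geq 1/2$ uniformly via Proposition \ref{mod3.1} applied at the sub-parallelogram scale (choosing $M$ large), then close with FKG over the $\Theta(\delta^{-3/2})$ increasing factors. The paper simply writes ``using Proposition \ref{mod3.1}\ldots it is clear that $\P(\mathbf{Close}_i^{\mathtt{L}})\geq 1-C_1e^{-c_1M^2}$'' and leaves the scaling book-keeping implicit, whereas you make explicit the shrinkage of the constraint region to $P_i$, the translation/rescaling identification $P_i\cong U_{\Delta_\star}^{N}$ with $N=2k_1\delta^{3/2}n$ and $\Delta_\star=(2k_1)^{-2/3}$ an absolute constant, and the recentering from $\mathbb{E}T_{u,v}$ to $2(\phi(v)-\phi(u))$ via \eqref{e:mean}; these are precisely the details the reader must supply to apply Proposition \ref{mod3.1} here, so your writeup is a faithful (and somewhat more careful) rendering of the intended argument.
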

The proof of Lemma \ref{l:close} has been postponed to the end of Section
\ref{ss:far}.

Finally, we define the event $\mathbf{Inside}$ by
\begin{equation}
	\label{eqn:15.3}
	\mathbf{Inside}= \mathbf{Far} \cap \mathbf{Close}.
\end{equation}
The following lower bound on $\P(\mathbf{Inside})$ easily follows from Lemma \ref{far1} and Lemma \ref{l:close}. 

\begin{lemma}
	\label{inside}
		For all $\delta$ sufficiently small, and for the parameters chosen as in
		Section \ref{ss:constants}, we have that there exist positive constants $C,c$
		(independent of $\delta$) such that for all $n$ large
		enough (depending on $\delta$),
		\begin{displaymath}
			\mathbb{P}\left( \mathbf{Inside} \right)\geq
			Ce^{-c\delta^{-3/2}}.
		\end{displaymath}
\end{lemma}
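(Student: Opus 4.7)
The plan is to deduce the lower bound on $\P(\mathbf{Inside})$ directly from the lower bounds already established in Lemma \ref{far1} and Lemma \ref{l:close} together with the FKG inequality, since $\mathbf{Inside}$ is defined as the intersection $\mathbf{Far} \cap \mathbf{Close}$ and both events are monotone in the underlying vertex weights.

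First I would observe that both constituent events are increasing events with respect to the natural partial order on the environment $\{\omega_v\}_{v\in \Z^2}$. For $\mathbf{Far}$, this is immediate from the definition \eqref{e:fardef}: the constrained passage time $T^{\delta}_{u,v}$ is a supremum of path weights $\ell(\gamma)$, and is therefore coordinatewise nondecreasing in the $\omega_v$'s, so the event that $T^{\delta}_{u,v}$ exceeds a fixed deterministic threshold (uniformly over all allowed $u,v$) is increasing. For $\mathbf{Close}$, each event $\mathbf{Close}_i^{\mathtt{L}}$ and $\mathbf{Close}_i^{\mathtt{R}}$ asks that the infimum (over the relevant pairs) of $T^{1}_{u,v} - 2(\phi(v)-\phi(u))$ be bounded below by a deterministic quantity; since constrained last passage times are themselves increasing functions of the weights, each of these events is increasing, and hence so is their intersection $\mathbf{Close}$.

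Next I would apply the FKG inequality for i.i.d.\ product measure (here the rate-one exponential product measure on $\Z^2$) to the two increasing events $\mathbf{Far}$ and $\mathbf{Close}$ to obtain
\begin{equation*}
\P(\mathbf{Inside}) \;=\; \P(\mathbf{Far}\cap \mathbf{Close}) \;\geq\; \P(\mathbf{Far})\cdot\P(\mathbf{Close}).
\end{equation*}
Plugging in the bounds $\P(\mathbf{Far})\geq C e^{-c\delta^{-3/2}}$ from Lemma \ref{far1} and $\P(\mathbf{Close})\geq C' e^{-c'\delta^{-3/2}}$ from Lemma \ref{l:close} then yields
\begin{equation*}
\P(\mathbf{Inside}) \;\geq\; CC'\, e^{-(c+c')\delta^{-3/2}},
\end{equation*}
which is of the required form after relabelling the constants.

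Since this proof is essentially a one-line application of FKG, there is no real obstacle; the only point that warrants a brief remark is the verification that the events are increasing, which is why I would spell out explicitly that every constrained passage time appearing in the definitions of $\mathbf{Far}$ and $\mathbf{Close}$ is monotone in the weights, so that the FKG inequality is indeed applicable. Note that one cannot simply multiply the probabilities by independence, since both $\mathbf{Far}$ and $\mathbf{Close}$ are measurable with respect to weights inside $U_\delta$ (and, in the case of $\mathbf{Close}$, possibly a slightly larger region depending on how one reads the superscript), so FKG is genuinely needed rather than independence.
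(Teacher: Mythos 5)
Your proposal is correct and follows essentially the same route as the paper: observe that $\mathbf{Far}$ and $\mathbf{Close}$ are increasing events, apply the FKG inequality to get $\P(\mathbf{Inside})\geq \P(\mathbf{Far})\P(\mathbf{Close})$, and then invoke Lemmas \ref{far1} and \ref{l:close}. The extra detail you supply (spelling out monotonicity of constrained passage times and noting why independence cannot be used in place of FKG) is accurate but not a departure from the paper's argument.
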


\begin{proof}
Observe that both $\mathbf{Far}$ and $\mathbf{Close}$ are increasing events
(i.e., for two weight configurations that are point-wise ordered, the event
being satisfied for the smaller weight configuration implies that it is also
satisfied for the larger one), and hence by the FKG inequality,
$$\P(\mathbf{Inside})\geq \P(\mathbf{Far})\P(\mathbf{Close}).$$
The lemma immediately follows from Lemma \ref{far1} and Lemma \ref{l:close}. 
\end{proof}

\subsubsection{The event $\mathbf{Bar}$}
Before proceeding, we first remark that the events $\mathbf{Bar}^\mathtt{L}$
and $\mathbf{Bar}^\mathtt{R}$ will be defined symmetrically about the line
$\left\{ \psi(u)=0 \right\}$. Hence, it suffices to give the details of the
construction of $\mathbf{Bar}^\mathtt{L}$. Our motivation while defining
$\mathbf{Bar}^\mathtt{L}$ is to obtain an environment where paths from
$\mathbf{0}$ to $\mathbf{n}$ which enter the region $\mathtt{Left}_\delta$ incur a loss in weight. To achieve this, $\mathbf{Bar}^\mathtt{L}$ will consist of two
events $\mathbf{Short}^\mathtt{L}$ and $\mathbf{Long}^\mathtt{L}$ which will
give the necessary weight deficits for short and long excursions into the
region $\mathtt{Left}_\delta$ respectively. 

\medskip
\noindent
\textbf{Definition of $\mathbf{Long^{\mathtt{L}}}$:}
For $k_1$ as before, we define
the event $\mathbf{Long}^\mathtt{L}$ as 
\begin{equation}
	\label{eqn:15.5}
	\mathbf{Long}^\mathtt{L}=\left\{\sup_{u,v\in  U_\delta^\mathtt{L}
	:\phi(v)-\phi(u)\geq k_1
	\delta^{3/2} n} \left\{ T_{u,v}^{ \mathtt{Left}_\delta}
	-2(\phi(v)-\phi(u))\right\} <\frac{(\phi(v)-\phi(u))}{\delta^{3/2}
	n}( \sqrt{\delta} n^{1/3})\right\}.
\end{equation}

We have the following lower bound of the probability of the above event. 
\begin{lemma}
	\label{far3}
	 For all
	$\delta$ sufficiently small, and for the parameters chosen as in Section
	\ref{ss:constants}, we have that there exist constants $C,c$ (independent of
	$\delta$) such that for all large enough $n$ (depending on
	$\delta$),  
	\begin{displaymath}
		\mathbb{P}\left( \mathbf{Long}^\mathtt{L} \right)	\ge Ce^{-c\delta^{-3/2}}.
	\end{displaymath}
\end{lemma}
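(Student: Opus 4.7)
The plan is to exhibit a favourable event $\mathcal{E}$ of probability $\ge Ce^{-c\delta^{-3/2}}$ on which $\mathbf{Long}^\mathtt{L}$ holds; trying to show $\mathbf{Long}^\mathtt{L}$ typically holds would fail for $\delta$ small. The event $\mathcal{E}$ combines a barrier event in a narrow strip adjacent to $U_\delta^\mathtt{L}$ with an auxiliary event controlling paths that leave the strip.

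I would partition the strip $S:=\{-2\delta n^{2/3}\le\psi(\cdot)\le-\delta n^{2/3}\}\cap\{0\le\phi(\cdot)\le 2n\}$ into $\Theta(\delta^{-3/2}/A)$ disjoint sub-parallelograms $V_i$ of dimensions $A\delta^{3/2}n\times\delta n^{2/3}$, for a large absolute constant $A$. Each $V_i$, up to translation, is a copy of $U_\Delta^{A\delta^{3/2}n/2}$ with $\Delta=(2/A)^{2/3}$, so Lemma \ref{mod3.2} yields (for any large fixed $M_*$) a constant $c_0=c_0(A,M_*)>0$ with $\P(E_i)\ge c_0$, where $E_i:=\{\sup_{u\in\underline{V_i},v\in\overline{V_i}}T_{u,v}\le 2A\delta^{3/2}n-M_*(A\delta^{3/2}n)^{1/3}\}$. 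Disjointness of the $V_i$ gives independence, so $\P(\bigcap_i E_i)\ge c_0^{\delta^{-3/2}/A}=e^{-c_1\delta^{-3/2}}$. On $\bigcap_i E_i$, for any path $\gamma\colon u\to v$ in $\mathtt{Left}_\delta$ with $t:=\phi(v)-\phi(u)\ge k_1\delta^{3/2}n$ (setting $k:=t/(\delta^{3/2}n)$), if $\gamma\subseteq S$ then $\gamma$ fully crosses at least $\lfloor k/A\rfloor-O(1)$ of the $V_i$, and summing the $E_i$-bounds yields $\ell(\gamma)\le 2t-c_2kA^{-2/3}\delta^{1/2}n^{1/3}$, well within the tolerance $k\delta^{1/2}n^{1/3}$ prescribed by $\mathbf{Long}^\mathtt{L}$.

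For paths $\gamma$ that leave $S$ at some point (transversal fluctuation $\ge\delta n^{2/3}$), I would introduce a second event $E'$ depending only on vertex weights in the complement of $S$, requiring the passage time over paths from $u$ to $v$ avoiding the $V_i$ to be at most $2t+k^{2/3}t^{1/3}$ uniformly over admissible $(u,v)$. The main obstacle is bounding $\P(E')$: the naive union bound at dyadic scale $t_j=2^jk_1\delta^{3/2}n$ produces an $O(\delta^{-3/2}/(k_1 2^j))$ prefactor multiplying the Proposition \ref{mod1} tail $Ce^{-c 2^j k_1}$, summing to $O(\delta^{-3/2}/k_1)e^{-ck_1}$, which is not small for fixed $k_1$ and $\delta$ small. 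A workable route is to combine the FKG inequality (the relevant events are monotone decreasing in the vertex weights in $S^c$) with effective dyadic chaining yielding $\P(E')\ge\exp(-O(\delta^{-3/2}/k_1^{\,2}))$; choosing $k_1$ sufficiently large so this exponent is at most $c_1\delta^{-3/2}/2$, and using independence of $E'$ from $\bigcap_i E_i$, we conclude $\P(\mathbf{Long}^\mathtt{L})\ge\P(\bigcap_i E_i\cap E')\ge Ce^{-c\delta^{-3/2}}$.
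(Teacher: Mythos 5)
Your plan of constructing an explicit favourable event is an attractive alternative, but as written it has a genuine gap that the paper's proof carefully avoids. The heart of the difficulty, which you correctly identify, is controlling paths that leave the narrow strip $S$ adjacent to $U_\delta^\mathtt{L}$ and wander arbitrarily far into $\mathtt{Left}_\delta$. Your proposed fix is hand-waved: you claim an ``effective dyadic chaining'' plus FKG gives $\P(E')\ge\exp(-O(\delta^{-3/2}/k_1^2))$ without exhibiting the chaining argument, and then you say you would choose $k_1$ large -- but $k_1$ is already fixed in Section \ref{ss:constants} (it equals $\max\{K,k_0\}+2$) and cannot be freely dialed up at this stage without unwinding the rest of the construction. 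Moreover, even granting $\bigcap_i E_i$ and $E'$, it is not clear how to control paths that weave in and out of $S$; you never combine the two events into a single bound on $T_{u,v}^{\mathtt{Left}_\delta}$, and a path may spend part of its time in $S$ (where $E_i$ gives a deficit) and part outside (where $E'$ says something about paths that completely avoid the $V_i$, not about excursions). There is also a smaller issue that your $E_i$, using the unconstrained $T_{u,v}$, are not independent as claimed; you would need $T_{u,v}^{V_i}$ to get weights measurable in $V_i$, which is a fixable but nontrivial change to the bookkeeping.

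The paper takes a different and cleaner route: it never tries to exhibit a favourable environment for $\mathbf{Long}^\mathtt{L}$ directly. Instead, Lemma \ref{far2} shows that for each pair of time-slices $(U_{\delta,i}^\mathtt{L}, U_{\delta,j}^\mathtt{L})$ at distance $k=j-i$, the event $\mathcal{A}_{i,j}$ that some $u,v$ in those slices has an atypically heavy $\mathtt{Left}_\delta$-constrained bridge has probability at most $Ce^{-ck}$. The trick is to condition on $\mathcal{A}_{i,j}$ (measurable on $\mathtt{Left}_\delta$) and use two \emph{independent} events $\mathcal{A}_1,\mathcal{A}_2$ living on $(\mathtt{Left}_\delta)^c$ that, with probability $\ge 1/2$ each, connect $u$ and $v$ to two fixed nearby points $w_1,w_2$ with only a small loss. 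This forces the \emph{unconstrained} point-to-point passage time $T_{w_1,w_2}$ to be atypically large, and Proposition \ref{mod1} applied to $T_{w_1,w_2}$ gives exactly the $e^{-ck}$ tail. This reflection/conditioning argument is precisely what circumvents the need to control weights arbitrarily far out in $\mathtt{Left}_\delta$: the quantifier over the unbounded region is converted into an upper-tail estimate for a single fixed-endpoint passage time. Then, since the $\mathcal{A}_{i,j}^c$ are decreasing, the FKG inequality gives $\P(\bigcap\mathcal{A}_{i,j}^c)\ge\prod_{k\ge k_0}(1-Ce^{-ck})^{\delta^{-3/2}}$, and summability of $\sum_k e^{-ck}$ yields the $e^{-c\delta^{-3/2}}$ lower bound. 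If you want to salvage your approach you would need to supply the missing chaining argument for $E'$ and carefully integrate $\bigcap E_i$ with $E'$, which seems harder than the path the paper takes.
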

The proof of Lemma \ref{far3} is postponed to Section
\ref{ss:lb1}.

\medskip
\noindent
\textbf{Definition of $\mathbf{Short^{\mathtt{L}}}$:}
Similar to the definition of $\mathbf{Close}^{\mathtt{L}}$,
$\mathbf{Short}^\mathtt{L}$ will also be defined as the intersection of
$\mathbf{Short}^\mathtt{L}_i$, where $i$ shall index the location of the short excursion. 
%
%
For $i\in\{0,1,\cdots,\frac{1}{k_1
\delta^{3/2}}-2\}$, we define $\mathbf{Short}^\mathtt{L}_i$ by setting $(\mathbf{Short}^\mathtt{L}_i)^c$ to be the event
that for some
	$u,v\in
	U_\delta^\mathtt{L}$
	with ${2}ik_1 \delta^{3/2}n\leq \phi(u)\leq \phi(v)\leq {2}(i+2)k_1\delta^{3/2} n$, 
		there exists $\gamma\colon u\rightarrow v$ satisfying
		$\gamma\setminus \left\{ u,v \right\}\subseteq
		\mathtt{Left}_\delta$ and $\gamma \not\subseteq
		(\mathtt{Left}_{M\delta})^c$ such that 
		\begin{equation}
			\label{eqn:15.51}
			\ell(\gamma)>2(\phi(v)-\phi(u))-\beta M^2 (2k_1
		\delta^{3/2}n)^{1/3}.
		\end{equation}
Having defined the events $\mathbf{Short}^\mathtt{L}_i$, we simply
		define
		\begin{equation}
			\label{eqn:15.6}
			\mathbf{Short}^\mathtt{L}=\bigcap_{i=0}^{\frac{1}{k_1
			\delta^{3/2}}-2}\mathbf{Short}^\mathtt{L}_i.
		\end{equation}
We have the following lower bound for $\P(\mathbf{Short}^\mathtt{L})$.
\begin{lemma}
	\label{short2}
		For all $\delta$ sufficiently small, and for the parameters chosen
		as in Section \ref{ss:constants}, we have that there exist positive constants $C,c$
		(independent of $\delta$) such that for $n$ large
		enough depending on $\delta$,
		\begin{displaymath}
			\mathbb{P}\left( \mathbf{Short}^\mathtt{L} \right)\geq
			Ce^{-c\delta^{-3/2}}.
		\end{displaymath}
\end{lemma}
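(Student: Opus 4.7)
The plan is to use the FKG inequality to reduce the problem to bounding each individual slab event $\mathbf{Short}^\mathtt{L}_i$ from below, and then appeal to Lemma \ref{short1} for the uniform single-slab bound. The first observation is that for each $i$, the event $\mathbf{Short}^\mathtt{L}_i$ is decreasing in the underlying i.i.d.\ vertex weights: indeed, $(\mathbf{Short}^\mathtt{L}_i)^c$ asserts the existence of a path $\gamma$ with $\ell(\gamma)$ exceeding a deterministic threshold, and increasing any weight can only increase $\ell(\gamma)$. Since the vertex weights form a product measure, the FKG inequality will yield
\begin{displaymath}
\mathbb{P}(\mathbf{Short}^\mathtt{L}) \;\geq\; \prod_{i=0}^{\frac{1}{k_1\delta^{3/2}}-2}\mathbb{P}(\mathbf{Short}^\mathtt{L}_i).
\end{displaymath}
This step is essential because the slab events are not jointly independent: the time strips for indices $i$ and $i+1$ share the sub-slab $\{2(i+1)k_1\delta^{3/2}n \leq \phi \leq 2(i+2)k_1\delta^{3/2}n\}$.

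Next, invoking Lemma \ref{short1} (the single-slab estimate), we will obtain a uniform upper bound $\mathbb{P}((\mathbf{Short}^\mathtt{L}_i)^c) \leq e^{-c_0 M^3}$ for every $i$, with $c_0>0$ an absolute constant independent of $\delta$ and $i$, provided $M$ is taken sufficiently large as specified in Section \ref{ss:constants}. Morally this is Proposition \ref{mod4} rescaled to a slab of time-length of order $k_1\delta^{3/2}n$: a path starting and ending on $U_\delta^\mathtt{L}$ while also reaching $\mathtt{Left}_{M\delta}$ has transversal fluctuation of order $M$ in units of the intrinsic KPZ transversal scale $(k_1\delta^{3/2}n)^{2/3}$, and Proposition \ref{mod4}, after this rescaling, forces a weight deficit precisely of the form $\beta M^2(2k_1\delta^{3/2}n)^{1/3}$ with probability at least $1-e^{-c_0 M^3}$; this is also where the choice $\beta = 2^{-2/3}\xi$ recorded in Section \ref{ss:constants} comes from.

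Combining the two displays, and choosing $M$ large enough (as already stipulated) so that $e^{-c_0 M^3}\leq 1/2$, the elementary inequality $(1-x)^N \geq e^{-2xN}$ valid for $x\in[0,1/2]$ gives
\begin{displaymath}
\mathbb{P}(\mathbf{Short}^\mathtt{L}) \;\geq\; \bigl(1-e^{-c_0 M^3}\bigr)^{1/(k_1\delta^{3/2})} \;\geq\; \exp\!\left(-\frac{2 e^{-c_0 M^3}}{k_1}\,\delta^{-3/2}\right) \;=\; Ce^{-c\delta^{-3/2}},
\end{displaymath}
with $C=1$ and $c=2 e^{-c_0 M^3}/k_1$ positive constants independent of $\delta$, as required.

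The main technical work is encapsulated in Lemma \ref{short1} itself: because $u$ and $v$ vary over an entire line segment $U_\delta^\mathtt{L}$ rather than being fixed, adapting Proposition \ref{mod4} requires a standard discretization/chaining step over the endpoints, with only a polynomial-in-$n$ loss which is absorbed into $e^{-c_0 M^3}$ by the cubic exponent in $M$ (this is why $M$ is eventually taken large). Once Lemma \ref{short1} is in hand, the present lemma is just a multiplicative aggregation of the single-slab estimate via FKG, and the number of slabs ($\sim \delta^{-3/2}/k_1$) is exactly what produces the $e^{-c\delta^{-3/2}}$ scaling.
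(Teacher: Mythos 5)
Your proof is correct and follows essentially the same route as the paper: observe that the $\mathbf{Short}^\mathtt{L}_i$ are decreasing, apply FKG to pass to a product, invoke Lemma \ref{short1} for the uniform single-slab bound, and conclude from the $\Theta(\delta^{-3/2})$ count of slabs. The paper simply writes $(1-C_1e^{-c_1M^3})^{\delta^{-3/2}/k_1-1}\geq(1/2)^{c_2\delta^{-3/2}}$ in place of your $(1-x)^N\geq e^{-2xN}$ step, but the substance is identical.
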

\noindent
The proof of Lemma \ref{short2} is postponed to Section
\ref{ss:lb2}.

\bigskip

Having completed the construction of $\mathbf{Bar}^\mathtt{L}$ by setting 
$$\mathbf{Bar}^\mathtt{L}:=\mathbf{Long}^\mathtt{L}\cap \mathbf{Short}^\mathtt{L}$$
we define
$\mathbf{Bar}^\mathtt{R}$ by symmetry about the line $\{\psi(u)=0\}$. We will not
repeat the details for $\mathbf{Bar}^\mathtt{R}$, but would like to record
that just in the same way as $\mathbf{Bar}^\mathtt{L}$, we also have
\begin{equation}
	\label{eqn:15.7}
	\mathbf{Bar}^\mathtt{R}=  
	\mathbf{Long}^\mathtt{R} \cap \mathbf{Short}^\mathtt{R}
\end{equation}
for analogously defined events 
$\mathbf{Long}^\mathtt{R}$ and $\mathbf{Short}^\mathtt{R}$. 

We have the following lower bound for the probability of $\mathbf{Bar}:=\mathbf{Bar^{\mathtt{L}}}\cap \mathbf{Bar^{\mathtt{R}}}$.

\begin{lemma}
\label{l:bar}
		For all $\delta$ small enough, and for the parameters chosen as in
		Section \ref{ss:constants}, we have that there exist positive constants $C,c$ 
		(independent of $\delta$) such that for all $n$ large
		enough (depending on $\delta$),
		\begin{displaymath}
			\mathbb{P}\left( \mathbf{Bar} \right)\geq
			Ce^{-c\delta^{-3/2}}.
		\end{displaymath}
\end{lemma}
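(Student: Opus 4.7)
The plan is to derive this from the previously stated lower bounds on $\mathbf{Long}^\mathtt{L}$ and $\mathbf{Short}^\mathtt{L}$ (Lemmas \ref{far3} and \ref{short2}) by combining two ingredients: the independence of $\mathbf{Bar}^\mathtt{L}$ and $\mathbf{Bar}^\mathtt{R}$, and the FKG inequality applied within each side. Both ingredients are almost immediate from the definitions, so no substantial new work is required, but some care is needed in tracking measurability.

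First I would verify that $\mathbf{Bar}^\mathtt{L}$ is measurable with respect to the weights $\{\omega_w\}_{w \in \mathtt{Left}_\delta}$. The event $\mathbf{Long}^\mathtt{L}$ is defined through $T_{u,v}^{\mathtt{Left}_\delta}$ for $u,v \in U_\delta^\mathtt{L}$; since $\ell(\gamma)$ sums weights over $\gamma \setminus \{u,v\}$ and the constraint forces $\gamma \setminus \{u,v\} \subseteq \mathtt{Left}_\delta$, this quantity is indeed a function only of weights in $\mathtt{Left}_\delta$. The same endpoint-exclusion convention makes $\mathbf{Short}^\mathtt{L}$ a function only of the weights in $\mathtt{Left}_\delta$, since the paths $\gamma$ under consideration satisfy $\gamma \setminus \{u,v\} \subseteq \mathtt{Left}_\delta$. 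By symmetry, $\mathbf{Bar}^\mathtt{R}$ depends only on weights in $\mathtt{Right}_\delta$, and since $\mathtt{Left}_\delta$ and $\mathtt{Right}_\delta$ are disjoint, the events $\mathbf{Bar}^\mathtt{L}$ and $\mathbf{Bar}^\mathtt{R}$ are independent.

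Next I would observe that $\mathbf{Long}^\mathtt{L}$ and $\mathbf{Short}^\mathtt{L}$ are both \emph{decreasing} events in the weights on $\mathtt{Left}_\delta$: $\mathbf{Long}^\mathtt{L}$ imposes a uniform upper bound on a family of constrained passage times, and $\mathbf{Short}^\mathtt{L}$ forbids the existence of paths with weight exceeding a threshold, and decreasing any $\omega_w$ can only make these conditions easier to satisfy. By the FKG inequality (applied to decreasing events, which are positively correlated in an i.i.d.\ product measure),
\[
\mathbb{P}(\mathbf{Bar}^\mathtt{L}) = \mathbb{P}(\mathbf{Long}^\mathtt{L} \cap \mathbf{Short}^\mathtt{L}) \geq \mathbb{P}(\mathbf{Long}^\mathtt{L})\, \mathbb{P}(\mathbf{Short}^\mathtt{L}) \geq C' e^{-c' \delta^{-3/2}},
\]
where the last step invokes Lemmas \ref{far3} and \ref{short2}. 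The symmetric argument gives the same lower bound for $\mathbb{P}(\mathbf{Bar}^\mathtt{R})$.

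Finally, combining the two sides using the independence established above,
\[
\mathbb{P}(\mathbf{Bar}) = \mathbb{P}(\mathbf{Bar}^\mathtt{L})\, \mathbb{P}(\mathbf{Bar}^\mathtt{R}) \geq (C')^2 e^{-2c' \delta^{-3/2}},
\]
which is of the required form $C e^{-c \delta^{-3/2}}$ after relabelling constants. The only subtlety in the argument is the measurability check in the first step; beyond that the proof is purely a bookkeeping exercise combining the four preceding lemmas, and I do not anticipate a real obstacle.
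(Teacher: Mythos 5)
Your proof is correct and follows the same route as the paper: apply FKG to the decreasing events $\mathbf{Long}^\mathtt{L}$ and $\mathbf{Short}^\mathtt{L}$ (using Lemmas \ref{far3} and \ref{short2}), obtain the bound for $\mathbf{Bar}^\mathtt{R}$ by symmetry, and then combine via the independence of $\mathbf{Bar}^\mathtt{L}$ and $\mathbf{Bar}^\mathtt{R}$. The explicit measurability check you include (noting that the endpoint-exclusion convention in $\ell$ confines both events to weights in $\mathtt{Left}_\delta$) is a useful elaboration of a point the paper leaves implicit, but the argument is the same.
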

\begin{proof}
	Since $\mathbf{Bar}^\mathtt{L}=\mathbf{Short}^\mathtt{L}\cap
	\mathbf{Long}^\mathtt{L}$, where both the events are decreasing, by
	Lemma \ref{far3}, Lemma \ref{short2} and the FKG inequality, we have
	that for $n$ large enough depending
	on $\delta$,
	\begin{displaymath}
		\mathbb{P}\left( \mathbf{Bar}^\mathtt{L}  \right)\geq
		C_1e^{-c_1\delta^{-3/2}}.
	\end{displaymath}
	By the symmetry about the line $\left\{ \psi(u)=0 \right\}$, we also obtain that
	\begin{displaymath}
		\mathbb{P}\left( \mathbf{Bar}^\mathtt{R}  \right)\geq
		C_1e^{-c_1\delta^{-3/2}}.
	\end{displaymath}
	The lemma now follows immediately by using the independence of
	$\mathbf{Bar}^{\mathtt{L}}$ and $\mathbf{Bar}^{\mathtt{R}}$.
\end{proof}

\subsection{Proofs of Theorem \ref{t:sb}, lower bound and Corollary
\ref{c:as} (i)}

As mentioned before, our main interest is in  the
event $\mathcal{E}$ defined by
\begin{equation}
	\label{eqn:15.8}
	\mathcal{E}=\mathbf{Inside} \cap \mathbf{Bar}.
\end{equation}
We first show that a small ball event is indeed satisfied on the event $\cE$.
\begin{lemma}
	\label{lower1}
	We have the deterministic inclusion $\mathcal{E}\subseteq \left\{
	\Gamma_n\subseteq U_{M\delta}  \right\}$, where the parameters are
	chosen as in Section \ref{ss:constants}. 
\end{lemma}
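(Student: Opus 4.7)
The plan is to argue by contradiction. Assume that on the event $\mathcal{E}$ we have $\Gamma_n\not\subseteq U_{M\delta}$. By the left-right symmetry implicit in the definitions of $\mathbf{Bar}^{\mathtt{L}}$ and $\mathbf{Bar}^{\mathtt{R}}$, we may suppose without loss of generality that $\Gamma_n$ visits some vertex in $\mathtt{Left}_{M\delta}$. Since $\Gamma_n$ starts and ends at $\mathbf{0},\mathbf{n}\in U_\delta$, and $\psi$ changes by $\pm 1$ along up-right paths, I can isolate a maximal subpath $\gamma_0\subseteq \Gamma_n$ whose vertices (excluding endpoints) all lie in $\mathtt{Left}_\delta$ and which contains at least one vertex in $\mathtt{Left}_{M\delta}$. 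Its endpoints $u,v$ then lie on $U_\delta^{\mathtt{L}}$. The crucial observation I shall use throughout is that, since $\Gamma_n$ is the unique geodesic from $\mathbf{0}$ to $\mathbf{n}$, the subpath $\gamma_0$ is the unique geodesic from $u$ to $v$, so $\ell(\gamma_0)=T_{u,v}$.

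The argument splits according to the time extent $\phi(v)-\phi(u)$ of the excursion. \emph{Case 1 (short excursion, $\phi(v)-\phi(u)<k_1\delta^{3/2}n$).} I pick an index $i\in\{0,\dots,\tfrac{1}{k_1\delta^{3/2}}-2\}$ with $[\phi(u),\phi(v)]\subseteq[2ik_1\delta^{3/2}n,2(i+2)k_1\delta^{3/2}n]$; such an $i$ exists because the windows have length $4k_1\delta^{3/2}n$ and consecutive windows overlap by $3k_1\delta^{3/2}n>k_1\delta^{3/2}n$, and the upper index $\tfrac{1}{k_1\delta^{3/2}}-2$ covers up to $\phi=2n$. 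Then $\gamma_0$ is a witness for the event defining $(\mathbf{Short}^{\mathtt{L}}_i)^c$, so on $\mathbf{Short}^{\mathtt{L}}_i\subseteq \mathbf{Bar}$ we must have
\[
\ell(\gamma_0)\leq 2(\phi(v)-\phi(u))-\beta M^2(2k_1\delta^{3/2}n)^{1/3}.
\]
Simultaneously, on $\mathbf{Close}^{\mathtt{L}}_i\subseteq \mathbf{Inside}$, the strict inequality
\[
T_{u,v}^{1}>2(\phi(v)-\phi(u))-\beta M^2(2k_1\delta^{3/2}n)^{1/3}
\]
holds. Combining these with the trivial bound $T_{u,v}^{1}\leq T_{u,v}=\ell(\gamma_0)$ (using the fact that the passage time restricted to $U_1$ is bounded by the unrestricted passage time) yields a contradiction.

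\emph{Case 2 (long excursion, $\phi(v)-\phi(u)\geq k_1\delta^{3/2}n$).} Since $k_1\geq K$, the event $\mathbf{Far}\subseteq \mathbf{Inside}$, applied to $u,v\in U_\delta^{\mathtt{L}}$, gives
\[
T_{u,v}^{\delta}\geq 2(\phi(v)-\phi(u))+\tfrac{\phi(v)-\phi(u)}{\delta^{3/2}n}\sqrt{\delta}\,n^{1/3}.
\]
The event $\mathbf{Long}^{\mathtt{L}}\subseteq \mathbf{Bar}$ provides the strict reverse inequality for $T_{u,v}^{\mathtt{Left}_\delta}$, and since $\gamma_0\setminus\{u,v\}\subseteq \mathtt{Left}_\delta$ makes $\gamma_0$ an admissible path for this constrained passage time, chaining the bounds produces
\[
\ell(\gamma_0)\leq T_{u,v}^{\mathtt{Left}_\delta}<T_{u,v}^{\delta}\leq T_{u,v}=\ell(\gamma_0),
\]
a contradiction. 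In either case we reach a contradiction, so $\Gamma_n\subseteq U_{M\delta}$ on $\mathcal{E}$.

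The argument is essentially bookkeeping once the events are set up correctly; the only delicate points are (i) verifying that the windows $\{[2ik_1\delta^{3/2}n,2(i+2)k_1\delta^{3/2}n]\}_i$ actually cover every possible short excursion location in $[0,2n]$, and (ii) making sure that the strict versus non-strict inequalities in the definitions of $\mathbf{Close}$ and $\mathbf{Short}$ (respectively $\mathbf{Far}$ and $\mathbf{Long}$) align so that, together with the trivial upper bounds $T^1_{u,v},T^{\delta}_{u,v}\leq T_{u,v}$ and the identity $\ell(\gamma_0)=T_{u,v}$, the comparisons really are contradictory rather than merely tight. No further new ingredient is required.
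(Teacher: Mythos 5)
Your proof is correct and follows essentially the same two-case contradiction argument as the paper, ruling out short excursions via $\mathbf{Short}^\mathtt{L}_i$ versus $\mathbf{Close}^\mathtt{L}_i$ and long excursions via $\mathbf{Long}^\mathtt{L}$ versus $\mathbf{Far}$. The one small stylistic improvement over the paper's write-up is that you close the comparisons via $\ell(\gamma_0)=T_{u,v}$ together with the trivial inequalities $T^1_{u,v},T^\delta_{u,v}\leq T_{u,v}$, which cleanly resolves the slight mismatch in the paper between the definition of $\mathbf{Close}^\mathtt{L}_i$ (stated in terms of $T^1_{u,v}$) and the paper's prose claim that the competing path lies in $U_\delta$; your chain avoids having to exhibit a competing path at all. (Minor arithmetic slip: consecutive windows $[2ik_1\delta^{3/2}n,2(i+2)k_1\delta^{3/2}n]$ overlap by $2k_1\delta^{3/2}n$, not $3k_1\delta^{3/2}n$, but this does not affect the covering argument.)
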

\begin{proof}
	We prove by contradiction. Clearly if $\Gamma_{n}\subseteq U_{\delta}$, we are done, so let us suppose that for some weight configuration in the event $\mathcal{E}$, we have
	that $\Gamma_n\not\subseteq U_\delta$. First consider the case that
	$\Gamma_n\cap\mathtt{Left}_\delta\neq \emptyset$. In this
	case, there must exist $t_1,t_2\in \mathbb{N}$ satisfying $0\leq
	t_1<t_2\leq 2n$ such that $	\left.\Gamma_n\right|_{[t_1+1,t_2-1]} \subseteq
	\mathtt{Left}_\delta$ and
	$\Gamma_n(t_1)=\Gamma_n(t_2)=-\delta n^{2/3}$. Let us define the points
	$u,v$ by $(\phi(u),\psi(u))=(t_1,\Gamma_n(t_1))$ and
	$(\phi(v),\psi(v))=(t_2,\Gamma_n(t_2))$, that is, $u$ and $v$ are the
	locations of the geodesics at times $t_1$ and $t_2$ respectively. We need to consider two separate cases (refer to Figure \ref{fig:lb-basic}).

\smallskip
\noindent	
	\textbf{Case 1:} $(\phi(v)-\phi(u)) \geq k_1
	\delta^{3/2} n$.
	
	\noindent	
	In this case we reach a contradiction due to the definitions of the
	events
	$\mathbf{Long}^\mathtt{L}$ and $\mathbf{Far}$. Indeed,
	$\mathbf{Long}^\mathtt{L}$ implies that
	\begin{equation}
		\label{eqn:15.9}
		\ell(\left.\Gamma_n\right|_{[t_1,t_2]})< 2(\phi(v)-\phi(u))+\frac{\phi(v)-\phi(u)}{\delta^{3/2}
	n}(\sqrt{\delta} n^{1/3})
	\end{equation}
	while $\mathbf{Far}$ implies (note that $k_1>K$ due to our choice of
	the parameters) that there exists a path
	$\gamma\colon u\rightarrow v$ such that $\gamma \subseteq U_\delta $
	satisfying 
	\begin{equation}
		\label{eqn:15.10}
		\ell(\gamma)\geq 2(\phi(v)-\phi(u))+\frac{\phi(v)-\phi(u)}{\delta^{3/2}
	n}( \sqrt{\delta} n^{1/3}).
	\end{equation}
	It is clear that \eqref{eqn:15.9} and \eqref{eqn:15.10} contradict the
	fact that $\left.\Gamma_n\right|_{[t_1,t_2]}$ is a geodesic from $u$ to
	$v$. 
	
\smallskip
\noindent	
	\textbf{Case 2:} $(\phi(v)-\phi(u)) < k_1
	\delta^{3/2} n$. 

\noindent	
	In this scenario,	
	 there exists an $i_0\in \left\{ 0,1,\cdots,
	\frac{1}{k_1 \delta^{3/2}}-2 \right\}$ satisfying $2i_0 k_1 \delta^{3/2}
	n \leq \phi(u)\leq \phi(v)\leq 2(i_0+2) k_1 \delta^{3/2}n$. Now, the
	definition of the event $\mathbf{Short}^\mathtt{L}_{i_0}$ and
	$\mathbf{Close}$ forces $\left.\Gamma_n\right|_{[t_1,t_2]} \subseteq
	U_{M\delta}$. Indeed, if we had $\left.\Gamma_n\right|_{[t_1,t_2]}\cap
	\mathtt{Left}_{M\delta}\neq \emptyset$, the event $\mathbf{Short}^\mathtt{L}_{i_0}$ would
	imply that
	\begin{equation}
		\label{eqn:15.11}
		\ell(\left.\Gamma_n\right|_{[t_1,t_2]}) \leq 2(\phi(v)-\phi(u))-\beta M^2 (2k_1
		\delta^{3/2}n)^{1/3}.
	\end{equation}
	On the other hand, the event $\mathbf{Close}^\mathtt{L}_{i_0}$ would
	imply that there exists $\gamma\colon u\rightarrow v$ satisfying
	$\gamma\subseteq U_\delta $ along with
	\begin{equation}
		\label{eqn:15.12}
		\ell(\gamma) > 2(\phi(v)-\phi(u))-\beta M^2 (2k_1
		\delta^{3/2}n)^{1/3},
	\end{equation}
	thereby contradicting that $\left.\Gamma_n\right|_{[t_1,t_2]}$ is a
	geodesic between $u$ and $v$. In effect, we have shown that
	$\left.\Gamma_n\right|_{[t_1,t_2]}\subseteq U_{M\delta}$. 	
	
	By an identical
	reasoning and the symmetric definition of the event $\mathbf{Bar}^\mathtt{R}$, we
	can handle the case $\Gamma_n \cap \mathtt{Right}_\delta\neq \emptyset$, and this completes the proof.
\end{proof}

We are now ready to complete the proof of the lower bound in Theorem \ref{t:sb}. 

\begin{proof}[Proof of Theorem \ref{t:sb}, lower bound]
	In view of Lemma \ref{lower1} (we are using that $M$ is a fixed
	constant), we need only show that for all $\delta$
	small enough, we have that there exist positive constants $C,c$
	(independent of $\delta$) such that for all $n$ large enough depending on $\delta$,
	\begin{equation}
		\label{eqn:14.10}
		\mathbb{P}\left( \mathcal{E} \right)\geq Ce^{-c\delta^{-3/2}},
	\end{equation}
		where all the parameters are obtained as described in Section
	\ref{ss:constants}.
	By the definition of the event $\mathcal{E}$
	and the independence of $\mathbf{Bar}$ and
	$\mathbf{Inside}$, we have that
	\begin{equation}
		\label{eqn:14.8}
		\mathbb{P}\left( \mathcal{E} \right)\geq \mathbb{P}\left(
		\mathbf{Bar}\right) \mathbb{P}\left( \mathbf{Inside} \right).
	\end{equation}
		The lower bound for $\mathbb{P}\left( \mathbf{Inside} \right)$ follows
	from Lemma \ref{inside} and the lower bound for $\mathbb{P}\left(
	\mathbf{Bar}
	\right)$ follows from Lemma \ref{l:bar}.
	Piecing these ingredients
	together, we obtain that for $n$ large enough depending on
	$\delta$, \eqref{eqn:14.10} holds.
\end{proof}

Before completing the proofs postponed earlier in this section, we quickly complete the straightforward proof of Corollary \ref{c:as} (i) using Theorem \ref{t:sb}. 
%
%
\begin{proof}[Proof of Corollary \ref{c:as} (i)]
	By using the definition $\pi_{n}(s):= n^{-2/3}\Gamma_{n}(2ns)$ for
	$2ns\in \mathbb{Z}$ along with Theorem \ref{t:sb}, we have 
	\begin{displaymath}
		C_2e^{-c_2\delta^{-3/2}}\leq \P\left (\sup_{s\in [0,1],2ns\in
		\mathbb{Z}}
		|\pi_n(s)|\leq \delta \right) \leq C_1e^{-c_1\delta^{-3/2}}.
	\end{displaymath}
	Since $\pi_n(s)$ is linearly interpolated to all values $s\in [0,1]$ by
	using the values of $\pi_n(s)$ for $s\in [0,1]\cap
	\frac{1}{2n}\mathbb{Z}$, the above immediately implies
	\begin{displaymath}
		C_2e^{-c_2\delta^{-3/2}}\leq \P\left (\sup_{s\in [0,1]}
		|\pi_n(s)|\leq \delta \right) \leq C_1e^{-c_1\delta^{-3/2}},
	\end{displaymath}
	Now, we just need to pass to the limit. To do
	this, first note that the mapping $f\mapsto \sup_{s\in [0,1]}f(s)$ is a
	continuous map from $C[0,1]$ to $\mathbb{R}^+$, where the former is
equipped with the topology of uniform convergence and the latter is
equipped with the Euclidean topology. Consider $\pi$, a
subsequential weak limit $\pi$ of $\pi_n$, that is,
$\pi_{n_i}\Rightarrow\pi$ as $i\rightarrow
\infty$ ($\Rightarrow$ denotes weak convergence) for some subsequence $\left\{ n_i \right\}$. Then by the continuous mapping theorem, we have that
\begin{displaymath}
	\sup_{s\in[0,1]}\pi_{n_i}(s)\Rightarrow \sup_{s\in[0,1]}\pi(s)
\end{displaymath}
as $i\rightarrow \infty$, and we get the result by applying the Portmanteau theorem.
%
\end{proof}
\medskip
The rest of this section is devoted to the proofs of Lemma \ref{far1}, Lemma
\ref{l:close}, Lemma \ref{far3} and Lemma \ref{short2}. Before proceeding
with the proofs, we 
introduce the notation $U_{\delta,i}^n$ for the $i$th rectangle when the strip
$U_\delta^n$ into $\delta^{-3/2}$ many smaller rectangles.
That is, let 
\begin{equation}
	\label{eqn:0}
	U_{\delta,i}^n= \left\{ -\delta n^{2/3}\leq
\psi(u)\leq \delta n^{2/3} \right\}\cap \left\{ 2i\delta^{3/2} n\leq \phi(u)\leq
2(i+1)\delta^{3/2} n \right\}. 
\end{equation}
For convenience, we make the above definition for all
$i\in \mathbb{Z}$ instead of just $i\in\left\{ 0,\dots,\delta^{-3/2}-1
\right\}$, though only the values in $\left\{ 0,\dots,\delta^{-3/2}-1
\right\}$ correspond to subrectangles in $U_\delta^n$. Again, we will simply write $U_{\delta,i}^n$ as $U_{\delta,i}$.
Similar to the definitions \eqref{eqn:1.10.1} for the left and right sides of
$U_\delta^n$, we denote the left and right sides of $U_{\delta,i}^n$ by
$U_{\delta,i}^{n,\mathtt{L}}$ and $U_{\delta,i}^{n,\mathtt{R}}$
respectively. These will be abbreviated to $U_{\delta,i}^{\mathtt{L}}$
and $U_{\delta,i}^{\mathtt{R}}$.

\subsection{Lower bounds for events inside $U_{\delta}$}
\label{ss:far}
This subsection is devoted to the proofs of Lemma \ref{far1} and Lemma \ref{l:close}, i.e., we prove the lower bounds for the probabilities of the events $\mathbf{Far}$ and $\mathbf{Close}$. The first one is more involved and will take up most of this subsection. 

\begin{figure}[t]
	\begin{center}
		\includegraphics[width=0.4\linewidth]{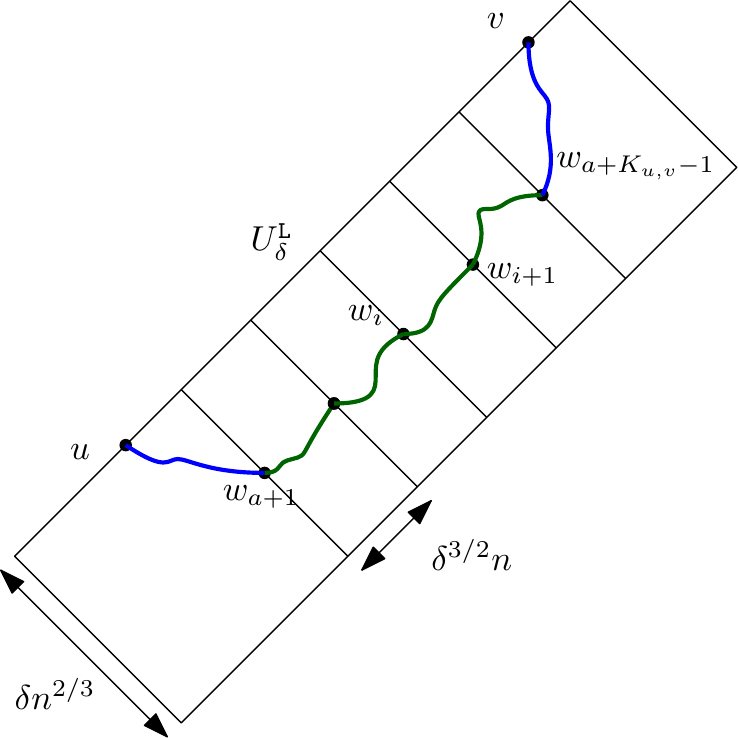}
	\end{center}
	\caption{Proof of Lemma \ref{far1}: for $u,v\in U_{\delta}^{\mathtt{L}}$
	with $\phi(v)-\phi(u)\ge K\delta^{3/2} n$, we lower bound
	$T_{u,v}^{\delta}$ by the weight of the concatenated path showed in the
	figure. The initial and final segments (marked in blue) denote the paths
	attaining weights $T^{\delta}_{u,w_{a+1}}$ and $T^{\delta}_{w_{a+K_{u,v}-1},v}$ respectively; both these paths are ensured to be not too small compared to typical. The intermediate green segments denote the paths attaining $T^{\delta}_{w_{i},w_{i+1}}$ and these are ensured to be larger than typical. For $K_{u,v}$ large, these two conditions ensure that $T_{u,v}^{\delta}$ is larger than typical, as required. }
	\label{fig:far}
\end{figure}

\begin{proof}[Proof of Lemma \ref{far1}]
	We locally define $\mathbf{Far}_\alpha$ to be the event that for any two points $u,v\in
	U_{\delta}^\mathtt{L}$ (or both in $U_{\delta}^\mathtt{R}$) satisfying
	$\phi(v)-\phi(u)\geq K\delta^{3/2}n$, we have
	\begin{displaymath}
		T_{u,v}^{\delta}\geq 2(\phi(v)-\phi(u))+\frac{\phi(v)-\phi(u)}{\delta^{3/2}
		n}(\alpha\sqrt{\delta} n^{1/3}).
	\end{displaymath}
	It suffices to prove that there exists a positive integer $K$,
	positive constants $C,c$ and some $\alpha>1$ (all independent of $\delta$)
	such that
	\begin{displaymath}
		\mathbb{P}\left( \mathbf{Far}_\alpha \right)\geq
		Ce^{-c\delta^{-3/2}}.
	\end{displaymath}
	Indeed, this is because $\mathbb{P}\left( \mathbf{Far}_\alpha \right)$ is
	decreasing in $\alpha$, and $\mathbb{P}(\mathbf{Far})$
	corresponds to $\alpha=1$. We shall
	construct an event $\mathcal{C}$ with the requisite probability lower
	bound such that $\mathcal{C}$ implies $\mathbf{Far}_{\alpha}$ for some
	large $\alpha$ whose value will be chosen later. 
	
	To illustrate our strategy, let us consider two points $u,v$ on $U_{\delta}^\mathtt{L}$ which
	satisfy $\phi(v)-\phi(u)\geq K\delta^{3/2} n$, where $K$ will be
	chosen large later (the case $u,v\in
	U_{\delta}^\mathtt{R}$ can be handled by an identical argument). It is
	immediate that we have
	\begin{equation}
		\label{eqn:1.11}
		\left\lfloor\frac{\phi(v)-\phi(u)}{2\delta^{3/2}n}\right\rfloor\geq
		K/2-1,
	\end{equation}
Thus, there exists a positive integer $K_{u,v}$ such that 
\begin{equation}
	\label{eqn:1.111}
	K_{u,v}\geq K/2-3
\end{equation}
and a positive
	integer $a$ depending on $u,v$ satisfying
	\begin{equation}
		\label{eqn:1.12}
		2(a-1)\leq\frac{\phi(u)}{\delta^{3/2} n} \leq 2a\leq 2(a+1)\leq \cdots\leq
		2(a+ K_{u,v})\leq \frac{\phi(v)}{\delta^{3/2} n}\leq
		2(a+K_{u,v}+1).
	\end{equation}
	 Let $w_i$ denote the point
	 $i\delta^{3/2}\mathbf{n}$. This gives that
	\begin{equation}
		\label{eqn:3}
		T_{u,v}^{\delta}\geq T^{\delta}_{u,
		w_{a+1}}+\sum_{j=2}^{K_{u,v}-1}
		T^{\delta}_{w_{a+j-1},w_{a+j}}+  T^{\delta}_{w_{a+K_{u,v}-1},
		v}.
	\end{equation}
	Our aim now is to construct an event having probability at least
	$Ce^{-c\delta^{-3/2}}$, on which we have that the r.h.s\ in \eqref{eqn:3}
	is larger than typical for each pair $u,v$ satisfying the conditions in the statement of the lemma. We will do this by ensuring that the terms
	$T^{\delta}_{w_{a+j-1},w_{a+j}}$ in \eqref{eqn:3} are larger
	than typical for $j=2$ to $K_{u,v}-3$ while the terms $T^{\delta}_{u,
		w_{a+1}}$ and $T^{\delta}_{w_{a+K_{u,v}-3},
		v}$ are not too small compared to their typical value; see Figure \ref{fig:far}.

		For each $i\in\left\{ 0,\dots,\delta^{-3/2}-2 \right\}$, define the events
	\begin{equation}
		\label{eqn:3.1}
		\mathcal{A}_i=\left\{ T^{\delta}_{w_i,w_{i+1}} \geq
		2(2\delta^{3/2} n) + 4(\alpha \sqrt{\delta} n^{1/3})\right\}.
	\end{equation}
	Note that the $\mathcal{A}_i$ satisfy
	\begin{equation}
		\label{eqn:3.1.1}
			\mathbb{P}(\mathcal{A}_i)\geq Ce^{-c\alpha^{3/2}}>c_3
		\end{equation}
for some $C,c$ (independent of $\delta$) coming from Lemma \ref{4.12+}.
	We now define the events $\mathcal{B}_i$ as follows:
	\begin{equation}
		\mathcal{B}_i=\left\{ \inf_{u,v\in U_{\delta,i}\cup
		U_{\delta,i+1}, \phi(v)-\phi(u)\geq 2\delta^{3/2}n} \widetilde{T}^{\delta}_{u,v} \geq
		-\alpha \sqrt{\delta} n^{1/3}\right\}\label{eqn:3.2}
	\end{equation}
 Note
		that by an application of Proposition \ref{mod3}, we have that for
		some constants $C_2,c_2$,
		\begin{equation}
			\label{eqn:3.2.1}
			\mathbb{P}(\mathcal{B}_i)\geq 1 - C_2 e^{-c_2\alpha}>\frac{1}{2}
		\end{equation}
		 for all $\alpha$ sufficiently large. Finally, we define
		the event $\mathcal{C}$ by
		\begin{equation}
			\label{eqn:3.2.2}
			\mathcal{C}=\bigcap_{i=0}^{\delta^{-3/2}-2} \left(
			\mathcal{A}_i\cap \mathcal{B}_i\right).
		\end{equation}
		Notice now that the events $\mathcal{A}_i$ and
		$\mathcal{B}_i$ are all increasing events measurable with respect to
		the vertex weights in $U_\delta$. By using the FKG inequality
		along with \eqref{eqn:3.1.1} and \eqref{eqn:3.2.1}, we
		have
		\begin{align}
			\label{eqn:3.8}
			\mathbb{P}\left( \mathcal{C} \right)&\geq \left(\prod_{i}
			\mathbb{P}\left( \mathcal{A}_i \right) \prod_{i}\mathbb{P}\left(
			\mathcal{B}_i \right) 
			\right)\nonumber \\ 
			&\geq (c_3/2)^{\delta^{-3/2}-2}\geq e^{-c_5\delta^{-3/2}}
		\end{align}		
		for some $c_5>0$.
		
It remains to prove that $\mathcal{C}\subseteq \mathbf{Far}_{\alpha}$ for some $\alpha$ sufficiently large. We shall only consider the case of $u,v\in U_{\delta}^{\mathtt{L}}$, the case of $u,v\in U_{\delta}^{\mathtt{R}}$ can be handled by an identical argument. On the event $\mathcal{C}$, we have that in \eqref{eqn:3}, for any
		$u,v$ satisfying the conditions in the statement of the lemma,
		\begin{equation}
			\label{eqn:3.3}
			T^{\delta}_{w_{a+j-1},w_{a+j}} \geq  2(2\delta^{3/2} n)
			+ 4(\alpha \sqrt{\delta} n^{1/3})
		\end{equation}
		for all $j\in \left\{ 2,\cdots,K_{u,v}-1 \right\}$. This is a
		straightforward consequence of the definition of the events
		$\mathcal{A}_i$. By using the definition of $\mathcal{B}_i$, we have
		that on $\mathcal{C}$,
		\begin{equation}
			\label{eqn:3.4}
			T^{\delta}_{u,
			w_{a+1}}\geq \mathbb{E}T_{u,
			w_{a+1}}-\alpha \sqrt{\delta} n^{1/3}.
		\end{equation}
	By using \eqref{e:mean}, we have that for all $\alpha$ large enough,
	on $\mathcal{C}$,
	\begin{equation}
		\label{eqn:3.5}
		T_{u,w_{a+1}}^\delta \geq
			2(\phi(w_{a+1})-\phi(u))-\frac{3\alpha}{2}\sqrt{\delta}
			n^{1/3}.
	\end{equation}
	By an analogous argument, we obtain that on
		$\mathcal{C}$, we have for all $\alpha$ large enough,
		\begin{equation}
			\label{eqn:3.6}
			T^{\delta}_{w_{a+K_{u,v}-1},v}\geq
			2(\phi(v)-\phi(w_{a+K_{u,v}-1}))-\frac{3\alpha}{2}\sqrt{\delta}
			n^{1/3}.
		\end{equation}
		On combining \eqref{eqn:3.3}, \eqref{eqn:3.5} and \eqref{eqn:3.6} with
		\eqref{eqn:3}, we deduce that on $\mathcal{C}$, for all $\alpha$
		large enough
		\begin{align}
			\label{eqn:3.7}
			T_{u,v}^{\delta}- 2(\phi(v)-\phi(u)) &\geq\left( 4K_{u,v}-11
			\right)\alpha\sqrt{\delta} n^{1/3} \nonumber\\
			&\geq 3K_{u,v}\alpha \sqrt{\delta} n^{1/3} \nonumber\\
			&\geq \frac{\phi(v)-\phi(u)}{\delta^{3/2}
	n}(\alpha \sqrt{\delta} n^{1/3}),
		\end{align}
		and we now fix $\alpha$ to be one such value which in addition satisfies
		$\alpha>1$.
		Note that we have used $\eqref{eqn:1.111}$ along with the fact that
		$K$ can be fixed to be large to obtain the last
		two inequalities. Thus, we have established that $\mathcal{C}\subseteq
		\mathbf{Far}_{\alpha}$, which together with \eqref{eqn:3.8} completes the proof. 		
\end{proof}
\noindent
We shall now prove Lemma \ref{l:close}.

\begin{proof}[Proof of Lemma \ref{l:close}]
Using Proposition \ref{mod3.1} along with the fact
that $M$ is fixed to be much larger than all the other parameters in Section
\ref{ss:constants}, it is clear that for $n$ large enough
	depending on $\delta$, we have
	\begin{gather}
		\mathbb{P}(\mathbf{Close}_i^\mathtt{L})\geq 1- C_1 e^{-c_1
		M^2}\geq 1/2,\nonumber\\
		\label{eqn:14.3}
		\mathbb{P}(\mathbf{Close}_i^\mathtt{R})\geq 1- C_1
		e^{-c_1 M^2}\geq 1/2.
	\end{gather}
	Since $\mathbf{Close}_i^\mathtt{L}$ and
	$\mathbf{Close}_i^\mathtt{R}$ are increasing events, we have by the FKG
	inequality,
	\begin{equation}
		\label{eqn:14.5}
		\mathbb{P}\left( \mathbf{Close}_i \right)\geq 1/4.
	\end{equation}
	Again, the $\mathbf{Close}_i$ are all increasing events, and by the FKG
	inequality, we have
	\begin{equation}
		\label{eqn:14.6}
		\mathbb{P}\left( \mathbf{Close} \right)\geq \left(1/4
		\right)^{\frac{\delta^{-3/2}}{k_1}-1}\geq C_2 e^{-c_2\delta^{-3/2}},
	\end{equation}
	completing the proof of the lemma.
\end{proof}

\subsection{Lower bounds for the barrier events}
This subsection is devoted to the proofs of Lemma \ref{far3} and Lemma \ref{short2}, i.e., we prove the lower bounds for the probabilities of the events $\mathbf{Long}^{\mathtt{L}}$ and $\mathbf{Short}^{\mathtt{L}}$. 

\subsubsection{Lower bound for the event $\mathbf{Long}^\mathtt{L}$}
\label{ss:lb1}

We need the following result to prove Lemma \ref{far3}. Recall the notation
$U_{\delta,i}$ and  $U_{\delta,i}^\mathtt{L}$ for all $i\in \mathbb{Z}$ from \eqref{eqn:0}.
\begin{lemma}
	\label{far2}
	For all $\delta$ sufficiently small, there exist constants
	$C,c$ (independent of $\delta$) and a positive integer $k_0$ (independent of
	$\delta$) such that for any integers $i,j$
	with $j-i=k\geq k_0$, for all $n$ large enough depending on
	$\delta$, we have that
	\begin{displaymath}
		\mathbb{P}\left( \sup_{u\in U_{\delta,i}^\mathtt{L},v\in
		U_{\delta,j}^\mathtt{L} } \left\{ T^{ \mathtt{Left}_\delta}_{u,v}-2(\phi(v)-\phi(u))
		\right\}\geq  \frac{(\phi(v)-\phi(u))}{\delta^{3/2}
		n}( \sqrt{\delta} n^{1/3}) \right) \leq Ce^{-ck}<1.
	\end{displaymath}
\end{lemma}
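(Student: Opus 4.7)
The plan is to recast this statement as a KPZ upper-tail estimate on an on-scale parallelogram and apply Proposition \ref{mod3}. Set $m := 2(k+1)\delta^{3/2}n$, so that for any admissible pair $u,v$ the time gap $\Delta\phi := \phi(v)-\phi(u)$ lies in $[2(k-1)\delta^{3/2}n,\, 2(k+1)\delta^{3/2}n]$ and the natural fluctuation scale is $m^{1/3} \asymp k^{1/3}\sqrt{\delta}\,n^{1/3}$. The threshold in the display, $\Delta\phi \cdot \sqrt{\delta}n^{1/3}/(\delta^{3/2}n)$, is $\gtrsim k\sqrt{\delta}n^{1/3} \asymp k^{2/3}\,m^{1/3}$, i.e.\ a deviation of $\Theta(k^{2/3})$ in fluctuation units. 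The KPZ upper tail $\exp(-c\,r^{3/2})$ with $r \asymp k^{2/3}$ then gives exactly the claimed $\exp(-c\,k)$.

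First, I would localize the problem to an on-scale parallelogram $P$ sitting inside $\mathtt{Left}_\delta$, defined by
\begin{displaymath}
P := \bigl\{w : 2i\delta^{3/2}n - \delta^{3/2}n \le \phi(w) \le 2(j+1)\delta^{3/2}n + \delta^{3/2}n,\ -\delta n^{2/3} - A\,m^{2/3} \le \psi(w) \le -\delta n^{2/3}\bigr\},
\end{displaymath}
where $A$ is a constant to be fixed below. By construction $P$ has time-extent $\Theta(m)$ and space-extent $A\,m^{2/3}$, lies inside $\mathtt{Left}_\delta \cup U_\delta^\mathtt{L}$, and has both $U_{\delta,i}^\mathtt{L}$ and $U_{\delta,j}^\mathtt{L}$ on its right boundary. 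An endpoint-uniform adaptation of the transversal-fluctuation estimate of Proposition \ref{mod4} to $\mathtt{Left}_\delta$-constrained paths will show that, on an event of probability at least $1 - Ce^{-cA^3}$, every $\mathtt{Left}_\delta$-constrained path from some $u \in U_{\delta,i}^\mathtt{L}$ to some $v \in U_{\delta,j}^\mathtt{L}$ whose weight exceeds the lemma's threshold (which is comfortably above the Proposition \ref{mod4} threshold $2\Delta\phi - \xi A^2 m^{1/3}$) is contained in $P$. On this event $\sup_{u,v} T^{\mathtt{Left}_\delta}_{u,v} \le \sup_{u,v} T^{P}_{u,v}$.

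Next, I would apply Proposition \ref{mod3}(1) to $P$. Since $\phi(v)-\phi(u) \ge 2(k-1)\delta^{3/2}n \ge m/3$ for $k \ge k_0 \ge 2$, taking $L=3$ and $\Delta = A$ gives
\begin{displaymath}
\mathbb{P}\Bigl(\sup_{u,v} \widetilde{T}^{P}_{u,v} \ge r\,m^{1/3}\Bigr) \le C_1 \exp\bigl(-c_1 \min\{r^{3/2},\, r\,m^{1/3}\}\bigr),
\end{displaymath}
where the centering is by $\mathbb{E}T_{u,v}$. Using \eqref{e:mean} to replace $\mathbb{E}T_{u,v}$ by $2\Delta\phi$ at cost $O(m^{1/3})$, the lemma's threshold corresponds to $r$ being a constant multiple of $k^{2/3}$; and since $\sqrt{\delta}n^{1/3} \ge 1$ for $n$ large in $\delta$, the $\min$ is attained at $r^{3/2} \asymp k$, so the bound becomes $C\exp(-c\,k)$. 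Choosing $A = B k^{1/3}$ with $B$ a large enough absolute constant makes the confinement error $Ce^{-cA^3}$ also of the form $Ce^{-c\,k}$; summing these yields the lemma, and taking $k_0$ large enough ensures the bound is strictly less than $1$.

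The main obstacle will be the confinement step: although Proposition \ref{mod4} is standard, it must be made uniform over the endpoint ranges and adapted to $\mathtt{Left}_\delta$-constrained rather than unrestricted paths. A routine chaining argument handles the uniformization, and the one-sided half-plane constraint does not worsen the transversal-fluctuation scale, so the cubic exponent in $A$ is preserved. Once this uniform confinement is in hand, the remainder is a direct application of Proposition \ref{mod3} combined with the mean estimate \eqref{e:mean}.
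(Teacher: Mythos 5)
Your proof takes a genuinely different route from the paper's, and there are two concrete gaps in it.

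The paper's proof is a conditioning/patching argument: it conditions on the bad event $\mathcal{A}$ (which is $\mathtt{Left}_\delta$-measurable), then completes the witnessing path $u\to v$ to a path between two \emph{fixed} diagonal points $w_1,w_2$ by attaching segments $w_1\to u$ and $v\to w_2$ constrained to $(\mathtt{Left}_\delta)^c$, so that these attachments are independent of $\mathcal{A}$. This yields $\min_u\P(\mathcal{A}_1)\min_v\P(\mathcal{A}_2)\,\P(\mathcal{A})\le\P(T_{w_1,w_2}\text{ too large})$, and the latter is just the one-point estimate Proposition~\ref{mod1} (with the same $r\asymp k^{2/3}$, $r^{3/2}\asymp k$ bookkeeping you identified). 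This entirely sidesteps the need to (a) confine paths or (b) invoke parallelogram estimates at growing aspect ratio.

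Your approach, by contrast, relies on two steps that are not available off the shelf. First, the confinement step is a real gap. Proposition~\ref{mod4} is stated for a single fixed endpoint pair $(\mathbf{0},\mathbf{n})$ and unrestricted passage times, and what you need is a statement uniform over all $u\in U_{\delta,i}^\mathtt{L}$, $v\in U_{\delta,j}^\mathtt{L}$ saying that, except on an event of probability $\le Ce^{-cA^3}$, \emph{no} near-optimal $\mathtt{Left}_\delta$-constrained $u\to v$ path exits $P$. (The constraint itself is benign since $T^{\mathtt{Left}_\delta}_{u,v}\le T_{u,v}$, but the endpoint-uniform version with the exponent $A^3$ is not quoted anywhere in the paper and would need a separate chaining argument; calling it ``routine'' glosses over the work the paper is actually doing by a different device.) Second, you apply Proposition~\ref{mod3} to a parallelogram of aspect ratio $\Delta=A=Bk^{1/3}$, which grows with $k$. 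But the constants in Proposition~\ref{mod3} ``depend on $\Delta$,'' and the paper only records that the proofs generalize to \emph{any fixed} $\Delta$. For your argument to close, one must check that $c_1=c_1(\Delta)$ does not degrade as $\Delta\to\infty$ and that $C_1(\Delta)$ grows sub-exponentially in $\Delta^3\asymp k$; this is plausible on curvature grounds, but it is an additional claim, not a quotation. The conditioning argument in the paper avoids both issues, which is why the paper needs only Propositions~\ref{mod1} and~\ref{mod3} at $\Delta=O(1)$.
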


\begin{proof}
	Observe that the event in question is measurable with respect to the vertex weights in the region $\mathtt{Left}_{\delta}$ (indeed, this is one of the reasons we chose the definition of $T$ to exclude the weights of the endpoints). Condition on the occurrence of the event in question and refer to it
	locally in this proof as $\mathcal{A}$. Thus on $\mathcal{A}$, there exist
	$u\in U_{\delta,i}^\mathtt{L},v\in
	U_{\delta,j}^\mathtt{L} $ such that  $T^{ \mathtt{Left}_\delta}_{u,v}-2(\phi(v)-\phi(u))
		\geq  \frac{(\phi(v)-\phi(u))}{\delta^{3/2}
		n}( \sqrt{\delta} n^{1/3})$.  Define the points
		$w_1=( (i-1)\delta^{3/2}n,(i-1)\delta^{3/2} n)$ and
	$w_2=( (j+2)\delta^{3/2}n,(j+2)\delta^{3/2} n)$. 
	 Let $\mathcal{A}_1$ be defined
	by
	\begin{displaymath}
		\mathcal{A}_1=\left\{ T_{w_1,u}^{(\mathtt{Left}_\delta)^c}-2(\phi(u)-\phi(w_1)) \geq - \frac{1}{4} \frac{(\phi(v)-\phi(u))}{\delta^{3/2}n}( \sqrt{\delta} n^{1/3})\right\} 
	\end{displaymath}
		and define $\mathcal{A}_2$ by
		\begin{displaymath}
				\mathcal{A}_2=\left\{ T_{v,w_2}^{(\mathtt{Left}_\delta)^c}-
	2(\phi(w_2)-\phi(v)) \geq - \frac{1}{4} \frac{(\phi(v)-\phi(u))}{\delta^{3/2}
		n}(\sqrt{\delta} n^{1/3})\right\}.
		\end{displaymath}
	 Note that $\mathcal{A}_1$ and $\mathcal{A}_2$
		are independent of each other and the vertex weights in the region
		$\mathtt{Left}_\delta$. Also, $\mathcal{A}_1$ and $\mathcal{A}_2$
		together with the conditioning imply the event $\mathcal{A}_3$ which is defined
		by
		\begin{equation}
			\label{eqn:4}
			\mathcal{A}_3=\left\{T_{w_1,w_2}- 2(\phi(w_2)-\phi(w_1))\geq\frac{1}{2} \frac{(\phi(v)-\phi(u))}{\delta^{3/2}
			n}(\sqrt{\delta} n^{1/3})\right\}.
		\end{equation}
			Indeed, this is simply a consequence of $T_{w_1,w_2}\geq
		T_{w_1,u}^{(\mathtt{Left}_\delta)^c}+T^{
		\mathtt{Left}_\delta}_{u,v}+T_{v,w_2}^{(\mathtt{Left}_\delta)^c}$,
		which holds because both the endpoints are excluded in the
		definition of $\ell$.
		Note that $\phi(w_2)-\phi(w_1) = 2(k+3)\delta^{3/2} n $. From this
		discussion, it is clear that 
\begin{equation}
	\label{eqn:5}
	\min_{u\in
	U_{\delta,i}^\mathtt{L}}\mathbb{P}(\mathcal{A}_1)\min_{v\in
	U_{\delta,j}^\mathtt{L}}\mathbb{P}(\mathcal{A}_2)\mathbb{P}(\mathcal{A})\leq
	\mathbb{P}(\mathcal{A}_3)\leq Ce^{-c
	\left(\frac{1}{(k+3)^{1/3}}\frac{\phi(v)-\phi(u)}{\delta^{3/2} n}\right)^{3/2}}\leq
	C_1 e^{-c_1 k}.
\end{equation}
The second inequality follows by using Proposition \ref{mod1} and the
third inequality follows by taking $k_0$ large, observing that
$\phi(v)-\phi(u)\geq
2(k-2) \delta^{3/2} n$.

It remains to establish good lower bounds for $
\min_{u}\mathbb{P}(\mathcal{A}_1)$ and $\min_{v}\mathbb{P}(\mathcal{A}_2)$; by symmetry, we only
deal with the former. Note that by translation invariance and Proposition
\ref{mod3}, we have that for $n$
large enough depending on $\delta$ and for some constants $C_2,c_2$
(independent of $u$ and $\delta$), 
\begin{equation}
	\label{eqn:6}
	\mathbb{P}(\mathcal{A}_1^c)\leq C_2e^{-c_2\frac{\phi(v)-\phi(u)}{\delta^{3/2}
	n}\frac{ \sqrt{\delta}n^{1/3}}{(\phi(u)-\phi(w_1))^{1/3}}}\leq
	C_3e^{-c_3 k}\leq \frac{1}{2}
\end{equation}
by taking $k_0$ to be large enough.
Indeed, \eqref{eqn:6} is obtained by using Proposition \ref{mod3} on the
parallelogram $U_{\delta,i-1}\cup U_{\delta,i}$, using \eqref{e:mean} to
change the centering from $\mathbb{E}T_{w_1,u}$ to $2(\phi(u)-\phi(w_1))$ and noting that
\begin{equation}
	\label{eqn:6.1}
	T_{w_1,u}^{(\mathtt{Left}_\delta)^c}\geq
T_{w_1,u}^{U_{\delta,i-1}\cup U_{\delta,i}}.
\end{equation}
The second inequality in \eqref{eqn:6} follows by using that $(\phi(v)-\phi(u))\geq
2(k-2)\delta^{3/2} n $ along with $2\delta^{3/2} n\leq \phi(u)-\phi(w_1)\leq 4
\delta^{3/2} n$.
Thus, \eqref{eqn:5}, \eqref{eqn:6} and an analogous lower bound on $\min_{v}\mathbb{P}(\mathcal{A}_2)$  immediately yields
\begin{equation}
	\label{eqn:7}
	\mathbb{P}(\mathcal{A})\leq 4C_1e^{-c_1k}
\end{equation}
if we take $k_0$ to be large enough.
\end{proof}
We are now ready to prove Lemma \ref{far3}. 

\begin{proof}[Proof of Lemma \ref{far3}]
	Denote by $\mathcal{A}_{i,j}$ the event whose probability is considered in
	Lemma \ref{far2}. By the choice of $k_1$ and $k_0$ in Section \ref{ss:constants}, we
	have that for $u,v$ as in the definition of $\mathbf{Long}^{\mathtt{L}}$
	we have $\phi(v)-\phi(u)\geq k_1 \delta^{3/2} n \geq
	(k_0+2)\delta^{3/2} n$, and this
		implies that $u\in U_{\delta,i}^\mathtt{L},v\in
		U_{\delta,j}^\mathtt{L}$ for some $i,j$ satisfying $0\leq i\leq j\leq
		\delta^{-3/2}-1$ and $j-i\geq k_0$. Thus, we
		have that
		\begin{equation}
			\label{eqn:8}
			\mathbb{P}\left( \sup_{u,v\in  U_\delta^\mathtt{L}
		:\phi(v)-\phi(u)\geq k_1
		\delta^{3/2} n} \left\{ T_{u,v}^{\mathtt{Left}_\delta}
	-2(\phi(v)-\phi(u))\right\} <\frac{(\phi(v)-\phi(u))}{\delta^{3/2}
	n}( \sqrt{\delta} n^{1/3})\right)\geq \mathbb{P}\left(
	\bigcap_{i,j:j-i\geq k_0} \mathcal{A}_{i,j}^c
	\right).
	\end{equation}
	If we let $k$ be the variable for $j-i$,
		observe that for a fixed value of $k$, there are $\delta^{-3/2} -k$
		pairs of $(i,j)$ satisfying $j-i=k$ and $0\leq i\leq j\leq
		\delta^{-3/2}-1$. Also, by translation invariance,
		we have that $\mathbb{P}(\mathcal{A}_{i,j})$ depends only on $k=j-i$.
		By the FKG inequality for the decreasing events $\mathcal{A}_{i,j}^c$, we have 
		\begin{equation}
			\label{eqn:9}
			 \mathbb{P}\left(
	\bigcap_{i,j:j-i\geq k_0} \mathcal{A}_{i,j}^c
	\right)\geq  \prod_{i,j:j-i\geq k_0} \mathbb{P}\left(
	 \mathcal{A}_{i,j}^c
	 \right)\geq \prod_{k_0\leq k\leq
	 \delta^{-3/2}-1}(1-Ce^{-ck})^{\delta^{-3/2} - k}\geq \prod_{k\geq
	 k_0}(1-Ce^{-ck})^{\delta^{-3/2}}.
		\end{equation}
		Noting that $\prod_{{k\geq
		k_0}}(1-Ce^{-ck})>0$ because $\sum_{k\geq 1} e^{-ck}<\infty$, we have
		the needed result on using \eqref{eqn:8}.
\end{proof}

%
\subsubsection{Lower bound for the event $\mathbf{Short}^\mathtt{L}$}
\label{ss:lb2}
In this subsection, we will obtain the required lower bound for
$\mathbb{P}\left( \mathbf{Short}^\mathtt{L} \right)$ in Lemma \ref{short2}. We first need the following result.

%

\begin{figure}[t]
	\begin{center}
		\includegraphics[width=0.5\linewidth]{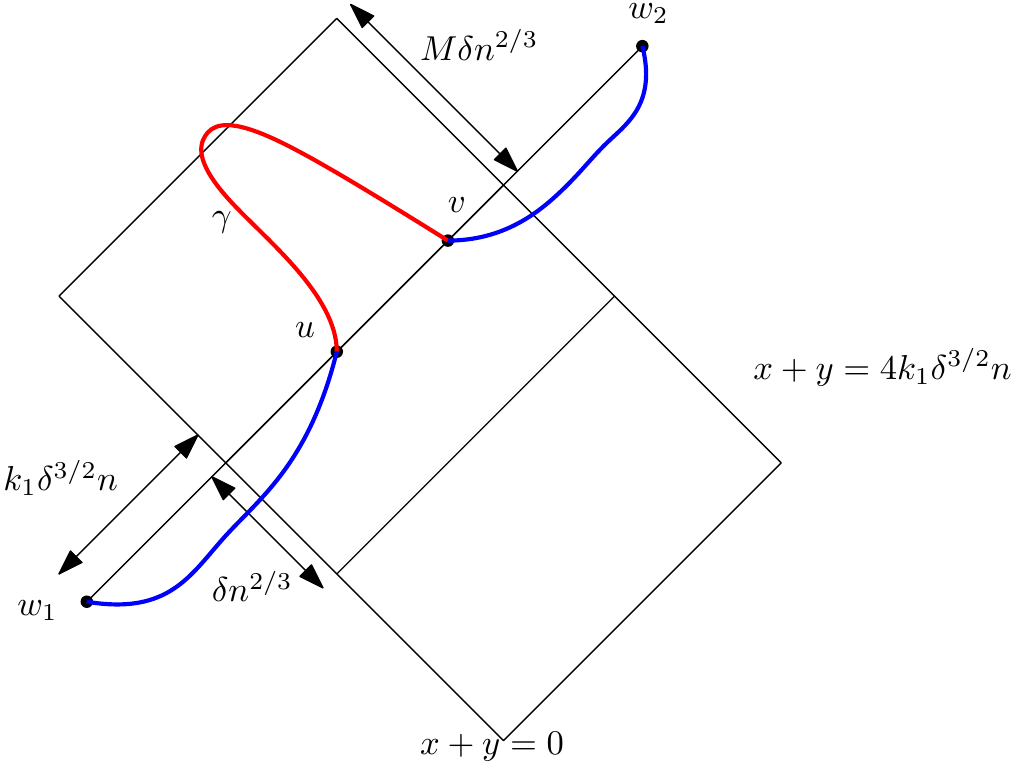}
	\end{center}
	\caption{Proof of Lemma \ref{short1}: we show that it is unlikely for
	there to be a short excursion $\gamma$ (marked in red) between $u$ and $v$
	outside $U_{\delta}$ that exits $U_{M\delta}$ and has weight not too
	small. To show this, on the event of existence of such a $\gamma$, we
	construct a path $\gamma_1$ (the concatenation of the paths marked in blue
	with $\gamma$) such that $\gamma_1$ is a path between two fixed points,
well separated in the time direction, such that $\ell(\gamma_1)$ is not too small. Notice that $\gamma_1$ has large transversal fluctuation and hence the latter event is shown to be unlikely using Proposition \ref{mod4}.}
	\label{fig:short}
\end{figure}


\begin{lemma}
	\label{short1}
	For any $k_1>0$ fixed, all $\delta$ sufficiently small,
	and $i\in\{0,1,\cdots,\frac{1}{k_1 \delta^{3/2}}-2\}$,  we have that there
	exists an absolute constant $\beta$ and positive constants $C,c$
		(independent of $\delta$) such that for $M$ large enough (independent
		of $\delta$) and for $n$ large
		enough depending on $\delta$, 
	\begin{displaymath}
		\mathbb{P}
		\left( (\mathbf{Short}^\mathtt{L}_i)^c \right)\leq Ce^{-cM^3}<1/2.
	\end{displaymath}
\end{lemma}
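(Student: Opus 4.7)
The plan is to contain $(\mathbf{Short}_i^\mathtt{L})^c$ in a rare event governed by Proposition \ref{mod4}. On $(\mathbf{Short}_i^\mathtt{L})^c$, some $u,v\in U_\delta^\mathtt{L}$ with $2ik_1\delta^{3/2}n\le \phi(u)\le \phi(v)\le 2(i+2)k_1\delta^{3/2}n$ together with an up-right path $\gamma\colon u\to v$, $\gamma\setminus\{u,v\}\subseteq \mathtt{Left}_\delta$, $\gamma\not\subseteq (\mathtt{Left}_{M\delta})^c$, satisfy $\ell(\gamma)>2(\phi(v)-\phi(u))-\beta M^2(2k_1\delta^{3/2}n)^{1/3}$. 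I will extend $\gamma$ by small pieces on either side to fixed diagonal anchor points $w_1,w_2$ placed on $\{\psi=0\}$ just outside the time window of the tube --- at $\phi(w_1)=2ik_1\delta^{3/2}n-O(\delta^{3/2}n)$ and $\phi(w_2)=2(i+2)k_1\delta^{3/2}n+O(\delta^{3/2}n)$ --- producing a single up-right path $\gamma_1\colon w_1\to w_2$ that inherits both the anomalously large weight of $\gamma$ and its large transversal fluctuation; see Figure \ref{fig:short}. Applied to the on-scale parallelogram between $w_1$ and $w_2$, Proposition \ref{mod4} will then control the probability of such a $\gamma_1$.

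Explicitly, $\gamma_1$ is the concatenation of the best up-right path from $w_1$ to $u$ in $(\mathtt{Left}_\delta)^c$, the path $\gamma$, and the best up-right path from $v$ to $w_2$ in $(\mathtt{Left}_\delta)^c$ (the restriction to $(\mathtt{Left}_\delta)^c$ ensures that the concatenation does not double-count vertices of $\gamma$). This yields $\ell(\gamma_1)\ge T_{w_1,u}^{(\mathtt{Left}_\delta)^c}+\ell(\gamma)+T_{v,w_2}^{(\mathtt{Left}_\delta)^c}$. Since $\gamma$ contains a vertex with $\psi<-M\delta n^{2/3}$, so does $\gamma_1$. Writing $N:=\phi(w_2)-\phi(w_1)=\Theta(k_1\delta^{3/2}n)$ for the time scale of the anchor parallelogram, with natural transversal scale $N^{2/3}$, this transversal excursion is $M'N^{2/3}$ with $M'\asymp M/k_1^{2/3}$.

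To lower bound the two outer passage times uniformly in $u,v$, I would introduce auxiliary events using Proposition \ref{mod3.1} on the parallelograms $U_\delta$ restricted to the time windows adjacent to $w_1$ and $w_2$: using the monotonicity $T^{(\mathtt{Left}_\delta)^c}_{\cdot,\cdot}\ge T^{U_\delta}_{\cdot,\cdot}$ together with \eqref{e:mean}, one obtains that with probability at least $1-Ce^{-cM}$, both $T_{w_1,u}^{(\mathtt{Left}_\delta)^c}$ and $T_{v,w_2}^{(\mathtt{Left}_\delta)^c}$ deviate from their typical values $2(\phi(u)-\phi(w_1))$ and $2(\phi(w_2)-\phi(v))$ by no more than $M\delta^{1/2}n^{1/3}$. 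On the intersection of $(\mathbf{Short}_i^\mathtt{L})^c$ with this auxiliary event, the estimates combine to give
\begin{displaymath}
\ell(\gamma_1)\ge 2N-\beta M^2(2k_1\delta^{3/2}n)^{1/3}-2M\delta^{1/2}n^{1/3}.
\end{displaymath}
For $\beta=2^{-2/3}\xi$ and the hierarchy of parameters fixed in Section \ref{ss:constants}, and $M$ taken sufficiently large (independently of $\delta$), the right-hand side is at least $2N-\tfrac12\xi M'^2 N^{1/3}$, so Proposition \ref{mod4} rescaled to the parallelogram bounded by $w_1,w_2$ bounds the probability of existence of such a $\gamma_1$ by $e^{-cM^3}$. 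Combined with the failure probability of the auxiliary event, this yields $\mathbb{P}((\mathbf{Short}_i^\mathtt{L})^c)\le Ce^{-cM^3}$, which is strictly less than $1/2$ once $M$ is taken large enough.

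The main obstacle is the delicate constant matching: the loss allowance $\beta M^2(2k_1\delta^{3/2}n)^{1/3}$ built into the definition of $\mathbf{Short}_i^\mathtt{L}$, together with the $O(M\delta^{1/2}n^{1/3})$ slack from the outer-segment auxiliary bounds, must remain strictly smaller than the weight deficit $\xi M'^2 N^{1/3}$ forced by Proposition \ref{mod4} in the rescaled parallelogram. The explicit choice $\beta=2^{-2/3}\xi$ records precisely the constant produced by this balance, and the uniformity over $u,v$ is handled by exploiting that the auxiliary bounds in Propositions \ref{mod2}--\ref{mod3.1} come with a built-in infimum over the short/long sides.
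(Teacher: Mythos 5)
Your proposal is essentially the ``alternative proof'' that the paper itself sketches immediately after its main proof of this lemma, so the approach is endorsed by the authors. Both you and the paper proceed by extending the excursion $\gamma$ to a path $\gamma_1$ between fixed anchors $w_1,w_2$, and control the resulting event with Proposition~\ref{mod4}. The difference is in how one packages the outer-segment estimates. The paper's main proof is a conditioning argument: on $(\mathbf{Short}^\mathtt{L}_0)^c$ the (random) endpoints $u,v$ are measurable in $\mathtt{Left}_\delta$, the events $\mathcal{B}_1(u),\mathcal{B}_2(v)$ are measurable in $(\mathtt{Left}_\delta)^c$, and one obtains $\min_u\P(\mathcal{B}_1)\min_v\P(\mathcal{B}_2)\,\P((\mathbf{Short}^\mathtt{L}_0)^c)\le\P(\mathcal{B}_3)$. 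You instead introduce a single auxiliary event uniform over all candidate $u,v$; this is precisely the event $\mathcal{B}$ the paper defines in the paragraph following its proof, and it is a somewhat cleaner packaging.

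The one step you should make explicit is the last one. Writing ``combined with the failure probability of the auxiliary event'' suggests a union bound, which would give $\P\big((\mathbf{Short}^\mathtt{L}_i)^c\big)\le Ce^{-cM^3}+C'e^{-c'M}\le C''e^{-c''M}$, which does \emph{not} match the asserted $Ce^{-cM^3}$ (and cannot be repaired by demanding the auxiliary event fail with probability $e^{-cM^3}$, since then the deviation allowance $\sim M^3\delta^{1/2}n^{1/3}$ would no longer be absorbed by the $\xi M'^2N^{1/3}$ deficit from Proposition~\ref{mod4}). What makes the argument work is that the auxiliary event is measurable with respect to the vertex weights in $(\mathtt{Left}_\delta)^c$ (since the outer passage times are restricted to $(\mathtt{Left}_\delta)^c$ and $\ell$ excludes endpoints), whereas $(\mathbf{Short}^\mathtt{L}_i)^c$ is measurable with respect to the weights in $\mathtt{Left}_\delta$. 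These two events are therefore \emph{independent}, so the containment $(\mathbf{Short}^\mathtt{L}_i)^c\cap\mathrm{Aux}\subseteq\mathcal{B}_3$ gives $\P\big((\mathbf{Short}^\mathtt{L}_i)^c\big)\,\P(\mathrm{Aux})\le\P(\mathcal{B}_3)\le Ce^{-cM^3}$, and since $\P(\mathrm{Aux})\ge 1/2$ for $M$ large, the desired $\P\big((\mathbf{Short}^\mathtt{L}_i)^c\big)\le 2Ce^{-cM^3}$ follows. (The weaker union-bound rate $e^{-cM}$ would in fact still suffice for Lemma~\ref{short2}, but it is not what the statement claims, and the independence argument is the one that yields the stated bound.) Two smaller remarks: placing $w_1,w_2$ on $\{\psi=0\}$ rather than near $U_\delta^\mathtt{L}$ as the paper does is fine but introduces a slightly larger curvature correction to the typical passage time $2(\phi(u)-\phi(w_1))$, which you should note remains $O(k_1^{-1}\delta^{1/2}n^{1/3})$ and hence negligible; and your monotonicity $T^{(\mathtt{Left}_\delta)^c}\ge T^{U_\delta}$ needs a slightly enlarged rectangle (as the paper's $V$) since $w_1,w_2$ lie outside the time window of $U_\delta$.
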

\begin{proof}
	By translation invariance, we can restrict to $i=0$. Define
	$w_1=(-\frac{1}{2}\delta^{3/2} n,\frac{1}{2}\delta^{3/2}
	n)-k_1\delta^{3/2}\mathbf{n}$ and $w_2=2k_1 \delta^{3/2} \mathbf{n} + (-\frac{1}{2}\delta^{3/2} n,\frac{1}{2}\delta^{3/2}
	n)+k_1 \delta^{3/2} \mathbf{n}$; see Figure \ref{fig:short}. Now, condition on the occurrence of the event
	$(\mathbf{Short}^\mathtt{L}_0)^c $. Thus, there exist $u,v\in
	U_\delta^\mathtt{L}$
	along with a path $\gamma\colon u\rightarrow v$ satisfying
	\eqref{eqn:15.51} along with $0\leq \phi(u)\leq \phi(v)\leq
	4k_1\delta^{3/2}n$.
	Analogous to $\mathcal{A}_1$ and $\mathcal{A}_2$ the proof of Lemma \ref{far2}, consider the
	events \begin{displaymath}
		\mathcal{B}_1=\left\{ T_{w_1,u}^{(\mathtt{Left}_\delta)^c}-
	2(\phi(u)-\phi(w_1)) \geq - \frac{M^2}{2} \beta(2k_1 \delta^{3/2} n)^{1/3}\right\}
	\end{displaymath} and 
	\begin{displaymath}
		\mathcal{B}_2= \left\{ T_{v,w_2}^{(\mathtt{Left}_\delta)^c}-
	2(\phi(w_2)-\phi(v)) \geq - \frac{M^2}{2} \beta  (2k_1 \delta^{3/2}
	n)^{1/3}\right\}.
	\end{displaymath}
	Note that $\mathcal{B}_1$ and
	$\mathcal{B}_2$ are independent of each other and the vertex weights in
	$\mathtt{Left}_\delta$. Also, $\mathcal{B}_1$ and $\mathcal{B}_2$ together with the
	conditioning imply the event $\mathcal{B}_3$ which is defined by
	\begin{equation}
		\label{eqn:11}
		\mathcal{B}_3=\left\{ \exists \gamma_1\colon w_1\rightarrow w_2
		\text{ with } \gamma_1 \cap \mathtt{Left}_{M\delta}\neq \phi \text{
		and } \ell(\gamma_1) -2(\phi(w_2)-\phi(w_1))\geq -2\beta M^2
		(2k_1 \delta^{3/2} n)^{1/3} \right\}.
	\end{equation}
	Indeed, if $\chi_1\colon w_1\rightarrow u$ is the path attaining $
	T_{w_1,u}^{(\mathtt{Left}_\delta)^c}$ and $\chi_2\colon v\rightarrow
	w_2$ is the path attaining $T_{v,w_2}^{(\mathtt{Left}_\delta)^c}$, then
	we can define $\gamma_1\colon w_1\rightarrow w_2$ as the concatenation of
	$\chi_1,\gamma,\chi_2$ and use $\ell(\gamma_1)\geq
	\ell(\chi_1)+\ell(\gamma)+\ell(\chi_2)$ which holds because both the
	endpoints were excluded in the definition of $\ell$.
	
	Since $k_1$ is some fixed constant (independent of $\delta$), we can
	invoke Proposition \ref{mod4} to say that there exists an absolute constant
	$\beta$ such that we have 
	\begin{equation}
		\label{eqn:12}
		\mathbb{P}\left( \mathcal{B}_3 \right)\leq Ce^{-cM^3}
	\end{equation}
	for large enough $n$. Indeed, we can just define $\beta=2^{-2/3} \xi$,
	where $\xi$ is in the statement of Lemma \ref{mod4}. Using this, we
	have that
	\begin{equation}
		\label{eqn:13}
		\min_u\mathbb{P}\left( \mathcal{B}_1 \right)\min_v\mathbb{P}\left( \mathcal{B}_2
		\right)\mathbb{P}\left( (\mathbf{Short}^\mathtt{L}_0)^c  \right)\leq
		\mathbb{P}\left( \mathcal{B}_3 \right)\leq Ce^{-cM^3},
	\end{equation}
	where both the minimum's in the above equation are taken over the set $\left\{
	z\in U_\delta^\mathtt{L}:\phi(z)\in [0,4k_1\delta^{3/2}n] \right\}$.
	By the symmetry of the setting, we only prove a lower bound for $\min_u \mathbb{P}\left( \mathcal{B}_1
\right)$ and omit the proof of the corresponding lower bound for $\min_v\mathbb{P}\left( \mathcal{B}_2
		\right)$ . Note that for all possible points $u$, we have that
{$2k_1\delta^{3/2} n\leq \phi(u)-\phi(w_1)\leq 6k_1 \delta^{3/2} n$}, and thus
$\frac{(\phi(u)-\phi(w_1))^{1/3}}{2\beta
		(2k_1 \delta^{3/2} n)^{1/3}}$ is bounded away from $0$ and
		$\infty$. Hence, on noting that $T_{w_1,u}^{(\mathtt{Left}_\delta)^c}\geq
	T_{w_1,u}^{V}$ for the rectangle $V$ defined by 
	\begin{displaymath}
		V=\left\{ -\delta n^{2/3}\leq \psi(u)\leq \delta n^{2/3}
		\right\}\cap \left\{ -2k_1\delta^{3/2}n\leq \phi(u)\leq
		6k_1\delta^{3/2} n \right\}
	\end{displaymath}
	and by using Proposition \ref{mod3} for $V$, along with
	\eqref{e:mean} to change the centering from $\mathbb{E}T_{w_1,u}$ to
	$2(\phi(u)-\phi(w_1))$, we get that for
	$M$ large enough (independent of $\delta$),
		\begin{equation}
			\label{eqn:14}
			\min_u\mathbb{P}\left( \mathcal{B}_1 \right)\geq 1-C_2e^{-c_2 M^2}\geq
			\frac{1}{2}.
		\end{equation}
		On combining this and the analogous lower bound of $\min_v\mathbb{P}\left( \mathcal{B}_2
		\right)$ with \eqref{eqn:13}, we get that for large enough
		$M$ (independent of $\delta$), we have
		\begin{equation}
			\label{eqn:14.1}
			\mathbb{P}( (\mathbf{Short}^\mathtt{L}_0)^c )\leq 4C e^{-c M^3}
		\end{equation}
		provided that $n$ is large enough depending on $\delta$.
\end{proof}

Instead of the conditioning argument presented above, another slightly
different way to prove
Lemma \ref{short1} would be to define $\mathcal{B}$ by
\begin{displaymath}
	\mathcal{B}=\left\{ \inf_{u',v'\in V^\mathtt{L},
	\phi(v')-\phi(u')\geq 2 k_1\delta^{3/2}n}
	\left\{T^V_{u',v'}-2(\phi(v')-\phi(u'))\right\}\geq
	- \frac{M^2}{2} \beta  (2k_1 \delta^{3/2}
	n)^{1/3}\right\},
\end{displaymath}
where $V^\mathtt{L}$ refers to the left long side of the rectangle $V$. One
can now observe that $(\mathbf{Short}^\mathtt{L}_0)^c\cap \mathcal{B}
\subseteq \mathcal{B}_3$, where the events on the left hand side are independent. $\mathbb{P}\left( \mathcal{B} \right)$ can be lower bounded by Proposition
\ref{mod3} and \eqref{e:mean}, and the rest of the proof follows
similarly. 

We end this section by proving Lemma \ref{short2}.

\begin{proof}[Proof of Lemma \ref{short2}]
	Note that the $\mathbf{Short}^\mathtt{L}_i$ are all decreasing events.
	Using the FKG inequality along with Lemma \ref{short1} immediately gives
	that for $n$ large enough depending on $\delta$,
	\begin{equation}
		\label{eqn:14.2}
		\mathbb{P}\left( \mathbf{Short}^\mathtt{L} \right) \geq
	(1-C_1e^{-c_1M^{3}})^{\frac{\delta^{-3/2}}{k_1}-1}\geq
	(1/2)^{c_2\delta^{-3/2}}=
	Ce^{-c\delta^{-3/2}}.
	\end{equation}
	Note that as mentioned in Section
	\ref{ss:constants}, $M$ is fixed to be large enough
	compared to the other parameters so as to
	satisfy the conclusion of Lemma \ref{short1}.
\end{proof}

\section{One point small deviation estimates for the geodesic}
\label{s:onept}

In this section, we will provide the proof of Theorem \ref{t:onepoint} and
the proof of Corollary \ref{c:as} (ii). We start with the upper bound in Theorem \ref{t:onepoint}, which is easier. 

\subsection{Proof of Theorem \ref{t:onepoint}, upper bound}
\label{s:oneub}
As already mentioned in the introduction, we shall complete the proof of the
upper bound using the idea outlined in \cite[Remark 2.11]{BHS18}. The main
ingredient that we require, an estimate of the number of disjoint geodesics between two
intervals of size $n^{2/3}$ on $\mathbb{L}_0$ and $\mathbb{L}_{2n}$, is quoted
from \cite{BHS18}. Recall that we use $\mathbb{L}_t$ to denote the line
$\left\{ x+y=t \right\}\subseteq \mathbb{Z}^2$. Also, recall from \eqref{eqn:1.0.1} that for any $\Delta>0$, we use the notation $U_\Delta$ for the rectangle
\begin{displaymath}
	\left\{ -\Delta n^{2/3}\leq \psi(u)\leq \Delta n^{2/3} \right\}\cap
	\left\{ 0\leq \phi(u)\leq 2n \right\}.
\end{displaymath}
%
%
%
%

\begin{proposition}[{\cite[Theorem 2]{BHS18}}]
	\label{one-2}
	For any $\Delta>0$, let $\mathcal{M}_l$ denote the event that there exist points
	$\left\{f_1,f_2,\cdots,f_l\right\}$ on $
	\underline{U}_\Delta$ and points
	$\left\{g_1,g_2,\cdots,g_l\right\}$ on $\overline{U}_\Delta$
	satisfying $\psi(f_1)>\psi(f_2)>\cdots> \psi(f_l)$ and
	$\psi(g_1)>\psi(g_2)>\cdots> \psi(g_l)$ such that the geodesics
	$\Gamma_{f_i,g_i}$ are all pairwise disjoint. Then there exist
	positive constants $n_0,l_0$ such that for all $n>n_0$ and all
	$l_0\leq l\leq n^{0.01}$, we have
	\begin{displaymath}
		\mathbb{P}\left( \mathcal{M}_l \right)\leq e^{-c_1l^{1/4}}
	\end{displaymath}
	for some positive constant $c_1$ depending on $\Delta$.
\end{proposition}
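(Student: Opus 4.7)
The strategy, following \cite{BHS18}, contrasts an upper bound on the total weight of any family of $l$ pairwise disjoint up-right paths across $U_\Delta$ with a uniform lower bound on individual passage times, forcing $\mathcal{M}_l$ to coincide with a rare deviation event.

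\emph{Multi-path upper bound.} Let $W_l$ denote the maximum of $\sum_{j=1}^{l}\ell(\gamma_j)$ over all $l$-tuples of pairwise disjoint up-right paths $\gamma_j$ connecting points of $\underline{U}_\Delta$ to points of $\overline{U}_\Delta$. On $\mathcal{M}_l$ the $l$ disjoint geodesics witness the bound $\sum_{i=1}^{l} T_{f_i,g_i} \leq W_l$. Via the Robinson--Schensted--Knuth correspondence $W_l$ is expressible as a linear statistic of the top $l$ eigenvalues of a Laguerre Unitary Ensemble matrix, and soft-edge asymptotics give
\begin{displaymath}
W_l \leq 4 l n - c\, l^{4/3} n^{1/3}
\end{displaymath}
with probability at least $1 - e^{-c' l}$ for appropriate constants $c,c'>0$; the deficit $-c\,l^{4/3} n^{1/3}$ reflects that the top $l$ eigenvalues lie in a window of width $\Theta(l^{2/3} n^{1/3})$ below the soft edge at $4n$.

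\emph{Individual lower bounds and contradiction.} By Proposition~\ref{mod1}(ii) combined with \eqref{e:mean}, for each fixed pair $(f,g) \in \underline{U}_\Delta \times \overline{U}_\Delta$ one has $\mathbb{P}(T_{f,g} \leq 4n - y n^{1/3}) \leq C e^{-c y^3}$; union bounding over the $O(n^{4/3})$ such pairs shows $T_{f,g} \geq 4n - y n^{1/3}$ uniformly in $(f,g)$ except on an event of probability $\leq C n^{4/3} e^{-c y^3}$. On $\mathcal{M}_l$ intersected with both favourable events,
\begin{displaymath}
4 l n - l y n^{1/3} \;\leq\; \sum_{i=1}^{l} T_{f_i,g_i} \;\leq\; W_l \;\leq\; 4 l n - c\, l^{4/3} n^{1/3},
\end{displaymath}
forcing $y \geq c\, l^{1/3}$. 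Taking $y := c\, l^{1/3}/2$ produces a contradiction, so $\mathcal{M}_l$ can hold only when one of the favourable events fails; the resulting probability bound is $e^{-c' l} + C n^{4/3} e^{-c'' l}$, which for $l$ in the regime $l \gg \log n$ is comfortably smaller than $e^{-c_1 l^{1/4}}$.

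\emph{Main obstacle.} The principal technical input is the sharp concentration of the multi-path weight $W_l$ around $4 l n - \Theta(l^{4/3} n^{1/3})$, which relies on the integrable structure of exponential LPP via RSK and precise asymptotics for sums of top LUE eigenvalues. The remaining subtlety is to handle the intermediate regime $l_0 \leq l \ll \log n$, where the union-bound factor $n^{4/3}$ cannot be absorbed directly by $e^{-c y^3}$; here one may instead appeal to a geodesic coalescence input, showing that disjointness of many geodesics with prescribed endpoint orderings forces multiple independent coalescence-avoidance events, each of probability bounded away from $1$ uniformly in $n$. Combining with the large-$l$ bound above yields $\mathbb{P}(\mathcal{M}_l) \leq e^{-c_1 l^{1/4}}$ throughout the stated range $l_0 \leq l \leq n^{0.01}$.
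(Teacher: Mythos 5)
This proposition is quoted from \cite{BHS18} and the paper does not re-prove it, so the relevant comparison is with the argument in \cite{BHS18}. Your RSK-based route is genuinely different from theirs: the proof in \cite{BHS18} is a percolation-style argument built on moderate-deviation estimates for passage times, a pigeonhole/coarse-graining step, and a chaining bound; it does not invoke the RSK/LUE correspondence, and the sub-optimal exponent $l^{1/4}$ is an artefact of that chaining. Your approach, if it closed, would in fact prove a stronger $e^{-cl}$ bound, which is an interesting observation.

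There is, however, a genuine gap in the key multi-path estimate. The inequality $W_l \leq 4ln - c\,l^{4/3}n^{1/3}$ is asserted via RSK, but the RSK/Greene correspondence controls the sum of weights of $l$ disjoint paths between \emph{fixed corner points} of an $m\times n$ grid (sum of the top $l$ LUE eigenvalues); it does not directly apply to paths between the two intervals $\underline{U}_\Delta$ and $\overline{U}_\Delta$ on the short sides of a rotated parallelogram. The natural fix---extend each path to the corner of a bounding axis-aligned box---fails at first order: the bounding box has side $n' = n + \Theta(\Delta n^{2/3})$, so the leading term becomes $4ln' = 4ln + \Theta(l\Delta n^{2/3})$, and the extra $\Theta(l\Delta n^{2/3})$ overwhelms the soft-edge deficit throughout the entire range $l \leq n^{0.01}$. (As a side remark, that deficit should be $\Theta(l^{5/3}n^{1/3})$, since the sum of the top $l$ LUE eigenvalues deviates from $4ln$ by roughly $n^{1/3}\sum_{i\leq l} i^{2/3}$; your $l^{4/3}$ appears to conflate the window width $l^{2/3}n^{1/3}$ with the accumulated deficit.) To make the argument work one would need a version of the multi-path bound with the deficit measured relative to $\sum_i \mathbb{E}T_{f_i,g_i}$ for varying endpoints in $\underline{U}_\Delta,\overline{U}_\Delta$, which is plausible but is a nontrivial piece of work that you have not supplied.

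A smaller issue: the $n^{4/3}$ factor from your pointwise union bound is avoidable and is what creates the $l \ll \log n$ complication you acknowledge at the end. Proposition~\ref{mod2}(ii) already gives $\P(\inf_{u\in\underline{U}_\Delta,v\in\overline{U}_\Delta}\widetilde{T}_{u,v}\leq -yn^{1/3})\leq Ce^{-cy^3}$ with no polynomial-in-$n$ cost, so the ``coalescence'' hand-wave for small $l$ is unnecessary once that estimate is used in place of the union bound.
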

Note that  \cite[Theorem 2]{BHS18} is stated for the special case
$\Delta=1$, but one can check that the same argument gives the result for every $\Delta>0$.

Now fix an $\epsilon>0$ and let $t\in[\![\epsilon n , (2-\epsilon) n]\!]$ as in the
setting of Theorem \ref{t:onepoint}. Let $\mathcal{L}_t$ denote the maximum number of points $\left\{
h_1,h_2,\cdots,h_l
\right\}$ on $\mathbb{L}_{t}\cap U_1$ strictly decreasing in $\psi(\cdot)$ such that there exist points 	$\left\{f_1,f_2,\cdots,f_l\right\}$ on $\underline{U}_1$ and points
	$\left\{g_1,g_2,\cdots,g_l\right\}$ on $\overline{U}_1$
	strictly decreasing in $\psi(\cdot)$ such that $h_i\in \Gamma_{f_i,g_i}$. We now use
	Proposition \ref{one-2} to show that $\mathbb{E}\mathcal{L}_t$ is upper bounded uniformly in $n$.

	\begin{lemma}
		\label{one-2.1}
		Fix an $\epsilon \in (0,1)$. There exists a constant $C>0$ such that for all $n$ large and all $t\in\llbracket\epsilon n , (2-\epsilon) n\rrbracket$, we have
		\begin{displaymath}
			\mathbb{E}\mathcal{L}_t\leq C.
		\end{displaymath}
	\end{lemma}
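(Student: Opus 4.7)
The plan is to bound $\mathbb{E}\mathcal{L}_t$ via the tail identity $\mathbb{E}\mathcal{L}_t=\sum_{l\geq 1}\mathbb{P}(\mathcal{L}_t\geq l)$ and reduce the event $\{\mathcal{L}_t\geq l\}$ to the event $\mathcal{M}_l$ of Proposition \ref{one-2}. The heart of the argument is the containment
\begin{equation*}
\{\mathcal{L}_t\geq l\}\subseteq \mathcal{M}_{\lceil c_* l\rceil}
\end{equation*}
for some absolute constant $c_*\in(0,1]$. Once this is in hand, I would split the tail sum into three ranges. For $l\leq l_0$ I use the trivial bound $\mathbb{P}(\mathcal{L}_t\geq l)\leq 1$, contributing at most $l_0$. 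For $l_0<l\leq c_*^{-1}n^{0.01}$, Proposition \ref{one-2} gives $\mathbb{P}(\mathcal{L}_t\geq l)\leq Ce^{-c_1(c_*l)^{1/4}}$, which is summable to a finite constant independent of $n$. For $l>c_*^{-1}n^{0.01}$, monotonicity together with the deterministic bound $\mathcal{L}_t\leq |\mathbb{L}_t\cap U_1|\leq C n^{2/3}$ yields a total contribution at most $Cn^{2/3}\cdot Ce^{-c_1(n^{0.01})^{1/4}}$, which is $o(1)$ as $n\to\infty$. Summing these contributions gives $\mathbb{E}\mathcal{L}_t\leq C$ uniformly in $t$ in the specified range.

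To establish the containment, suppose $\mathcal{L}_t\geq l$ is witnessed by $h_1,\dots,h_l$ on $\mathbb{L}_t\cap U_1$ and geodesics $\Gamma_{f_i,g_i}$ through them, with $(f_i)$, $(h_i)$, $(g_i)$ all strictly decreasing in $\psi$. The planar ordering of geodesics implies that the family $\{\Gamma_{f_i,g_i}\}$ is pairwise non-crossing. Moreover, almost-sure uniqueness of point-to-point geodesics forces any two of them to share either no vertex, or a single contiguous sub-segment: if two geodesics shared two distinct vertices $v\prec v'$, then both would have to contain the unique geodesic $\Gamma_{v,v'}$ in between, so any overlap is contiguous. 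Starting from this non-crossing family with contiguous overlaps, I would extract $\lceil c_*l\rceil$ pairwise disjoint geodesics through distinct points of $\mathbb{L}_t$, with strictly decreasing $\psi$-endpoints on $\underline{U}_1$ and $\overline{U}_1$, by a standard pruning procedure — for instance, iteratively discarding any geodesic that coincides with a planar neighbor on a sub-segment. The output of this procedure witnesses $\mathcal{M}_{\lceil c_*l\rceil}$.

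The main obstacle will be quantifying the pruning step: showing that no more than a constant fraction of the geodesics is lost. This requires careful handling of multi-way merging vertices, where three or more of the $\Gamma_{f_i,g_i}$ meet, and a combinatorial bookkeeping on how often such merging structures can occur within the planar family. Once this geometric input is in place, the tail identity, monotonicity and the appeal to Proposition \ref{one-2} are routine.
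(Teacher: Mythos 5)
The outer framework of your proposal (tail identity, split of the sum at $l\approx n^{0.01}$, deterministic bound $\mathcal{L}_t\leq n^{2/3}+1$ for the far tail) matches the paper, but the central containment $\{\mathcal{L}_t\geq l\}\subseteq\mathcal{M}_{\lceil c_*l\rceil}$ is false, and the obstacle you flag at the end is not a quantitative bookkeeping issue but a structural one. With positive probability every geodesic from $\underline{U}_1$ to $\overline{U}_1$ passes through a single common vertex $w$ at some time $s\le \epsilon n$ (think of an atypically large vertex weight planted near the diagonal there); after $w$ the geodesics fan out and reach many distinct points of $\mathbb{L}_t$, so $\mathcal{L}_t$ can be large, yet any two of them share $w$, so even $\mathcal{M}_2$ fails. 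No pruning of the family, and no re-choice of endpoints on $\underline{U}_1,\overline{U}_1$, can yield two fully disjoint geodesics from such a configuration.

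The paper's proof never asks for full disjointness. Since the $l$ geodesics pass through distinct points $h_i$ on $\mathbb{L}_t$ and (by uniqueness) any two of them share at most one contiguous sub-path, that sub-path cannot contain a point of $\mathbb{L}_t$; as $t\in\llbracket\epsilon n,(2-\epsilon)n\rrbracket$, the shared sub-path therefore misses at least one of the end windows $[0,\epsilon n]$ and $[(2-\epsilon)n,2n]$, so every pair of geodesics is disjoint when restricted to at least one of these two windows. Letting $F$ (resp.\ $G$) denote the largest sub-collection that is pairwise disjoint on $[0,\epsilon n]$ (resp.\ $[(2-\epsilon)n,2n]$), the planar ordering makes the two ``bad'' adjacency relations proper-interval graphs with disjoint edge sets, whence $F+G\ge l$ and $\max\{F,G\}\ge l/2$. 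One then applies Proposition \ref{one-2} in the variant for geodesics required to be disjoint only over a time window of macroscopic length, which the authors note follows from the proof of that result in \cite{BHS18}. The idea you are missing is precisely this switch from full disjointness to disjointness on one of the two end windows.
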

	\begin{proof}
		For a collection $\left\{ f_1,f_2,\cdots,f_l \right\}$, $\left\{
		g_1,g_2,\cdots,g_l \right\}$ and $\left\{ h_1,h_2,\cdots,h_l \right\}$
		satisfying the conditions in the definition of $\mathcal{L}_t$,
		we have that any two geodesics $\Gamma_{f_i,g_i}$ and
		$\Gamma_{f_j,g_j}$ must be disjoint when restricted to at least one of the
		time intervals $[0,\epsilon n]$ and $[(2-\epsilon) n, 2n]$. Indeed, this follows by the ordering (and uniqueness) of
		geodesics along with the fact that $h_i\neq h_j$. Let $F$ be the
		size of the largest subset of $\left\{ 1,\cdots, l \right\}$ such
		that all the geodesics $\left\{ \Gamma_{f_i,g_i} \right\}_{i\in I_1}$
		are pairwise disjoint when restricted to the time interval
		$[0,\epsilon n]$. Let $G$ be defined analogously for the
		time interval
		$[(2-\epsilon) n, 2n]$. It is easy to see that
		$\max\left\{ F,G \right\}\geq l/2$. Hence, by a union bound along
		with Proposition \ref{one-2}, we have
		\begin{equation}
			\label{eqn:14.10.1}
			\mathbb{P}\left( \mathcal{L}_t\geq l \right)\leq
			e^{-c_2l^{1/4}}
		\end{equation}
		for all $l\leq n^{0.01}$ and some constant $c_2$ depending on
		$\epsilon$. The far end of the tail of $\mathcal{L}_t$
		can be bounded by using that 
		\begin{equation}
			\label{eqn:14.10.2}
			\mathcal{L}_t\leq |\mathbb{L}_t\cap
		U_1^n|= n^{2/3} +1
		\end{equation}
		deterministically, and on using \eqref{eqn:14.10.1} with
		$l=n^{0.01}$, this yields
		\begin{equation}
			\label{eqn:14.10.3}
			\mathbb{E}[\mathbbm{1}_{\mathcal{L}_t\geq n^{0.01}}
			\mathcal{L}_t]\leq e^{-c_2n^{0.01/4}}(n^{2/3}+1)<C_2
		\end{equation}
		for some positive constant $C_2$ for all $n$. The proof is completed using \eqref{eqn:14.10.1} and
		\eqref{eqn:14.10.3}.
	\end{proof}

	We now complete the proof of the upper bound in Theorem \ref{t:onepoint}
	by using Lemma \ref{one-2.1}.

\begin{proof}[Proof of Theorem \ref{t:onepoint}, upper bound]
We consider the points $u_{i}=(i\delta
n^{2/3},-i\delta n^{2/3})$ and $v_i=\mathbf{n}+u_{i}$ for $i\in \llbracket
-\frac{\delta^{-1}}{2}, \frac{\delta^{-1}}{2} \rrbracket$. By translation invariance, we have that the probability 
	\begin{equation}
		\label{eqn:14.11}
		\mathbb{P}\left( |\Gamma_{u_i,v_i}(t)- \psi(u_i)|< \delta n^{2/3} \right)
	\end{equation}
	is independent of $i$. Also, by looking at the definition of
	$\mathcal{L}_t$ in Lemma \ref{one-2.1}, we can deduce that
	\begin{equation}
		\label{eqn:14.12}
		\sum_{i=-\delta^{-1}/4}^{\delta^{-1}/4}\mathbb{P}\left(|\Gamma_{u_i,v_i}(t)-
		\psi(u_i)|< \delta n^{2/3}
		\right)=\mathbb{E}\left[\sum_{i=-\delta^{-1}/4}^{\delta^{-1}/4}
		\mathbbm{1}\left(|\Gamma_{u_i,v_i}(t)- \psi(u_i)|<\delta
		n^{2/3}\right)
		\right]\leq \mathbb{E}\left[ \mathcal{L}_t \right]\leq C,
	\end{equation}
	where $C_2$ is obtained from Lemma \ref{one-2.1}. \eqref{eqn:14.11} and
	\eqref{eqn:14.12} together imply 
	\begin{equation}
		\label{eqn:14.13}
		\mathbb{P}\left( |\Gamma_{u_0,v_0}(t)- \psi(u_0)|< \delta n^{2/3}
		\right)=\mathbb{P}\left( |\Gamma_n(t)|< \delta n^{2/3}
		\right)\leq 2C \delta,
	\end{equation}
	and the result follows immediately.
\end{proof}

\subsection{Proof of the lower bound in Theorem \ref{t:onepoint}}
\label{s:onelb}
We shall prove the lower bound in Theorem \ref{t:onepoint} in this subsection. We first complete the proof of the lower bound using Proposition \ref{one0} and prove the latter result, which is the heart of the technical content in this section, in the next subsection.


We recall the notation from the statement of Proposition \ref{one0}:
for some $M>0$ which will be taken to be large later, we define the points
$a_1,a_2$ by
\begin{align}
	&a_1=(-Mn^{2/3},Mn^{2/3}),\nonumber\\
	\label{e*:1}
	&a_2=(Mn^{2/3},-Mn^{2/3}),
\end{align}
and let $b_1=a_1+\mathbf{n}$ and $b_2=a_2+\mathbf{n}$.
Note that the parameter $M$ used in this section is completely
independent of the parameter $M$ used in Section \ref{s:lb}. Proposition \ref{one0} roughly
states that the geodesics $\Gamma_{a_1,b_1}$ and
$\Gamma_{a_2,b_2}$ coalesce with positive probability, while
not straying more than $Mn^{2/3}$ distance away in the transversal direction.

\begin{figure}[htbp!]
\begin{center}
	\includegraphics[width=0.5\linewidth]{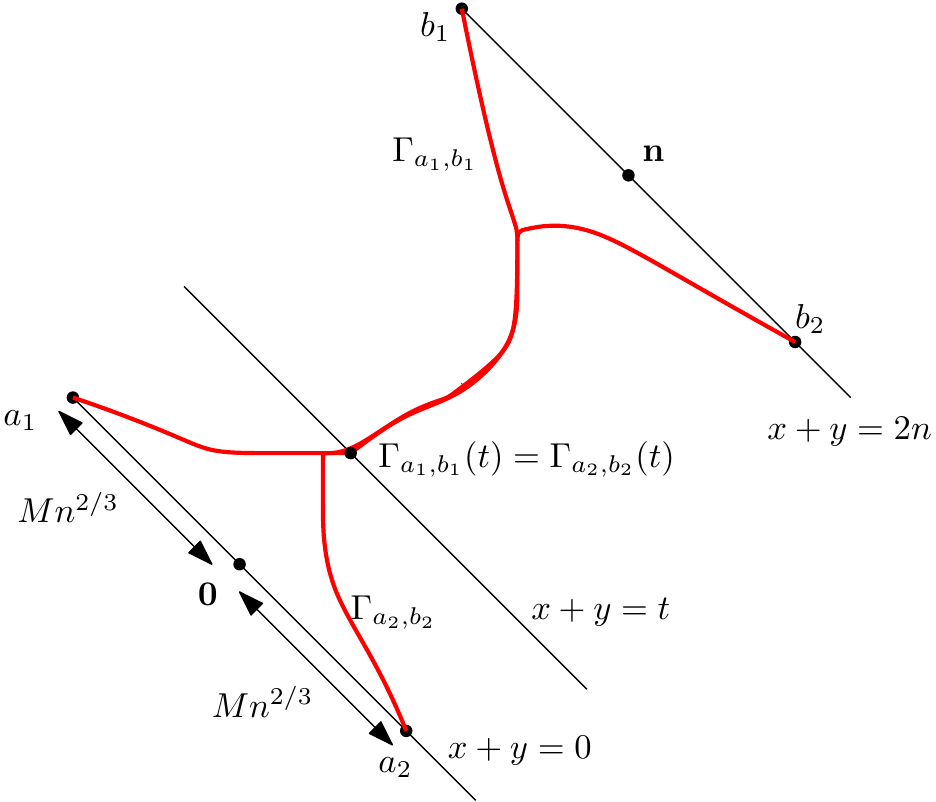}
	\end{center}
	\caption{Proof of the lower bound in Theorem \ref{t:onepoint}: Proposition
	\ref{one0} implies that with probability bounded away from $0$, the
	geodesics $\Gamma_{a_1,b_1}$ and $\Gamma_{a_2,b_2}$ intersect the line
	$x+y=t$ at the same point with the point's $\psi$-coordinate bounded by
	$M\delta n^{2/3}$ in the absolute value. By the planar ordering of
	geodesics, this implies that for every point $u$ between $a_1$ and $a_2$ and for every point $v$ between $b_1$ and $b_2$, $\Gamma_{u,v}$ intersects the line $x+y=t$ at the same point. In particular, this implies that there exists $u_i=(i\delta n^{2/3},-i\delta n^{2/3})$ and $v_i=u_{i}+\mathbf{n}$ such that $|\Gamma_{u_i,v_i}-\psi(u_i)|\leq \delta n^{2/3}$, and hence the expected number of such $i$ is bounded away from $0$. The proof of the lower bound in Theorem \ref{t:onepoint} follows by a translation invariance argument.}
	\label{fig:lowerbound2}
\end{figure}


\begin{proof}[Proof of the lower bound in Theorem \ref{t:onepoint}]
The basic strategy is similar to the proof of the upper bound.	 We consider points $u_{i}=(i\delta
n^{2/3},-i\delta n^{2/3})$ and $v_i=\mathbf{n}+u_{i}$ for $i\in \llbracket
-M\delta^{-1}, M\delta^{-1} \rrbracket$. By
	the same reasoning as in \eqref{eqn:14.11}, we have that
	\begin{equation}
		\label{eqn:14.14}
			\mathbb{P}\left( |\Gamma_{u_i,v_i}(t)- \psi(u_i)|\leq \delta n^{2/3} \right)
	\end{equation}
	is independent of $i$. Similarly, we also have that 
	\begin{equation}
		\label{eqn:14.15}
		\sum_{i=-M/\delta}^{M/\delta}\mathbb{P}\left(|\Gamma_{u_i,v_i}(t)-
		\psi(u_i)|\leq \delta n^{2/3}
		\right)=\mathbb{E}\left[\sum_{i=-M/\delta}^{M/\delta}
		\mathbbm{1}\left(|\Gamma_{u_i,v_i}(t)- \psi(u_i)|\leq \delta
		n^{2/3}\right)
		\right].
	\end{equation}
	We locally refer to the event defined in the statement of Proposition
	\ref{one0} as $\mathcal{A}$. Note that by the ordering of geodesics, we have that
	$\Gamma_{u_i,v_i}(t)$ is the same for all $i$ on the event
	$\mathcal{A}$. This in turn
	implies that on the event $\mathcal{A}$, the expression
	\begin{displaymath}
			\mathbbm{1}\left(|\Gamma_{u_i,v_i}(t)- \psi(u_i)|\leq \delta
			n^{2/3}\right)
	\end{displaymath}
	must be $1$ for at least one $i$. Thus, we have
	\begin{align}
		\label{eqn:14.16}
		\mathbb{E}\left[\sum_{i=-M/\delta}^{M/\delta}
		\mathbbm{1}\left(|\Gamma_{u_i,v_i}(t)- \psi(u_i)|\leq \delta
		n^{2/3}\right)
		\right]&\geq \mathbb{E}\left[\sum_{i=-M/\delta}^{M/\delta}
		\mathbbm{1}\left(|\Gamma_{u_i,v_i}(t)- \psi(u_i)|\leq \delta
		n^{2/3}\right)
		; \mathcal{A}\right] \nonumber \\
		&\geq \mathbb{P}\left( \mathcal{A} \right)\geq c,
	\end{align}
	where $c$ comes from Proposition \ref{one0}. Finally, by combining
	\eqref{eqn:14.14}, \eqref{eqn:14.15} and \eqref{eqn:14.16}, we obtain
	\begin{equation}
		\label{eqn:14.17}
		\mathbb{P}\left( |\Gamma_n|\leq \delta n^{2/3} \right)=
		\mathbb{P}\left(|\Gamma_{u_0,v_0}(t)-
		\psi(u_0)|\leq \delta n^{2/3} \right)\geq \frac{c}{2M}\delta,
	\end{equation}
	which finishes the proof since $\frac{c}{2M}$ is a constant.
\end{proof}

Before moving onto the proof of Proposition \ref{one0}, we quickly finish the proof of
Corollary \ref{c:as} (ii) by using Theorem \ref{t:onepoint}.

\begin{proof}[Proof of Corollary \ref{c:as} (ii)]

	We use Theorem 
	\ref{t:onepoint} along with the definition $\pi_{n}(s)= n^{-2/3}\Gamma_{n}(2ns)$ for
	$2ns\in \mathbb{Z}$. Indeed, for $n$ large enough, we have that for any
	fixed $s\in (0,1$), $\frac{\lceil 2ns\rceil}{2n}$ is bounded away from $0$
	and $1$ and now Theorem \ref{t:onepoint} implies that
	for some constants $c,C$, we have
	\begin{displaymath}
		c\delta\leq \mathbb{P}\left( |\Gamma_{n}(\lceil 2ns\rceil )|\leq
		\delta n^{2/3}\right)\leq C\delta.
	\end{displaymath}
	Using that $\Gamma_n(\cdot)$ has $\pm 1$ increments, and that
	$\pi_n(s)$ is defined for all $s\in[0,1]$ by interpolating the values on
	$[0,1]\cap \frac{1}{2n}\mathbb{Z}$, we obtain that
	\begin{displaymath}
		\frac{c}{2}\delta\leq \mathbb{P}\left( |\pi_n(s)|\leq
		\delta \right)\leq 2C\delta.
	\end{displaymath}
	for all $n$ large enough. The mapping $f\mapsto |f(s)|$ is continuous as a
	map from $C[0,1]$ to $\mathbb{R}^+$ with the uniform convergence
	topology and Euclidean topology respectively. Thus by the continuous
	mapping theorem, if $\pi_{n_i}\Rightarrow \pi$
	as $i\rightarrow\infty$ for a subsequence $\left\{ n_i \right\}$, we have
	that $|\pi_{n_i}(s)|\Rightarrow|\pi(s)|$. We can
	now use the Portmanteau theorem in the same way as in the Proof of
	Corollary \ref{c:as} (i) to obtain
	\begin{displaymath}
			\frac{c}{4}\delta\leq \mathbb{P}\left( |\pi(s)|\leq
		\delta \right)\leq 4C\delta
	\end{displaymath}
	for any fixed $s\in (0,1)$. The uniformity of $c,C$ as long as $s$ is
	bounded away from $0$ and $1$ is evident from the proof.
\end{proof}

\subsection{Proof of Proposition \ref{one0}}
\label{s:one0}
%

To prove Proposition \ref{one0}, we shall again construct a host of favourable
events whose intersection holds with positive probability and on which the
geodesics are forced to coalesce at the required location. These events
will be defined with a parameter $M$ (note that this $M$ has nothing to do
with the parameter used in the proof of the lower bound in Theorem \ref{t:sb}
from the previous sections). We start by outlining how the choice of $M$ is
fixed, and then define the relevant events.

\subsubsection{Choice of the parameter $M$}
\label{ss:constants1}
All the constructions will be made with a parameter $M$ which will be fixed to be a large constant
at the end, and this fixed constant is the one appearing in the statement
of Proposition \ref{one0}. The
specific choice of $M$ is fixed large enough to satisfy the
conclusions of Lemma \ref{one0.2}, Lemma \ref{one1} and
Proposition \ref{one5}. Using this value of $M$ in Lemma \ref{one3.1}
provides us with a corresponding value of $\beta$ and the final probability
lower bound in \eqref{eqn:31} is in terms of $\beta$. 

\noindent
Observe that \eqref{e:mean} along with
$\mathbb{E}\underline{T}_{u,v}=\mathbb{E}T_{u,v}+1$ implies that for all $\Delta$
large, and for all $n$ large depending on $\Delta$, we have that for all $u\in
\underline{U}_\Delta$ and $v\in \overline{U}_\Delta$,
\begin{align}
	\label{eqn:14.17.0}
	&-C'n^{1/3}\leq \mathbb{E}T_{u,v}-(4n-C\Delta^2n^{1/3})\leq
	C'n^{1/3},\nonumber\\
	&-C'n^{1/3}\leq \mathbb{E}\underline{T}_{u,v}-(4n-C\Delta^2n^{1/3})\leq
	C'n^{1/3}
\end{align}
for some constants $C,C'$ not depending on $\Delta$. We will repeatedly use
\eqref{eqn:14.17.0} with $\Delta$ being equal to
$M^{3/4},M$ and $M^2$ to change the centering in the estimates.  

\subsubsection{Events favourable for coalescence}

Recall from \eqref{eqn:1.0.1} that for any $\Delta>0$, we use the notation $U_\Delta$ for the rectangle
\begin{displaymath}
	\left\{ -\Delta n^{2/3}\leq \psi(u)\leq \Delta n^{2/3} \right\}\cap
	\left\{ 0\leq \phi(u)\leq 2n \right\}.
\end{displaymath}
Let the rectangles $R_1$ and $R_2$ be defined by
\begin{gather}
	\label{eqn:14.17.1} R_1=U_{M^2}\cap\left\{\frac{\epsilon}{4} n\leq
	\phi(u)\leq \frac{3\epsilon}{4}n \right\},\\
\label{eqn:14.17.2}
R_2=U_{M^2} \cap \left\{2n-\frac{3\epsilon}{4}n\leq \phi(u)\leq
2n-\frac{\epsilon}{4}n \right\}.
\end{gather} Let $\underline{R}_1$ and $\overline{R}_1$ denote the
short sides of $R_1$ lying on the lines $\mathbb{L}_{\frac{\epsilon n}{4}}$ and
$\mathbb{L}_{\frac{3\epsilon n}{4}}$ respectively. Similarly, let  $\underline{R}_2$ and
$\overline{R}_2$ denote the
short sides of $R_2$ lying on the lines $\mathbb{L}_{2n-\frac{3\epsilon n}{4}}$
and $\mathbb{L}_{2n-\frac{\epsilon n}{4}}$ respectively; see Figure \ref{fig:barrier}. We now define some high probability events which will
be used later. For a point
$u\in \mathbb{L}_0$, and any $x>0$, define the event $\mathbf{Restr}_{u,x}$ by
\begin{equation}
	\label{eqn:14.17.3}
	\mathbf{Restr}_{u,x}=\left\{\text{All } \gamma\colon u\rightarrow u+\mathbf{n}
	\text{ satisfying } \gamma \cap (U_x)^c\neq \emptyset \text{ have }
	\ell(\gamma) \leq 4n-\frac{\xi}{2} x^2n^{1/3}\right\},
\end{equation}
where in this section, $\xi$ always denotes the constant from Proposition \ref{mod4}.
We define the event $\mathbf{Restr}$ by
\begin{equation}
	\label{eqn:14.18}
	\mathbf{Restr}= \mathbf{Restr}_{\mathbf{0},2M}\cap
	\mathbf{Restr}_{a_1,M^2}\cap \mathbf{Restr}_{a_2,M^2},
\end{equation}
where  $a_1=(-Mn^{2/3}, Mn^{2/3})$ and $a_2=(Mn^{2/3},-Mn^{2/3})$
as defined in \eqref{e*:1}. Also, recall that we have defined
$b_1=a_1+\mathbf{n}$ and $b_2=a_2+\mathbf{n}$.

We now define some more events which will be used in our construction.
\begin{itemize}
	\item $\mathcal{W}_1$: the event that for any $u\in \mathbb{L}_0\cap
		U_{M^2}$ and $v\in \underline{R}_1$, we have $|\widetilde{T}_{u,v}|\leq
		M n^{1/3}$.
	\item $\mathcal{W}_2$: the event that for any $u\in \overline{R}_1$ and $v\in
		\underline{R}_2$, we have $|\widetilde{\underline{T}}_{u,v}|\leq
		M n^{1/3}$.
	\item $\mathcal{W}_3$: the event that for any $u\in \overline{R}_2$ and $v\in
		\mathbb{L}_{2n}\cap
		U_{M^2}$, we have $|\widetilde{\underline{T}}_{u,v}|\leq
		M n^{1/3}$.
\end{itemize}

\begin{itemize}
	\item $\mathcal{S}_1$:  the event that for any $u\in \underline{R}_1$ and $v\in
		\overline{R}_1$, we have $|\widetilde{\underline{T}}^{R_1}_{u,v}|\leq
		M n^{1/3}$.
	\item $\mathcal{S}_2$:  the event that for any $u\in \underline{R}_2$ and $v\in
		\overline{R}_2$, we have $|\widetilde{\underline{T}}^{R_2}_{u,v}|\leq
		M n^{1/3}$.
\end{itemize}
\begin{itemize}
	\item $\mathcal{S}_1'$:  the event that for any $u\in \underline{R}_1$ and $v\in
		\overline{R}_1$, we have $\widetilde{\underline{T}}^{R_1}_{u,v}\leq
		M n^{1/3}$.
	\item $\mathcal{S}_2'$:  the event that for any $u\in \underline{R}_2$ and $v\in
		\overline{R}_2$, we have $\widetilde{\underline{T}}^{R_2}_{u,v}\leq
		M n^{1/3}$.
\end{itemize}
Note that both $T$ and $\underline{T}$ are used to define the above events,
and the minor technical reason for doing this will become clear in the proof of
Proposition \ref{one5}. 

We will be using the events
\begin{gather}
	\label{eqn:14.19}
	\mathcal{W}=\mathcal{W}_1\cap \mathcal{W}_2 \cap \mathcal{W}_3,\\
	\label{eqn:14.20}
	\mathcal{S}=\mathcal{S}_1\cap \mathcal{S}_2,\\
	\label{eqn:14.21}
	\mathcal{S}'=\mathcal{S}_1'\cap \mathcal{S}_2'.
\end{gather}
Note that the reason for defining $\mathcal{S}'$ and $\mathcal{S}$ separately is that $\mathcal{S}'$ is a
decreasing event, and this will allow us to apply the FKG inequality in
Proposition \ref{one4}. We now show that all the above events occur with high probability.

\begin{figure}[t]
\begin{center}
	\includegraphics[width=0.5\linewidth]{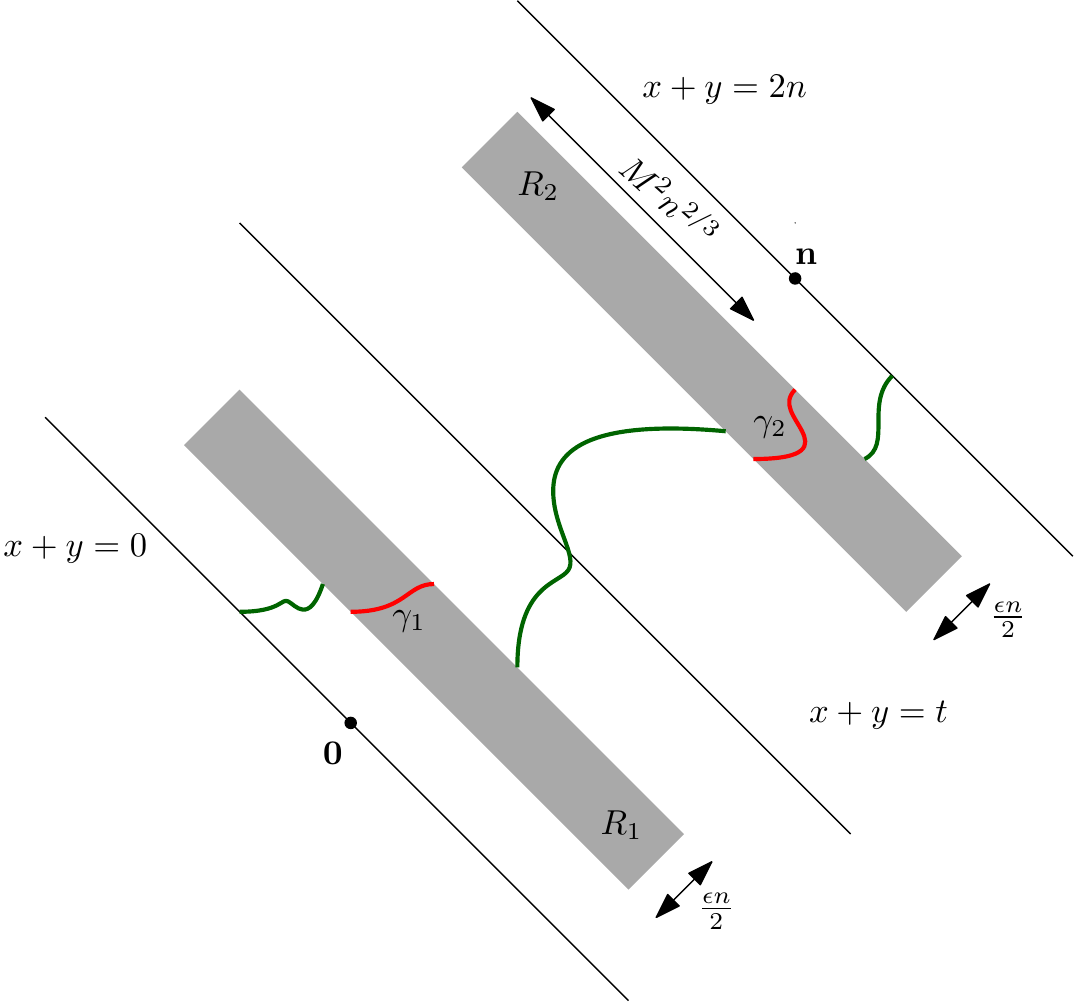}
	\end{center}
	\caption{The basic elements for constructing favourable events to force coalescence of geodesics in the proof of Proposition \ref{one0}: The rectangles $R_1$ and $R_2$, marked in grey, act as barriers, on either side of the lines $x+y=t$. All paths across $R_1$ and $R_2$ (except for those that intersect two specially designated paths $\gamma_1$ and $\gamma_2$, marked in red) are forced to be much shorter than typical. This is the content of the event $\mathcal{P}^{\gamma_1,\gamma_2}$. The other events $\mathcal{S}$, $\mathcal{S}'$ and $\mathcal{W}$ are typical; $\mathcal{W}$ ensures that the paths through the regions below $R_1$, between $R_1$ and $R_2$, and above $R_2$ (some instances of such path are marked in green) have typical weights, whereas $\mathcal{S}'$ ensures that any path through $R_1$ and $R_2$ does not have atypically large weight. The event $\mathbf{Restr}$, not shown in the figure, ensures that paths have typical transversal fluctuation.}
	\label{fig:barrier}
\end{figure}

\begin{lemma}
	\label{one0.2}
	For all $M$ large, and all $n$ large depending on $M$, we have $\mathbb{P}(\mathcal{W})\geq
	0.99$, $\mathbb{P}\left( \mathcal{S} \right)\geq 0.99, \mathbb{P}\left( \mathcal{S}'
	\right)\geq 0.99$ and $\mathbb{P}\left( \mathbf{Restr} \right)\geq 0.99$.
	Thus, we have
	\begin{displaymath}
		\mathbb{P}\left( \mathcal{S}'\cap \mathbf{Restr}\cap \mathcal{W} \right)\geq 0.97.
	\end{displaymath}
\end{lemma}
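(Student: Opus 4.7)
The plan is straightforward: each of the four events is a finite intersection of sub-events whose complements are controlled by the moderate deviation and transversal fluctuation estimates collected in Section \ref{s:prelim}. Taking $M$ large will drive each individual failure probability below, say, $10^{-3}$, and a union bound then delivers the claimed lower bounds.

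For $\mathbf{Restr}$, I will apply Proposition \ref{mod4} three times. For $\mathbf{Restr}_{\mathbf{0},2M}$, the event follows immediately from Proposition \ref{mod4} with its parameter taken to be $2M$, since $\tfrac{\xi}{2}(2M)^2 \le \xi(2M)^2$ forces the desired containment of bad events, giving a failure probability at most $e^{-c(2M)^3}$. For $\mathbf{Restr}_{a_i,M^2}$ with $i=1,2$, I will use translation invariance: a path from $a_i$ to $b_i$ that leaves $U_{M^2}$ in the original coordinates must fail to be contained in $U_{M^2-2M}$ in the coordinates centered at $a_i$, because the translation shifts the $\psi$-coordinate by $\pm 2Mn^{2/3}$. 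Applying Proposition \ref{mod4} with parameter $M^2-2M$, and noting $(M^2-2M)^2 \ge M^4/2$ for $M$ sufficiently large, produces the bound $e^{-c(M^2-2M)^3}$, comfortably small when $M$ is large.

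For $\mathcal{W}_1,\mathcal{W}_2,\mathcal{W}_3$, each controls $|\widetilde T_{u,v}|$ (or $|\widetilde{\underline T}_{u,v}|$) uniformly as $u,v$ range over two parallel line segments of space-width $M^2 n^{2/3}$ separated by a time interval of length $\Theta(n)$ (with constant depending on $\epsilon$). I view each such pair of line segments as the short sides of an on-scale parallelogram of effective width parameter $\Delta_{\mathrm{eff}} = O(M^2)$, so that Proposition \ref{mod2}, applied with bound level $r = O(M)$, yields both upper- and lower-tail control, with the probability of failure bounded by $Ce^{-cM^{3/2}}$ (for $n$ large depending on $M$ and $\epsilon$). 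For $\mathcal{S}_i,\mathcal{S}_i'$, which concern the constrained passage time $\widetilde{\underline T}^{R_i}_{u,v}$ for $u,v$ on the well-separated short sides of $R_i$, an identical argument via Proposition \ref{mod3} applied to $R_i$ (viewed as an $\Theta(n)$-scale parallelogram) gives analogous tails (one-sided for $\mathcal{S}_i'$, two-sided for $\mathcal{S}_i$).

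Finally, a union bound over the finitely many sub-events, followed by a union bound over $\mathcal{W},\mathcal{S},\mathcal{S}',\mathbf{Restr}$, delivers the lemma once $M$ is chosen large (depending on $\epsilon$) and $n$ large depending on $M$. I do not anticipate any serious obstacle; the only mild point to track is that the constants in Propositions \ref{mod2} and \ref{mod3} depend on the effective width parameter $\Delta_{\mathrm{eff}} = O(M^2)$, but since our bound level $r$ is polynomial in $M$ and the exponents $r^{3/2}$, $r^3$, or $r$ in these estimates grow polynomially in $M$, the exponential decay comfortably dominates any polynomial pre-factors once $M$ is chosen sufficiently large.
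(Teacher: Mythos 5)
Your proposal has the correct high-level structure (union bound over sub-events, moderate deviation and transversal-fluctuation inputs), and your treatment of $\mathbf{Restr}$ — applying Proposition~\ref{mod4} after a translation by $a_i$ and observing that exiting $U_{M^2}$ forces exiting the symmetric strip $U_{M^2-2M}$ in the shifted frame — is essentially what the paper does and is correct.

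For $\mathcal{W}$, $\mathcal{S}$, $\mathcal{S}'$ there is a genuine gap. You propose to apply Propositions~\ref{mod2} and \ref{mod3} directly to a parallelogram of effective width $\Delta_{\mathrm{eff}} = O(M^2)$ with fluctuation level $r = O(M)$, and you dismiss the $\Delta$-dependence of the constants on the grounds that ``exponential decay dominates polynomial pre-factors.'' But the constants $c_1, c_2$ in the \emph{exponent} of Propositions~\ref{mod2} and \ref{mod3} are explicitly allowed to depend on $\Delta$, and nothing in their statements prevents them from degrading (say, polynomially) in $\Delta$. If, e.g., $c(\Delta) \asymp \Delta^{-1}$, then $c(M^2)\cdot M^{3/2} = M^{-1/2}\to 0$ and the bound is vacuous. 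Your remark only addresses the multiplicative prefactor $C(\Delta)$, not the exponent, so the argument as written does not close.

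The paper avoids this entirely by a different reduction: it partitions each of $\underline{R}_1$ and $\overline{R}_1$ into $M^2$ segments of fixed (``unit'') width $2n^{2/3}$, considers the $O(M^4)$ parallelograms $P_{i,j}$ with these segments as short sides (applying the generalized \cite[Theorem 4.2(ii)]{BGZ19} for long sides of non-unit slope, plus Proposition~\ref{mod2} with $\Delta=O(1)$ for the upper tail), and takes a union bound. Each small parallelogram has width parameter $O(1)$ independent of $M$, so the constants are uniform, and the per-parallelogram failure probability $O(e^{-cM}) + O(e^{-cM^{3/2}})$ beats the polynomial count $M^4$. You should incorporate this subdivision-and-union-bound step; as written, your estimate for $\mathcal{W}$, $\mathcal{S}$, $\mathcal{S}'$ relies on an unstated uniformity in $\Delta$ that the quoted propositions do not provide.
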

\begin{proof}
	The lower bounds for $\mathbb{P}(\mathcal{W}),
	\mathbb{P}(\mathcal{S})$ and $\mathbb{P}(\mathcal{S}')$ are very simple consequences of a union bound, Proposition
	\ref{mod2} and Proposition \ref{mod3}. We just illustrate the proof by proving the bound for
	$\mathbb{P}(\mathcal{S})$. By symmetry and a union bound, we only need to
	lower bound
	$\mathbb{P}(\mathcal{S}_1)$. Now, divide $\underline{R}_1$ into $M^2$ disjoint
	segments of length $1$, and call these segments $(\underline{R}_1)_i$
	where $i\in \left\{ 1,\cdots,M^2 \right\}$; do the same for
	$\overline{R}_1$ and call the segments $(\overline{R}_1)_j$. Let
	$P_{i,j}$ denote the parallelogram with $(\underline{R}_1)_i$ and
	$(\overline{R}_1)_j$ as the short sides. By using a slightly generalized
	version of
	Proposition \ref{mod3} for parallelograms whose long sides have slopes
	other than $1$ (see \cite[Theorem 4.2 (ii)]{BGZ19}), we have 
	\begin{equation}
		\label{eqn:14.22}
		\mathbb{P}\left( \inf_{u\in (\underline{R}_1)_i, v\in
		(\overline{R}_1)_j} \widetilde{T}_{u,v}^{R_1}\geq -Mn^{1/3} \right)\geq \mathbb{P}\left( \inf_{u\in (\underline{R}_1)_i, v\in
		(\overline{R}_1)_j} \widetilde{T}_{u,v}^{P_{i,j}}\geq -Mn^{1/3}
		\right)\geq 1-Ce^{-cM}.
	\end{equation}
	By Proposition \ref{mod2}, we have
	\begin{equation}
		\label{eqn:14.23}
		\mathbb{P}\left( \sup_{u\in (\underline{R}_1)_i, v\in
		(\overline{R}_1)_j} \widetilde{T}_{u,v}^{R_1}\leq Mn^{1/3} \right) \geq \mathbb{P}\left( \sup_{u\in (\underline{R}_1)_i, v\in
		(\overline{R}_1)_j} \widetilde{T}_{u,v}\leq Mn^{1/3} \right)\geq 1-C_1
		e^{-c_1 M^{3/2}}.
	\end{equation}
	{To complete the proof, one just needs to take a union bound over the polynomially many (in $M$) choices of $i,j$, and we are done by choosing $M$ sufficiently large because of the stretched exponential decay in the probability.}

	The lower bound for $\mathbb{P}\left( \mathbf{Restr} \right)$ is an
	immediate application of Proposition \ref{mod4} and a union bound.
Finally, we obtain
	\begin{equation}
	\label{eqn:14.23.1}
	\mathbb{P}\left( \mathcal{S}'\cap \mathbf{Restr}\cap \mathcal{W} \right)\geq 0.97.
	\end{equation}
	by another union bound.
\end{proof}

We now use the events that we have defined to show that with high
probability, the geodesic
$\Gamma_n$ is constrained inside $U_{M^{3/4}}$ while
ensuring that its different segments have weights close to typical.
\begin{lemma}
	\label{one1}
	On the event $\mathbf{Restr}_{\mathbf{0},M^{3/4}} \cap \mathcal{W} \cap \mathcal{S}$, we have that the geodesic $\Gamma_n\subseteq
U_{M^{3/4}}$ for all $M$ large, and $n$ large depending on $M$. We also have
\begin{displaymath}
	\mathbb{P}\left( \mathbf{Restr}_{\mathbf{0},M^{3/4}} \cap \mathcal{W} \cap \mathcal{S}
	\right)\geq 0.97.
\end{displaymath}
\end{lemma}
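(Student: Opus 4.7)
The plan is to handle the probability lower bound and the deterministic containment separately.

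For the probability bound, Lemma~\ref{one0.2} already yields $\mathbb{P}(\mathcal{W} \cap \mathcal{S}) \geq 0.98$ after a union bound, so it suffices to control $\mathbb{P}(\mathbf{Restr}_{\mathbf{0},M^{3/4}})$. I will apply Proposition~\ref{mod4} with the reduced parameter $M' = M^{3/4}/\sqrt{2}$: since $U_{M'} \subseteq U_{M^{3/4}}$ and $\xi (M')^2 = \tfrac{\xi}{2} M^{3/2}$, the event $\mathbf{Restr}_{\mathbf{0},M^{3/4}}^c$ is contained in the event $\mathcal{G}$ of Proposition~\ref{mod4} at parameter $M'$. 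Hence $\mathbb{P}(\mathbf{Restr}_{\mathbf{0},M^{3/4}}^c) \leq e^{-c_1 (M')^3} \leq 0.01$ for $M$ large, and the claimed bound of $0.97$ follows by one more union bound.

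For the containment, the goal is to exhibit a path from $\mathbf{0}$ to $\mathbf{n}$ whose weight exceeds $4n - \tfrac{\xi}{2} M^{3/2} n^{1/3}$; such a path cannot exit $U_{M^{3/4}}$ thanks to $\mathbf{Restr}_{\mathbf{0},M^{3/4}}$, and this forces $\Gamma_n \subseteq U_{M^{3/4}}$. I will take four intermediate points on the diagonal, $v_1 \in \underline{R}_1, v_2 \in \overline{R}_1, v_3 \in \underline{R}_2, v_4 \in \overline{R}_2$ (namely $v_i = (\phi_i/2, \phi_i/2)$ with $\phi_1 = \epsilon n/4, \phi_2 = 3\epsilon n/4, \phi_3 = 2n - 3\epsilon n/4, \phi_4 = 2n - \epsilon n/4$), and concatenate the geodesic $\mathbf{0} \to v_1$, the $R_1$-constrained geodesic $v_1 \to v_2$, the geodesic $v_2 \to v_3$, the $R_2$-constrained geodesic $v_3 \to v_4$, and the geodesic $v_4 \to \mathbf{n}$. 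Using the identity $\omega_{v_i} + T_{v_i, v_{i+1}} = \underline{T}_{v_i, v_{i+1}}$ (and its analogue for constrained passage times) to absorb the four transition weights that appear because the pieces exclude their endpoints, the total weight of this concatenated path equals
\begin{displaymath}
T_{\mathbf{0},v_1} + \underline{T}^{R_1}_{v_1, v_2} + \underline{T}_{v_2, v_3} + \underline{T}^{R_2}_{v_3, v_4} + \underline{T}_{v_4, \mathbf{n}},
\end{displaymath}
which is a lower bound on $T_n$.

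The events $\mathcal{W}_1, \mathcal{S}_1, \mathcal{W}_2, \mathcal{S}_2, \mathcal{W}_3$ respectively bound the centered versions of these five quantities from below by $-Mn^{1/3}$, costing at most $5Mn^{1/3}$ in total. Since all $v_i$ lie on the diagonal, the expectations carry no parabolic correction, and~\eqref{eqn:14.17.0} applied to each piece (each of $\phi$-length $\Theta_\epsilon(n)$) gives $\mathbb{E}\underline{T}_{v_i, v_{i+1}} = 2(\phi_{i+1} - \phi_i) + O_\epsilon(n^{1/3})$. The $\phi$-increments telescope to $2n$, so the expectations sum to $4n - C(\epsilon) n^{1/3}$, yielding $T_n \geq 4n - (C(\epsilon) + 5M)n^{1/3}$. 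For $M$ large enough that $\tfrac{\xi}{2} M^{3/2} > C(\epsilon) + 5M$, this exceeds the cap from $\mathbf{Restr}_{\mathbf{0},M^{3/4}}$, completing the argument.

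The main obstacle is keeping the constant $C(\epsilon)$ genuinely independent of $M$, since the target to be dominated is only $\tfrac{\xi}{2} M^{3/2} n^{1/3}$. This is precisely why the intermediate points are placed on the diagonal: had they been placed at the extremes of $\underline{R}_j, \overline{R}_j$ (where $|\psi|$ can reach $M^2 n^{2/3}$), the parabolic correction $(\psi(v) - \psi(u))^2 / (2\tau)$ would have contributed an $O(M^4) n^{1/3}$ term, dwarfing $\tfrac{\xi}{2} M^{3/2} n^{1/3}$ and wrecking the comparison. With diagonal placement that term vanishes, and only the Tracy-Widom mean correction of size $O(\tau_i^{1/3}) \leq O(n^{1/3})$ per piece survives, aggregating to $C(\epsilon) n^{1/3}$ independently of $M$.
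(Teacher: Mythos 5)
Your proof is correct and follows essentially the same route as the paper's: decompose along the same diagonal waypoints $\frac{\epsilon}{8}\mathbf{n}, \frac{3\epsilon}{8}\mathbf{n}, (1-\frac{3\epsilon}{8})\mathbf{n}, (1-\frac{\epsilon}{8})\mathbf{n}$, use $\mathcal{W}\cap\mathcal{S}$ to force each piece within $Mn^{1/3}$ of its mean, and invoke $\mathbf{Restr}_{\mathbf{0},M^{3/4}}$ to conclude. One small remark in your favor: you handle the $T$ versus $\underline{T}$ bookkeeping at the junction points carefully (via $\underline{T}_{u,v}=T_{u,v}+\omega_u$), which makes the concatenation identity exact and lines up precisely with the way $\mathcal{W}_2,\mathcal{W}_3,\mathcal{S}_1,\mathcal{S}_2$ are stated in terms of $\widetilde{\underline{T}}$; the paper's display \eqref{eqn:14.23.2} writes $T$ throughout, which is a harmless but slightly imprecise shorthand for the same estimate.
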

\begin{proof}
	On $\mathcal{W}\cap \mathcal{S}$, we have that for some constant $C$,
	\begin{equation}
		\label{eqn:14.23.2}
		T_{\mathbf{0},\mathbf{n}}\geq T_{\mathbf{0},\frac{\epsilon}{8}\mathbf{n}}+T^{R_1}_{\frac{\epsilon}{8}\mathbf{n},\frac{3\epsilon}{8}\mathbf{n}}+T_{\frac{3\epsilon}{8}\mathbf{n},(1-\frac{3\epsilon}{8})\mathbf{n}}+
		T^{R_2}_{(1-\frac{3\epsilon}{8})\mathbf{n},(1-\frac{\epsilon}{8})\mathbf{n}}+
		T_{(1-\frac{\epsilon}{8})\mathbf{n},\mathbf{n}}\geq 4n
		-(5M+5{C})n^{1/3},
	\end{equation}
	where the constant $C$ comes from switching the centering by using
	\eqref{e:mean}.
	Using the definition of $\mathbf{Restr}_{\mathbf{0},M^{3/4}}$ and the fact
	that $\xi M^{3/2}/2>M+C$ for $M$ large, it is clear from \eqref{eqn:14.23.2}
	that $\Gamma_n \subseteq U_{M^{3/4}}$.
	
	The probability lower bound is an
	easy consequence of Proposition \ref{mod4} and Lemma \ref{one0.2}. Indeed, we just need to
	individually lower bound $\mathbb{P}\left(
	\mathbf{Rest}_{\mathbf{0},M^{3/4}} \right)$, $\mathbb{P}\left( \mathcal{W}
	\right)$ and $\mathbb{P}(\mathcal{S})$ by $0.99$ and use a union bound.
\end{proof}
At this point, we introduce some notation. For a given path
$\gamma\colon u\rightarrow v$, we define
\begin{gather}
	\label{eqn:14.24.1}
	\widetilde{\ell}(\gamma)=\ell(\gamma)-\mathbb{E}T_{u,v},\\	
	\widetilde{\underline{\ell}}(\gamma)=
	\underline{\ell}(\gamma)-\mathbb{E}\underline{T}_{u,v}\nonumber.
\end{gather}
From now on, for a fixed $M$, we say that a path  $\gamma$ is {$\theta$-typical} if it
satisfies $\gamma \subseteq U_{\theta}$ and
$|\widetilde{\underline{\ell}}(\gamma)|\leq Mn^{1/3}$. 

\begin{lemma}
	\label{one2}
	Let $\mathcal{T}$ be the event that
	$\left.\Gamma_n\right|_{R_1}$ and $\left.\Gamma_n\right|_{R_2}$ are
	$M^{3/4}$-typical. Then $\mathbb{P}(\mathcal{T})\geq 0.97$ for all $M$
	large, and $n$ large depending on $M$ as in the statement of Lemma
	\ref{one1}.
\end{lemma}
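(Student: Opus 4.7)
The plan is to establish the deterministic inclusion
\begin{displaymath}
\mathbf{Restr}_{\mathbf{0},M^{3/4}} \cap \mathcal{W} \cap \mathcal{S} \subseteq \mathcal{T},
\end{displaymath}
after which the probability bound $\mathbb{P}(\mathcal{T})\geq 0.97$ is immediate from the corresponding lower bound on the left-hand side already proved in Lemma \ref{one1}. Thus no new probabilistic estimate is required, and the entire content of the lemma reduces to a deterministic verification on the configuration.

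To carry out the verification, I would fix any configuration in $\mathbf{Restr}_{\mathbf{0},M^{3/4}} \cap \mathcal{W} \cap \mathcal{S}$ and first invoke Lemma \ref{one1} to get $\Gamma_n \subseteq U_{M^{3/4}}$. Since $M^{3/4} < M^2$, the slices $U_{M^{3/4}} \cap \mathbb{L}_{\epsilon n/4}$ and $U_{M^{3/4}} \cap \mathbb{L}_{3\epsilon n/4}$ are contained in $\underline{R}_1$ and $\overline{R}_1$ respectively, so $\Gamma_n$ enters $R_1$ at a unique point $u\in\underline{R}_1$ and exits at a unique point $v\in\overline{R}_1$, and the sub-path $\left.\Gamma_n\right|_{R_1}$ stays inside $U_{M^{3/4}}$. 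This settles the containment half of $M^{3/4}$-typicality for $i=1$, and the same reasoning with $R_2$ in place of $R_1$ handles $i=2$.

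For the weight half of typicality I would argue as follows. The restriction $\left.\Gamma_n\right|_{R_1}$ is a sub-path of the geodesic $\Gamma_n$ with endpoints $u,v$, so by uniqueness it coincides with the unrestricted geodesic $\Gamma_{u,v}$; since this geodesic lies in $U_{M^{3/4}}\subseteq R_1$, the restricted and unrestricted passage times agree, and
\begin{displaymath}
\underline{\ell}(\left.\Gamma_n\right|_{R_1}) \;=\; \underline{T}_{u,v} \;=\; \underline{T}^{R_1}_{u,v}.
\end{displaymath}
On the event $\mathcal{S}_1$ the last quantity differs from $\mathbb{E}\underline{T}_{u,v}$ by at most $Mn^{1/3}$, giving $|\widetilde{\underline{\ell}}(\left.\Gamma_n\right|_{R_1})|\leq Mn^{1/3}$, as required; the analogous bound for $\left.\Gamma_n\right|_{R_2}$ follows from $\mathcal{S}_2$.

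I do not foresee any substantive obstacle; this lemma is essentially bookkeeping on top of Lemma \ref{one1}. The two points to keep track of are (i) that $R_i$ is built at transversal scale $M^2$ whereas the confinement of $\Gamma_n$ only holds at the much smaller scale $M^{3/4}$, which is precisely what guarantees that the entry/exit points of $\Gamma_n$ into $R_i$ sit on the short sides $\underline{R}_i,\overline{R}_i$; and (ii) the identification $\underline{T}_{u,v}=\underline{T}^{R_1}_{u,v}$, which holds exactly because the unique geodesic $\Gamma_{u,v}$ is itself a sub-path of $\Gamma_n$ and hence contained in $R_1$.
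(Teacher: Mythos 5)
Your proof is correct and follows the same route as the paper: establish the deterministic inclusion $\mathbf{Restr}_{\mathbf{0},M^{3/4}} \cap \mathcal{W} \cap \mathcal{S} \subseteq \mathcal{T}$ and then invoke the probability bound already obtained in Lemma \ref{one1}. The paper's proof is terser — it asserts directly that $\mathcal{S}$ gives $|\widetilde{\underline{\ell}}(\left.\Gamma_n\right|_{R_1})|\leq Mn^{1/3}$ — whereas you make explicit the bridging identity $\underline{\ell}(\left.\Gamma_n\right|_{R_1}) = \underline{T}_{u,v} = \underline{T}^{R_1}_{u,v}$ (valid because the sub-path of a geodesic is a geodesic and, by the confinement $\Gamma_n\subseteq U_{M^{3/4}}$, it stays inside $R_1$), which is precisely the step that lets the constrained event $\mathcal{S}_1$ control a realized path weight; this is a sound and slightly more careful rendering of the same argument.
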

\begin{proof}
	By
Lemma \ref{one1}, on the event $\mathbf{Restr}_{\mathbf{0},M^{3/4}} \cap \mathcal{W} \cap \mathcal{S}$, we have that the geodesic $\Gamma_n\subseteq
U_{M^{3/4}}$, which in particular implies that
$\left.\Gamma_n\right|_{R_1},\left.\Gamma_n\right|_{R_2}\subseteq
U_{M^{3/4}}$. Also, the definition of the event $\mathcal{S}$ ensures that on
$\mathbf{Restr}_{\mathbf{0},M^{3/4}} \cap \mathcal{W} \cap \mathcal{S}$, we have
$|\widetilde{\underline{\ell}}(\left.\Gamma_n\right|_{R_1})|\leq Mn^{1/3}$ and
$|\widetilde{\underline{\ell}}(\left.\Gamma_n\right|_{R_2})|\leq Mn^{1/3}$. Thus, we have that
\begin{displaymath}
	\mathbb{P}(\mathcal{T})\geq \mathbb{P}\left( \mathbf{Restr}_{\mathbf{0},M^{3/4}} \cap \mathcal{W} \cap
	\mathcal{S} \right)\geq 0.97
\end{displaymath}
by Lemma \ref{one1}.
\end{proof}

Let $\mathscr{I}_1$ and
$\mathscr{I}_2$ denote the set of all $M^{3/4}$-typical paths from
$\underline{R}_1$ to $\overline{R}_1$ and $\underline{R}_2$ to
$\overline{R}_2$ respectively. By Lemma \ref{one2}, it is clear that
\begin{equation}
	\label{eqn:14.25}
	\mathbb{P}\left( \left.\Gamma_n\right|_{R_1}\in \mathscr{I}_1,
\left.\Gamma_n\right|_{R_2} \in \mathscr{I}_2\right)\geq 0.97.
\end{equation}
Let $\mathscr{I}$ denote the set of pairs of paths $(\gamma_1,\gamma_2)$ such
that $\gamma_1\in \mathscr{I}_1$, $\gamma_2\in \mathscr{I}_2$ and 
\begin{equation}
	\label{eqn:14.26}
\mathbb{P}\left( \mathcal{S}'\cap {\mathbf{Restr}}\cap \mathcal{W}|
\left.\Gamma_n\right|_{R_1}=\gamma_1, \left.\Gamma_n\right|_{R_2}=\gamma_2
\right)\geq \frac{1}{2}.	
\end{equation}
In view of Lemma \ref{one0.2}, we have the following lower bound.
\begin{lemma}
	\label{one3}
	$\mathbb{P}\left( ( \left.\Gamma_n\right|_{R_1},
	\left.\Gamma_n\right|_{R_2})\in \mathscr{I} \right)\geq 0.9$ for all $M$
	large and $n$ large depending on $M$ as in the statement of Lemma
	\ref{one1}.
\end{lemma}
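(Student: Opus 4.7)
The plan is to deduce Lemma \ref{one3} from Lemma \ref{one0.2} and \eqref{eqn:14.25} via a straightforward conditional-probability (Markov type) argument, with essentially no further LPP input required.

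First I would set $A := \mathcal{S}' \cap \mathbf{Restr} \cap \mathcal{W}$, which by Lemma \ref{one0.2} satisfies $\mathbb{P}(A) \geq 0.97$, and introduce the $[0,1]$-valued random variable
\begin{equation*}
	X := \mathbb{P}\bigl(A \,\big|\, \left.\Gamma_n\right|_{R_1},\, \left.\Gamma_n\right|_{R_2}\bigr).
\end{equation*}
Since $\mathbb{E}[X] = \mathbb{P}(A) \geq 0.97$ and $X \leq 1$, a one-line Markov-type estimate gives
\begin{equation*}
	0.97 \leq \mathbb{E}[X] \leq \mathbb{P}(X \geq \tfrac{1}{2}) + \tfrac{1}{2}\,\mathbb{P}(X < \tfrac{1}{2}) = 1 - \tfrac{1}{2}\,\mathbb{P}(X < \tfrac{1}{2}),
\end{equation*}
so $\mathbb{P}(X \geq \tfrac{1}{2}) \geq 0.94$.

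Next, I would observe that by the very definition of $\mathscr{I}$ in \eqref{eqn:14.26}, the event $\{(\left.\Gamma_n\right|_{R_1}, \left.\Gamma_n\right|_{R_2}) \in \mathscr{I}\}$ is exactly the intersection of the two events
\begin{equation*}
	\mathcal{E}_1 := \bigl\{\left.\Gamma_n\right|_{R_1} \in \mathscr{I}_1,\; \left.\Gamma_n\right|_{R_2} \in \mathscr{I}_2\bigr\}, \qquad \mathcal{E}_2 := \{X \geq \tfrac{1}{2}\}.
\end{equation*}
By \eqref{eqn:14.25} we have $\mathbb{P}(\mathcal{E}_1) \geq 0.97$, and the Markov bound above gives $\mathbb{P}(\mathcal{E}_2) \geq 0.94$. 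A union bound on complements then yields
\begin{equation*}
	\mathbb{P}\bigl((\left.\Gamma_n\right|_{R_1}, \left.\Gamma_n\right|_{R_2}) \in \mathscr{I}\bigr) \geq 1 - 0.03 - 0.06 = 0.91 \geq 0.9,
\end{equation*}
which is the desired bound.

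There is no real obstacle here: all the substantive work lies in Lemma \ref{one0.2} and in identifying the correct collection $\mathscr{I}$ of \emph{good} path pairs. The only mildly delicate point is a measurability check, namely that $X$ is well defined as a conditional probability given the finite-valued pair $(\left.\Gamma_n\right|_{R_1}, \left.\Gamma_n\right|_{R_2})$; since both restrictions take values in a countable set of lattice paths, this is immediate. Accordingly I expect the proof to be only a few lines long, a direct bookkeeping corollary of Lemma \ref{one0.2} and \eqref{eqn:14.25}.
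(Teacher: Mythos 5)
Your proof is correct and follows essentially the same route as the paper: the paper bounds $\mathbb{P}(\mathscr{P}\setminus\mathscr{I})\leq 0.06$ by summing the conditional probability of $(\mathcal{S}'\cap\mathbf{Restr}\cap\mathcal{W})^c$ over bad pairs, which is exactly the Markov-type estimate you phrase in terms of $X$, and then concludes by subtraction rather than your union bound. The two are just notational variants of the same argument and land on the same $0.91\geq 0.9$ bound.
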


\begin{proof}
Let $\mathscr{P}$ be the set of pairs of paths $(\gamma_1,\gamma_2)$ such that
$\gamma_1\in \mathscr{I}_1$ and $\gamma_2\in \mathscr{I}_2$. Note that we have 
	\begin{align*}
		&\mathbb{P}(\mathcal{S}'\cap {\mathbf{Restr}}\cap
		\mathcal{W})\\
		&=1-\mathbb{P}( (\mathcal{S}'\cap {\mathbf{Restr}}\cap
		\mathcal{W})^c )\\
		&\leq 1-\sum_{(\gamma_1,\gamma_2)\in \mathscr{P}\setminus
		\mathscr{I}}
		\mathbb{P}\left( (\mathcal{S}'\cap {\mathbf{Restr}}\cap \mathcal{W})^c|
\left.\Gamma_n\right|_{R_1}=\gamma_1, \left.\Gamma_n\right|_{R_2}=\gamma_2
\right)\mathbb{P}\left( \left.\Gamma_n\right|_{R_1}=\gamma_1, \left.\Gamma_n\right|_{R_2}=\gamma_2 \right)\\
&\leq 1-\frac{1}{2}\mathbb{P}( ( \left.\Gamma_n\right|_{R_1},
\left.\Gamma_n\right|_{R_2})\in \mathscr{P}\setminus \mathscr{I})
\end{align*}
	 by using the definition of $\mathscr{I}$
	as in \eqref{eqn:14.26}.
	Using \eqref{eqn:14.23.1} along with the above, we obtain
	\begin{equation}
		\label{eqn:14.27}
		\mathbb{P}\left( ( \left.\Gamma_n\right|_{R_1},
		\left.\Gamma_n\right|_{R_2})\in \mathscr{P}\setminus \mathscr{I} \right)\leq
		0.06.
	\end{equation} 
	Finally, we have that
	\begin{displaymath}
		\mathbb{P}\left( ( \left.\Gamma_n\right|_{R_1},
		\left.\Gamma_n\right|_{R_2})\in \mathscr{I} \right)=	\mathbb{P}\left( ( \left.\Gamma_n\right|_{R_1},
		\left.\Gamma_n\right|_{R_2})\in \mathscr{P} \right)-\mathbb{P}\left( ( \left.\Gamma_n\right|_{R_1},
		\left.\Gamma_n\right|_{R_2})\in \mathscr{P}\setminus \mathscr{I} \right)\geq
		0.97-0.06=0.91
	\end{displaymath}
	by using \eqref{eqn:14.25} and \eqref{eqn:14.27}
	and this completes the
	proof.
\end{proof}

We will finally want to condition on the event $\left\{\left( \left.\Gamma_n\right|_{R_1},
\left.\Gamma_n\right|_{R_2}\right)=(\gamma_1,\gamma_2)\right\}$ for some
$\gamma\in \mathscr{I}$, and then show that in this conditional environment, we
have that both the geodesics {$\Gamma_{a_1,b_1}$} and
{$\Gamma_{a_2,b_2}$} meet
$\gamma_1$ and $\gamma_2$ with positive probability. {We will ensure
this by ``decreasing'' the background vertex weights in the region
$R_1\setminus \gamma_1$ and $R_2 \setminus \gamma_2$}, and we define some
more events which will help us in achieving this.

For any path $\gamma_1\subseteq R_1$ from $\underline{R}_1$ to
$\overline{R}_1$, and any path $\gamma_2\subseteq R_2$ from
$\underline{R}_2$ to
$\overline{R}_2$, we will consider the following events:
\begin{itemize}
	\item $\mathcal{P}^{\gamma_1}$: the event that any path $\gamma\subseteq R_1$ from $\underline{R}_1$ to
		$\overline{R}_1$ disjoint from $\gamma_1$ satisfies
		$\widetilde{\underline{\ell}}(\gamma)\leq -M^4 n^{1/3}$.
	\item $\mathcal{P}^{\gamma_2}$: the event that any path $\gamma\subseteq R_2$ from
		$\underline{R}_2$ to $\overline{R}_2$ disjoint from $\gamma_2$ satisfies
		$\widetilde{\underline{\ell}}(\gamma)\leq -M^4 n^{1/3}$.
\end{itemize}
In the above setting, we define the event
$\mathcal{P}^{\gamma_1,\gamma_2}$ by
\begin{displaymath}
	\mathcal{P}^{\gamma_1,\gamma_2}=\mathcal{P}^{\gamma_1}\cap\mathcal{P}^{\gamma_2}.
\end{displaymath}
 Note that the event $\mathcal{P}^{\gamma_1,\gamma_2}$ depends only on the vertex weights
	inside $(R_1\setminus 
	(\gamma_1 \cup \overline{R}_1) )\cup (R_2\setminus  (\gamma_2\cup
	\overline{R}_2)
	)$. 

Note that by using \eqref{eqn:14.17.0} along with Lemma \ref{mod3.2}, we have that once we fix some large
constant $M$, then for some
constant $c$ not depending on $\gamma_1$ or $\gamma_2$, we have that the
probabilities of each of the above events are lower bounded by some
constant $c$ for all $n$ large enough. Using the
independence between the vertex weights in $R_1$ and $R_2$, we have that $\mathcal{P}^{\gamma_1}$ and $\mathcal{P}^{\gamma_2}$ are
independent and this
immediately implies the following lemma:

\begin{lemma}
	\label{one3.1}
	For any path $\gamma_1\subseteq R_1$ from $\underline{R}_1$ to
	$\overline{R}_1$ and any path $\gamma_2\subseteq R_2$ from
	$\underline{R}_2$ to $\overline{R}_2$, we
	have that for any $M>0$, there exists a
	positive constant $\beta$  (depending on the choice of $M$, independent of
	the choice of $\gamma_1,\gamma_2$) such that for all $n$ large
	enough,
	\begin{equation}
	\label{eqn:23}
	\mathbb{P}\left( \mathcal{P}^{\gamma_1,\gamma_2} \right)\geq \beta.
\end{equation}
\end{lemma}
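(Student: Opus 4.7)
The plan is to lower bound each of $\mathbb{P}(\mathcal{P}^{\gamma_1})$ and $\mathbb{P}(\mathcal{P}^{\gamma_2})$ by a positive constant depending only on $M$ (and $\epsilon$), uniformly in the fixed choice of $\gamma_1, \gamma_2$, and then exploit the independence of the two events to multiply. The work is essentially in the individual lower bound; the product step is immediate once the two rectangles are seen to be disjoint.

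The key monotonicity I would exploit is that $\mathcal{P}^{\gamma_i}$ is implied by the stronger, $\gamma_i$-free event
\begin{displaymath}
\mathcal{Q}_i := \left\{ \sup_{u \in \underline{R}_i,\, v \in \overline{R}_i} \widetilde{\underline{T}}_{u,v} \leq -M^4 n^{1/3} \right\}
\end{displaymath}
(with centering by $\mathbb{E}\underline{T}_{u,v}$, as usual). Indeed, on $\mathcal{Q}_i$, any path $\gamma \subseteq R_i$ from $\underline{R}_i$ to $\overline{R}_i$ with endpoints $u, v$ satisfies
\begin{displaymath}
\widetilde{\underline{\ell}}(\gamma) \leq \widetilde{\underline{T}}^{R_i}_{u,v} \leq \widetilde{\underline{T}}_{u,v} \leq -M^4 n^{1/3},
\end{displaymath}
and in particular this bound is valid for the subfamily of paths in $R_i$ which are disjoint from $\gamma_i$. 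So it suffices to show $\mathbb{P}(\mathcal{Q}_i) \geq c$ for some $c = c(M, \epsilon) > 0$ independent of $\gamma_i$.

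To lower bound $\mathbb{P}(\mathcal{Q}_i)$, I would view $R_i$ as a translate of an on-scale parallelogram $U_\Delta^m$ with $m = \epsilon n /4$ and $\Delta = M^2(4/\epsilon)^{2/3}$, and apply Lemma \ref{mod3.2} (in its $\underline{T}$-analogue, which is justified by the discussion after Proposition \ref{mod2}) with this $\Delta$ and with a large deficit constant $L = L(M, \epsilon)$ to be chosen. This yields, for every such $L$, a positive constant $c_L = c(L, M, \epsilon)$ such that
\begin{displaymath}
\mathbb{P}\left( \sup_{u \in \underline{R}_i,\, v \in \overline{R}_i} \underline{T}_{u,v} \leq 4m - L m^{1/3} \right) \geq c_L.
\end{displaymath}
Combining this with the centering estimate \eqref{eqn:14.17.0}, which (applied at scale $m$ with width parameter $\Delta$) provides $\mathbb{E}\underline{T}_{u,v} \geq 4m - C(M, \epsilon) n^{1/3}$ uniformly over $u \in \underline{R}_i, v \in \overline{R}_i$, a sufficiently large choice of $L$ depending on $M$ and $\epsilon$ converts the uncentered bound into the centered bound in $\mathcal{Q}_i$. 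This gives $\mathbb{P}(\mathcal{Q}_i) \geq c_L$ for all large $n$.

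Finally, since $R_1 \subseteq \{\phi \leq 3\epsilon n/4\}$ and $R_2 \subseteq \{\phi \geq 2n - 3\epsilon n /4\}$ are disjoint (for any $\epsilon \in (0,1)$), and since $\mathcal{P}^{\gamma_i}$ is measurable with respect to the weights in $R_i \setminus (\gamma_i \cup \overline{R}_i)$ alone, $\mathcal{P}^{\gamma_1}$ and $\mathcal{P}^{\gamma_2}$ are independent. Multiplying the two individual lower bounds gives $\mathbb{P}(\mathcal{P}^{\gamma_1, \gamma_2}) \geq \beta$ for some $\beta = \beta(M, \epsilon) > 0$. There is no real obstacle here; the one point of care is bookkeeping the scaling constants so that the chosen $L$ beats the $M^4 n^{1/3}$ threshold after absorbing the mean, but since $M$ and $\epsilon$ are fixed and $L$ can be taken as large as needed with only $c_L$ shrinking (not going to zero), this is routine.
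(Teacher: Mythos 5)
Your proposal is correct and follows essentially the same route as the paper: reduce $\mathcal{P}^{\gamma_i}$ to a $\gamma_i$-free event of the form $\left\{\sup_{u,v}\widetilde{\underline{T}}_{u,v}\leq -M^4n^{1/3}\right\}$ via the pointwise bound $\widetilde{\underline{\ell}}(\gamma)\leq\widetilde{\underline{T}}_{u,v}$, recenter using \eqref{eqn:14.17.0}, lower-bound the resulting uncentered lower-tail event by Lemma \ref{mod3.2}, and multiply by independence of the two rectangles. The paper presents the recentering as a single displayed inclusion rather than via an auxiliary event $\mathcal{Q}_i$, but the content is identical.
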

\begin{proof}
	The lemma follows by using \eqref{eqn:14.17.0}, Lemma \ref{mod3.2}, and the
	independence of $\mathcal{P}^{\gamma_1}$ and
	$\mathcal{P}^{\gamma_2}$ as explained above. Indeed, by
	\eqref{eqn:14.17.0}, we have that for any path $\gamma\subset R_1$
	from $\underline{R}_1$ to $\overline{R}_1$, and for some
	positive constants $C_1,C_2$ not depending on $M$,
	\begin{displaymath}
		\left\{ \widetilde{\underline{\ell}}(\gamma)\leq -M^4 n^{1/3}
		\right\}\supseteq \left\{ \underline{\ell}(\gamma)-\epsilon n\leq (-C_1-1)M^4
		n^{1/3}-C_2n^{1/3} \right\},
	\end{displaymath}
	and the $M$-dependent lower bound for $\mathbb{P}(\mathcal{P}^{\gamma_1})$ now follows
	by using Lemma \ref{mod3.2} for the rectangle $R_1$. Similar
	considerations for $R_2$ yield the lower
	bound for $\mathbb{P}(\mathcal{P}^{\gamma_2})$.
\end{proof}

Again, note that the $\beta$ and $M$ in this section have no relation to the
ones in Section \ref{ss:constants}.

\begin{proposition}
	\label{one4}
	For any $M>0$ fixed, let $\beta=\beta(M)$ be the constant obtained from Lemma \ref{one3.1}. For any path $\gamma_1\subseteq R_1$ from $\underline{R}_1$ to
	$\overline{R}_1$ and any path $\gamma_2\subseteq R_2$ from
	$\underline{R}_2$ to $\overline{R}_2$, we have
	\begin{displaymath}
		\mathbb{P}\left( \mathcal{P}^{\gamma_1,\gamma_2}| \mathcal{S}'\cap \mathbf{Restr}\cap \mathcal{W}\cap  \left\{
		\left.\Gamma_n\right|_{R_1}=\gamma_1 \right\}\cap \left\{
		\left.\Gamma_n\right|_{R_2}=\gamma_2
		\right\} \right)\geq \beta
	\end{displaymath}
	for all $n$ large enough.
\end{proposition}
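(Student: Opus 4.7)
The plan is to apply the FKG inequality conditionally on the weights outside a carefully chosen region. Define
\begin{equation*}
X=\bigl(R_1\setminus(\gamma_1\cup\overline{R}_1)\bigr)\cup\bigl(R_2\setminus(\gamma_2\cup\overline{R}_2)\bigr),
\end{equation*}
and let $\mathcal{F}$ denote the $\sigma$-algebra generated by the vertex weights outside $X$; conditional on $\mathcal{F}$, the weights on $X$ remain i.i.d.\ rate one exponentials. I first observe that $\mathcal{P}^{\gamma_1,\gamma_2}$ is determined by (and is a decreasing event in) the weights on $X$: indeed $\mathcal{P}^{\gamma_i}$ concerns paths $\gamma\subseteq R_i$ disjoint from $\gamma_i$, and since $\underline{\ell}(\gamma)$ excludes the endpoint of $\gamma$ on $\overline{R}_i$, every vertex contributing to $\underline{\ell}(\gamma)$ lies in $R_i\setminus(\gamma_i\cup\overline{R}_i)\subseteq X$. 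The upper bound $\widetilde{\underline{\ell}}(\gamma)\leq -M^4 n^{1/3}$ is manifestly decreasing in these weights.

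The crux is to verify that the conditioning event $E:=\mathcal{S}'\cap\mathbf{Restr}\cap\mathcal{W}\cap A_1\cap A_2$, where $A_i=\{\Gamma_n|_{R_i}=\gamma_i\}$, is also a decreasing event in the weights on $X$. A direct inspection using the exclusion of the appropriate endpoints in $\ell$ and $\underline{\ell}$ shows that the passage times involved in $\mathcal{W}$ do not use any vertex of $X$ (the relevant paths live in the strips below $R_1$, between $R_1$ and $R_2$, and above $R_2$, and only touch $\underline{R}_i$ or $\overline{R}_i$ at excluded endpoints), so $\mathcal{W}$ is $\mathcal{F}$-measurable. Both $\mathcal{S}'$ and $\mathbf{Restr}$ are defined by upper bounds on (possibly constrained) passage times, hence are decreasing in all weights; this is precisely why the two-sided event $\mathcal{S}$ has been replaced by $\mathcal{S}'$ in the statement. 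The main point to verify is the monotonicity of $A_1\cap A_2$. On this event, $\Gamma_n\cap R_i=\gamma_i$, and since $X\subseteq R_1\cup R_2$ and $(\gamma_1\cup\gamma_2)\cap X=\emptyset$, the geodesic satisfies $\Gamma_n\cap X=\emptyset$. Decreasing any weight on $X$ thus leaves $\ell(\Gamma_n)$ unchanged, while weakly reducing $\ell(\pi)$ for every up-right path $\pi$ from $\mathbf{0}$ to $\mathbf{n}$; hence $\Gamma_n$ remains the (almost surely unique) geodesic and $A_1\cap A_2$ still holds.

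Granted these monotonicity statements, the FKG inequality applied conditionally on $\mathcal{F}$ (under which the weights on $X$ form a product measure) gives
\begin{equation*}
\mathbb{P}(\mathcal{P}^{\gamma_1,\gamma_2}\cap E\mid\mathcal{F})\geq\mathbb{P}(\mathcal{P}^{\gamma_1,\gamma_2}\mid\mathcal{F})\cdot\mathbb{P}(E\mid\mathcal{F})=\mathbb{P}(\mathcal{P}^{\gamma_1,\gamma_2})\cdot\mathbb{P}(E\mid\mathcal{F}),
\end{equation*}
the last equality using that $\mathcal{P}^{\gamma_1,\gamma_2}$ depends only on the weights on $X$, which are independent of $\mathcal{F}$. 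Taking expectations and dividing by $\mathbb{P}(E)$ yields
\begin{equation*}
\mathbb{P}(\mathcal{P}^{\gamma_1,\gamma_2}\mid E)\geq\mathbb{P}(\mathcal{P}^{\gamma_1,\gamma_2})\geq\beta,
\end{equation*}
where the final bound is Lemma \ref{one3.1}. The principal obstacle in this plan is the monotonicity of $A_1\cap A_2$ in the $X$-weights; this is exactly the reason for choosing $X$ to be disjoint from $\gamma_1\cup\gamma_2\cup\overline{R}_1\cup\overline{R}_2$, so that on $A_1\cap A_2$ the geodesic avoids $X$ entirely and the standard ``removing vertices only hurts competing paths'' argument applies.
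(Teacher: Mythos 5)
Your proof is correct and follows essentially the same route as the paper: both condition on the $\sigma$-algebra generated by the weights outside $X=(R_1\setminus(\gamma_1\cup\overline{R}_1))\cup(R_2\setminus(\gamma_2\cup\overline{R}_2))$, observe that $\mathcal{P}^{\gamma_1,\gamma_2}$, $\mathcal{S}'$, $\mathbf{Restr}$, and the geodesic-restriction event are decreasing in the $X$-weights while $\mathcal{W}$ is measurable with respect to the complement, and then apply the conditional FKG inequality. The only difference is presentational (you fold $\mathcal{W}$ into the event $E$ from the start, whereas the paper handles it in a separate intermediate step), and you spell out the ``geodesic avoids $X$, so decreasing weights there preserves it'' monotonicity argument for $A_1\cap A_2$ which the paper leaves implicit.
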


\begin{proof}
	Note that after conditioning on the configuration in $\left((R_1\setminus 
	(\gamma_1 \cup \overline{R}_1) )\cup (R_2\setminus  (\gamma_2\cup
	\overline{R}_2)
	)\right)^c$, we have that
	$\mathcal{S}',\mathbf{Restr},\mathcal{P}^{\gamma_1,\gamma_2}$ and $\left\{
		\left.\Gamma_n\right|_{R_1}=\gamma_1 \right\}\cap \left\{
		\left.\Gamma_n\right|_{R_2}=\gamma_2
		\right\}$ are decreasing events on $(R_1\setminus 
	(\gamma_1 \cup \overline{R}_1) )\cup (R_2\setminus  (\gamma_2\cup
	\overline{R}_2)
	)$.  Let $\mathscr{F}_{\gamma_1,\gamma_2}$ denote the
	$\sigma$-algebra generated by the vertex weights in $\left((R_1\setminus 
	(\gamma_1 \cup \overline{R}_1) )\cup (R_2\setminus  (\gamma_2\cup
	\overline{R}_2)
	)\right)^c$. Using the FKG inequality and the dependence of
	$\mathcal{P}^{\gamma_1,\gamma_2}$ only on the vertex weights in $(R_1\setminus 
	(\gamma_1 \cup \overline{R}_1) )\cup (R_2\setminus  (\gamma_2\cup
	\overline{R}_2)
	)$
	(thus,
	$\mathcal{P}^{\gamma_1,\gamma_2}$ is independent of
	$\mathscr{F}_{\gamma_1,\gamma_2}$), we have that
	\begin{equation}
		\label{eqn:24}
		\begin{split}
			\mathbb{P}\left( \mathcal{P}^{\gamma_1,\gamma_2} \cap \mathcal{S}' \cap \mathbf{Restr} \cap  \left\{
		\left.\Gamma_n\right|_{R_1}=\gamma_1 \right\}\cap \left\{
		\left.\Gamma_n\right|_{R_2}=\gamma_2
		\right\} | \mathscr{F}_{\gamma_1,\gamma_2}\right) \geq\\ \mathbb{P}\left( \mathcal{P}^{\gamma_1,\gamma_2}
	\right) 	\mathbb{P}\left( \mathcal{S}'\cap \mathbf{Restr} \cap  \left\{
		\left.\Gamma_n\right|_{R_1}=\gamma_1 \right\}\cap \left\{
		\left.\Gamma_n\right|_{R_2}=\gamma_2
		\right\} | \mathscr{F}_{\gamma_1,\gamma_2} \right).
	\end{split}
	\end{equation}
	Noting that the event $\mathcal{W}$ is measurable with respect to
	$\mathscr{F}_{\gamma_1,\gamma_2}$, we have
	\begin{align}
		\label{eqn:25}
			&\mathbb{P}\left( \mathcal{P}^{\gamma_1,\gamma_2} \cap \mathcal{S}'\cap \mathbf{Restr} \cap  \left\{
		\left.\Gamma_n\right|_{R_1}=\gamma_1 \right\}\cap \left\{
		\left.\Gamma_n\right|_{R_2}=\gamma_2
		\right\} | \mathcal{W}\right) \nonumber \\ &\geq  \mathbb{P}\left( \mathcal{P}^{\gamma_1,\gamma_2}
	\right) 	\mathbb{P}\left( \mathcal{S}'\cap \mathbf{Restr} \cap  \left\{
		\left.\Gamma_n\right|_{R_1}=\gamma_1 \right\}\cap \left\{
		\left.\Gamma_n\right|_{R_2}=\gamma_2
		\right\} | \mathcal{W} \right)
	\end{align}
	and this in turn yields
	\begin{equation}
		\label{eqn:26}
		\mathbb{P}\left( \mathcal{P}^{\gamma_1,\gamma_2}| \mathcal{S}'\cap \mathbf{Restr}\cap \mathcal{W} \cap  \left\{
		\left.\Gamma_n\right|_{R_1}=\gamma_1 \right\}\cap \left\{
		\left.\Gamma_n\right|_{R_2}=\gamma_2
		\right\} \right)\geq \mathbb{P}\left( \mathcal{P}^{\gamma_1,\gamma_2} \right)\geq
		\beta,
	\end{equation}
thereby completing the proof.
\end{proof}

\subsubsection{Forcing the geodesics to coalesce on the favourable events}
We shall now show that the positive probability event constructed above
implies that $\Gamma_{a_1,b_1}$ and $\Gamma_{a_2,b_2}$ coincide between
$\overline{R}_1$ and $\underline{R}_2$.

\begin{proposition}
	\label{one5}
	Take any $(\gamma_1,\gamma_2)\in \mathscr{I}$. On the event
	$\mathcal{P}^{\gamma_1,\gamma_2}\cap \mathcal{S}'\cap \mathbf{Restr} \cap
	\mathcal{W}$, we have that for $M$ large enough, and $n$ large
	depending on $M$:
	$\Gamma_{a_1,b_1},\Gamma_{a_2,b_2}\subseteq U_{M^2}$, $\Gamma_n\subseteq
	U_{2M}$ and both
	$\Gamma_{a_1,b_1}$ and $\Gamma_{a_2,b_2}$ intersect both $\gamma_1$ and
	$\gamma_2$.
\end{proposition}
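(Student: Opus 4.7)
The proposition asserts three things: strip containment of $\Gamma_{a_1,b_1}$ and $\Gamma_{a_2,b_2}$ in $U_{M^2}$, of $\Gamma_n$ in $U_{2M}$, and that both geodesics meet $\gamma_1$ and $\gamma_2$. My plan is to handle these together via two complementary comparisons. First, I will use $(\gamma_1,\gamma_2)\in\mathscr{I}$ as an explicit competitor: concatenating unrestricted best paths between consecutive cross-sections with $\gamma_1$ and $\gamma_2$ will produce a path from $a_1$ to $b_1$ (respectively $a_2$ to $b_2$ and $\mathbf{0}$ to $\mathbf{n}$) whose weight is large enough that, combined with $\mathbf{Restr}$, the corresponding geodesic is forced inside the required strip. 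Second, once each geodesic is known to lie in $U_{M^2}$, its restrictions to $R_1$ and $R_2$ must be full crossings, and on $\mathcal{P}^{\gamma_1,\gamma_2}$ any such crossing disjoint from $\gamma_i$ is extremely light; combining this with the upper bounds from $\mathcal{S}'$ and $\mathcal{W}$ on the remaining four segments will push the geodesic weight below the competitor lower bound, a contradiction.

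\textbf{Lower bound and strip containment.} Let $p_i,q_i$ denote the initial and final vertices of $\gamma_i$ on $\underline R_i$ and $\overline R_i$. Since $(\gamma_1,\gamma_2)\in\mathscr{I}$ is $M^{3/4}$-typical, each $p_i,q_i$ lies in $U_{M^{3/4}}$ and $\widetilde{\underline{\ell}}(\gamma_i)\geq -Mn^{1/3}$. On $\mathcal{W}_1$ one has $T_{a_1,p_1}\geq \mathbb{E}T_{a_1,p_1}-Mn^{1/3}$; expanding $(\sqrt m+\sqrt{n'})^2$ for the rectangle spanning $a_1,p_1$ (whose spatial imbalance is at most $2Mn^{2/3}+M^{3/4}n^{2/3}\leq 3Mn^{2/3}$ over time length $\epsilon n/4$) and using \eqref{eqn:14.17.0} gives $\mathbb{E}T_{a_1,p_1}\geq 2\cdot(\epsilon n/4)-C(\epsilon)M^{2}n^{1/3}$. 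Analogous estimates for the three other connectors, using $\mathcal{W}_2,\mathcal{W}_3$ and exploiting that both endpoints of the $\gamma_1\to\gamma_2$ and the closing connectors lie in $U_{M^{3/4}}$ (contributing only $M^{3/2}n^{1/3}$-size corrections), combine with the typicality of $\gamma_1,\gamma_2$ to give
\begin{displaymath}
T_{a_1,b_1}\ \geq\ 4n - C'(\epsilon)M^{2}n^{1/3}.
\end{displaymath}
Fixing $M$ large enough depending on $\epsilon$ that $C'(\epsilon)M^{2}<(\xi/2)M^{4}$, the event $\mathbf{Restr}_{a_1,M^2}$ then forces $\Gamma_{a_1,b_1}\subseteq U_{M^2}$. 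The same reasoning with $a_2$ gives $\Gamma_{a_2,b_2}\subseteq U_{M^2}$; and since $\mathbf 0\in U_{M^{3/4}}$, the competitor starting at $\mathbf 0$ has only $C(\epsilon)M^{3/2}n^{1/3}$ error, which beats the $\mathbf{Restr}_{\mathbf 0,2M}$ threshold $(\xi/2)(2M)^2n^{1/3}$ for $M$ large, yielding $\Gamma_n\subseteq U_{2M}$.

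\textbf{Coalescence.} To show $\Gamma_{a_1,b_1}$ meets $\gamma_1$ (the other three cases are identical by symmetry), I will argue by contradiction. Because $\Gamma_{a_1,b_1}\subseteq U_{M^2}$ and $R_1$ is a full cross-section of $U_{M^2}$ over the times $[\epsilon n/4,3\epsilon n/4]$, the restriction $\eta:=\Gamma_{a_1,b_1}\cap R_1$ is a path from some $p\in\underline R_1$ to some $q\in\overline R_1$ lying in $R_1$. Supposing $\eta$ disjoint from $\gamma_1$, the event $\mathcal{P}^{\gamma_1}$ gives $\underline{\ell}(\eta)\leq\mathbb{E}\underline{T}_{p,q}-M^{4}n^{1/3}$. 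Splitting $\Gamma_{a_1,b_1}$ into its five pieces at $p,q$ and at the entry/exit vertices of $R_2$, and upper bounding the other four pieces using $\mathcal{W}_1$, $\mathcal{W}_2$, $\mathcal{S}'_2$, $\mathcal{W}_3$ (each bounded by its mean plus $Mn^{1/3}$), together with the elementary $\mathbb{E}T_{u,v}\leq 2(\phi(v)-\phi(u))+O(n^{1/3})$ available for any rectangle, will yield
\begin{displaymath}
T_{a_1,b_1}\ \leq\ 4n + O(Mn^{1/3}) - M^{4}n^{1/3}\ \leq\ 4n - \tfrac{1}{2}M^{4}n^{1/3}
\end{displaymath}
for $M$ large. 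Since $M^{4}/2 > C'(\epsilon)M^{2}$ for $M$ sufficiently large depending on $\epsilon$, this contradicts the lower bound of the previous paragraph, and hence $\Gamma_{a_1,b_1}\cap\gamma_1\neq\emptyset$. Using $\mathcal{P}^{\gamma_2}$ in place of $\mathcal{P}^{\gamma_1}$ and $\mathcal{S}'_1$ in place of $\mathcal{S}'_2$ handles $\gamma_2$, and the two cases for $\Gamma_{a_2,b_2}$ follow identically.

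\textbf{Main obstacle.} The crux is the asymmetry in powers of $M$: the competitor lower bound is at best of order $M^2 n^{1/3}$ because $a_1,a_2$ sit at distance $2Mn^{2/3}$ off the diagonal, whereas the $\mathbf{Restr}$ threshold scales as $M^4 n^{1/3}$ and the $\mathcal{P}^{\gamma_i}$ penalty is $M^4 n^{1/3}$ itself. The $M^{3/4}$-typicality in the definition of $\mathscr{I}$ -- which pins the endpoints of $\gamma_i$ inside $U_{M^{3/4}}$ rather than merely $U_{M^2}$ -- is what keeps the competitor's correction at order $M^2 n^{1/3}$ rather than $M^4 n^{1/3}$; without it both comparisons would be borderline and the argument would collapse. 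Tracking the constants carefully so that the hierarchy $M^{3/2}\ll M^2\ll M^4$ can be exploited in the correct direction, via the right instances of \eqref{eqn:14.17.0}, is therefore the technical heart of the proof.
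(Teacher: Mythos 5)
Your proposal follows essentially the same route as the paper's proof: build the explicit competitor by concatenating the connecting geodesics with $\gamma_1,\gamma_2$ to get the lower bound $T_{a_1,b_1}\geq 4n-O(M^2n^{1/3})$ (resp.\ $T_n\geq 4n-O(M^{3/2}n^{1/3})$ using that $\chi_2$'s endpoints all lie in $U_{M^{3/4}}$), invoke $\mathbf{Restr}$ to trap each geodesic in its strip, and then compare against the $\mathcal{P}^{\gamma_1,\gamma_2}$--driven upper bound $\leq 4n+O(Mn^{1/3})-M^4n^{1/3}$ on any crossing disjoint from one of the $\gamma_i$. You have also correctly identified the key point -- that the upper-bound recentering picks up only $O(n^{1/3})$ because the curvature correction $-C\Delta^2 n^{1/3}$ in \eqref{eqn:14.17.0} has the favorable sign -- which is exactly why the paper's constant $c_2$ in \eqref{eqn:29} is $M$-free. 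The only minor point worth flagging is that, like the paper, you tacitly absorb an $\epsilon$-dependence into your constants $C(\epsilon),C'(\epsilon)$ when applying the curvature bound to segments of time length $\Theta(\epsilon n)$; this is harmless since $\epsilon$ is fixed and $M$ is chosen afterward, but it should be stated explicitly that $M$ is taken large depending on $\epsilon$.
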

\begin{proof}
	Let $u_1,v_1$ (resp.\ $u_2,v_2$) be the starting and ending points of $\gamma_1$ (resp.\ $\gamma_2$). Let 
	$\chi_1$ be the
	concatenation of the paths $\Gamma_{a_1,u_1},\gamma_1,\Gamma_{v_1,u_2},\gamma_2$ and
	$\Gamma_{v_2,b_1}$. By using that the paths $\gamma_1$ and
	$\gamma_2$ are $M^{3/4}$-typical along with the definition of the events $\mathbf{Restr}$
	and $\mathcal{W}$, we have that for some absolute constant $c$,
	\begin{equation}
		\label{eqn:28}
		\ell(\chi_1)-4n\geq -c M^2 n^{1/3}.
	\end{equation}
	Here, we have used that
	$\ell(\chi_1)=\ell(\Gamma_{a_1,u_1})+\underline{\ell}(\gamma_1)+
	\underline{\ell}(\Gamma_{v_1,u_2})+\underline{\ell}(\gamma_2)+\underline{\ell}(\Gamma_{v_2,b_1})$. The
	recentering in \eqref{eqn:28} with respect to $4n$ is done by
	using \eqref{eqn:14.17.0} for the endpoints of each of the five
	segments making up $\chi_1$, and using that all the endpoints lie
	inside $U_{2M}$. Indeed, this recentering leads to the $M^2$ term in
	\eqref{eqn:28}. Now, using that $\xi M^4/2> cM^2$ for large $M$, we have
	that $\chi_1\subseteq U_{M^2}$ and consequently
	$\Gamma_{a_1,b_1}\subseteq U_{M^2}$. By a symmetric argument, one also
	obtains that $\Gamma_{a_2,b_2}\subseteq U_{M^2}$.
	
	Now,
	let $\chi_2$ be the concatenation of the paths
	$\Gamma_{\mathbf{0},u_1},\gamma_1,\Gamma_{v_1,u_2},\gamma_2$ and
	$\Gamma_{v_2,\mathbf{n}}$. By using that $\gamma_1$ and $\gamma_2$ are
$M^{3/4}$-typical along with the definition of the events $\mathbf{Restr}$
	and $\mathcal{W}$, we have that for some absolute constant $c_1$, 
	\begin{equation}
		\label{eqn:28.1}
		\ell(\chi_2)-4n \geq -c_1 M^{3/2} n^{1/3}.
	\end{equation}
	Again, we have used \eqref{eqn:14.17.0} for the endpoints of the
	five segments of $\chi_2$ along with
	$\ell(\chi_2)=\ell(\Gamma_{\mathbf{0},u_1})+\underline{\ell}(\gamma_1)+
	\underline{\ell}(\Gamma_{v_1,u_2})+\underline{\ell}(\gamma_2)+\underline{\ell}(\Gamma_{v_2,\mathbf{n}})$. Here, we obtained $M^{3/2}$ since
	$\mathbf{0},u_1,v_1,u_2,v_2,\mathbf{n}\subseteq U_{M^{3/4}}$ and this is
	because
	$\gamma_1$ and $\gamma_2$ are $M^{3/4}$-typical as in the definition of
	$\mathscr{I}$. Using that $\xi(2M)^2/2> c_1 M^{3/2}$ for large $M$, and the definition of the event
	$\mathbf{Restr}_{\mathbf{0},2M}\supseteq \mathbf{Restr}$, we
	obtain that	
	$\Gamma_n\subseteq U_{2M}$.
	
	Now, consider any path
	$\gamma\subseteq U_{M^2}$ from $a_1$ to $b_1$ which does not intersect
	{at least one of $\gamma_1$ and $\gamma_2$}. By the definition of the event
	$\mathcal{P}^{\gamma_1,\gamma_2}$, $\mathcal{W}$ and $\mathcal{S}'$, we have that 
	\begin{equation}
		\label{eqn:29}
		\ell(\gamma)-4n\leq 4M n^{1/3}  -M^{4}n^{1/3} +c_2n^{1/3}.
	\end{equation}
	Here, one divides $\gamma$ into five segments which we call
	$\sigma_1,\sigma_2,\dots,\sigma_5$ and uses that
	$\ell(\gamma)=\ell(\sigma_1)+\underline{\ell}(\sigma_2)+
	\underline{\ell}(\sigma_3)+\underline{\ell}(\sigma_4)+\underline{\ell}(\sigma_5)$.
	The recentering in \eqref{eqn:29} is done by using the right
	hand side part of \eqref{eqn:14.17.0} and this leads to $c_2$, a
	positive constant. Indeed, one uses
	{\eqref{eqn:14.17.0}} once for each segment of $\gamma$, and $c_2$ is the
	cumulative effect of these five applications. By noting that
	$M^4-4M-c_2>c M^2$ for large $M$ and by using
	\eqref{eqn:28} along with \eqref{eqn:29}, we have that $\Gamma_{a_1,b_1}$
	must intersect $\gamma_1$ and $\gamma_2$ both. By
	entirely analogous arguments, one observes that $\Gamma_{a_2,b_2}$ must
	intersect $\gamma_1$ and $\gamma_2$ both; see Figure \ref{fig:coalesce} for an illustration of this argument.
\end{proof}

\begin{figure}[t]
\begin{center}
	\includegraphics[width=0.5\linewidth]{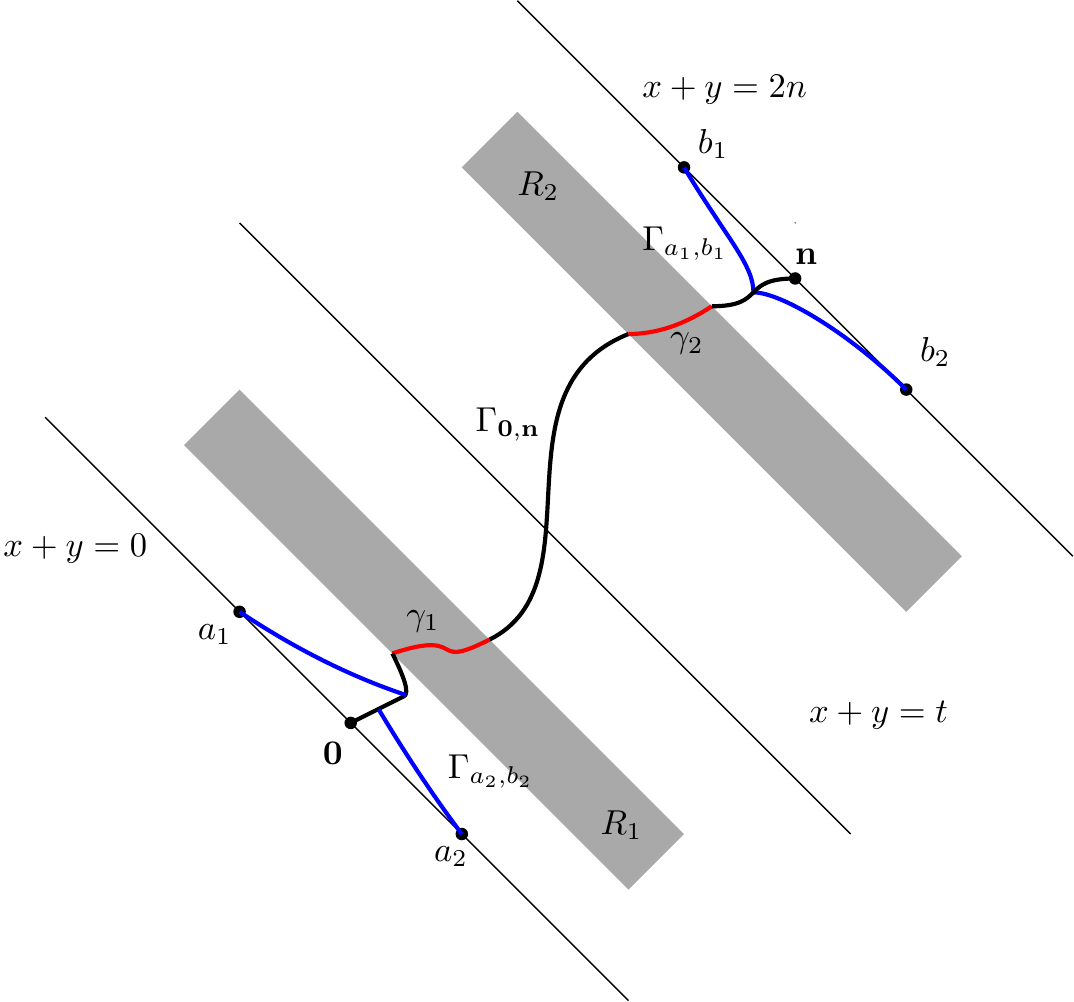}
	\end{center}
	\caption{Coalescence of geodesics $\Gamma_{a_1,b_1}, \Gamma_{a_2,b_2}$  on the favourable events: we show that for a pair of paths $\gamma_1$ and $\gamma_2$ across $R_1$ and $R_2$ respective satisfying the required regularity conditions, on the intersection of event $\mathcal{P}^{\gamma_1,\gamma_2}$ and the typical event $\mathcal{S}'\cap \mathbf{Restr} \cap
	\mathcal{W}$, $\Gamma_{a_1,b_1}$ and $\Gamma_{a_2,b_2}$ each intersect
	both $\gamma_1$ and $\gamma_2$. This is done by using the definition of
	$\mathcal{P}^{\gamma_1,\gamma_2}$ which ensures that any path crossing
	$R_1$ or $R_2$ and disjoint with $\gamma_1$ or $\gamma_2$ respectively
	incurs a heavy penalty in weight. Finally, the event
	$\{\left.\Gamma_{n}\right|_{R_1}=\gamma_1\} \cap \{ \left.\Gamma_{n}\right|_{R_2}=\gamma_2\}$ ensures the required coalescence.}
	\label{fig:coalesce}
\end{figure}

We are finally ready to prove Proposition \ref{one0}.

\begin{proof}[Proof of Proposition \ref{one0}]
	We first show that we can fix $M$ large enough such that if we take any $(\gamma_1,\gamma_2)\in \mathscr{I}$, we have 
		\begin{equation}
			\label{eqn:29.1}
			\mathcal{P}^{\gamma_1,\gamma_2}\cap \mathcal{S}'\cap \mathbf{Restr} \cap \mathcal{W} \cap \left\{
		\left.\Gamma_n\right|_{R_1}=\gamma_1 \right\}\cap \left\{
		\left.\Gamma_n\right|_{R_2}=\gamma_2
		\right\} \subseteq 
		 \left\{\Gamma_{a_1,b_1}(t)=
		\Gamma_{a_2,b_2}(t)\right\} \cap \left\{ |\Gamma_{a_1,b_1}(t)|\leq
		2Mn^{2/3}\right\} 	
		\end{equation}
		for all $n$ large enough. 
		As mentioned in Section \ref{ss:constants1}, fix $M$ large so as to satisfy the
		conclusions of Lemma \ref{one0.2}, Lemma \ref{one1} and
Proposition \ref{one5} for all large $n$. By using Proposition \ref{one5}, we have that $\Gamma_{a_1,b_1}$ intersects
	$\gamma_1$ and $\gamma_2$. Since $\Gamma_n$ is a geodesic, this implies
	that $\Gamma_{a_1,b_1}=\Gamma_n$ in the region between $\overline{R}_1$ and
	$\underline{R}_2$. Using an analogous argument, we have that
	$\Gamma_{a_1,b_1}=\Gamma_{a_2,b_2}=\Gamma_n$ in the region between  $\overline{R}_1$ and
	$\underline{R}_2$, and note that the line  $\mathbb{L}_t$ lies in this region. Since
	$\Gamma_n\subseteq U_{2M}$ by Proposition \ref{one5}, it is clear that
	$ |\Gamma_n(t)|\leq
	2Mn^{2/3}$. This proves \eqref{eqn:29.1}.
	
	We now use Lemma \ref{one3.1} to obtain $\beta=\beta(M)$ for the $M$ which was
	just fixed. Now, by using \eqref{eqn:29.1}, we have that for any
	$(\gamma_1,\gamma_2)\in \mathscr{I}$,
	\begin{align}
		\label{eqn:30}
		&\mathbb{P}\left(  \left\{\Gamma_{a_1,b_1}(t)=
		\Gamma_{a_2,b_2}(t)\right\} \cap \left\{ |\Gamma_{a_1,b_1}(t)|\leq
		2Mn^{2/3}\right\}| \left\{\left.\Gamma_n\right|_{R_1}=\gamma_1 \right\}\cap \left\{
		\left.\Gamma_n\right|_{R_2}=\gamma_2
		\right\} \right)\nonumber \\&\geq \mathbb{P}\left( 	\mathcal{P}^{\gamma_1,\gamma_2}\cap
		\mathcal{S}'\cap \mathbf{Restr} \cap \mathcal{W}|   \left\{\left.\Gamma_n\right|_{R_1}=\gamma_1 \right\}\cap \left\{
		\left.\Gamma_n\right|_{R_2}=\gamma_2
		\right\} \right)\nonumber\\
		&\geq 	\mathbb{P}\left( \mathcal{P}^{\gamma_1,\gamma_2}| \mathcal{S}'\cap \mathbf{Restr}\cap \mathcal{W}\cap  \left\{
		\left.\Gamma_n\right|_{R_1}=\gamma_1 \right\}\cap \left\{
		\left.\Gamma_n\right|_{R_2}=\gamma_2
		\right\} \right) \mathbb{P}\left( \mathcal{S}'\cap \mathbf{Restr}\cap \mathcal{W}|
\left.\Gamma_n\right|_{R_1}=\gamma_1, \left.\Gamma_n\right|_{R_2}=\gamma_2
\right) \nonumber\\ & \geq \beta/2. 
	\end{align}
	Note that we used Proposition \ref{one4} and the definition of $\mathscr{I}$ (as
	in \eqref{eqn:14.26}) to obtain
the last inequality. Now, observe that
\begin{align}
	\label{eqn:31}
	&\mathbb{P}\left( \left\{\Gamma_{a_1,b_1}(t)=
		\Gamma_{a_2,b_2}(t)\right\} \cap\left\{ |\Gamma_{a_1,b_1}(t)|\leq
		2Mn^{2/3}\right\}  \right)\nonumber\\
		&\geq \mathbb{P}\left( \left\{\Gamma_{a_1,b_1}(t)=
		\Gamma_{a_2,b_2}(t)\right\} \cap\left\{ |\Gamma_{a_1,b_1}(t)|\leq
		2Mn^{2/3}\right\} \cap  \left\{(\left.\Gamma_n\right|_{R_1},
		\left.\Gamma_n\right|_{R_2})\in \mathscr{I}
		\right\}  \right)\nonumber\\
		&=\sum_{(\gamma_1,\gamma_2)\in \mathscr{I}} \bigg[ \mathbb{P}\left(  \left\{\Gamma_{a_1,b_1}(t)=
		\Gamma_{a_2,b_2}(t)\right\} \cap \left\{ |\Gamma_{a_1,b_1}(t)|\leq
		2Mn^{2/3}\right\}| \left\{\left.\Gamma_n\right|_{R_1}=\gamma_1 \right\}\cap \left\{
		\left.\Gamma_n\right|_{R_2}=\gamma_2
		\right\} \right)\nonumber\\
		&\qquad \qquad \qquad \mathbb{P}\left( (\left.\Gamma_n\right|_{R_1},
		\left.\Gamma_n\right|_{R_2})=(\gamma_1,\gamma_2)
		\right)\bigg]\nonumber\\
		&\geq \frac{\beta}{2}\mathbb{P}\left( (\left.\Gamma_n\right|_{R_1},
		\left.\Gamma_n\right|_{R_2})\in \mathscr{I}  \right)\nonumber\\
		&\geq \frac{0.9\beta}{2}.
\end{align}
Note that we have used \eqref{eqn:30} and Lemma \ref{one3} to obtain the last
two inequalities. This completes the proof of the proposition.
\end{proof}


\section{Concluding remarks and possible extensions}
\label{s:dis}
As we have remarked throughout, our objective in this paper was to focus on
one of the simplest settings to maintain maximum clarity of exposition. However, we
expect that the methods illustrated here have broader applicability and we conclude with a discussion of possible extensions of the results presented in this paper. We shall not attempt to make this discussion precise; working out these details will be taken up elsewhere. 

There are primarily two directions of possible generalizations we will discuss: the first one will focus on applicability of our results beyond the exponential LPP model. The other will focus on the set up of exponential LPP itself but will look at the geometry of parts of the geodesics at macroscopic or mesoscopic scales.

\subsection{Beyond exponential LPP}
As alluded to before, we recall the reader's attention to the fact that even
though we worked with the specific model of exponential LPP, our arguments
depended only on the one-point estimates (Proposition \ref{mod1}) and its
consequences about passage times across the parallelogram (primarily Proposition
\ref{mod2}, Proposition \ref{mod3}) together with some basic
tools of percolation like the FKG inequality. An attempt to formalize this
axiomatic study of last passage percolation on $\mathbb{Z}^2$ was made in
\cite{BGHH20}, and one expects that the results in this paper will continue to hold
under the same set of assumptions (see Section 1.1, page 5 and Appendix A in
\cite{BGHH20}). In particular, the analogues of Proposition \ref{mod2} and
Proposition \ref{mod3} for Geometric LPP were verified  in \cite{BGHH20} based
on the one point convergence and moderate deviation estimates from \cite{Jo00,
BDMMZ02} (see Section B.1 and Section B.3 in
\cite{BGHH20}). Hence, one expects straightforward modifications of Theorem \ref{t:sb} to
hold for Geometric LPP, after one modifies the statement to deal with the
non-uniqueness of the geodesic (by e.g.\ fixing the left most geodesic). Note
that our proof of Theorem \ref{t:onepoint} used the uniqueness of the geodesic
in a slightly more crucial way, and hence a direct adaptation of the same to
the geometric LPP setting would not work. Even though we believe that an
appropriate variant of Theorem \ref{t:onepoint} would hold for geometric LPP, we shall not comment on this here. 


One point convergence and moderate deviation estimates are known for two non-lattice
exactly solvable models of planar last passage percolation as well. The first of these is Poissonian
LPP on $\R^2$, where the underlying randomness is a homogeneous rate one
Poisson point process on $\R^2$ and the last passage time between two ordered
points is the maximum number of Poisson points that can be collected in an
up/right journey from the ``smaller" to the ``larger" point. The analogue of Proposition \ref{mod1} for Poissonian LPP was established in \cite{LM01, LMS02} and using these, the parallelogram estimates (analogues of Proposition \ref{mod2}, Proposition \ref{mod3}) were established in \cite{BSS14}. Using these, we expect that our arguments could be used to extend Theorem \ref{t:sb} to Poissonian LPP as well (dealing with the non-uniqueness issue as before).

The second model, the semi-discrete Brownian LPP is of particular interest to us, as this is the only model of
planar LPP for which convergence to the Directed Landscape has been rigorously
established so far. Let us define this model precisely. Let
$\{B_i(\cdot)\}_{i\in \Z}$ denote a sequence of two sided standard Brownian
motions on $\R$. For a non-decreasing function $\phi: [0,n]\to \{0,1,2,\ldots,
n\}$ with $\phi(0)=0$ and $\phi(n)=n$, let us define 
$E(\phi):=\sum_{i=0}^{n} B_{i}(\phi_{i+1})-B_i(\phi_{i})$ where $[\phi_i,\phi_{i+1}]$ is the smallest closed interval containing the set $\{x:\phi(x)=i\}$. The last passage time from $(0,0)$ to $(n,n)$ is defined to the maximum of $E(\phi)$ over all $\phi$. Let $\Pi_{n}$ denote the (almost surely unique) function $\phi$ which attains the last passage time (geodesic).

Using the correspondence between passage times in Brownian
LPP and the largest eigenvalue of Gaussian Unitary Ensemble (GUE)
\cite{Bar01}, the one point estimates for passage times in
Brownian LPP can be obtained from \cite{LR10} (see also \cite{Aub04}), and the
convergence to the GUE Tracy-Widom distribution is proved in \cite{GTW00}. An inspection of the arguments in
\cite{BSS14,BGZ19} shows that the arguments are sufficiently robust and uses
only the curvature of limit shape and one point moderate deviation estimates
and hence one expects to establish analogues of Proposition \ref{mod2} and
Proposition \ref{mod3} for Brownian last passage percolation as well. Although
the parallelogram estimates for Brownian LPP has not been worked out anywhere
in the literature as far as we are aware, some similar estimates have appeared
in the works \cite{Ham16, Ham17a, Ham17b, DOV18, DV18, DSV20, SS20,GH20} using the
Brownian Gibbs property of a line ensemble associated to Brownian motion. Even
though we will not attempt to provide a proof of any of these results, we
shall state the following  precise analogue of Theorem \ref{t:sb} as a
conjecture that we believe can be proved by adapting the arguments presented in this paper. 


\begin{conjecture}
\label{con:blpp}
There exists $\delta>0$ and positive constants $C_1,c_1,C_2,c_2$ such that for all
$\delta<\delta_0$, we have that for all $n$ large depending on $\delta$,
$$C_2e^{-c_2\delta^{-3/2}}\leq \P\left( \sup_{s\in (0,1)}n^{-2/3}|\Pi_{n}(ns)-ns| \leq \delta \right) \leq C_1e^{-c_1\delta^{-3/2}}.$$
\end{conjecture}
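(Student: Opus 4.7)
The plan is to port the architecture of the proof of Theorem \ref{t:sb} to BLPP essentially verbatim, replacing each exponential-LPP input by its BLPP analogue. The arguments in Sections \ref{s:ub} and \ref{s:lb} interact with the model only through: the one-point moderate deviation estimate (Proposition \ref{mod1}); the parallelogram passage-time estimates for on-scale $n \times n^{2/3}$ parallelograms (Propositions \ref{mod2}, \ref{mod3}, \ref{mod3.1}); the transversal fluctuation bound (Proposition \ref{mod4}); the two-sided lower bounds Lemma \ref{4.12+} and Lemma \ref{mod3.2}; and the FKG inequality. The first step, therefore, is to assemble all of these in the BLPP setting. The one-point moderate deviations follow from the correspondence of the BLPP passage time with the top eigenvalue of GUE \cite{Bar01} combined with \cite{LR10,Aub04}. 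The parallelogram estimates should follow by adapting the chaining and superadditivity arguments of \cite{BSS14,BGZ19}; these proofs are essentially axiomatic in that they use only the curvature of the limit shape together with the one-point estimates, and similar ingredients have already appeared in \cite{Ham16,Ham17a,DOV18}. FKG for the BLPP environment follows from Harris' inequality for the product law on independent Brownian motions.

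With these inputs in place, the upper bound is a direct transcription of Proposition \ref{mod5} and its consequence: define $T_n^\delta$ as the maximum energy over paths $\phi$ with $\sup_x |\phi(x)-x| \leq \delta n^{2/3}$, decompose the strip into $\delta^{-3/2}/A$ independent sub-rectangles of dimension $A\delta^{3/2} n \times \delta n^{2/3}$ with passage times $Y_i$, and verify the BLPP analogue of Lemma \ref{l:zi}: the normalized variables $Z_i$ have negative expectation (using the negativity of the GUE Tracy-Widom mean) and sub-exponential upper tails (from the BLPP analogue of Proposition \ref{mod2}). A Bernstein inequality for sums of i.i.d.\ sub-exponential variables then yields $\mathbb{P}(T_n^\delta \geq 4n - c\delta^{-1} n^{1/3}) \leq e^{-c\delta^{-3/2}}$, which combined with the lower-tail estimate for the unrestricted $T_n$ gives the upper half of the conjecture.

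For the lower bound, one mimics Section \ref{s:lb}: construct $\mathbf{Inside} = \mathbf{Far} \cap \mathbf{Close}$ (measurable with respect to randomness ``inside" the strip) and $\mathbf{Bar} = \mathbf{Bar}^{\mathtt{L}} \cap \mathbf{Bar}^{\mathtt{R}}$ (measurable with respect to the two ``outside" half-strips), lower bound each by $e^{-c\delta^{-3/2}}$ using the parallelogram estimates together with FKG, and then invoke the deterministic geometric comparison argument of Lemma \ref{lower1} to force $\Pi_n \subseteq U_{M\delta}$. \emph{The main technical obstacle is to recover the exact independence of $\mathbf{Inside}$, $\mathbf{Bar}^{\mathtt{L}}$ and $\mathbf{Bar}^{\mathtt{R}}$ that was automatic in exponential LPP on $\mathbb{Z}^2$}: there, the three events depend on disjoint collections of i.i.d.\ vertex weights, whereas in BLPP a single Brownian motion $B_i$ has segments that naturally contribute to the interior (for $x$ near $i$) and to both exterior half-strips (for $x$ far from $i$). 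One clean route is to redefine the three events as measurable functions of increments of the $B_i$'s over three disjoint space regions, conditioning on the values of each $B_i$ at the two boundary points of its ``inside'' interval and decomposing into an independent Brownian bridge (inside) and two independent Brownian motions (outside); a separate small argument is needed to verify that the monotonicity properties required for FKG survive this conditioning. Once this decoupling is set up correctly, the remainder of the proof, namely Lemmas \ref{far1}, \ref{l:close}, \ref{far3}, \ref{short2} and the purely combinatorial path-comparison argument of Lemma \ref{lower1}, is formulated entirely in terms of constrained BLPP passage times and the planar ordering of geodesics, and carries over without modification.
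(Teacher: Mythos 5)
The statement you are proving is stated in the paper only as a conjecture; the authors explicitly write that they ``will not attempt to provide a proof'' and merely sketch the same program you describe (port the one-point and parallelogram estimates to BLPP and re-run the architecture of Sections \ref{s:ub} and \ref{s:lb}). So there is no proof in the paper to compare against, and your sketch is faithful to the paper's suggested plan. The decisive missing ingredient --- which the paper flags as well --- is the suite of on-scale parallelogram estimates (the BLPP analogues of Propositions \ref{mod2}, \ref{mod3}, \ref{mod3.1}, Lemma \ref{4.12+}, Lemma \ref{mod3.2} and Proposition \ref{mod4}); these have not been established in the literature in the form required, and your proposal defers them to ``adapting the chaining and superadditivity arguments.'' Until that adaptation is actually carried out, the sketch remains a plan rather than a proof; that, not the independence bookkeeping, is the real gap.

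On the independence point you highlight: your diagnosis is correct but your proposed fix is both unnecessary and, as stated, would not work. Conditioning each $B_i$ at the two endpoints of its strip interval and replacing the inside segment by a Brownian bridge destroys exactly the product structure that makes FKG available: increments of a Brownian bridge over disjoint subintervals have strictly negative covariance, so the Gaussian FKG inequality fails for the conditioned inside law, and the ``separate small argument'' you gesture at cannot be made to go through by Harris-type reasoning. The correct and much simpler route is to observe that each of $\mathbf{Inside}$, $\mathbf{Bar}^{\mathtt{L}}$, $\mathbf{Bar}^{\mathtt{R}}$ depends only on increments of the $B_i$'s over the three disjoint families of intervals cut out at each level $i$ by $U_\delta$, $\mathtt{Left}_\delta$ and $\mathtt{Right}_\delta$ (constrained passage times are sums of increments within the constraining region, and the exclusion of endpoints in the definition of $\ell$ keeps the three regions cleanly disjoint), and that increments of a single Brownian motion over disjoint intervals are unconditionally independent. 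This gives the required three-fold independence with no conditioning at all, and Harris' inequality applies within each region because the increments there form an independent family with respect to which all the passage-time events in question are monotone. Introducing the bridge decomposition would actively break this step.
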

As geodesics in Brownian LPP are almost surely unique, one expects the arguments proving Theorem \ref{t:onepoint} to go through in this case as well. 

\subsubsection{Geodesics in the directed landscape}

It was shown in \cite[Theorem 1.1]{DOV18} that $n^{-2/3}|\Pi_{n}(ns)-ns|$
converges almost surely to a continuous random function $\Pi$ on $[0,1]$. The
directed landscape $\mathcal{L}(x,s;y,t)$ is a four parameter random field
defined on  on the half space $s<t$ of $\R^4$ such that $\mathcal{L}(x,s;y,t)$
is the scaling limit (as $n\to \infty$) of the centered and scaled passage
times from $(x,s)$ to $(y,t)$ under the affine spatial scaling that keeps the
origin fixed and takes the point $(n+yn^{2/3},n)$ to the point $(y,1)$. We do
not need the precise definition of the directed landscape, but we remark that
it was shown in \cite{DOV18} that $\Pi$ above is the geodesic in the directed
landscape from $(x,s)=(0,0)$ to $(y,t)=(0,1)$. Geometric properties of $\Pi$,
working directly with the directed landscape, have been studied in
\cite{DOV18,DSV20} where an analogue of \eqref{e:tfub} was proved and an expression for the $3/2$ variation of $\Pi$ was obtained. 

One can also attempt to study the geometry of $\Pi$ by studying finite
geodesics in Brownian LPP. For example, the proof of Corollary \ref{c:as}
together with Conjecture \ref{con:blpp} would show that Corollary \ref{c:as}
(i) remains valid with $\pi$ replaced by $\Pi$ as above, and one expects a
similar reasoning to yield a variant of  Corollary \ref{c:as} (ii), from the
appropriate variant of Theorem \ref{t:onepoint} proved for Brownian LPP. We
also remark the the directed landscape is expected to be universal and it is
believed that one could construct the same object by taking a suitable space time scaling of the exponential LPP. If such a result is established, Corollary \ref{c:as} would directly apply to the geodesic $\Pi$ in the directed landscape. 


\subsection{Geometry of geodesics at a finer Scale}
\label{s:disf}
Here we shall bring our focus back to the exponential LPP model and discuss the applicability of our results to (i) parts of the geodesic $\Gamma_{n}$ (both macroscopic and mesoscopic), (ii) the scenario when $\delta$ is allowed to go to $0$ with $n$.

\subsubsection{Macroscopic segments of the geodesic}
Notice that in Theorem \ref{t:sb}, we only considered the small ball
probability for the whole geodesic $\Gamma_{n}$. However, by following our
arguments, one can easily also derive the same result for any macroscopic segment of the
geodesic. More precisely, for $\epsilon>0$ fixed, let $\llbracket t_1,t_2 \rrbracket$ denote a sub-interval of $\llbracket 0,2n \rrbracket$ such that $t_2-t_1\geq \epsilon n$. We have the following analogue of Theorem \ref{t:sb}:
\begin{equation}
\label{e:inter}
C_2e^{-c_2\delta^{-3/2}}\leq \P\left(\sup_{t\in \llbracket t_1,t_2
\rrbracket}|\Gamma_n(t)|\leq \delta n^{2/3}\right)\leq C_1e^{-c_1\delta^{-3/2}}.
\end{equation}
Notice that the lower bound here is immediate from Theorem \ref{t:sb} whereas
for the upper bound, one needs to redo the argument restricted to the interval $\llbracket t_1, t_2\rrbracket$.

\subsubsection{Mesoscopic Segments of the Geodesic at either end}
A more interesting question is to consider the segment of geodesic
$\Gamma_{n}$ restricted to the interval $\llbracket 0,r \rrbracket$ or
$\llbracket 2n-r,2n \rrbracket$ for some $1\ll r\ll n$. One of the advantages
of working with a pre-limiting model, rather than a limiting model such as the
directed landscape, is that these mesoscopic statistics cannot be read off from
the limiting model.  It is known that the transversal fluctuation of
$\Gamma_n$ at scale $r$ is $O(r^{2/3})$ (see \cite[Theorem 3]{BSS17B}), hence
the natural question is to ask for the probability that $\sup_{t\in [0,r]}
|\Gamma_n(t)|\leq \delta r^{2/3}$. We believe that the argument in this paper
together with \cite[Theorem 3]{BSS17B} and \cite[Theorem 3]{BG18} can be used
to show that for $r$ sufficiently large and $\delta$ small, we have 
\begin{equation}
\label{e:meso}
C_2e^{-c_2\delta^{-3/2}}\leq \mathbb{P}\left(\sup_{t\in [0,r]} |\Gamma_n(t)|\leq \delta
r^{2/3}\right)\leq C_1e^{-c_1\delta^{-3/2}}.
\end{equation}
One also expects a similar estimate to hold for the semi-infinite geodesic from $\mathbf{0}$ in the direction $(1,1)$. 

The analogue of Theorem \ref{t:onepoint} is expected to hold at the scale $r\ll n$ as well. In particular, one expects that appropriate modifications of our estimates will yield that 
\begin{equation}
\label{e:mesoonept}
c\delta \leq  \P(|\Gamma_n(r)|\leq \delta r^{2/3})\leq C\delta.
\end{equation}
The translation invariance in the proof of the upper bound in
Theorem \ref{t:onepoint} will directly give the upper bound in
\eqref{e:mesoonept}, but the lower bound requires significant modifications
in the argument for it to work and would be taken up elsewhere. 

\subsubsection{The case of vanishing $\delta$ and small deviations away from the diagonal}
Finally, we want to point out that even though for the sake of notational
convenience we have stated our results for a fixed but small $\delta$ while
letting $n$ become arbitrarily large, our arguments are sufficiently robust to
handle the case when $\delta$ is allowed to go to $0$ with $n$ sufficiently
slowly. This is rather transparent for Theorem \ref{t:onepoint}, where the key
estimates Proposition \ref{one-2} and Proposition \ref{one0} did not depend on
$\delta$ at all and the role of $\delta$ was merely in setting up the
appropriate translations. A moment's thought should convince the reader that
Theorem \ref{t:onepoint} holds for all $\delta$ such that $\delta n^{2/3} \geq
1$ which ensures that the translations can be made sense of in the lattice. 

{As already mentioned in the introduction, Theorem \ref{t:onepoint} can
also be strengthened by considering $\P(\Gamma_{n}(\cdot)\in I)$ for any
compact interval $I$ of length $\delta n^{2/3}\geq 2$ (the lower bound is imposed to make sure that $\{\Gamma_{n}(\cdot)\in I\}$ is not vacuously empty). Indeed, for any $L>0$ and $f\in
\llbracket-Ln^{2/3},Ln^{2/3}\rrbracket$, it is easy to check that for $t$ even (an analogous statement holds for $t$ odd)
$\P(\Gamma_{n}(t)=2f)=\P(\Gamma_{u_i,u_{i}+\mathbf{n}}(t)-\psi(u_i)=2f)$ for each
$i\in \Z$ where $u_i=(i,-i)$. One can show that $\sum_{|i|\leq L' n^{2/3}}
\P(\Gamma_{u_i,u_{i}+\mathbf{n}}(t)-\psi(u_i)=2f)$ is bounded away from $0$ and
$\infty$ uniformly in $n$ for every fixed $L'$ sufficiently larger compared to
$L$. Indeed, one observes that the argument in Lemma \ref{one-2.1} remains
valid if one changes the definition of $\mathcal{L}_{t}$ to include all points
in $\L_{t}\cap U_{L'}$ and Proposition \ref{one0} is true for arbitrarily
large choices of $M$. This estimate, together with repeating the proofs of the
upper and lower bounds in the proof of Theorem \ref{t:onepoint}, gives the following corollary.}

{
\begin{corollary}
\label{c:lbgen}
For each $L\geq 0$ and for all $\epsilon\in (0,1)$, there exist positive constants $C_3,c_3$ depending
on $\epsilon$ and $L$ such that for
all $n\geq n_0(\epsilon,L)$ and $t\in \llbracket \epsilon n,
(2-\epsilon)n \rrbracket$, and for all intervals $I\subseteq
[-Ln^{2/3},Ln^{2/3}]$ with $|I|=\delta n^{2/3}\geq 2$, we have
\begin{displaymath}
	c_3\delta \leq \P\left (\Gamma_n(t)\in I\right) \leq C_3\delta.
\end{displaymath}
\end{corollary}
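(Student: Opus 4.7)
The strategy is to deduce Corollary \ref{c:lbgen} from the pointwise bounds
\begin{equation}
\label{eq:ptlbgen}
\frac{c_0}{n^{2/3}} \;\leq\; \P(\Gamma_n(t) = x) \;\leq\; \frac{C_0}{n^{2/3}}
\end{equation}
for some $c_0, C_0 > 0$ depending only on $\epsilon$ and $L$, valid uniformly for $n$ large, $t \in \llbracket \epsilon n,(2-\epsilon)n \rrbracket$, and integers $x$ with $|x| \le L n^{2/3}$ and $x \equiv t \pmod{2}$. Granting \eqref{eq:ptlbgen}, the conclusion follows by summing over the integers of the correct parity inside $I$; the hypothesis $|I| \ge 2$ ensures that this count lies between $\lfloor |I|/2 \rfloor \ge 1$ and $|I|/2 + 1$, both of which are comparable to $\delta n^{2/3}$ in the regime considered.

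For the upper estimate in \eqref{eq:ptlbgen}, I would mimic the argument of Section \ref{s:oneub}. Setting $u_i = (i, -i)$ and $v_i = u_i + \mathbf{n}$, translation invariance of the i.i.d.\ weight field gives $\P(\Gamma_n(t) = x) = \P(\Gamma_{u_i, v_i}(t) = x + 2i)$ for every $i \in \Z$. Fix $L' = L + 2$ (say) and sum this identity over $|i| \le L' n^{2/3}$. The target $\psi$-coordinates $x + 2i$ are distinct for distinct $i$ and lie in $\L_t \cap U_{L + 2L'}$, so the sum is bounded above by the expectation of the natural analogue $\mathcal{L}_t^{L+2L'}$ of the random variable $\mathcal{L}_t$ from Lemma \ref{one-2.1}, obtained by replacing $U_1$ by $U_{L + 2L'}$ throughout its definition. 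Since Proposition \ref{one-2} holds verbatim for any fixed $\Delta > 0$, the proof of Lemma \ref{one-2.1} transfers to yield $\E[\mathcal{L}_t^{L+2L'}] \le C$ for a constant $C$ depending on $L$, and dividing by $2 L' n^{2/3} + 1$ gives the upper bound.

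For the lower estimate, I would apply Proposition \ref{one0} with $M$ taken large in $L$, which is admissible since the statement of the proposition remains valid for arbitrarily large $M$. Let $\mathcal{A}$ denote the coalescence event of Proposition \ref{one0}: $\P(\mathcal{A}) \ge c$, and on $\mathcal{A}$ both $\Gamma_{a_1, b_1}(t)$ and $\Gamma_{a_2, b_2}(t)$ equal a common value $p$ with $|p| \le 2 M n^{2/3}$. By the planar ordering of geodesics, on $\mathcal{A}$ we have $\Gamma_{u_i, v_i}(t) = p$ for every $|i| \le M n^{2/3}$, so the identity $\Gamma_{u_i, v_i}(t) = x + 2i$ reduces to the existence of an integer $i = (p - x)/2$ in the sandwich, which fails only when $|p - x| > 2 M n^{2/3}$. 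Given $|x| \le L n^{2/3}$ and $|p| \le 2 M n^{2/3}$, this failure forces one of $\Gamma_{a_j, b_j}(t)$ (for some $j \in \{1,2\}$) to deviate by at least $(4M - L) n^{2/3}$ from its typical $\psi$-value $\mp 2 M n^{2/3}$; using the distributional identity $\Gamma_{a_j, b_j}(t) \stackrel{d}{=} \Gamma_n(t) + \psi(a_j)$ and the transversal-fluctuation tail \eqref{e:tfub}, this event has probability at most $e^{-c(4M - L)^3} \le e^{-c M^3}$. Choosing $M$ large enough that $e^{-c M^3} < c/2$, I obtain $\P(\mathcal{A} \cap \{|p - x| \le 2 M n^{2/3}\}) \ge c/2$; summing the translation-invariance identity over $|i| \le M n^{2/3}$ then yields $(2 M n^{2/3} + 1) \P(\Gamma_n(t) = x) \ge c/2$, completing the proof of \eqref{eq:ptlbgen}. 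The main obstacle, absent in the proof of Theorem \ref{t:onepoint}, is that shifting $x$ away from the origin can push the single valid sandwich index $(p-x)/2$ out of the range $|i| \le M n^{2/3}$; this edge effect is absorbed by taking $M$ large relative to $L$ and invoking the cubic-exponential transversal-fluctuation bound.
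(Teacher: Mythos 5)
The overall architecture of your proof --- reducing the interval statement to a pointwise estimate $\P(\Gamma_n(t)=x)\asymp n^{-2/3}$ via translation invariance and a summation, then summing over the integers of the right parity in $I$ --- is exactly what the paper has in mind, and your upper bound is correct: extending the definition of $\mathcal{L}_t$ to $U_{L+2L'}$ is valid since Proposition \ref{one-2} holds for arbitrary $\Delta$, and dividing by the number of translates gives the claim.

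Your lower bound, however, has a genuine gap. You correctly spot the new obstacle absent from Theorem \ref{t:onepoint}: the single valid index $i_0=(p-x)/2$ can fall outside the planar-ordering sandwich $|i|\le Mn^{2/3}$, because $|p|\le 2Mn^{2/3}$ while the sandwich has half-width only $Mn^{2/3}$, and $|x|\le Ln^{2/3}$ pushes $i_0$ out by up to $Ln^{2/3}/2$. But the fix you propose does not close. You bound $\P(\mathcal A\cap\{|p-x|>2Mn^{2/3}\})$ by the transversal fluctuation tail $e^{-c'(4M-L)^3}$ and then ask that this be less than $c/2$, where $c=\P(\mathcal A)$ is the constant from Proposition \ref{one0}. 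This is circular: $c=c(M)$ depends on $M$, and the dependence is unfavorable. Tracing the proof, $c(M)\lesssim\beta(M)$ where $\beta(M)$ is the probability from Lemma \ref{one3.1} that every crossing of $R_1\subseteq U_{M^2}$ disjoint from $\gamma_1$ has centered weight below $-M^4n^{1/3}$; by the Tracy--Widom lower tail this decays like $e^{-c M^{12}}$, far smaller than $e^{-c'(4M-L)^3}$. Hence the inequality $e^{-c'M^3}<c(M)/2$ fails for all large $M$, and no choice of $M$ large relative to $L$ rescues the argument as written.

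The remedy, implicit in the paper's sketch, is to sharpen the containment in Proposition \ref{one0} itself rather than to union out the bad event. In the proof of Proposition \ref{one5}, the conclusion $\Gamma_n\subseteq U_{2M}$ (whence $|p|\le 2Mn^{2/3}$) comes from invoking $\mathbf{Restr}_{\mathbf{0},2M}$ together with $\ell(\chi_2)\ge 4n-c_1M^{3/2}n^{1/3}$, and the factor $2$ in $\mathbf{Restr}_{\mathbf{0},2M}$ is arbitrary slack: the requisite inequality $\xi(\alpha M)^2/2>c_1M^{3/2}$ holds for any fixed $\alpha>0$ once $M$ is large. Replacing $\mathbf{Restr}_{\mathbf{0},2M}$ by $\mathbf{Restr}_{\mathbf{0},M}$ (or any $\mathbf{Restr}_{\mathbf{0},\alpha M}$ with $\alpha M\gg M^{3/4}$) in the definition \eqref{eqn:14.18} gives the strengthened conclusion $|p|\le Mn^{2/3}$, and then $|i_0|\le(|p|+|x|)/2\le(M+L)n^{2/3}/2\le Mn^{2/3}$ automatically once $M\ge L$. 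With that modification there is no edge effect to absorb, and the rest of your argument goes through verbatim.
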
}

Working through the steps of the proof of Theorem \ref{t:sb} in the case of $\delta\to 0$ requires a little
more work, but observe that whenever we have applied estimates like Proposition \ref{mod2} or Proposition \ref{mod3} to a rectangle or parallelogram whose dimensions involved $\delta$, it was applied to a parallelogram of size $O(\delta^{3/2}n) \times \delta n^{2/3}$. Application of these parallelogram estimates only require that the dimensions of the parameters be sufficiently large and hence it is expected that the proofs will all go through as long as $\delta n^{2/3}\to \infty$. 


\bibliography{delocalization}
\bibliographystyle{plain}

\end{document}